\documentclass[12pt]{article}
\usepackage{amssymb}
\usepackage{amsthm}
\usepackage{amsmath}
\usepackage{bbm}
\usepackage{amscd}
\usepackage{stmaryrd}
\usepackage{array}
\usepackage{url}
\usepackage{hyperref}
\usepackage[latin2]{inputenc}
\usepackage{t1enc}
\usepackage{enumerate}
\usepackage[mathscr]{eucal}
\usepackage[british]{babel}
\usepackage[dvips]{graphicx}
\usepackage[dvips]{epsfig}
\usepackage{epsf}
\usepackage{color}
\usepackage{indentfirst}
\usepackage[all]{xy}

\addtolength{\hoffset}{-15mm}
\addtolength{\topmargin}{-20mm} \addtolength{\textwidth}{30mm}
\addtolength{\textheight}{40mm}
\newcommand{\Ker}{\operatorname{Ker}}
\newcommand{\GQpD}{G_{\mathbb{Q}_p,\Delta}}
\newcommand{\GQpa}{G_{\mathbb{Q}_p,\alpha}}
\newcommand{\HQpD}{H_{\mathbb{Q}_p,\Delta}}
\newcommand{\HQpa}{H_{\mathbb{Q}_p,\alpha}}
\newcommand{\GQpDa}{G_{\mathbb{Q}_p,\Delta\setminus\{\alpha\}}}
\newcommand{\HQpDa}{H_{\mathbb{Q}_p,\Delta\setminus\{\alpha\}}}
\newcommand{\RepDFp}{\mathrm{Rep}_{\mathbb{F}_p}(G_{\mathbb{Q}_p,\Delta})}
\newcommand{\RepDZp}{\mathrm{Rep}_{\mathbb{Z}_p}(G_{\mathbb{Q}_p,\Delta})}
\newcommand{\RepDQp}{\mathrm{Rep}_{\mathbb{Q}_p}(G_{\mathbb{Q}_p,\Delta})}
\newcommand{\Spec}{\operatorname{Spec}}
\newcommand{\codim}{\operatorname{codim}}
\newcommand{\Zp}{\mathbb{Z}_p}
\newcommand{\Fp}{\mathbb{F}_p}
\newcommand{\Qp}{\mathbb{Q}_p}
\newcommand{\OK}{\mathcal{O}_K}
\newcommand{\OED}{\mathcal{O}_{\mathcal{E}_\Delta}}
\newcommand{\val}{\operatorname{val}}
\newcommand{\diag}{\operatorname{diag}}
\newcommand{\genlength}{{\operatorname{length}_{gen}}}
\newcommand{\length}{\operatorname{length}}

\newcommand{\Tr}{\operatorname{Tr}}
\newcommand{\Ext}{\operatorname{Ext}}
\newcommand{\Hom}{\operatorname{Hom}}

\newcommand{\Tor}{\operatorname{Tor}}
\newcommand{\Frac}{\operatorname{Frac}}

\newcommand{\GL}{\operatorname{GL}}
\newcommand{\Gal}{\operatorname{Gal}}

\newcommand{\id}{\operatorname{id}}

\newcommand{\rk}{\operatorname{rk}}

\newcommand{\Coker}{\operatorname{Coker}}

\newcommand{\bg}{(\hspace{-0.06cm}(}
\newcommand{\jg}{)\hspace{-0.06cm})}

\newcommand{\bs}{[\hspace{-0.04cm}[}
\newcommand{\js}{]\hspace{-0.04cm}]}
\newtheorem{thm}{Theorem}[section]

\newtheorem{pro}[thm]{Proposition}
\newtheorem{lem}[thm]{Lemma}
\newtheorem{cor}[thm]{Corollary}

\theoremstyle{definition}
\newtheorem*{rem}{Remark}
\newtheorem*{rems}{Remarks}

\begin{document}
\title{Multivariable $(\varphi,\Gamma)$-modules and products of Galois groups}
\author{Gergely Z\'abr\'adi \footnote{This research was supported by a Hungarian OTKA Research grant K-100291 and by the J\'anos Bolyai Scholarship of the Hungarian Academy of Sciences. I would like to thank the Arithmetic Geometry and Number Theory group of the University of Duisburg--Essen, campus Essen, for its hospitality and for financial support from SFB TR45 where parts of this paper was written.}}
\maketitle

\begin{abstract}
We show that the category of continuous representations of  the $d$th direct power of the absolute Galois group of $\mathbb{Q}_p$ on finite dimensional $\mathbb{F}_p$-vector spaces (resp.\ finitely generated $\Zp$-modules, resp.\ finite dimensional $\mathbb{Q}_p$-vector spaces) is equivalent to the category of \'etale $(\varphi,\Gamma)$-modules over a $d$-variable Laurent-series ring over $\mathbb{F}_p$ (resp.\ over $\Zp$, resp.\ over $\Qp$).
\end{abstract}

\section{Introduction}

This note serves as a complement to the work \cite{MultVar} where we relate multivariable $(\varphi,\Gamma)$-modules to smooth modulo $p^n$ representations of a split reductive group $G$ over $\mathbb{Q}_p$. The goal here is to show that the category of $d$-variable $(\varphi,\Gamma)$-modules is equivalent to the category of representations of the $d$th direct power of the absolute Galois group of $\Qp$. 

Let $K$ be a finite extension of $\Qp$ with ring of integers $\OK$, prime element $\varpi$, and residue field $\kappa$. For a finite set $\Delta$ let $\GQpD:=\prod_{\alpha\in\Delta}\Gal(\overline{\mathbb{Q}_p}/\mathbb{Q}_p)$ denote the direct power of the absolute Galois group of $\Qp$ indexed by $\Delta$. We denote by $\operatorname{Rep}_\kappa(\GQpD)$ (resp.\ by $\operatorname{Rep}_{\OK}(\GQpD)$, resp.\ by $\operatorname{Rep}_K(\GQpD)$) the category of continuous representations of the profinite group $\GQpD$ on finite dimensional $\kappa$-vector spaces (resp.\ finitely generated $\OK$-modules, resp.\ finite dimensional $K$-vector spaces). On the other hand, for independent commuting variables $X_\alpha$ ($\alpha\in\Delta$) we put 
\begin{eqnarray*}
E_{\Delta,\kappa}&:=&\kappa\bs X_\alpha\mid \alpha\in\Delta\js [X_\alpha^{-1}\mid \alpha\in\Delta]\ , \\
\mathcal{O}_{\mathcal{E}_{\Delta,K}}&:=&\varprojlim_h \left(\OK/\varpi^h\bs X_\alpha\mid \alpha\in\Delta\js [X_\alpha^{-1}\mid \alpha\in\Delta]\right)\ ,\\
\mathcal{E}_{\Delta,K}&:=&\mathcal{O}_{\mathcal{E}_{\Delta,K}}[p^{-1}]\ .
\end{eqnarray*}
Moreover, for each element $\alpha\in\Delta$ we have the partial Frobenius $\varphi_\alpha$, and group $\Gamma_\alpha\cong \Gal(\Qp(\mu_{p^\infty})/\Qp)$ acting on the variable $X_\alpha$ in the usual way and commuting with the other variables $X_\beta$ ($\beta\in\Delta\setminus\{\alpha\}$) in the above rings. A $(\varphi_\Delta,\Gamma_\Delta)$-module over $E_{\Delta,\kappa}$ (resp.\ over $\mathcal{O}_{\mathcal{E}_{\Delta,K}}$, resp.\ over $\mathcal{E}_{\Delta,K}$) is a finitely generated $E_{\Delta,\kappa}$-module (resp.\ $\mathcal{O}_{\mathcal{E}_{\Delta,K}}$-module, resp.\ $\mathcal{E}_{\Delta,K}$-module) $D$ together with commuting semilinear actions of the operators $\varphi_\alpha$ and groups $\Gamma_\alpha$ ($\alpha\in\Delta$). In case the coefficient ring is $E_{\Delta,\kappa}$ or $\mathcal{O}_{\mathcal{E}_{\Delta,K}}$, we say that $D$ is \'etale if the map $\id\otimes\varphi_\alpha\colon \varphi_\alpha^*D\to D$ is an isomorphism for all $\alpha\in\Delta$. For the coefficient ring $\mathcal{E}_{\Delta,K}$ we require the stronger assumption for the \'etale property that $D$ comes from an \'etale $(\varphi_\Delta,\Gamma_\Delta)$-module over $\mathcal{O}_{\mathcal{E}_{\Delta,K}}$ by inverting $p$. The main result of the paper is that $\operatorname{Rep}_\kappa(\GQpD)$ (resp.\ $\operatorname{Rep}_{\OK}(\GQpD)$, resp.\ $\operatorname{Rep}_K(\GQpD)$) is equivalent to the category of \'etale $(\varphi_\Delta,\Gamma_\Delta)$-modules over $E_{\Delta,\kappa}$ (resp.\ over $\mathcal{O}_{\mathcal{E}_{\Delta,K}}$, resp.\ over $\mathcal{E}_{\Delta,K}$).

Passing from the Galois side to $(\varphi_\Delta,\Gamma_\Delta)$-modules is rather straightforward. One constructs a big ring $E^{sep}_{\Delta}$ as an inductive limit of completed tensor products of finite separable extensions $E'_\alpha$ of $E_\alpha=\Fp\bg X_\alpha\jg$ ($\alpha\in\Delta$) over which the action of $\HQpD=\Ker(\GQpD\twoheadrightarrow\prod_{\alpha\in\Delta}\Gamma_\Delta)$ trivializes. The other direction is more involved. In order to trivialize the action of the partial Frobenii $\varphi_\alpha$ ($\alpha\in\Delta$) using induction, the main step is to find a lattice $D^{+*}_{\overline{\alpha}}$ integral in the variable $X_\alpha$ for some fixed $\alpha\in\Delta$ which is an \'etale $(\varphi_{\Delta\setminus\{\alpha\}},\Gamma_{\Delta\setminus\{\alpha\}})$-module over the ring $\Fp\bs X_\beta\mid \beta\in\Delta\js [X_\beta^{-1}\mid\beta\in\Delta\setminus\{\alpha\}]$. This uses the ideas of Colmez \cite{Mira} constructing lattices $D^+$ and $D^{++}$ in usual $(\varphi,\Gamma)$-modules.

We remark here that Scholze \cite{SchW} recently realized $\GQpD$ (using Drinfeld's Lemma for diamonds) as a geometric fundamental group $\pi_1((\mathrm{Spd}\ \Qp)^{|\Delta|}/\mathrm{p.Fr.})$ of the diamond $(\mathrm{Spd}\ \Qp)^{|\Delta|}$ modulo the partial Frobenii $\varphi_\beta$ ($\beta\in\Delta\setminus\{\alpha\}$) for some fixed $\alpha\in\Delta$: one can endow $E_\Delta^+=\Fp\bs X_\alpha\mid\alpha\in\Delta\js$ with its natural compact topology, and look at the subset of its adic spectrum $\operatorname{Spa}E_\Delta^+$ where all $X_\alpha$ ($\alpha\in\Delta$) are invertible. This defines an analytic adic space over $\Fp$, whose perfection modulo the
action of all $\Gamma_\alpha$'s is a model for $(\operatorname{Spd} \Qp)^d$. Thus, after taking the action modulo partial Frobenii $\varphi_\beta$ ($\beta\in\Delta\setminus\{\alpha\}$ for some fixed $\alpha\in\Delta$), the fundamental group will be $\GQpD$. Now, quite generally \'etale local systems on diamonds are equivalent to $\varphi$-modules. This introduces the last missing Frobenius, and one ends up with an equivalence between representations of $\GQpD$, and some sheaf of modules with $\Gamma_\Delta$-action and commuting actions of $\varphi_\alpha$ for all $\alpha\in\Delta$. However, this will not produce an actual module over a ring, but a sheaf of modules over a sheaf of rings. One can perhaps deduce the result of this paper along these lines, but that would require some further nontrivial input (replacing the above method of finding a lattice $D^{+*}_{\overline{\alpha}}$).

\subsection{Acknowledgements}

I would like to thank Christophe Breuil, Elmar Gro\ss e-Kl\"onne, Kiran Kedlaya, and Vytas Pa\v{s}k\={u}nas for useful discussions on the topic. I would like to thank Peter Scholze for clarifying the relation of this work to his theory of realizing $\GQpD$ as the \'etale fundamental group of a diamond.

\section{Algebraic properties of multivariable $(\varphi,\Gamma)$-modules}

\subsection{Definition and projectivity}

For a finite set $\Delta$ (which is the set of simple roots of $G$ in \cite{MultVar}) consider the Laurent series ring $E_{\Delta}:=E_{\Delta}^+[X_\Delta^{-1}]$ where $E_{\Delta}^+:=\mathbb{F}_p\bs X_\alpha\mid \alpha\in\Delta\js$ and $X_\Delta:=\prod_{\alpha\in\Delta}X_\alpha\in E_\Delta^+$. $E_\Delta^+$ is a regular noetherian local ring of global dimension $|\Delta|$, therefore $E_\Delta$ is a regular noetherian ring of global dimension $|\Delta|-1$. For each index $\alpha$ we define the action of the partial Frobenius $\varphi_\alpha$ and of the group $\Gamma_\alpha$ with $\chi_\alpha\colon\Gamma_\alpha\overset{\sim}{\to} \mathbb{Z}_p^\times$ on $E_\Delta$ as
\begin{eqnarray}
\varphi_\alpha(X_\beta)&:=&\begin{cases}X_\beta&\text{if }\beta\in\Delta\setminus\{\alpha\}\\ (X_\alpha+1)^p-1=X_\alpha^p&\text{if }\beta=\alpha\end{cases}\notag\\
\gamma_\alpha(X_\beta)&:=&\begin{cases}X_\beta&\text{if }\beta\in\Delta\setminus\{\alpha\}\\ (X_\alpha+1)^{\chi_\alpha(\gamma_\alpha)}-1&\text{if }\beta=\alpha\end{cases}\label{phigammaED}
\end{eqnarray}
for all $\gamma_\alpha\in\Gamma_\alpha$ extending the above formulas to continuous ring endomorphisms of $E_\Delta$ in the obvious way. By an \'etale $(\varphi_\Delta,\Gamma_\Delta)$-module over $E_\Delta $ we mean a (unless otherwise mentioned) finitely generated module $D$ over $E_\Delta $ together with a semilinear action of the (commutative) monoid $T_{+,\Delta}:=\prod_{\alpha\in\Delta}\varphi_\alpha^{\mathbb{N}}\Gamma_\alpha$ (also denote by $\varphi_t$ the action of $\varphi_t\in T_{+,\Delta}$ where the subscript $t$ is formal and refers to distinguishing between the elements of the set $T_{+,\Delta}$) such that the maps $$\id\otimes\varphi_t\colon \varphi_t^*D:=E_\Delta \otimes_{E_\Delta ,\varphi_t}D\to D$$ are isomorphisms for all elements $\varphi_t\in T_{+,\Delta}$. Here we put $\Gamma_\Delta:=\prod_{\alpha\in\Delta}\Gamma_\alpha$. We denote by $\mathcal{D}^{et}(\varphi_{\Delta},\Gamma_\Delta,E_\Delta)$ the category of \'etale $(\varphi_{\Delta},\Gamma_\Delta)$-modules over $E_\Delta$.

The category $\mathcal{D}^{et}(\varphi_{\Delta},\Gamma_\Delta,E_\Delta)$ has the structure of a neutral Tannakian category: For two objects $D_1$ and $D_2$ the tensor product $D_1\otimes_{E_\Delta}D_2$ is an \'etale $T_{+,\Delta}$-module with the action $\varphi_t(d_1\otimes d_2):=\varphi_t(d_1)\otimes\varphi_t(d_2)$ for $\varphi_t\in T_{+,\Delta,}$, $d_i\in D_i$ ($i=1,2$). Moreover, since $E_\Delta$ is a free module over itself via $\varphi_t$, putting $(\cdot)^*:=\Hom_{E_\Delta}(\cdot,E_\Delta)$ we have an identification $(\varphi_t^* D)^*\cong \varphi_t^*(D^*)$. So the isomorphism $\id\otimes\varphi_t\colon \varphi_t^*D\to D$ dualizes to an isomorphism $D^*\to \varphi_t^*(D^*)$. The inverse of this isomorphism (for all $\varphi_t\in T_{+,\Delta}$) equips $D^*$ with the structure of an \'etale $T_{+,\Delta}$-module.

\begin{lem}\label{cousingamma}
There exists a $\Gamma_\Delta$-equivariant injective resolution of $E_\Delta^+$ as a module over itself. 
\end{lem}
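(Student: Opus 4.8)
The plan is to construct the $\Gamma_\Delta$-equivariant injective resolution by a Koszul-type / local-cohomology construction adapted to the regular local ring $E_\Delta^+ = \Fp\bs X_\alpha \mid \alpha \in \Delta\js$. Recall $E_\Delta^+$ is regular local of dimension $|\Delta|$ with maximal ideal $\mathfrak{m} = (X_\alpha \mid \alpha\in\Delta)$, and the action of $\Gamma_\Delta = \prod_\alpha \Gamma_\alpha$ fixes this ring filtration-by-filtration: each $\gamma_\alpha$ acts on $X_\alpha$ by $X_\alpha \mapsto (X_\alpha+1)^{\chi_\alpha(\gamma_\alpha)}-1 = \chi_\alpha(\gamma_\alpha)X_\alpha + (\text{higher order})$, and fixes the other variables. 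So $\Gamma_\Delta$ acts on $E_\Delta^+$ by continuous ring automorphisms preserving $\mathfrak{m}$ and each partial power $\mathfrak{m}_\alpha^n = (X_\alpha^n)$ (up to units, but in fact $\gamma_\alpha(X_\alpha^n) \in X_\alpha^n E_\Delta^+$ and $X_\alpha^n \in \gamma_\alpha(X_\alpha^n)E_\Delta^+$).

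First I would treat the one-variable case $\Delta = \{\alpha\}$ and then take a (completed) tensor product over $\Fp$. For $E^+ = \Fp\bs X\js$ with its $\Gamma = \Gamma_\alpha$-action, the ring $E^+$ itself is the first term, and the obvious candidate for an injective resolution is
\begin{equation*}
0 \longrightarrow E^+ \longrightarrow \Frac(E^+) \longrightarrow \Frac(E^+)/E^+ \longrightarrow 0,
\end{equation*}
i.e. the minimal injective resolution of a 1-dimensional regular local ring. Both $\Frac(E^+) = \Fp\bg X \jg$ and the quotient $\Fp\bg X\jg/\Fp\bs X\js$ carry natural continuous semilinear $\Gamma$-actions (the latter is, up to a twist, Colmez's/Fontaine's module, essentially $X^{-1}\Fp[X^{-1}]$ with its $\Gamma$-action), and one checks these are injective as $E^+$-modules — which is automatic since localizations and the relevant quotient of a regular local ring of dimension one are injective. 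The point is only that the maps are $\Gamma$-equivariant, which is immediate because $\Gamma$ acts by ring automorphisms. Then for general $\Delta$ I would tensor the one-variable resolutions over $\Fp$ (completing appropriately at each stage to stay inside the relevant ring), using that a (completed) tensor product of injective resolutions of regular local rings is again an injective resolution — here one uses that over a field $\Fp$ everything is flat, that $E_\Delta^+ = \widehat{\bigotimes}_\alpha \Fp\bs X_\alpha\js$, and that the total complex of the tensor product of the length-one complexes above is a complex of $E_\Delta^+$-modules whose terms are injective (flat base changes / localizations of $E_\Delta^+$ and the local cohomology module $H^{|\Delta|}_{\mathfrak m}(E_\Delta^+)$ in top degree). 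The $\Gamma_\Delta = \prod_\alpha\Gamma_\alpha$-equivariance is inherited factor-by-factor since $\gamma_\alpha$ acts only on the $\alpha$-th tensor factor.

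The main obstacle I expect is the verification that the terms of the tensor-product complex really are \emph{injective} $E_\Delta^+$-modules rather than merely flat or merely injective over the individual factors — injectivity does not behave well under tensor product in general. The clean way around this is to recognize the tensor-product complex as (a shift of) the \v{C}ech / Koszul complex computing local cohomology: for $E_\Delta^+$ regular local with system of parameters $(X_\alpha)_{\alpha\in\Delta}$, the complex $\bigotimes_\alpha \bigl(E_\Delta^+ \to (E_\Delta^+)_{X_\alpha}\bigr)$ is quasi-isomorphic to $H^{|\Delta|}_{\mathfrak m}(E_\Delta^+)[-|\Delta|]$, and each term $\bigl(\bigotimes_{\alpha\in S}(E_\Delta^+)_{X_\alpha}\bigr)$, being a localization of a regular ring together with the correct support condition, assembles into the minimal injective resolution of $E_\Delta^+$ (each $(E_\Delta^+)_{X_\alpha}$ is not injective by itself, but the full \v{C}ech complex of a regular local ring \emph{is} its minimal injective resolution — this is classical). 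One then only needs to observe that every map and every module in this \v{C}ech complex is visibly $\Gamma_\Delta$-stable, because $\Gamma_\alpha$ sends $X_\alpha$ to a unit multiple of $X_\alpha$ and hence preserves the localization $(E_\Delta^+)_{X_\alpha}$, and fixes all other localizing elements. Thus the \v{C}ech complex of $E_\Delta^+$ with respect to $(X_\alpha)_{\alpha\in\Delta}$, with its evident $\Gamma_\Delta$-action, is the desired $\Gamma_\Delta$-equivariant injective resolution, and it has length $|\Delta|$ as expected for a regular local ring of that dimension.
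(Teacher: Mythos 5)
Your reduction to the one--variable case works precisely because of a low--dimensional coincidence, and the step you flag yourself as ``the main obstacle'' is exactly where the argument breaks. For $d=1$ the complex $0\to R\to R_x\to R_x/R\to 0$ is an injective resolution only because $R_x=\Frac(R)$ and $R_x/R\cong E(R/\mathfrak m)$ happen to be injective. For $|\Delta|\geq 2$ the \v{C}ech complex of $E_\Delta^+$ on $(X_\alpha)_{\alpha\in\Delta}$ is \emph{not} the minimal injective resolution, and the claim that this is ``classical'' is false: its terms $(E_\Delta^+)_{X_S}$ are torsion-free but not divisible (e.g.\ $(E_\Delta^+)_{X_\alpha}$ is not divisible by $X_\beta$ for $\beta\neq\alpha$), hence not injective over the domain $E_\Delta^+$; and the complex is quasi-isomorphic to $H^{|\Delta|}_{\mathfrak m}(E_\Delta^+)[-|\Delta|]$, not to $E_\Delta^+$, so even after repairing the terms it is not a resolution of the ring. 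Likewise, the degree-zero term of \emph{any} injective resolution of the domain $E_\Delta^+$ must contain its injective hull $\Frac(E_\Delta^+)$, which is not a (completed) tensor product of the one-variable fraction fields; so the tensor-product route cannot produce injective terms either. You appear to have conflated the \v{C}ech complex (which computes local cohomology) with the Cousin complex (which, for a Gorenstein --- here regular --- ring, \emph{is} the minimal injective resolution).

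The fix is to use the Cousin complex directly, which is what the paper does: the minimal injective resolution of a regular noetherian ring $R$ has $r$-th term $\bigoplus_{\mathfrak p\in\Spec R,\ \codim\mathfrak p=r}J(\mathfrak p)$ with $J(\mathfrak p)$ the injective envelope of $\kappa(\mathfrak p)$, and this complex is functorial for ring automorphisms. Since $\Gamma_\Delta$ acts on $E_\Delta^+$ by ring automorphisms, it permutes the primes of each fixed codimension, hence acts semilinearly on each term and equivariantly on the whole resolution; no Koszul or tensor-product construction is needed. (Your observation that $\gamma_\alpha(X_\alpha)$ is a unit multiple of $X_\alpha$ is correct but is not what the equivariance rests on.)
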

\begin{proof}
Consider the Cousin complex (see IV.2 in \cite{H})
\begin{align*}
0\to E_{\Delta}\to E_{\Delta,(0)}\to\dots\to\bigoplus_{\mathfrak{p}\in\Spec (E_\Delta), \codim\mathfrak{p}=r}J(\mathfrak{p})\to\dots
\end{align*}
where $J(\mathfrak{p})$ is the injective envelope of the residue field $\kappa(\mathfrak{p})$ as a module over the local ring $E_{\Delta,\mathfrak{p}}$. This is a $\Gamma_\Delta$-equivariant injective resolution since the action of $\Gamma_\Delta$ on $\Spec (E_\Delta)$ respects the codimension.
\end{proof}

\begin{pro}\label{proj}
Any object $D$ in $\mathcal{D}^{et}(\varphi_{\Delta},\Gamma_\Delta,E_\Delta)$ is a projective module over $E_\Delta$.
\end{pro}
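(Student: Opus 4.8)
The plan is to deduce projectivity from the vanishing of $\Ext^j_{E_\Delta}(D,E_\Delta)$ for $j\ge 1$, exploiting that the partial Frobenii make these $\Ext$-modules into étale objects as well, and then that their composite is the absolute Frobenius of $E_\Delta$.

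First I would reduce. Since $E_\Delta$ is regular noetherian and $D$ is finitely generated, $D$ is projective iff $D_\mathfrak m$ is free over the regular local ring $E_{\Delta,\mathfrak m}$ for every maximal $\mathfrak m$, iff $\mathrm{pd}_{E_{\Delta,\mathfrak m}}D_\mathfrak m=0$ for all $\mathfrak m$. Over a regular (hence Gorenstein) local ring $R$ of dimension $n$, one has $\mathrm{pd}_RM=\max\{j:\Ext^j_R(M,R)\neq 0\}$ for $M$ finitely generated (local duality plus Auslander--Buchsbaum); hence $D$ is projective iff $\Ext^j_{E_\Delta}(D,E_\Delta)=0$ for all $j\ge 1$. (These groups can be computed, if one wishes to be explicit, from the $\Gamma_\Delta$-equivariant injective resolution of $E_\Delta$ produced in Lemma \ref{cousingamma}.) Moreover, for $j\ge 1$ the module $N_j:=\Ext^j_{E_\Delta}(D,E_\Delta)$ is supported in codimension $\ge j$: at a prime $\mathfrak p$ with $\codim\mathfrak p<j$ the ring $E_{\Delta,\mathfrak p}$ has global dimension $<j$, so $(N_j)_\mathfrak p=\Ext^j_{E_{\Delta,\mathfrak p}}(D_\mathfrak p,E_{\Delta,\mathfrak p})=0$. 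In particular $\Supp N_j$ is a proper closed subset of $\Spec E_\Delta$, so $\mathrm{Ann}_{E_\Delta}(N_j)\neq 0$ as soon as $N_j\neq 0$.

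Next I would transport the étale structure to the $N_j$. For each $\alpha\in\Delta$ the map $\varphi_\alpha\colon E_\Delta\to E_\Delta$ is a finite free ring extension of degree $p$ (with basis $1,X_\alpha,\dots,X_\alpha^{p-1}$ over the image), in particular flat, and $\varphi_\alpha^*E_\Delta\cong E_\Delta$. Flat base change for $\Ext$ along $\varphi_\alpha$ then gives $\varphi_\alpha^*\,\Ext^j_{E_\Delta}(D,E_\Delta)\cong\Ext^j_{E_\Delta}(\varphi_\alpha^*D,E_\Delta)$, and the étale isomorphism $\id\otimes\varphi_\alpha\colon\varphi_\alpha^*D\to D$ yields $\varphi_\alpha^*N_j\cong N_j$ for every $\alpha$. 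Composing these over all $\alpha\in\Delta$ and noting that the composite $\varphi:=\prod_{\alpha\in\Delta}\varphi_\alpha$ is exactly the $p$-power Frobenius of $E_\Delta$ (it sends every $X_\alpha$ to $X_\alpha^p$), one obtains $\varphi^*N_j\cong N_j$, where $\varphi$ is again finite free, in particular flat. Finally, set $\mathfrak a:=\mathrm{Ann}_{E_\Delta}(N_j)$. Flatness of $\varphi$ and finite generation of $N_j$ give $\mathrm{Ann}_{E_\Delta}(\varphi^*N_j)=\varphi(\mathfrak a)E_\Delta$, which is the ideal generated by the $p$-th powers of a generating set of $\mathfrak a$; hence $\mathfrak a=\varphi(\mathfrak a)E_\Delta\subseteq\mathfrak a^p\subseteq\mathfrak a$, so $\mathfrak a=\mathfrak a^p$. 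By Nakayama's lemma a finitely generated idempotent ideal is generated by an idempotent element, and $E_\Delta$ is a domain, so $\mathfrak a\in\{0,E_\Delta\}$. If $N_j\neq 0$ then $\mathfrak a\neq 0$, forcing $\mathfrak a=E_\Delta$ and $N_j=0$ — a contradiction. Thus $N_j=0$ for all $j\ge 1$ and $D$ is projective.

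The two genuinely non-routine moves are: making the reduction to $\Ext^\bullet(D,E_\Delta)$ rather than working with $D$ directly (so that one has a module with nonzero annihilator to play with), and recognizing that, although no individual $\varphi_\alpha$ is the Frobenius, the product of all of them is, which is precisely what converts the étale hypothesis into the rigidity statement $\mathfrak a=\mathfrak a^p$ inside the domain $E_\Delta$. I expect the latter observation to be the crux; the finite-flatness of the $\varphi_\alpha$ and the flat-base-change bookkeeping for $\Ext$ are routine. (If one preferred to avoid the composite-Frobenius step, an alternative would be to argue that the common closed set $\Supp N_j$, being stable under each $(\Spec\varphi_\alpha)^{-1}$ and under the $\Gamma_\Delta$-action inherited from $D$, must be empty, but this seems to require more delicate dynamics than the annihilator argument above.)
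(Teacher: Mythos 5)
Your proof is correct, and while its overall skeleton matches the paper's (reduce projectivity to the vanishing of $\Ext^j_{E_\Delta}(D,E_\Delta)$ for $j\geq 1$, observe that these modules are finitely generated and torsion, and then show their annihilator is forced to be all of $E_\Delta$), the decisive rigidity step is genuinely different. The paper puts a semilinear $\Gamma_\Delta$-action on $\Ext^n(D,E_\Delta)$ via the $\Gamma_\Delta$-equivariant Cousin resolution of Lemma \ref{cousingamma}, so that the annihilator is a nonzero $\Gamma_\Delta$-stable ideal, and then invokes Lemma 2.1 of \cite{MultVar} to conclude that such an ideal equals $E_\Delta$. You instead exploit the $\varphi$-structure: flat base change along each finite free $\varphi_\alpha$ together with the \'etale isomorphisms gives $\varphi_s^*N_j\cong N_j$ for the absolute Frobenius $\varphi_s=\prod_\alpha\varphi_\alpha$, whence $\mathfrak a=\varphi_s(\mathfrak a)E_\Delta\subseteq\mathfrak a^p$ and the idempotent-ideal argument in the domain $E_\Delta$ finishes. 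Your route is self-contained (it needs neither the equivariance of the Cousin complex nor the external lemma on $\Gamma_\Delta$-stable ideals, replacing the torsion statement by the elementary observation that $N_j$ is supported in codimension $\geq j$), and it uses only the \'etale hypothesis rather than the $\Gamma_\Delta$-action; the paper's route, by contrast, reuses a lemma already needed elsewhere in \cite{MultVar} and avoids the flat-base-change bookkeeping for $\Ext$. Both arguments are complete; minor remarks: your appeal to local duality is overkill (the formula $\mathrm{pd}_RM=\max\{j:\Ext^j_R(M,R)\neq 0\}$ over a regular local ring follows from dualizing a minimal free resolution), and the paper's version of the reduction avoids localization by using instead that $\Ext^n(D,\cdot)$ commutes with direct sums.
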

\begin{proof}
Since $E_\Delta$ has finite global dimension, let $n$ be the projective dimension of $D$. Then by Lemma 4.1.6 in \cite{W} we have $\Ext^i(D,M)=0$ for all $i>n$ and $E_\Delta$-module $M$ and there exists an $R$-module $M_0$ with $\Ext^n(D,M_0)\neq 0$. By the long exact sequence of $\Ext$ and choosing an onto module homomorphism $F\twoheadrightarrow M_0$ from a free module $F$ we find that $\Ext^n(D,F)\neq 0$. Now $F$ is a (possibly infinite) direct sum of copies of $E_\Delta$ whence $\Ext^n(D,E_\Delta)\neq 0$ as $\Ext^n(D,\cdot)$ commutes with arbitrary direct sums. However, $\Ext^n(D,E_\Delta)$ is a finitely generated (as $E_\Delta$ is noetherian) torsion $E_\Delta$-module for $n>0$ (as all the modules in positive degrees in the Cousin complex above are torsion) admitting a semilinear action of $\Gamma_\Delta$ by Lemma \ref{cousingamma}. Therefore the global annihilator of $\Ext^n(D,E_\Delta)$ in $E_\Delta$ is a nonzero $\Gamma_\Delta$-invariant ideal in $E_\Delta$ hence equals $E_\Delta$ by Lemma 2.1 in \cite{MultVar}. So $n=0$ and $D$ is projective.
\end{proof}

\begin{lem}\label{stably}
We have $K_0(E_\Delta)\cong \mathbb{Z}$, ie.\ any finitely generated projective module over $E_\Delta$ is stably free.
\end{lem}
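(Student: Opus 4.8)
The plan is to reduce to a known positive answer to Serre's conjecture (Bass--Quillen type statement) for the ring $E_\Delta$. Recall $E_\Delta = \Fp\bs X_\alpha\mid\alpha\in\Delta\js[X_\Delta^{-1}]$ is a localization of the power series ring $E_\Delta^+ = \Fp\bs X_\alpha\mid\alpha\in\Delta\js$ at the single element $X_\Delta=\prod_\alpha X_\alpha$. The ring $E_\Delta^+$ is a regular local ring, and more importantly it is a formal power series ring over a field. First I would invoke the theorem of Lindel (resolving the geometric case of the Bass--Quillen conjecture) together with its formal-power-series version: finitely generated projective modules over $\Fp\bs X_\alpha\mid\alpha\in\Delta\js$ are free; indeed for a power series ring over a field this already follows from the fact that it is local. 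So $K_0(E_\Delta^+)\cong\mathbb{Z}$, generated by the class of the free module of rank one.

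Next I would pass from $E_\Delta^+$ to the localization $E_\Delta = (E_\Delta^+)[X_\Delta^{-1}]$. The localization map induces a surjection $K_0(E_\Delta^+)\to K_0(E_\Delta)$, so every projective $E_\Delta$-module has a well-defined rank and its class in $K_0(E_\Delta)$ is determined by that rank; hence $K_0(E_\Delta)$ is a quotient of $\mathbb{Z}$. Since $E_\Delta$ is a nonzero commutative ring the rank function $K_0(E_\Delta)\to\mathbb{Z}$ is a well-defined splitting of $\mathbb{Z}\to K_0(E_\Delta)$ (the free module of rank one is not stably zero), so $K_0(E_\Delta)\cong\mathbb{Z}$. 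Equivalently: a finitely generated projective $E_\Delta$-module $P$ has constant rank $r$ (as $\Spec E_\Delta$ is connected, being an open subscheme of the irreducible $\Spec E_\Delta^+$), and $[P]=[E_\Delta^r]$ in $K_0$, which is exactly the assertion that $P$ is stably free.

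The one point that needs a little care — and is the main obstacle — is justifying that projectives over the multivariable power series ring $\Fp\bs X_\alpha\mid\alpha\in\Delta\js$ are free: this is immediate from locality, but one must make sure the localization at $X_\Delta$ does not create new stable classes. This is handled by the observation above that $K_0$ can only shrink under localization combined with the connectedness of $\Spec E_\Delta$, which gives the rank splitting. Alternatively, one can cite directly that $E_\Delta^+$ is a regular local ring of dimension $|\Delta|$ so that $\operatorname{Pic}(E_\Delta^+)=0$ and, being a UFD (regular local rings are UFDs by Auslander--Buchsbaum), its localization $E_\Delta$ also has trivial Picard group; combined with the surjectivity of $K_0(E_\Delta^+)\to K_0(E_\Delta)$ and the rank map this yields $K_0(E_\Delta)\cong\mathbb{Z}$ and hence stable freeness of every finitely generated projective, in particular of every object of $\mathcal{D}^{et}(\varphi_\Delta,\Gamma_\Delta,E_\Delta)$ by Proposition \ref{proj}.
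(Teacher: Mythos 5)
Your overall strategy (reduce to $E_\Delta^+$, use locality there, then localize) is the same as the paper's, and your conclusion is correct, but the step you yourself single out as ``the main obstacle''---the surjectivity of $K_0(E_\Delta^+)\to K_0(E_\Delta)$---is precisely where the argument has a genuine gap. The principle you invoke to dispose of it, that ``$K_0$ can only shrink under localization,'' is not true for the $K_0$ of finitely generated projective modules: the localization exact sequence of algebraic $K$-theory is a statement about $G$-theory (finitely generated modules), not about projectives, and for a non-regular noetherian domain $A$ the map $K_0(A)\to K_0(A_f)$ can fail to be onto (already $\operatorname{Pic}(A)\to\operatorname{Pic}(A_f)$ can fail to be onto, e.g.\ for the quadric cone $A=k[x,y,z,w]/(xy-zw)$ and a generic linear form $f$ through the vertex, where $\operatorname{Pic}(A)=0$ but $\operatorname{Pic}(A_f)\cong\mathbb{Z}$). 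Your fallback via Picard groups does not close the gap either: it still presupposes the surjectivity of $K_0(E_\Delta^+)\to K_0(E_\Delta)$, and in any case for a regular ring of Krull dimension at least $2$ the rank and the determinant do not determine the class of a projective module in $K_0$, so ``trivial Picard group plus the rank splitting'' is not enough to conclude $K_0\cong\mathbb{Z}$.

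The missing ingredient is regularity, and this is exactly how the paper argues: $E_\Delta^+$ is regular local, hence $E_\Delta$ is regular of finite global dimension, so the resolution theorem identifies $K_0\cong G_0$ for both rings; the localization exact sequence in $G$-theory, $G_0(E_\Delta^+/(X_\Delta))\to G_0(E_\Delta^+)\to G_0(E_\Delta)\to 0$, supplies the surjectivity you need; and $G_0(E_\Delta^+)\cong K_0(E_\Delta^+)\cong\mathbb{Z}$ by locality. With that substitution the rest of your argument (constant rank on the connected $\Spec E_\Delta$, the rank map splitting $\mathbb{Z}\to K_0(E_\Delta)$) goes through verbatim. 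The appeal to Lindel/Bass--Quillen is superfluous, as you note yourself, since $E_\Delta^+$ is local.
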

\begin{proof}
$E_\Delta^+\cong \mathbb{F}_p\bs X_\alpha\mid \alpha\in\Delta\js$ is a regular local ring, so it has finite global dimension and $K_0(E_\Delta^+)\cong G_0(E_\Delta^+)\cong\mathbb{Z}$ (Thm.\ II.7.8 in \cite{Kbook}). Therefore the localization $E_\Delta=E^+_\Delta[X_\Delta^{-1}]$ also has finite global dimension whence we have $K_0(E_\Delta)\cong G_0(E_\Delta)$. The statement follows noting that the map $G_0(E_\Delta^+)\to G_0(E_\Delta)$ is onto by the localization exact sequence of algebraic $K$-theory (Thm.\ II.6.4 in \cite{Kbook}).
\end{proof}

\begin{rem}
I am not aware of the analogue of the Theorem of Quillen and Suslin on the freeness of projective modules over $E_\Delta$. However, using the equivalence of categories of $\mathcal{D}^{et}(\varphi_{\Delta},\Gamma_\Delta,E_\Delta)$ with $\RepDFp$ we shall see later on (Cor.\ \ref{alwaysfree}) that any object $D$ in $\mathcal{D}^{et}(\varphi_{\Delta},\Gamma_\Delta,E_\Delta)$ is in fact free over $E_\Delta$.
\end{rem}

We equip $E_\Delta^+$ with the $X_\Delta$-adic topology. Then $(E_\Delta,E_\Delta^+)$ is a Huber pair (in the sense of \cite{SchW}) if we equip $E_\Delta$ with the inductive limit topology $E_\Delta=\bigcup_n X_\Delta^{-n}E_\Delta^+$. In fact, $E_\Delta$ is a complete noetherian Tate ring (op.\ cit.). Note that this is \emph{not} the natural compact topology on $E_\Delta^+$ as in the compact topology $E_\Delta^+$ would not be open in $E_\Delta$ since the index of $E_\Delta^+$ in $X_\Delta^{-n}E_{\Delta}^+$ is not finite. On the other hand, the inclusion $\mathbb{F}_p\bg X_\alpha\jg\hookrightarrow E_\Delta$ is \emph{not} continuous in the $X_\Delta$-adic topology (unless $|\Delta|=1$) therefore we cannot apply Drinfeld's Lemma (Thm.\ 17.2.4 in \cite{SchW}) directly in this situation.

Let $D$ be an object in $\mathcal{D}^{et}(\varphi_{\Delta},\Gamma_\Delta,E_\Delta)$. By Banach's Theorem for Tate rings (Prop.\ 6.18 in \cite{WH}), there is a unique $E_\Delta$-module topology on $D$ that we call the $X_\Delta$-adic topology. Moreover, any $E_\Delta$-module homomorphism is continuous in the $X_\Delta$-adic topology. 

\subsection{Integrality properties}

Put $\varphi_s:=\prod_{\alpha\in\Delta}\varphi_\alpha\in T_{+,\Delta}$ and define $D^{++}:=\{x\in D\mid \lim_{k\to\infty}\varphi_s^k(x)= 0\}$ where the limit is considered in the $X_\Delta$-adic topology (cf.\ II.2.1 in \cite{Mira} in case $|\Delta|=1$). Note that $\varphi_s$ is the absolute Frobenius on $E_\Delta$, it takes any element to its $p$th power.

\begin{lem}\label{phistarfingen}
Let $M$ be a finitely generated $E_\Delta^+$-submodule in $D$. Then $E_\Delta^+\varphi_s(M)$ is also finitely generated.
\end{lem}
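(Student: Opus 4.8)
The plan is to reduce everything to a statement about the ring $E_\Delta^+$ via a well-chosen finite generating set. Let $m_1, \dots, m_r$ generate $M$ over $E_\Delta^+$. Then $\varphi_s(M)$ is the $E_\Delta^+$-span (via the ring map $\varphi_s$, i.e.\ raising to the $p$th power) of $\varphi_s(m_1), \dots, \varphi_s(m_r)$, so $E_\Delta^+ \varphi_s(M) = \sum_{i=1}^r E_\Delta^+ \varphi_s(m_i)$ — the point is that the coefficients are now ordinary elements of $E_\Delta^+$, not elements composed with Frobenius. Hence it suffices to show that each cyclic submodule $E_\Delta^+ \varphi_s(m_i) \subseteq D$ is finitely generated, and for that it suffices to find one finitely generated $E_\Delta^+$-submodule of $D$ containing all the $\varphi_s(m_i)$; then $E_\Delta^+\varphi_s(M)$, being an $E_\Delta^+$-submodule of a finitely generated module over the noetherian ring $E_\Delta^+$, is automatically finitely generated.

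So the real task is: produce a single finitely generated $E_\Delta^+$-lattice $L \subseteq D$ with $\varphi_s(M) \subseteq L$. Here I would use the étale property. Since $D$ is étale, $\id\otimes\varphi_s\colon \varphi_s^* D \to D$ is an isomorphism; by Proposition \ref{proj}, $D$ is projective (indeed finitely generated) over $E_\Delta$, so pick a finite $E_\Delta$-basis-up-to-direct-summand, or more simply a finite set of $E_\Delta$-module generators $e_1, \dots, e_s$ of $D$ lying in some finitely generated $E_\Delta^+$-submodule $D^+_0 := \sum_j E_\Delta^+ e_j$. The étale isomorphism then lets one write $\varphi_s(e_j) = \sum_k a_{jk} e_k$ with $a_{jk} \in E_\Delta$, and in fact (choosing the $e_j$ suitably, or just clearing denominators) one gets that $\varphi_s(D^+_0) \subseteq X_\Delta^{-N} D^+_0$ for some $N$; since $X_\Delta^{-N} D^+_0$ is finitely generated over $E_\Delta^+$, this is a candidate for $L$ once we know it contains $\varphi_s(M)$. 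Because $M \subseteq D = \bigcup_n X_\Delta^{-n} D^+_0$ and $M$ is finitely generated, $M \subseteq X_\Delta^{-n_0} D^+_0$ for some $n_0$; applying $\varphi_s$ and using $\varphi_s(X_\Delta^{-n_0}) = X_\Delta^{-pn_0}$ together with $\varphi_s(D^+_0) \subseteq X_\Delta^{-N} D^+_0$ gives $\varphi_s(M) \subseteq X_\Delta^{-(pn_0 + N)} D^+_0 =: L$, which is finitely generated over $E_\Delta^+$. This completes the argument.

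The step I expect to be the main obstacle is establishing $\varphi_s(D^+_0) \subseteq X_\Delta^{-N} D^+_0$ for a suitable finitely generated $E_\Delta^+$-lattice $D^+_0$ and integer $N$ — i.e.\ that the partial-Frobenius (here the total Frobenius $\varphi_s$) pullback of an integral lattice lands in a bounded dilation of itself. The subtlety is that $\varphi_s$ is not $E_\Delta^+$-linear but $p$th-power-semilinear, so one cannot just expand in the $e_j$ directly; one must use the étale structure map $\id \otimes \varphi_s$ to rewrite $\varphi_s$-expansions as $E_\Delta$-linear ones, and then control denominators uniformly. Concretely, $\id\otimes\varphi_s$ being an isomorphism of finitely generated $E_\Delta$-modules means its matrix and the inverse matrix have entries in $E_\Delta$, hence in $X_\Delta^{-N}E_\Delta^+$ for some $N$, which yields the claim; this is where I would invoke Proposition \ref{proj} (projectivity of $D$, so $D^+_0$ really does generate over $E_\Delta$) and the finite generation of $D$ over $E_\Delta$. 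Everything after that is the routine noetherian argument over $E_\Delta^+$ outlined above.
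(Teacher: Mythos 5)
Your first two sentences are already the paper's entire proof: since $\varphi_s(E_\Delta^+)\subseteq E_\Delta^+$ (as $\varphi_s(X_\alpha)=X_\alpha^p$), writing $m=\sum_i a_i m_i$ gives $\varphi_s(m)=\sum_i a_i^p\varphi_s(m_i)$, so $E_\Delta^+\varphi_s(M)=\sum_{i=1}^r E_\Delta^+\varphi_s(m_i)$ is generated by $\varphi_s(m_1),\dots,\varphi_s(m_r)$, and you are done. The reduction you then perform --- ``it suffices to show that each cyclic submodule $E_\Delta^+\varphi_s(m_i)$ is finitely generated'' --- is vacuous, since a cyclic module is finitely generated by definition, and the entire subsequent construction of a lattice $L$ with $\varphi_s(M)\subseteq L$ via the \'etale property is superfluous for this lemma. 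It is worth noting that the containment $\varphi_s(M)\subseteq X_\Delta^{-r}D_0^+$ you labour to establish is essentially the statement needed in the \emph{next} proposition (on $D^{++}$), where the paper deduces it \emph{from} the present lemma together with noetherianity of $E_\Delta^+$ and $D=\bigcup_r X_\Delta^{-r}M$, rather than proving it by bounding denominators of the matrix of $\id\otimes\varphi_s$; your direct denominator argument is a valid alternative there, but it has no role here.
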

\begin{proof}
If $M$ is generated by $m_1,\dots,m_n$ then $\varphi_s(m_1),\dots,\varphi_s(m_n)$ generate $E_\Delta^+\varphi_s(M)$.
\end{proof}

\begin{pro}
$D^{++}$ is a finitely generated $E_\Delta^+$-submodule in $D$ that is stable under the action of $T_{+,\Delta}$ and we have $D=D^{++}[X_\Delta^{-1}]$.
\end{pro}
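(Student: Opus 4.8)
The plan is to build $D^{++}$ as an increasing union of finitely generated $E_\Delta^+$-submodules that eventually stabilizes, using the étale condition for the total Frobenius $\varphi_s$ to control growth. First I would fix a finite generating set of $D$ over $E_\Delta$ and, after clearing denominators, produce a finitely generated $E_\Delta^+$-submodule $M_0\subseteq D$ with $M_0[X_\Delta^{-1}]=D$. Since $\id\otimes\varphi_s\colon \varphi_s^*D\to D$ is an isomorphism and $\varphi_s$ is the absolute Frobenius (so $\varphi_s(X_\Delta)=X_\Delta^p$), one checks that $E_\Delta^+\varphi_s(M_0)$ — which is finitely generated by Lemma~\ref{phistarfingen} — again generates $D$ over $E_\Delta$, and after possibly enlarging $M_0$ once we may assume $\varphi_s(M_0)\subseteq M_0$; here the key input is that $\varphi_s$ contracts the $X_\Delta$-adic filtration, i.e.\ $\varphi_s(X_\Delta^k E_\Delta^+)\subseteq X_\Delta^{pk}E_\Delta^+$, which forces $\varphi_s^k(M_0)\subseteq X_\Delta^{k}M_0$ for $k$ large and hence $M_0\subseteq D^{++}$. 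Conversely, given any $x\in D^{++}$, write $x\in X_\Delta^{-N}M_0$ for some $N$; I would argue that $\varphi_s^k(x)\to 0$ together with étaleness forces $x$ itself to lie in a controlled dilate of $M_0$, so that $D^{++}=\bigcup_{j} X_\Delta^{-?}$-type bound collapses — more precisely one shows $D^{++}$ is contained in the saturation of $M_0$ inside $D$ with respect to the subring generated by the $\varphi_s$-action, which by noetherianity of $E_\Delta^+$ on the relevant finitely generated pieces is again finitely generated.

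The cleanest route is probably this: set $D^{++}_n:=\{x\in D\mid \varphi_s^k(x)\in X_\Delta M_0\text{ for all }k\ge n\}$ for a well-chosen $\varphi_s$-stable $M_0$. Each $D^{++}_n$ is an $E_\Delta^+$-submodule, the union over $n$ is all of $D^{++}$ (this uses that the $X_\Delta$-adic topology on $D$ is the Banach topology from Prop.~6.18 of \cite{WH}, so $\varphi_s^k(x)\to 0$ iff eventually $\varphi_s^k(x)\in X_\Delta M_0$), and $\varphi_s D^{++}_n\subseteq D^{++}_{n-1}$. Using the isomorphism $\id\otimes\varphi_s\colon E_\Delta\otimes_{E_\Delta,\varphi_s}D\to D$ one shows the chain $(D^{++}_n)_n$ stabilizes: the transition maps are essentially $\varphi_s^{-1}$, and because $\varphi_s$ is faithfully flat on $E_\Delta$ and strictly contracts the filtration, the preimage operation does not produce genuinely new elements beyond a bounded stage. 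Once $D^{++}=D^{++}_n$ for $n\gg 0$, finite generation follows: $D^{++}_n\subseteq X_\Delta^{-1}\varphi_s^{-n}(X_\Delta M_0)$, and the latter is a finitely generated $E_\Delta^+$-module since $\varphi_s^*$ of a finitely generated module is finitely generated (Lemma~\ref{phistarfingen} applied in reverse via the étale isomorphism), and $E_\Delta^+$ is noetherian so submodules of finitely generated modules are finitely generated.

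Stability under $T_{+,\Delta}$: the definition of $D^{++}$ via $\lim\varphi_s^k(x)=0$ is manifestly stable under $\varphi_s$; for a general $\varphi_\alpha$ one uses that the $\varphi_\beta$ commute and that each $\varphi_\beta$ is also $X_\Delta$-adically contracting (it contracts the $X_\alpha$ direction and fixes the rest), so $\varphi_s^k(\varphi_\alpha x)=\varphi_\alpha(\varphi_s^k x)\to 0$ because $\varphi_\alpha$ is continuous and sends $0$ to $0$; and for $\gamma\in\Gamma_\Delta$, $\gamma$ is a topological automorphism of $E_\Delta$ commuting with $\varphi_s$, hence preserves the condition. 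Finally $D=D^{++}[X_\Delta^{-1}]$ holds because $M_0\subseteq D^{++}$ and $M_0[X_\Delta^{-1}]=D$. The main obstacle I anticipate is the stabilization of the chain $(D^{++}_n)$ — that is, ruling out that taking $\varphi_s$-preimages keeps enlarging the module forever; this is exactly where the full strength of the étale hypothesis (the isomorphism, not just injectivity, of $\id\otimes\varphi_s$, together with flatness of $\varphi_s$ on $E_\Delta$) has to be used, mirroring Colmez's argument in II.2.1 of \cite{Mira} in the one-variable case but requiring care that the $X_\Delta$-adic (rather than $X_\alpha$-adic) topology behaves well under the \emph{total} Frobenius.
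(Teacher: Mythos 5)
The first half of your argument (producing a finitely generated $M_0$ with $M_0[X_\Delta^{-1}]=D$, shrinking it by a power of $X_\Delta$ so that the contraction $\varphi_s(X_\Delta^kM_0)\subseteq X_\Delta^{pk-r}M_0$ puts it inside $D^{++}$, and deducing $D=D^{++}[X_\Delta^{-1}]$ together with $T_{+,\Delta}$-stability from continuity and commutativity) is correct and is exactly what the paper does. The gap is in the finite-generation half. Your chain $(D^{++}_n)_n$ is an increasing chain of $E_\Delta^+$-submodules of $D$, and $D$ is \emph{not} noetherian over $E_\Delta^+$ (the chain $X_\Delta^{-n}M_0$ does not stabilize), so stabilization is precisely the point to be proved, not a consequence of general principles. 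Your justification --- that ``$\varphi_s^{-n}(X_\Delta M_0)$ is finitely generated since $\varphi_s^*$ of a finitely generated module is finitely generated (Lemma~\ref{phistarfingen} applied in reverse via the \'etale isomorphism)'' --- does not work: Lemma~\ref{phistarfingen} controls the module generated by the \emph{image} $\varphi_s(M)$, whereas you need to control the \emph{preimage} $\varphi_s^{-n}(X_\Delta M_0)$ inside $D$, and a priori such a preimage is just some $E_\Delta^+$-submodule of $D$ with no finiteness guarantee. The invocation of faithful flatness and ``strict contraction'' is where the actual work is hiding.

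What is needed, and what the paper supplies, is a uniform bound on the denominators of elements of $D^{++}$: one first reduces to the case where $D$ is \emph{free} over $E_\Delta$ (via Prop.~\ref{proj} and Lemma~\ref{stably}, adding a trivial free summand --- a reduction your proposal omits entirely, and without which ``the matrix of $\varphi_s$'' is not available), arranges the matrix $A$ of $\varphi_s$ to have entries in $E_\Delta^+$, and then shows $D^{++}\subseteq X_\Delta^{-r}M$ for any $r>\val_{X_\alpha}(\det A)$ for all $\alpha$. The key computation is: if $d=\sum d_ie_i\in D^{++}$ then $\varphi_s^k(d)\in M$ for some $k$, and multiplying the resulting integrality condition by the adjugate of $A\varphi_s(A)\cdots\varphi_s^{k-1}(A)$ gives $\det(A)^{(p^k-1)/(p-1)}d_i^{p^k}\in E_\Delta^+$; comparing $X_\alpha$-valuations then forces $\val_{X_\alpha}(d_i)\geq -r$. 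This determinant/valuation step is the heart of the proof (it is the multivariable version of Colmez's II.2.1) and is absent from your proposal; without it, or an equivalent bound, the stabilization of $(D^{++}_n)_n$ is unsubstantiated.
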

\begin{proof}
Choose an arbitrary finitely generated $E_\Delta^+$-submodule $M$ of $D$ with $M[X_\Delta^{-1}]=D$ (e.g.\ take $M=E_\Delta^+e_1+\dots+E_\Delta^+ e_n$ for some $E_\Delta$-generating system $e_1,\dots,e_n$ of $D$). By Lemma \ref{phistarfingen} we have an integer $r\geq 0$ such that $\varphi_s(M)\subseteq X_\Delta^{-r}M$, since $E_\Delta^+$ is noetherian and we have $D=\bigcup_r X_\Delta^{-r}M$. Then we have $$\varphi_s(X_\Delta^k M)=X_\Delta^{pk}\varphi_s(M)\subseteq X_\Delta^{pk-r}M\subseteq X_\Delta^{k+1}M$$
for any integer $k\geq \frac{r+1}{p-1}$. Therefore we have $X_\Delta^{\left[\frac{r+1}{p-1}\right]+1}M\subseteq D^{++}$ whence $D^{++}[X_\Delta^{-1}]=M[X_\Delta^{-1}]=D$.

Since $T_{+\Delta}$ is commutative and the action of each $\varphi_t\in T_{+,\Delta}$ is continuous, $D^{++}$ is stable under the action of $T_{+,\Delta}$. There is a system of neighbourhoods of $0$ in $D$ consisting of $E_\Delta^+$-submodules therefore $D^{++}$ is an $E_\Delta^+$-submodule.

To prove that $D^{++}$ is finitely generated over $E^+_\Delta$ suppose first that $D$ is a free module over $E_\Delta$ generated by $e_1,\dots,e_n$ and put $M:=E^+_\Delta e_1+\dots + E_\Delta^+ e_n$. We may assume $M\subseteq D^{++}$ by replacing $M$ with $X_\Delta^{\left[\frac{r+1}{p-1}\right]+1}M$. Moreover, further multiplying $M=E^+_\Delta e_1+\dots +E^+_\Delta e_n$ by a power of $X_\Delta$, we may assume that the matrix $A:=[\varphi_s]_{e_1,\dots,e_n}$ of $\varphi_s$ in the basis $e_1,\dots,e_n$ lies in ${E^+_\Delta}^{n\times n}$ as we have $[\varphi_s]_{X_\Delta^re_1,\dots,X_\Delta^re_n}=X_\Delta^{(p-1)r}[\varphi_s]_{e_1,\dots,e_n}$. Now we choose the integer $r>0$ so that it is bigger than $\val_{X_\alpha}(\det A)$ for all $\alpha\in\Delta$ and claim that $D^{++}\subseteq X_\Delta^{-r}M$ whence $D^{++}$ is finitely generated over $E^+_\Delta$ as $E^+_\Delta$ is noetherian. Assume for contradiction that $d=\sum_{i=1}^nd_ie_i$ lies in $D^{++}$ for some $d_i\in E_\Delta$ ($i=1,\dots,n$) such that at least one $d_i$, say $d_1$, does not lie in $X_\Delta^{-r}E^+_\Delta$. In particular, there exists an $\alpha$ in $\Delta$ such that $\val_{X_\alpha}(d_1)<-r$. Since $M$ is open in $D$ and $d\in D^{++}$, there exists an integer $k>0$ such that $\varphi_s^k(d)$ is in $M$ which is equivalent to saying that the column vector
\begin{equation*}
A\varphi_s(A)\dots\varphi_s^{k-1}(A)\begin{pmatrix}\varphi_s^k(d_1)\\ \vdots \\ \varphi_s^k(d_n)\end{pmatrix}
\end{equation*}
lies in ${E^+_\Delta}^n$. Multiplying this by the matrix built from the $(n-1)\times(n-1)$ minors of $A\varphi_s(A)\dots\varphi_s^{k-1}(A)$ we deduce that $\det(A\varphi_s(A)\dots\varphi_s^{k-1}(A))\varphi_s^k(d_1)=\det(A)^{\frac{p^k-1}{p-1}}d_1^{p^k}$ lies in $E^+_\Delta$. We compute
\begin{align*}
0\leq \val_{X_\alpha}(\det(A)^{\frac{p^k-1}{p-1}}d_1^{p^k})=\frac{p^k-1}{p-1}\val_{X_\alpha}(\det(A))+p^k\val_{X_\alpha}(d_1)<\\
<\frac{p^k-1}{p-1}\val_{X_\alpha}(\det(A))-p^kr<0
\end{align*}
by our assumption that $r>\val_{X_\alpha}(\det(A))$, yielding a contradiction.

In the general case note that $D$ is always stably free by Prop.\ \ref{proj} and Lemma \ref{stably}. So $D_1:=D\oplus E_\Delta^k$ is a free module over $E_\Delta$ for $k$ large enough. We make $D_1$ into an \'etale $T_{+,\Delta}$-module by the trivial action of $T_{+,\Delta}$ on $E_\Delta^k$ to deduce that $D_1^{++}$ is finitely generated over $E_\Delta^+$. The result follows noting that $D^{++}\subseteq D_1^{++}$ and $E_\Delta^+$ is noetherian. 
\end{proof}

For an object $D$ in $\mathcal{D}^{et}(\varphi_{\Delta},\Gamma_\Delta,E_\Delta)$ we define $$D^+:=\{x\in D\mid \{\varphi_s^k(x)\colon k\geq 0\}\subset D\text{ is bounded}\}\ .$$ Since $\varphi_s^k(X_\Delta)$ tends to $0$ in the $X_\Delta$-adic topology, we have $X_\Delta D^+\subseteq D^{++}$, ie.\ $D^+\subseteq X_\Delta^{-1}D^{++}$. In particular, $D^+$ is finitely generated over $E_\Delta^+$. On the other hand, we also have $D^{++}\subseteq D^+$ by construction whence we deduce $D=D^+[X_\Delta^{-1}]$.

\begin{lem}\label{tstableD+}
We have $\varphi_t(D^+)\subset D^+$ (resp.\ $\varphi_t(D^{++})\subset D^{++}$) for all $\varphi_t\in T_{+,\Delta}$.
\end{lem}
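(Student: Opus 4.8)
The plan is to deduce both inclusions from the single observation that every $\varphi_t\in T_{+,\Delta}$, being a continuous $E_\Delta$-semilinear endomorphism of $D$ that as a ring endomorphism maps $E_\Delta^+$ into itself, sends bounded subsets of $D$ to bounded subsets. Granting this, one finishes as follows. Since $T_{+,\Delta}$ is a commutative monoid containing $\varphi_s$, we have $\varphi_s^k\circ\varphi_t=\varphi_t\circ\varphi_s^k$ for all $k\geq 0$. Hence if $x\in D^{++}$, then $\varphi_s^k(\varphi_t(x))=\varphi_t(\varphi_s^k(x))\to 0$ in the $X_\Delta$-adic topology by continuity of $\varphi_t$, so $\varphi_t(x)\in D^{++}$ (this reproves the corresponding assertion already contained in the previous Proposition); and if $x\in D^+$, then the set $\{\varphi_s^k(\varphi_t(x))\colon k\geq 0\}=\varphi_t(\{\varphi_s^k(x)\colon k\geq 0\})$ is bounded because $\varphi_t$ preserves boundedness, whence $\varphi_t(x)\in D^+$.

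It remains to verify that $\varphi_t$ preserves bounded subsets. First I would record the standard fact that a subset $S\subseteq D$ is bounded in the $X_\Delta$-adic topology if and only if $S\subseteq X_\Delta^{-N}D^{++}$ for some integer $N\geq 0$: indeed $D^{++}$ is a finitely generated $E_\Delta^+$-submodule with $D^{++}[X_\Delta^{-1}]=D$, so the submodules $X_\Delta^kD^{++}$ ($k\geq 0$) form a basis of neighbourhoods of $0$, while $D=\bigcup_N X_\Delta^{-N}D^{++}$. Writing $D^{++}=E_\Delta^+m_1+\dots+E_\Delta^+m_r$ and using $\varphi_t(E_\Delta^+)\subseteq E_\Delta^+$ (clear from the formulas \eqref{phigammaED}) together with semilinearity, one gets $\varphi_t(D^{++})\subseteq\sum_{i=1}^r E_\Delta^+\varphi_t(m_i)\subseteq X_\Delta^{-N'}D^{++}$, where $N'$ is chosen so that $\varphi_t(m_i)\in X_\Delta^{-N'}D^{++}$ for each of the finitely many indices $i$.

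Finally one needs to control $\varphi_t(X_\Delta)^{-1}$. Writing $\varphi_t=\prod_{\alpha\in\Delta}\varphi_\alpha^{n_\alpha}\gamma_\alpha$, I claim $\varphi_t(X_\Delta)=\bigl(\prod_{\alpha\in\Delta}X_\alpha^{p^{n_\alpha}}\bigr)u$ for a unit $u\in(E_\Delta^+)^\times$: by \eqref{phigammaED} the element $\gamma_\alpha(X_\alpha)=(1+X_\alpha)^{\chi_\alpha(\gamma_\alpha)}-1$ has $X_\alpha$-adic valuation $1$ with leading coefficient $\chi_\alpha(\gamma_\alpha)\in\Zp^\times$, hence is $X_\alpha$ times a unit power series in $X_\alpha$, and applying $\varphi_\alpha^{n_\alpha}$ raises it to the $p^{n_\alpha}$-th power times a unit, while all the remaining factors of $\varphi_t$ fix $X_\alpha$. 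Since $\prod_\alpha X_\alpha^{p^{n_\alpha}}$ divides $X_\Delta^{m}$ in $E_\Delta^+$ for $m:=\max_{\alpha\in\Delta}p^{n_\alpha}$, we obtain $\varphi_t(X_\Delta)^{-N}\in X_\Delta^{-mN}E_\Delta^+$, and therefore for bounded $S\subseteq X_\Delta^{-N}D^{++}$,
\[
\varphi_t(S)\subseteq\varphi_t(X_\Delta)^{-N}\varphi_t(D^{++})\subseteq X_\Delta^{-mN}E_\Delta^+\cdot X_\Delta^{-N'}D^{++}=X_\Delta^{-mN-N'}D^{++},
\]
which is bounded, as required. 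The only mildly delicate points are that bounded sets can be tested against powers of $X_\Delta$ after applying $\varphi_t$ — which is exactly the content of the computation that $\varphi_t$ moves $X_\Delta$ only by a unit of $E_\Delta^+$ and a fixed monomial — and the identification of $\{X_\Delta^kD^{++}\}$ as a neighbourhood basis of $0$; neither is a serious obstacle.
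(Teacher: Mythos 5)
Your proof is correct and follows essentially the same route as the paper: commutativity of $T_{+,\Delta}$ reduces everything to showing that $\varphi_t$ is continuous (for the $D^{++}$ statement) and carries bounded sets to bounded sets (for the $D^+$ statement), which you get by controlling $\varphi_t$ on the finitely generated lattice $D^{++}$ and on $X_\Delta$. The only cosmetic difference is that you compute $\varphi_t(X_\Delta)$ explicitly as a monomial times a unit of $E_\Delta^+$, whereas the paper gets by with the divisibility $X_\Delta\mid\varphi_t(X_\Delta)$ together with Lemma \ref{phistarfingen} applied to a lattice $M$ spanned by generators of $D$.
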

\begin{proof}
For any generating system $e_1,\dots,e_n$ of $D$ and any $\varphi_t\in T_{+,\Delta}$ there exists an integer $k=k(\varphi_t,M)>0$ such that we have $\varphi_t(X_\Delta^kM)\subseteq X_\Delta^kE_\Delta^+\varphi_t(M)\subseteq M$ where we put $M:=E_\Delta^+e_1+\dots +E_\Delta^+e_n$ by Lemma \ref{phistarfingen}. Indeed, $X_\Delta$ divides $\varphi_t(X_\Delta)$ in $E^+_\Delta$, and we have $D=M[X_\Delta^{-1}]$ by construction. The statement on $D^{++}$ follows from the commutativity of the monoid $T_{+,\Delta}$ noting that there exists a basis of neighbouhoods of $0$ in $D$ consisting of $E_\Delta^+$-submodules of the form $M$. To see that $\varphi_t(D^+)\subseteq D^+$ note that $\varphi_t(D^+)$ is bounded and we have $\varphi_s^k(\varphi_t(D^+))=\varphi_t(\varphi_s^k(D^+))\subset \varphi_t(D^+)$.
\end{proof}

Now fix an $\alpha\in \Delta$ and define $D^+_{\overline{\alpha}}:=D^+[X_{\Delta\setminus\{\alpha\}}^{-1}]$ where for any subset $S\subseteq \Delta$ we put $X_S:=\prod_{\beta\in S}X_\beta$. Then $D^+_{\overline{\alpha}}$ is a finitely generated module over $E^+_{\overline{\alpha}}:=E^+_\Delta[X_{\Delta\setminus\{\alpha\}}^{-1}]$. We denote by $T_{+,\overline{\alpha}}\subset T_{+,\Delta}$ the monoid generated by $\varphi_\beta$ ($\beta\in \Delta\setminus\{\alpha\}$) and $\Gamma_{\Delta}$. 

\begin{lem}\label{twoprodinD+}
$D^+_{\overline{\alpha}}/D^+$ is $X_\alpha$-torsion free: If both $X_\alpha^{n_1} d$ and $X_{\Delta\setminus\{\alpha\}}^{n_2}d$ lie in $D^+$ for some element $d\in D$, $\alpha\in\Delta$, and integers $n_1,n_2\geq 0$ then we have $d\in D^+$. The same statement holds if we replace $D^+$ by $D^{++}$.
\end{lem}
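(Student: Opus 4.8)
The plan is to reduce to the case where $D$ is free over $E_\Delta$ and then to re-express the conditions $d\in D^+$ and $d\in D^{++}$ in valuation-theoretic terms, where the statement becomes transparent. First I would reduce to $D$ free: by Proposition~\ref{proj} and Lemma~\ref{stably} the module $D$ is stably free, so $D_1:=D\oplus E_\Delta^k$ is free over $E_\Delta$ for $k$ large, where $E_\Delta^k$ carries the trivial \'etale $T_{+,\Delta}$-action (each $\varphi_t$ acting through its ring endomorphism), exactly as in the proof that $D^{++}$ is finitely generated. The $X_\Delta$-adic topology on $D$ is the restriction of that on $D_1$ and $\varphi_s$ respects the decomposition, so $\{\varphi_s^j(x)\}$ is bounded (resp.\ tends to $0$) in $D$ iff it is in $D_1$; hence $D^+=D_1^+\cap D$ and $D^{++}=D_1^{++}\cap D$, and it suffices to prove the lemma for $D_1$. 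So assume $D$ is free, fix an $E_\Delta$-basis $e_1,\dots,e_n$, and put $M:=\bigoplus_i E_\Delta^+e_i$, so that $\{X_\Delta^jM\}_{j\ge 0}$ is a base of neighbourhoods of $0$.

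Next, for each $\beta\in\Delta$ let $v_\beta$ be the $X_\beta$-adic valuation on $E_\Delta=E_\Delta^+[X_\Delta^{-1}]$ (well defined since every element of $E_\Delta$ is $X_\Delta^{-N}$ times a power series, hence has $X_\beta$-exponents bounded below), normalised by $v_\beta(X_\gamma)=\delta_{\beta\gamma}$, and extend it to $D$ by $v_\beta(\sum_i d_ie_i):=\min_i v_\beta(d_i)$. The routine facts I would record are: $v_\beta(fd)=v_\beta(f)+v_\beta(d)$ for $f\in E_\Delta$, $d\in D$; for $c\in\mathbb{Z}$ one has $d\in X_\Delta^{-c}M$ iff $v_\beta(d)\ge -c$ for all $\beta\in\Delta$; consequently a subset $S\subseteq D$ is bounded iff there is a single $c\ge 0$ with $v_\beta(s)\ge -c$ for all $s\in S$ and all $\beta$, and a sequence tends to $0$ iff each of its valuations $v_\beta$ tends to $+\infty$.

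Then the proof is a short computation. Since $\varphi_s(X_\alpha)=X_\alpha^p$ and $\varphi_s(X_{\Delta\setminus\{\alpha\}})=X_{\Delta\setminus\{\alpha\}}^p$, one has $\varphi_s^j(X_\alpha^{n_1}d)=X_\alpha^{n_1p^j}\varphi_s^j(d)$ and $\varphi_s^j(X_{\Delta\setminus\{\alpha\}}^{n_2}d)=X_{\Delta\setminus\{\alpha\}}^{n_2p^j}\varphi_s^j(d)$, so using $v_\beta(X_\alpha)=\delta_{\alpha\beta}$ and $v_\beta(X_{\Delta\setminus\{\alpha\}})=1-\delta_{\alpha\beta}$ one gets for all $j\ge 0$ and $\beta\in\Delta$
\begin{align*}
v_\beta\big(\varphi_s^j(X_\alpha^{n_1}d)\big)&=n_1p^j\,\delta_{\alpha\beta}+v_\beta\big(\varphi_s^j(d)\big),\\
v_\beta\big(\varphi_s^j(X_{\Delta\setminus\{\alpha\}}^{n_2}d)\big)&=n_2p^j\,(1-\delta_{\alpha\beta})+v_\beta\big(\varphi_s^j(d)\big).
\end{align*}
If $X_\alpha^{n_1}d\in D^+$, boundedness of $\{\varphi_s^j(X_\alpha^{n_1}d)\}_j$ gives a constant $a\ge 0$ with $v_\beta(\varphi_s^j(X_\alpha^{n_1}d))\ge -a$ for all $j,\beta$; taking $\beta\ne\alpha$ in the first identity the $\delta_{\alpha\beta}$-term vanishes and $v_\beta(\varphi_s^j(d))\ge -a$ for all $\beta\ne\alpha$ and all $j$. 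If $X_{\Delta\setminus\{\alpha\}}^{n_2}d\in D^+$, the second identity with $\beta=\alpha$ gives in the same way $v_\alpha(\varphi_s^j(d))\ge -b$ for all $j$. Hence $v_\beta(\varphi_s^j(d))\ge-\max(a,b)$ for every $\beta$ and $j$, so $\{\varphi_s^j(d)\}\subseteq X_\Delta^{-\max(a,b)}M$ is bounded and $d\in D^+$. The $D^{++}$ statement follows by the same argument with ``bounded below by $-c$ uniformly'' replaced by ``tends to $+\infty$'': the first hypothesis forces $v_\beta(\varphi_s^j(d))\to\infty$ for $\beta\ne\alpha$, the second forces $v_\alpha(\varphi_s^j(d))\to\infty$, so $\varphi_s^j(d)\to 0$ and $d\in D^{++}$.

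I do not expect a serious obstacle. The one genuine idea is the observation driving the computation: multiplying by a power of $X_\alpha$ changes only the $X_\alpha$-adic valuation and leaves all the others unchanged, so the hypothesis on $X_\alpha^{n_1}d$ already pins down the orbit of $d$ in every direction except the $\alpha$-th, while the hypothesis on $X_{\Delta\setminus\{\alpha\}}^{n_2}d$ supplies precisely that missing direction. The main point to get right is that this valuative description of boundedness requires $M$ to be a \emph{free} lattice $\bigoplus_i E_\Delta^+e_i$ — the corresponding ``saturation'' property can fail for a general finitely generated $E_\Delta^+$-submodule of $D$ — which is why the reduction to the free case has to come first.
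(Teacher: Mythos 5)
Your proof is correct and follows essentially the same route as the paper's: reduce to the free case via stable freeness (Prop.~\ref{proj} and Lemma~\ref{stably}) with the trivial \'etale structure on the complement, then observe that multiplying by $X_\alpha^{n_1p^k}$ (resp.\ $X_{\Delta\setminus\{\alpha\}}^{n_2p^k}$) only shifts the $X_\alpha$-valuation (resp.\ the $X_\beta$-valuations for $\beta\neq\alpha$) of the coordinates of $\varphi_s^k(d)$, so the two hypotheses together bound all valuations. Your valuation-theoretic phrasing is just a more explicit version of the paper's argument about bounded denominators in a free basis of $D$, using that $E_\Delta^+$ is a UFD.
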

\begin{proof}
At first assume that $D$ is free as a module over $E_\Delta$ with basis $e_1,\dots,e_n$. Then the denominators of $\varphi_s^k(X_\alpha^{n_1} d)=X_\alpha^{n_1p^k}\varphi_s^k(d)$ in the basis $e_1,\dots,e_n$ are bounded for $k\geq 0$ by assumption. Therefore the $X_\beta$-valuations of the denominators of $\varphi_s^k(d)$ are bounded for all $\beta\in\Delta\setminus\{\alpha\}$ since $E_\Delta^+$ is a unique factorization domain. On the other hand, the $X_\alpha$-valuations of these denominators are also bounded since the denominators of $\varphi_s^k(X_{\Delta\setminus\{\alpha\}}^{n_2}d)=X_{\Delta\setminus\{\alpha\}}^{n_2p^k}\varphi_s^k(d)$ are bounded. To prove the statement for $D^{++}$ we have the same argument but `being bounded' replaced by `tends to $0$'.

Finally, by Prop.\ \ref{proj} and Lemma \ref{stably} $D\oplus E_\Delta^k$ is free over $E_\Delta$ and we equip it with the structure of an \'etale $(\varphi,\Gamma)$-module (trivially on $E_\Delta^k$). The statement follows from the additivity of the constructions $D\mapsto D^+$ and $D\mapsto D^+_{\overline{\alpha}}$ in direct sums.
\end{proof}

\begin{lem}\label{rank1val}
Assume that $D$ is generated by a single element $e_1\in D$ over $E_\Delta$. Then for any $\varphi_t$ in $T_{+,\overline{\alpha}}$ we have $\varphi_t(e_1)=a_te_1$ for some unit $a_t$ in $(E^+_{\overline{\alpha}})^\times$.
\end{lem}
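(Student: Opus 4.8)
The plan is to reduce to the case where $D$ is free of rank one over $E_\Delta$ and then to compute the $X_\alpha$-adic valuation of the scalar $a_t$, exploiting that the commutative monoid $T_{+,\Delta}$ contains $\varphi_\alpha$ in addition to the operators in $T_{+,\overline\alpha}$.

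First I would note that $D=E_\Delta e_1$ is cyclic, so $D\cong E_\Delta/I$ with $I=\operatorname{Ann}(e_1)$, and projective by Proposition~\ref{proj}; hence the surjection $E_\Delta\twoheadrightarrow D$ splits, $I$ becomes a direct summand of the $E_\Delta$-module $E_\Delta$, and so $I=eE_\Delta$ for an idempotent $e$. Since $E_\Delta$ is a domain we get $e\in\{0,1\}$: either $D=0$ and the statement is vacuous, or $I=0$ and $D$ is free of rank one with basis $e_1$. In the latter case, for each $\varphi_t\in T_{+,\overline\alpha}$ the isomorphism $\id\otimes\varphi_t\colon\varphi_t^*D\to D$ carries the basis $1\otimes e_1$ to $\varphi_t(e_1)$, so $\varphi_t(e_1)$ generates $D$ and $\varphi_t(e_1)=a_te_1$ for a unique $a_t\in E_\Delta^\times$; by multiplicativity ($a_{t_1t_2}=\varphi_{t_1}(a_{t_2})\,a_{t_1}$) the same relation with $a_t\in E_\Delta^\times$ holds for all $\varphi_t\in T_{+,\Delta}$, in particular $\varphi_\alpha(e_1)=a_\alpha e_1$ with $a_\alpha\in E_\Delta^\times$.

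Next I would identify the relevant units. As $E_\Delta^+\cong\Fp\bs X_\beta\mid\beta\in\Delta\js$ is regular local it is a unique factorization domain in which each $X_\beta$ is prime, and $E_\Delta=E_\Delta^+[X_\Delta^{-1}]$ (resp.\ $E^+_{\overline\alpha}=E_\Delta^+[X_{\Delta\setminus\{\alpha\}}^{-1}]$) is the localization of $E_\Delta^+$ at the multiplicative set generated by the $X_\beta$ with $\beta\in\Delta$ (resp.\ $\beta\in\Delta\setminus\{\alpha\}$). By the usual description of units in a localized UFD, inside $\Frac(E_\Delta)=\Frac(E_\Delta^+)$ the group $E_\Delta^\times$ is generated by $(E_\Delta^+)^\times$ together with the $X_\beta$ ($\beta\in\Delta$), and $(E^+_{\overline\alpha})^\times$ by $(E_\Delta^+)^\times$ together with the $X_\beta$ ($\beta\in\Delta\setminus\{\alpha\}$). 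Writing $v_\alpha$ for the $X_\alpha$-adic discrete valuation on this fraction field, one deduces that for $u\in E_\Delta^\times$ one has $u\in(E^+_{\overline\alpha})^\times$ if and only if $v_\alpha(u)=0$. Hence it suffices to prove $v_\alpha(a_t)=0$ for every $\varphi_t\in T_{+,\overline\alpha}$.

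For this, the key observations are: (i) every generator of $T_{+,\overline\alpha}$ preserves $v_\alpha$ — namely $\varphi_\beta$ for $\beta\in\Delta\setminus\{\alpha\}$ because it fixes $X_\alpha$ and is injective (a $p$-power substitution in the variable $X_\beta$), and every $\gamma\in\Gamma_\Delta$ because it is a ring automorphism of $E_\Delta^+$ fixing the prime ideal $(X_\alpha)$ (its $\Gamma_\alpha$-component sends $X_\alpha$ to $(1+X_\alpha)^{c}-1=X_\alpha\cdot(\text{unit})$ for some $c\in\Zp^\times$) — hence so does every $\varphi_t\in T_{+,\overline\alpha}$; and (ii) $\varphi_\alpha$ multiplies $v_\alpha$ by $p$, since $\varphi_\alpha(X_\alpha)=X_\alpha^p$ and it fixes the remaining variables. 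Now fix $\varphi_t\in T_{+,\overline\alpha}$. Because $T_{+,\Delta}$ is commutative, $\varphi_t\varphi_\alpha=\varphi_\alpha\varphi_t$ as operators on $D$; evaluating both sides on the basis vector $e_1$ and cancelling $e_1$ gives $\varphi_t(a_\alpha)\,a_t=\varphi_\alpha(a_t)\,a_\alpha$ in $E_\Delta$. Applying $v_\alpha$, using (i) for $\varphi_t$, (ii) for $\varphi_\alpha$, and $v_\alpha(a_\alpha)<\infty$ (as $a_\alpha\neq0$), we obtain
\[
v_\alpha(a_\alpha)+v_\alpha(a_t)=p\,v_\alpha(a_t)+v_\alpha(a_\alpha),
\]
so $(p-1)v_\alpha(a_t)=0$ and thus $v_\alpha(a_t)=0$; combined with the previous paragraph, $a_t\in(E^+_{\overline\alpha})^\times$, which is what we wanted. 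I do not expect a real obstacle here: the only things needing care are the reduction to the free rank-one case together with the degenerate possibility $D=0$, and the remark that $v_\alpha$ is preserved by every element of $T_{+,\overline\alpha}$ (immediate, a composite of $v_\alpha$-preserving endomorphisms being $v_\alpha$-preserving), not only by its generators.
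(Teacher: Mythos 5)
Your proof is correct and follows essentially the same route as the paper: the paper's argument is exactly your step (e), namely evaluating $\varphi_\alpha\varphi_t=\varphi_t\varphi_\alpha$ on $e_1$ to get $\varphi_\alpha(a_t)a_\alpha=\varphi_t(a_\alpha)a_t$ and then comparing $X_\alpha$-valuations to obtain $(p-1)\val_{X_\alpha}(a_t)=0$. The surrounding points you make explicit (the reduction via projectivity to $D$ free of rank one, the description of $(E^+_{\overline{\alpha}})^\times$ inside $E_\Delta^\times$ via $\val_{X_\alpha}$, and the fact that every element of $T_{+,\overline{\alpha}}$ preserves $\val_{X_\alpha}$ while $\varphi_\alpha$ multiplies it by $p$) are all used implicitly in the paper's proof, so your write-up is just a more detailed version of the same argument.
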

\begin{proof}
Define $a_t\in E_\Delta$ and $a_\alpha\in E_\Delta$ so that $\varphi_t(e_1)=a_te_1$ and $\varphi_\alpha(e_1)=a_\alpha e_1$. By the \'etale property both $a_t$ and $a_\alpha$ are units in $E_\Delta$, so it remains to show that $\val_{X_\alpha}(a_t)=0$. We compute
\begin{align*}
\varphi_\alpha(a_t)a_\alpha e_1=\varphi_\alpha(a_t)\varphi_\alpha(e_1)=\varphi_\alpha(a_t e_1)=\varphi_\alpha(\varphi_t(e_1))=\\
=\varphi_t(\varphi_\alpha(e_1))=\varphi_t(a_\alpha e_1)=\varphi_t(a_\alpha)\varphi_t(e_1)=\varphi_t(a_\alpha)a_te_1
\end{align*}
whence we deduce
\begin{align*}
p\val_{X_\alpha}(a_t)+\val_{X_\alpha}(a_\alpha)=\val_{X_\alpha}(\varphi_\alpha(a_t)a_\alpha)=\val_{X_\alpha}(\varphi_t(a_\alpha)a_t)=\val_{X_\alpha}(a_\alpha)+\val_{X_\alpha}(a_t)\ .
\end{align*}
This yields $\val_{X_\alpha}(a_t)=0$ as required.
\end{proof}

\begin{lem}\label{intersectnon0}
There exists an integer $k=k(D)>0$ such that for any $\varphi_t\in T_{+,\overline{\alpha}}$ we have $X_\alpha^kD^+_{\overline{\alpha}}\subseteq E_\Delta^+\varphi_t(D^+_{\overline{\alpha}})$.
\end{lem}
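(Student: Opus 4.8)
The plan is to reduce to the case where $D$ is free over $E_\Delta$ and then to extract the uniform bound from Cramer's rule, using Lemma~\ref{rank1val} to control the $X_\alpha$-valuation of the relevant determinant.

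First I would reduce to $D$ free. By Prop.~\ref{proj} and Lemma~\ref{stably} the module $D\oplus E_\Delta^r$ is free over $E_\Delta$ for $r$ large; equipping $E_\Delta^r=\bigoplus_kE_\Delta e_k$ with the étale $T_{+,\Delta}$-action $\varphi_t(\sum_k c_ke_k):=\sum_k\varphi_t(c_k)e_k$ (as in the proof of Lemma~\ref{twoprodinD+}), the operations $D\mapsto D^+$, $D\mapsto D^+_{\overline\alpha}$ and $D'\mapsto E_\Delta^+\varphi_t(D')$ are additive in direct sums, so a bound $k$ valid for $D\oplus E_\Delta^r$ is valid for $D$. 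Assume then that $D$ is free with basis $e_1,\dots,e_n$. Since $D=D^+[X_\Delta^{-1}]$ and $X_\alpha\in E_\Delta^\times$, after multiplying the $e_i$ by a suitable power of $X_\alpha$ we may assume $e_i\in D^+_{\overline\alpha}$; then $M:=\sum_{i=1}^nE^+_{\overline\alpha}e_i$ is a free $E^+_{\overline\alpha}$-submodule of $D^+_{\overline\alpha}$, and since $D^+_{\overline\alpha}$ is finitely generated over $E^+_{\overline\alpha}$ with $D^+_{\overline\alpha}[X_\alpha^{-1}]=D=M[X_\alpha^{-1}]$, there is an integer $N=N(D)\geq 0$ with $X_\alpha^ND^+_{\overline\alpha}\subseteq M\subseteq D^+_{\overline\alpha}$.

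Now fix $\varphi_t\in T_{+,\overline\alpha}$ and let $A_t\in\GL_n(E_\Delta)$ be its matrix in the basis $e_1,\dots,e_n$. As $\varphi_\alpha$ does not occur in $T_{+,\overline\alpha}$, the map $\varphi_t$ is a composite of maps $\varphi_\beta$ ($\beta\neq\alpha$) and elements of $\Gamma_\Delta$, all of which preserve $E^+_{\overline\alpha}=E_\Delta^+[X_{\Delta\setminus\{\alpha\}}^{-1}]$; hence Lemma~\ref{tstableD+} gives $\varphi_t(D^+_{\overline\alpha})\subseteq D^+_{\overline\alpha}\subseteq X_\alpha^{-N}M$, so $X_\alpha^NA_t$ has entries in $E^+_{\overline\alpha}$. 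On the other hand, applying Lemma~\ref{rank1val} to $\bigwedge^nD$ --- a rank-one étale module, generated over $E_\Delta$ by $e_1\wedge\dots\wedge e_n$, on which $\varphi_t$ acts by the scalar $\det A_t$ --- shows that $\det A_t$ is a unit in $E^+_{\overline\alpha}$ (alternatively, one may repeat the computation of Lemma~\ref{rank1val} for $\det A_t$ and $\det A_\alpha$, which satisfy the same commutation relation). Thus $\det(X_\alpha^NA_t)=X_\alpha^{Nn}\cdot(\text{unit of }E^+_{\overline\alpha})$, and from $(X_\alpha^NA_t)^{-1}=\det(X_\alpha^NA_t)^{-1}\operatorname{adj}(X_\alpha^NA_t)$ we get that $X_\alpha^{Nn}(X_\alpha^NA_t)^{-1}=X_\alpha^{N(n-1)}A_t^{-1}$ has entries in $E^+_{\overline\alpha}$, with a bound independent of $\varphi_t$. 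Hence $X_\alpha^{N(n-1)}M\subseteq A_t(E^+_{\overline\alpha})^n=\sum_{i=1}^nE^+_{\overline\alpha}\varphi_t(e_i)\subseteq E^+_{\overline\alpha}\varphi_t(D^+_{\overline\alpha})$.

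Finally I would pass from $E^+_{\overline\alpha}$ back to $E_\Delta^+$: for $\beta\in\Delta\setminus\{\alpha\}$ one has $\varphi_t(X_\beta)=X_\beta^{p^m}u$ with $m\geq 0$ and $u\in(E_\Delta^+)^\times$, so for $d\in D^+_{\overline\alpha}$ the identity $X_\beta^{-1}\varphi_t(d)=X_\beta^{p^m-1}u\cdot\varphi_t(X_\beta^{-1}d)$ shows $X_\beta^{-1}\varphi_t(d)\in E_\Delta^+\varphi_t(D^+_{\overline\alpha})$; running over all $\beta\neq\alpha$ gives $E^+_{\overline\alpha}\varphi_t(D^+_{\overline\alpha})=E_\Delta^+\varphi_t(D^+_{\overline\alpha})$. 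Combined with $X_\alpha^ND^+_{\overline\alpha}\subseteq M$ this yields $X_\alpha^{Nn}D^+_{\overline\alpha}=X_\alpha^{N(n-1)}(X_\alpha^ND^+_{\overline\alpha})\subseteq X_\alpha^{N(n-1)}M\subseteq E_\Delta^+\varphi_t(D^+_{\overline\alpha})$, so $k:=\max(Nn,1)$ works. The only step that is not formal bookkeeping is the uniform bound $\val_{X_\alpha}(\det A_t)=0$, which is exactly what Lemma~\ref{rank1val} provides; everything else is a matter of tracking $X_\alpha$-denominators and of the difference between the rings $E_\Delta^+$ and $E^+_{\overline\alpha}$.
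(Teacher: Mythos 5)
Your proof is correct and follows the paper's own argument essentially verbatim: reduce to the free case via stable freeness, bound the denominators of $A_t$ using $D^+_{\overline{\alpha}}\subseteq X_\alpha^{-N}M$, apply Lemma~\ref{rank1val} to $\bigwedge^n D$ to get $\val_{X_\alpha}(\det A_t)=0$, and invert via the adjugate to obtain a bound independent of $\varphi_t$. Your final paragraph converting $E^+_{\overline{\alpha}}\varphi_t(D^+_{\overline{\alpha}})$ into $E_\Delta^+\varphi_t(D^+_{\overline{\alpha}})$ is a point the paper leaves implicit, but it is the same proof.
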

\begin{proof}
At first assume that $D$ is free, choose a basis $e_1,\dots,e_n$ contained in $D^{+}$, and put $M:=E_\Delta^+e_1+\dots E_\Delta^+e_n$, $M_\alpha:=E_{\overline{\alpha}}^+e_1+\dots +E_{\overline{\alpha}}^+e_n$. There exists an integer $k_0>0$ such that $D^+\subseteq X_\Delta^{-k_0}M$. In particular, we have $D^+_{\overline{\alpha}}\subseteq X_\alpha^{-k_0}M_{\overline{\alpha}}$. Now for a fixed $\varphi_t\in T_{+,\overline{\alpha}}$ let $A_t\in E_\Delta^{n\times n}$ be the matrix of $\varphi_t$ in the basis $e_1,\dots,e_n$. Since $\varphi_t(e_i)$ lies in $D^+\subseteq X_\alpha^{-k_0}M_{\overline{\alpha}}$, all the entries of the matrix $A_t$ are in $X_\alpha^{-k_0}E^+_{\overline{\alpha}}$. Applying Lemma \ref{rank1val} to the single generator $e_1\wedge\dots\wedge e_n$ of $\bigwedge^n D$ we obtain $\val_{X_\alpha}(\det A_t)=0$. In particular, all the entries of $A_t^{-1}$ lie in $X_\alpha^{-(n-1)k_0}E^+_{\overline{\alpha}}$ by the formula for the inverse matrix using the $(n-1)\times(n-1)$ minors in $A_t$. Now note that the elements $e_1,\dots,e_n$ can be written as a linear combination of $\varphi_t(e_1),\dots,\varphi_t(e_n)$ with coefficients from $A_t^{-1}$. Using Lemma \ref{tstableD+} this shows 
\begin{equation*}
X_\alpha^{k_0}D^+_{\overline{\alpha}}\subseteq M_{\overline{\alpha}}\subseteq X_\alpha^{-(n-1)k_0}\varphi_t(M_{\overline{\alpha}})\subseteq X_\alpha^{-(n-1)k_0}\varphi_t(D^+_{\overline{\alpha}})\ .
\end{equation*}
So we may choose $k:=nk_0$ independent of $\varphi_t$.

The general case follows from Prop.\ \ref{proj} and Lemma \ref{stably} noting that the functor $D\mapsto D^+_{\overline{\alpha}}$ commutes with direct sums.
\end{proof}

In view of the above Lemma we define $$D^{+\ast}_{\overline{\alpha}}:=\bigcap_{\varphi_t\in T_{+,\overline{\alpha}}}E^+_{\overline{\alpha}}\varphi_t(D^+_{\overline{\alpha}})\ .$$ $D^{+\ast}_{\overline{\alpha}}$ is finitely generated over $E^+_{\overline{\alpha}}$ as it is contained in $D^+_{\overline{\alpha}}$ and $E^+_{\overline{\alpha}}$ is noetherian. On the other hand, by Lemma \ref{intersectnon0} we have $X_\alpha^k D^+_{\overline{\alpha}}\subseteq D^{+\ast}_{\overline{\alpha}}$ for some integer $k=k(D)>0$ whence, in particular, $D=D^{+\ast}_{\overline{\alpha}}[X_\alpha^{-1}]$.

\begin{pro}\label{alphaintetale}
$D^{+\ast}_{\overline{\alpha}}$ is an \'etale $T_{+,\overline{\alpha}}$-module over $E^+_{\overline{\alpha}}$, ie.\ the maps 
\begin{equation}
\id\otimes\varphi_t\colon \varphi_t^*D^{+\ast}_{\overline{\alpha}}=E^+_{\overline{\alpha}}\otimes_{E^+_{\overline{\alpha}},\varphi_t}D^{+\ast}_{\overline{\alpha}}\to D^{+\ast}_{\overline{\alpha}} \label{varphiastDalpha}
\end{equation}
are bijective for all $\varphi_t\in T_{+,\alpha}$.
\end{pro}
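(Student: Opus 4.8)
The plan is to check that the structure map $\id\otimes\varphi_t$ is bijective for each generator $\varphi_t\in T_{+,\overline\alpha}$ separately, i.e.\ for $\varphi_t=\varphi_\beta$ with $\beta\in\Delta\setminus\{\alpha\}$ and for $\varphi_t=\gamma\in\Gamma_\Delta$ (the $\Gamma_\beta$ are automorphisms so for those the statement is essentially formal once one knows $\gamma(D^{+\ast}_{\overline\alpha})=D^{+\ast}_{\overline\alpha}$). Since $D$ is an \'etale $(\varphi_\Delta,\Gamma_\Delta)$-module over $E_\Delta$, the map $\id\otimes\varphi_t\colon\varphi_t^*D\to D$ is already an isomorphism over $E_\Delta$; the content is that it restricts to an isomorphism on the lattice $D^{+\ast}_{\overline\alpha}$ over $E^+_{\overline\alpha}$. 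By Lemma~\ref{tstableD+} and the definition of $D^{+\ast}_{\overline\alpha}$ as $\bigcap_{\varphi_{t'}}E^+_{\overline\alpha}\varphi_{t'}(D^+_{\overline\alpha})$, we have $\varphi_t(D^{+\ast}_{\overline\alpha})\subseteq D^{+\ast}_{\overline\alpha}$, so the map in \eqref{varphiastDalpha} is at least well defined; I will first argue injectivity and then surjectivity.

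Injectivity: since $D^{+\ast}_{\overline\alpha}$ is finitely generated over the noetherian domain $E^+_{\overline\alpha}$ and $D=D^{+\ast}_{\overline\alpha}[X_\alpha^{-1}]$, it is torsion-free over $E^+_{\overline\alpha}$ (a submodule of $D$, which embeds in a finite-dimensional $E_\Delta$-vector space). The source $\varphi_t^*D^{+\ast}_{\overline\alpha}$ need not be torsion-free a priori, but after inverting $X_\alpha$ the map becomes $\id\otimes\varphi_t\colon\varphi_t^*D\to D$, which is injective; hence the kernel of \eqref{varphiastDalpha} is $X_\alpha$-power torsion, and it suffices to show $\varphi_t^*D^{+\ast}_{\overline\alpha}$ has no $X_\alpha$-torsion. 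This follows from Lemma~\ref{twoprodinD+}: $D^+_{\overline\alpha}$ (hence $D^{+\ast}_{\overline\alpha}$, being an $E^+_{\overline\alpha}$-submodule of $D^+_{\overline\alpha}$ with $X_\alpha^k D^+_{\overline\alpha}\subseteq D^{+\ast}_{\overline\alpha}$) is $X_\alpha$-torsion free, and $\varphi_t$ sends $X_\alpha$ to a unit times a power of $X_\alpha$, so base change along $\varphi_t$ preserves $X_\alpha$-torsion-freeness.

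Surjectivity is the main obstacle, and here the $*$-construction is designed precisely to make it work: one wants $D^{+\ast}_{\overline\alpha}=E^+_{\overline\alpha}\varphi_t(D^{+\ast}_{\overline\alpha})$. The inclusion $\supseteq$ is clear. For $\subseteq$, take $x\in D^{+\ast}_{\overline\alpha}=\bigcap_{\varphi_{t'}\in T_{+,\overline\alpha}}E^+_{\overline\alpha}\varphi_{t'}(D^+_{\overline\alpha})$. For each $\varphi_{t'}$, write the index as running over $\varphi_{t'}=\varphi_t\varphi_{t''}$ with $\varphi_{t''}\in T_{+,\overline\alpha}$ (using commutativity of the monoid and that every element is reachable); then $x\in E^+_{\overline\alpha}\varphi_t\varphi_{t''}(D^+_{\overline\alpha})=E^+_{\overline\alpha}\varphi_t\bigl(E^+_{\overline\alpha}\varphi_{t''}(D^+_{\overline\alpha})\bigr)$, using that $\varphi_t$ is $E^+_{\overline\alpha}$-semilinear and $\varphi_t(E^+_{\overline\alpha})\subseteq E^+_{\overline\alpha}$. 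One has to be a little careful that $\varphi_t$ does not surject onto $E^+_{\overline\alpha}$, so $E^+_{\overline\alpha}\varphi_t(N)$ is genuinely larger than $\varphi_t(N)$; the point is that $x$ lies in $\bigcap_{\varphi_{t''}}E^+_{\overline\alpha}\varphi_t(E^+_{\overline\alpha}\varphi_{t''}(D^+_{\overline\alpha}))$, and one wants to pull $\varphi_t$ out of the intersection, i.e.\ to know
\[
\bigcap_{\varphi_{t''}\in T_{+,\overline\alpha}}E^+_{\overline\alpha}\varphi_t\bigl(E^+_{\overline\alpha}\varphi_{t''}(D^+_{\overline\alpha})\bigr)\;=\;E^+_{\overline\alpha}\varphi_t(D^{+\ast}_{\overline\alpha}).
\]
This reduces to the statement that $\varphi_t$ commutes with the relevant finite intersection, which in turn follows because $\varphi_t^*$ is exact enough here: $E^+_{\overline\alpha}$ is free (of rank $p$ in the relevant variable) as a module over itself via $\varphi_t$, so $N\mapsto E^+_{\overline\alpha}\otimes_{E^+_{\overline\alpha},\varphi_t}N$ is exact, and $E^+_{\overline\alpha}\varphi_t(N)$ is the image of $\varphi_t^*N\to D$ which for torsion-free $N$ is $\varphi_t^*N$ itself; flatness of $\varphi_t$ then gives $\varphi_t^*\bigl(\bigcap N_i\bigr)=\bigcap\varphi_t^*N_i$ since the intersection is finite (by Lemma~\ref{intersectnon0} only finitely many terms in the definition of $D^{+\ast}_{\overline\alpha}$ matter, as $X_\alpha^kD^+_{\overline\alpha}\subseteq E^+_{\overline\alpha}\varphi_{t'}(D^+_{\overline\alpha})$ for all $\varphi_{t'}$ with a single $k$). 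Combining, $x\in E^+_{\overline\alpha}\varphi_t(D^{+\ast}_{\overline\alpha})$, giving surjectivity. As usual one first does the case $D$ free over $E_\Delta$ and then reduces the general case via $D\oplus E_\Delta^k$ and additivity of $D\mapsto D^{+\ast}_{\overline\alpha}$ in direct sums, exactly as in the preceding lemmas.
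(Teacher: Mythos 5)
Your overall strategy is the same as the paper's: well-definedness from Lemma \ref{tstableD+}, injectivity by comparing with the \'etale isomorphism $\varphi_t^*D\to D$ after base change, and surjectivity by commuting $\varphi_t^*$ with the intersection defining $D^{+\ast}_{\overline{\alpha}}$ and then reindexing $\varphi_{t'}=\varphi_t\varphi_{t''}$. The injectivity part is fine (and could be shortened: by flatness $\varphi_t^*D^{+\ast}_{\overline{\alpha}}$ injects into $\varphi_t^*D$, and $\id\otimes\varphi_t$ on $\varphi_t^*D$ is injective since $D$ is \'etale).

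There is, however, a genuine gap in the surjectivity step, namely in the claim that ``only finitely many terms in the definition of $D^{+\ast}_{\overline{\alpha}}$ matter.'' Lemma \ref{intersectnon0} gives a uniform lower bound $X_\alpha^kD^+_{\overline{\alpha}}\subseteq E^+_{\overline{\alpha}}\varphi_{t'}(D^+_{\overline{\alpha}})$, so the intersection is controlled by a family of submodules of $D^+_{\overline{\alpha}}/X_\alpha^kD^+_{\overline{\alpha}}$; but this is a finitely generated module over $E^+_{\overline{\alpha}}/(X_\alpha^k)\cong E_{\Delta\setminus\{\alpha\}}[X_\alpha]/(X_\alpha^k)$, which is noetherian but not artinian once $|\Delta|\geq 3$. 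Noetherianity gives the ascending chain condition, not the descending one, so a (downward directed) infinite intersection need not coincide with any finite sub-intersection --- compare $\bigcap_n 2^n\mathbb{Z}=0$ in the noetherian module $\mathbb{Z}$. Hence the reduction to a finite intersection, which is what your appeal to flatness requires, is unjustified. The repair is exactly the point the paper exploits: $E^+_{\overline{\alpha}}$ (and $E^+_\Delta$) is finite \emph{free}, not merely flat, over itself via $\varphi_t$, and base change along a free ring map commutes with \emph{arbitrary} intersections of submodules of a fixed module (decompose $\varphi_t^*D$ along a basis of $E^+_\Delta$ over $\varphi_t(E^+_\Delta)$ and intersect componentwise). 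With that substitution your argument closes, and the remaining reindexing step $\bigcap_{t''}E^+_{\overline{\alpha}}\varphi_t\varphi_{t''}(D^+_{\overline{\alpha}})=D^{+\ast}_{\overline{\alpha}}$ is correct since $\varphi_t(D^+_{\overline{\alpha}})\subseteq D^+_{\overline{\alpha}}$ shows the shifted intersection is squeezed between $D^{+\ast}_{\overline{\alpha}}$ and itself. Note also that the final reduction to the free case via $D\oplus E_\Delta^k$ is unnecessary here: nothing in the argument uses freeness of $D$.
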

\begin{proof}
At first note that we have $\varphi_t(D^{+\ast}_{\overline{\alpha}})\subseteq D^{+\ast}_{\overline{\alpha}}$ for all $\varphi_t\in T_{+,\overline{\alpha}}$ by Lemma \ref{tstableD+} and the commutativity of $T_{+,\overline{\alpha}}$, so the map \eqref{varphiastDalpha} exists. Now let $\varphi_{t_0}\in T_{+,\overline{\alpha}}$ be arbitrary. Since $E^+_{\overline{\alpha}}$ (resp.\ $E_\Delta$) is a finite free module over $\varphi_{t_0}(E^+_{\overline{\alpha}})$ (resp.\ over $\varphi_{t_0}(E_\Delta)$) with generators contained in $E^+_\Delta$, we have a natural identification $\varphi_{t_0}^*D^{+\ast}_{\overline{\alpha}}\cong E^+_{\Delta}\otimes_{E^+_{\Delta},\varphi_{t_0}}D^{+\ast}_{\Delta}$ (resp.\ $\varphi_{t_0}^*D\cong E^+_\Delta \otimes_{E^+_{\Delta},\varphi_{t_0}}D$). Since $E^+_\Delta$ is finite free (hence flat) over $\varphi_{t_0}(E^+_\Delta)$, the inclusion $D^+_{\overline{\alpha}}\subset D$ induces an inclusion $\varphi_{t_0}^*D^+_{\overline{\alpha}}\subset \varphi_{t_0}^*D$. It follows that \eqref{varphiastDalpha} is injective since $D$ is \'etale. Similarly, for each $\varphi_t\in T_{+,\overline{\alpha}}$, the map $$\id\otimes\varphi_{t_0}\colon \varphi_{t_0}^*(E^+_{\overline{\alpha}}\varphi_t(D^+_{\overline{\alpha}}))\to E^+_{\overline{\alpha}}\varphi_t(D^+_{\overline{\alpha}})$$ is injective with image $E^+_{\overline{\alpha}}\varphi_{t_0}\varphi_t(D^+_{\overline{\alpha}})$. On the other hand, since $E^+_\Delta$ is finite free over $\varphi_{t_0}(E^+_\Delta)$, we have $\varphi_{t_0}^*D^{+\ast}_{\overline{\alpha}}=\bigcap_{t\in T_{+,\overline{\alpha}}}\varphi_{t_0}^*(E^+_{\overline{\alpha}}\varphi_t(D^+_{\overline{\alpha}}))$ where the intersection is taken inside $\varphi_{t_0}^*D$. Therefore \eqref{varphiastDalpha} is bijective as we have $D^{+\ast}_{\overline{\alpha}}=\bigcap_{\varphi_t\in T_{+,\overline{\alpha}}}E^+_{\overline{\alpha}}\varphi_{t_0}\varphi_t(D^+_{\overline{\alpha}})$.
\end{proof}

\begin{lem}\label{phicover}
There exists a finitely generated $E_\Delta^+$-submodule $D_0\subset D^{+*}_{\overline{\alpha}}$ such that $D_0\subseteq E_\Delta^+\varphi_{\overline{\alpha}}(D_0)$ and $D^{+*}_{\overline{\alpha}}=D_0[X_{\Delta\setminus\{\alpha\}}^{-1}]$ where $\varphi_{\overline{\alpha}}:=\prod_{\beta\in\Delta\setminus\{\alpha\}}\varphi_\beta$. Moreover, we have $D^{+*}_{\overline{\alpha}}=\bigcup_{r\geq 0}E^+_\Delta\varphi_{\overline{\alpha}}^r(X_{\Delta\setminus\{\alpha\}}^{-1}D_0)$.
\end{lem}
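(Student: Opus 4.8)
The plan is to adapt Colmez's construction of the lattices $D^{+}$, $D^{++}$ inside an ordinary $(\varphi,\Gamma)$-module (\cite{Mira}) to the present setting, where $\varphi_{\overline{\alpha}}$ and the variables $X_\beta$ ($\beta\in\Delta\setminus\{\alpha\}$) play the role of $\varphi$ and $X$, while $X_\alpha$ is treated as an inert coefficient variable. The only substantial input is Proposition \ref{alphaintetale} applied to the element $\varphi_{\overline{\alpha}}\in T_{+,\overline{\alpha}}$: it asserts that $\id\otimes\varphi_{\overline{\alpha}}\colon\varphi_{\overline{\alpha}}^*D^{+*}_{\overline{\alpha}}\to D^{+*}_{\overline{\alpha}}$ is bijective, equivalently that $E^+_{\overline{\alpha}}\varphi_{\overline{\alpha}}(D^{+*}_{\overline{\alpha}})=D^{+*}_{\overline{\alpha}}$; the same proposition also gives $\varphi_{\overline{\alpha}}(D^{+*}_{\overline{\alpha}})\subseteq D^{+*}_{\overline{\alpha}}$.

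First I would pick a finite $E^+_{\overline{\alpha}}$-generating system $m_1,\dots,m_n$ of $D^{+*}_{\overline{\alpha}}$ (possible since $E^+_{\overline{\alpha}}$ is noetherian and $D^{+*}_{\overline{\alpha}}$ is finitely generated over it) and set $M:=\sum_i E^+_\Delta m_i$; this is a finitely generated $E^+_\Delta$-submodule of $D^{+*}_{\overline{\alpha}}$ with $M[X_{\Delta\setminus\{\alpha\}}^{-1}]=D^{+*}_{\overline{\alpha}}=\bigcup_{t\ge 0}X_{\Delta\setminus\{\alpha\}}^{-t}M$. Using the \'etale identity above I would write each $m_i=\sum_k a_{ik}\varphi_{\overline{\alpha}}(x_k)$ with $a_{ik}\in E^+_{\overline{\alpha}}$ and $x_k\in D^{+*}_{\overline{\alpha}}$. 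Expanding each $x_k$ in the form $X_{\Delta\setminus\{\alpha\}}^{-t_k}y_k$ with $y_k\in M$, bounding the $X_{\Delta\setminus\{\alpha\}}$-pole orders of the finitely many $a_{ik}$ and $x_k$, and using $\varphi_{\overline{\alpha}}(E^+_\Delta)\subseteq E^+_\Delta$ together with $\varphi_{\overline{\alpha}}(X_{\Delta\setminus\{\alpha\}})=X_{\Delta\setminus\{\alpha\}}^{p}$, I obtain an integer $s\ge 0$ with $M\subseteq X_{\Delta\setminus\{\alpha\}}^{-s}E^+_\Delta\varphi_{\overline{\alpha}}(M)$.

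Then I would put $D_0:=X_{\Delta\setminus\{\alpha\}}^{-N}M$ with $N:=\lceil s/(p-1)\rceil$, so that $N(p-1)\ge s$. This $D_0$ is finitely generated over $E^+_\Delta$; since $X_{\Delta\setminus\{\alpha\}}$ is invertible in $E^+_{\overline{\alpha}}$ one has $D_0\subseteq D^{+*}_{\overline{\alpha}}$ and $D_0[X_{\Delta\setminus\{\alpha\}}^{-1}]=D^{+*}_{\overline{\alpha}}$, while the required inclusion follows from
\[
D_0=X_{\Delta\setminus\{\alpha\}}^{-N}M\subseteq X_{\Delta\setminus\{\alpha\}}^{-N-s}E^+_\Delta\varphi_{\overline{\alpha}}(M)\subseteq X_{\Delta\setminus\{\alpha\}}^{-Np}E^+_\Delta\varphi_{\overline{\alpha}}(M)=E^+_\Delta\varphi_{\overline{\alpha}}(D_0),
\]
the middle step using $-N-s\ge -Np$ and the last using $\varphi_{\overline{\alpha}}(D_0)=X_{\Delta\setminus\{\alpha\}}^{-Np}\varphi_{\overline{\alpha}}(M)$. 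For the final assertion, set $N_0:=X_{\Delta\setminus\{\alpha\}}^{-1}D_0$: iterating $D_0\subseteq E^+_\Delta\varphi_{\overline{\alpha}}(D_0)$ and using $\varphi_{\overline{\alpha}}^r(E^+_\Delta)\subseteq E^+_\Delta$ gives $D_0\subseteq E^+_\Delta\varphi_{\overline{\alpha}}^r(D_0)$ for all $r$, whence, since $\varphi_{\overline{\alpha}}^r(N_0)=X_{\Delta\setminus\{\alpha\}}^{-p^r}\varphi_{\overline{\alpha}}^r(D_0)$, one gets $X_{\Delta\setminus\{\alpha\}}^{-t}D_0\subseteq E^+_\Delta\varphi_{\overline{\alpha}}^r(N_0)$ whenever $p^r\ge t$; combined with $D^{+*}_{\overline{\alpha}}=\bigcup_t X_{\Delta\setminus\{\alpha\}}^{-t}D_0$ and the reverse inclusions $E^+_\Delta\varphi_{\overline{\alpha}}^r(N_0)\subseteq D^{+*}_{\overline{\alpha}}$ (from $N_0\subseteq D^{+*}_{\overline{\alpha}}$ and $\varphi_{\overline{\alpha}}(D^{+*}_{\overline{\alpha}})\subseteq D^{+*}_{\overline{\alpha}}$), this yields the stated description of $D^{+*}_{\overline{\alpha}}$.

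I do not expect a genuine obstacle here, as the argument is a direct transcription of Colmez's lattice construction. The only points requiring care are the uniform bound on $X_{\Delta\setminus\{\alpha\}}$-pole orders producing $M\subseteq X_{\Delta\setminus\{\alpha\}}^{-s}E^+_\Delta\varphi_{\overline{\alpha}}(M)$, and the compatible choice of the shift $N$ — this last being precisely where the inequality $p-1\ge 1$ is used.
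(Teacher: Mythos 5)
Your argument is correct and follows the paper's proof essentially verbatim: both start from a finitely generated $E^+_\Delta$-submodule $M$ with $M[X_{\Delta\setminus\{\alpha\}}^{-1}]=D^{+*}_{\overline{\alpha}}$, extract from Prop.~\ref{alphaintetale} a bound $X_{\Delta\setminus\{\alpha\}}^{s}M\subseteq E^+_\Delta\varphi_{\overline{\alpha}}(M)$, rescale by $X_{\Delta\setminus\{\alpha\}}^{-N}$ with $N(p-1)\geq s$, and iterate. The only (harmless for this statement) deviation is your choice of the initial lattice: the paper takes $D_1=D^+\cap D^{+*}_{\overline{\alpha}}$, and that specific choice $D_0=X_{\Delta\setminus\{\alpha\}}^{-k}D_1$ is invoked later, in the proof of Lemma~\ref{bounded}, to deduce via Lemma~\ref{twoprodinD+} that the relevant quotient has no $X_\alpha$-torsion --- a property your $D_0$, built from an arbitrary generating system, would not obviously have.
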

\begin{proof}
Put $D_1:=D^+\cap D^{+*}_{\overline{\alpha}}$. By Prop.\ \ref{alphaintetale} and the fact that $D^{+*}_{\overline{\alpha}}=D_1[X_{\Delta\setminus\{\alpha\}}^{-1}]$ we find an integer $k_0>0$ such that $X_{\Delta\setminus\{\alpha\}}^{k_0}D_1\subseteq E^+_\Delta\varphi_{\overline{\alpha}}(D_1)$. So for $k>\frac{k_0}{p-1}$ we have $$X_{\Delta\setminus\{\alpha\}}^{-k}D_1\subseteq X_{\Delta\setminus\{\alpha\}}^{-k-k_0}E^+_\Delta\varphi_{\overline{\alpha}}(D_1) \subseteq X_{\Delta\setminus\{\alpha\}}^{-pk}E^+_\Delta\varphi_{\overline{\alpha}}(D_1)=E^+_\Delta\varphi_{\overline{\alpha}}(X_{\Delta\setminus\{\alpha\}}^{-k}D_1)\ .$$ So we put $D_0:=X_{\Delta\setminus\{\alpha\}}^{-k}D_1$ so that the first part of the statement is satisfied. Iterating the inclusion $D_0\subseteq E^+_\Delta\varphi_{\overline{\alpha}}(D_0)$ we obtain $D_0\subseteq E^+_\Delta\varphi_{\overline{\alpha}}^r(D_0)$ for all $r\geq 1$. Finally, we compute $$X_{\Delta\setminus\{\alpha\}}^{-p^r}D_0\subseteq X_{\Delta\setminus\{\alpha\}}^{-p^r} E^+_\Delta\varphi_{\overline{\alpha}}^r(D_0)= E^+_\Delta\varphi_{\overline{\alpha}}^r(X_{\Delta\setminus\{\alpha\}}^{-1}D_0)\ .$$
The statement follows noting that we have $D^{+*}_{\overline{\alpha}}=D_0[X_{\Delta\setminus\{\alpha\}}^{-1}]=\bigcup_r X_{\Delta\setminus\{\alpha\}}^{-p^r}D_0$.
\end{proof}

\section{The equivalence of categories for $\mathbb{F}_p$-representations}

\subsection{The functor $\mathbb{D}$}

Take a copy $G_{\mathbb{Q}_p,\alpha}\cong \Gal(\overline{\mathbb{Q}_p}/\mathbb{Q}_p)$ of the absolute Galois group of $\mathbb{Q}_p$ for each element $\alpha\in\Delta$ and let $G_{\mathbb{Q}_p,\Delta}:=\prod_{\alpha\in\Delta}G_{\mathbb{Q}_p,\alpha}$. Let $\RepDFp$ be the category of continuous representations of the group $\GQpD$ on finite dimensional $\mathbb{F}_p$ vectorspaces. We identify $\Gamma_\alpha$ with the Galois group $\Gal(\mathbb{Q}_p(\mu_{p^\infty})/\mathbb{Q}_p)$ as a quotient of $\GQpa$ via the cyclotomic character $\chi_\alpha\colon \Gal(\mathbb{Q}_p(\mu_{p^\infty})/\mathbb{Q}_p)\to \Zp^\times$. Further, we denote by $\HQpa$ the kernel of the natural quotient map $\GQpa\to \Gamma_\alpha$ and put $\HQpD:=\prod_{\alpha\in\Delta}\HQpa\lhd \GQpD$. Putting $E_\alpha:=\mathbb{F}_p\bg X_\alpha\jg$ we have the following fundamental result of Fontaine and Wintenberger (Thm.\ 4.16 \cite{FO}).
\begin{thm}
The absolute Galois group $\Gal(E_\alpha^{sep}/E_\alpha)$ is isomorphic to $\HQpa$. Moreover, $\GQpa$ acts on the separable closure $E_\alpha^{sep}$ via automorphisms such that the action of $\Gamma_\alpha\cong \GQpa/\HQpa$ on $E_\alpha=(E_\alpha^{sep})^{\HQpa}$ coincides with the one given in \eqref{phigammaED}.
\end{thm}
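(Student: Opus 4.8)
The plan is to prove this as an instance of the theory of the \emph{field of norms} of Fontaine and Wintenberger, applied to the cyclotomic tower. Write $K=\Qp$, $K_n:=\Qp(\mu_{p^n})$ and $K_\infty:=\bigcup_n K_n=\Qp(\mu_{p^\infty})$, so that $\Gamma_\alpha\cong\Gal(K_\infty/K)$ via $\chi_\alpha$ and, by definition, $\HQpa=\Gal(\overline{\Qp}/K_\infty)$. First I would record that $K_\infty/K$ is a strictly arithmetically profinite (APF) extension: it is totally ramified (residue field $\Fp$), and since each step $K_{n+1}/K_n$ is totally wildly ramified of degree $p$ for $n\ge 1$, the ramification breaks in the upper numbering grow without bound. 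This is exactly the hypothesis under which the field-of-norms construction is available.

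Next I would construct the field of norms
\[
X_K(K_\infty):=\varprojlim_{n}\bigl(K_n^\times\cup\{0\}\bigr),
\]
with transition maps the norm maps $N_{K_{n+1}/K_n}$, multiplication componentwise, and addition defined by $(a_n)_n+(b_n)_n=\bigl(\lim_{m}N_{K_m/K_n}(a_m+b_m)\bigr)_n$, the limit converging by the APF property. One then checks that $X_K(K_\infty)$ is a complete discrete valuation field of characteristic $p$ whose residue field is that of $K_\infty$, namely $\Fp$; hence $X_K(K_\infty)\cong\Fp\bg X_\alpha\jg=E_\alpha$. To match the formulas in \eqref{phigammaED} I would fix a compatible system of primitive roots of unity $(\zeta_{p^n})_{n\ge 1}$ with $\zeta_{p^{n+1}}^p=\zeta_{p^n}$; then $(\zeta_{p^n}-1)_n$ is norm-compatible and is a uniformizer, so I put $X_\alpha:=(\zeta_{p^n}-1)_n$, i.e.\ $1+X_\alpha=(\zeta_{p^n})_n$.

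The substantive input is Wintenberger's theorem that the field-of-norms functor $M\mapsto X_K(M)$, defined on the category of finite extensions $M$ of $K_\infty$ inside $\overline{\Qp}$ (writing $M=LK_\infty$ for a finite $L/K$ and $X_K(M):=\varprojlim_n N_{LK_{n+1}/LK_n}(LK_n)^\times$), is an equivalence onto the category of finite separable extensions of $E_\alpha$ inside a fixed separable closure, and is Galois-equivariant. In particular it identifies $E_\alpha^{sep}$ with $\varinjlim_M X_K(M)$ and yields a topological isomorphism $\Gal(E_\alpha^{sep}/E_\alpha)\cong\Gal(\overline{\Qp}/K_\infty)=\HQpa$. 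I expect the essential surjectivity of this functor — that every finite separable extension of $\Fp\bg X_\alpha\jg$ lifts to a finite extension of $K_\infty$ — to be the heart of the proof and its main obstacle; it rests on the precise control of higher ramification afforded by the APF hypothesis together with a Krasner/Hensel-type lifting of the generating polynomial from characteristic $p$ to characteristic $0$.

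Finally, for the $\Gamma_\alpha$-action: $\GQpa=\Gal(\overline{\Qp}/K)$ acts on $\overline{\Qp}$ preserving every $K_\infty$-subextension, hence acts functorially on $\varinjlim_M X_K(M)=E_\alpha^{sep}$, with $\HQpa$ acting as the deck group just identified; this induces the action of $\Gamma_\alpha=\GQpa/\HQpa$ on $E_\alpha=(E_\alpha^{sep})^{\HQpa}$. To see it agrees with \eqref{phigammaED}, take $\gamma_\alpha\in\Gamma_\alpha$ and use $\gamma_\alpha(\zeta_{p^n})=\zeta_{p^n}^{\chi_\alpha(\gamma_\alpha)}$ together with the fact that $\gamma_\alpha$ commutes with the norm maps:
\[
\gamma_\alpha(1+X_\alpha)=\gamma_\alpha\bigl((\zeta_{p^n})_n\bigr)=\bigl(\zeta_{p^n}^{\chi_\alpha(\gamma_\alpha)}\bigr)_n=(1+X_\alpha)^{\chi_\alpha(\gamma_\alpha)},
\]
the last $\Zp^\times$-power being taken in $X_K(K_\infty)^\times$ by continuity. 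This is precisely the rule $\gamma_\alpha(X_\alpha)=(X_\alpha+1)^{\chi_\alpha(\gamma_\alpha)}-1$, and the theorem follows.
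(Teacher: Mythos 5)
Your outline is correct, and it is essentially the paper's approach: the paper offers no proof of this statement at all, simply citing it as Theorem 4.16 of Fontaine--Ouyang \cite{FO}, and your sketch is an accurate summary of the standard field-of-norms argument (strict APF-ness of the cyclotomic tower, Wintenberger's equivalence between finite extensions of $\Qp(\mu_{p^\infty})$ and finite separable extensions of $E_\alpha\cong\Fp\bg X_\alpha\jg$, and the computation $\gamma_\alpha(1+X_\alpha)=(1+X_\alpha)^{\chi_\alpha(\gamma_\alpha)}$ on the norm-compatible uniformizer) that underlies that citation. You correctly identify essential surjectivity of the field-of-norms functor as the genuinely hard step, and deferring it to Wintenberger is no less complete than what the paper does.
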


For each $\alpha\in\Delta$ consider a finite separable extension $E'_\alpha$ of $E_\alpha$ together with the Frobenius $\varphi_\alpha\colon E'_\alpha\to E'_\alpha$ acting by raising to the power $p$. We denote by $E'^+_\alpha$ the integral closure of $E_\alpha^+=\mathbb{F}_p\bs X_\alpha\js$ in $E'_\alpha$. Note that $E'_\alpha$ is isomorphic to $\mathbb{F}_{q_\alpha}\bg X'_\alpha\jg$ for some power $q_\alpha$ of $p$ and uniformizer $X'_\alpha$ such that we have $E'^+_\alpha\cong \mathbb{F}_{q_\alpha}\bs X'_\alpha\js$. We normalize the $X_\alpha$-adic (multiplicative) valuation on $E_\alpha$ so that we have $|X_\alpha|_{X_\alpha}=p^{-1}$. This extends uniquely to the finite extension $E'_\alpha$. Moreover, we equip the tensor product $E'_{\Delta,\circ}:=\bigotimes_{\alpha\in\Delta,\mathbb{F}_p}E'_\alpha$ with a norm $|\cdot|_{prod}$ by the formula 
\begin{equation}\label{prodnorm}
|c|_{prod}:=\inf\left(\max_i(\prod_{\alpha\in\Delta}|c_{\alpha,i}|_\alpha)\ \mid\ c=\sum_{i=1}^n\bigotimes_{\alpha\in\Delta}c_{\alpha,i}\right)\ .
\end{equation}
Note that the restriction of $|\cdot|_{prod}$ to the subring $E'^+_{\Delta,\circ}:=\bigotimes_{\alpha\in\Delta,\mathbb{F}_p}E'^+_\alpha$ induces the valuation with respect to the augmentation ideal $\Ker(E'^+_{\Delta,\circ}\twoheadrightarrow \bigotimes_{\alpha\in\Delta,\mathbb{F}_p}\mathbb{F}_{q_\alpha})$. The norm $|\cdot|_{prod}$ is not multiplicative in general, as the ring $\bigotimes_{\alpha\in\Delta,\mathbb{F}_p}\mathbb{F}_{q_\alpha}$ is not a domain. However, it is submultiplicative. We define $E'^+_\Delta$ as the completion of $E'^+_{\Delta,\circ}$ with respect to $|\cdot|_{prod}$ and put $E'_\Delta:=E'^+_\Delta[1/X_\Delta]$. Note that $E'_\Delta$ is \emph{not} complete with respect to $|\cdot|_{prod}$ (unless $|\Delta|=1$) even though $E'_{\Delta,\circ}=E'^+_{\Delta,\circ}[1/X_\Delta]$ is a dense subring in $E'_\Delta$. Since we have a containment 
\begin{equation*}
(\bigotimes_{\alpha\in\Delta,\mathbb{F}_p}\mathbb{F}_{q_\alpha})[X'_\alpha,\alpha\in \Delta]=\bigotimes_{\alpha\in\Delta,\mathbb{F}_p}\mathbb{F}_{q_\alpha}[X'_\alpha]\leq_{dense} E'^+_{\Delta,\circ}
\end{equation*}
we may identify $E'^+_\Delta$ with the power series ring $(\bigotimes_{\alpha\in\Delta,\mathbb{F}_p}\mathbb{F}_{q_\alpha})\bs X'_\alpha,\alpha\in \Delta\js$ which is the completion of the polynomial ring above. In particular, the special case $E'_\alpha=E_\alpha$ for all $\alpha\in\Delta$ yields a ring $E'_\Delta$ isomorphic to $E_\Delta$. Therefore $E_\Delta$ is a subring of $E'_\Delta$ for all collections of finite separable extensions $E'_\alpha$ of $E_\alpha$ ($\alpha\in\Delta$). Further, $\varphi_\alpha$ acts on $E'^+_{\Delta,\circ}$ (and on $E'_{\Delta,\circ}$) by the Frobenius on the component in $E'_\alpha$ and by the identity on all the other components in $E'_\beta$, $\beta\in\Delta\setminus\{\alpha\}$. This action is continuous in the norm $|\cdot|_{prod}$ therefore extends to the completion $E'^+_\Delta$ and the localization $E'_\Delta$. We have the following alternative characterization of the ring $E'_\Delta$.

\begin{lem}\label{E'Dfingen}
Put $\Delta=\{\alpha_1,\dots,\alpha_n\}$. We have
\begin{equation*}
E'_\Delta\cong E'_{\alpha_1}\otimes_{E_{\alpha_1}}\left(E'_{\alpha_2}\otimes_{E_{\alpha_2}}\left(\cdots(E'_{\alpha_n}\otimes_{E_{\alpha_n}}E_\Delta)\right)\right)\ .
\end{equation*}
\end{lem}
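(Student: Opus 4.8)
The plan is to prove this by induction on $n=|\Delta|$, peeling off one tensor factor $E'_{\alpha_1}$ at a time, with the key observation being that the completion process defining $E'_\Delta$ interacts compatibly with the partial completions carried out one variable at a time. First I would set up the inductive step: writing $\Delta'=\Delta\setminus\{\alpha_1\}$ and using the description of $E'^+_\Delta$ as the power series ring $\left(\bigotimes_{\alpha\in\Delta,\Fp}\mathbb{F}_{q_\alpha}\right)\bs X'_\alpha\mid\alpha\in\Delta\js$ obtained just before the statement, one sees that
\begin{equation*}
E'^+_\Delta\cong \left(\mathbb{F}_{q_{\alpha_1}}\bs X'_{\alpha_1}\js\right)\widehat{\otimes}_{\Fp}\left(\bigotimes_{\alpha\in\Delta',\Fp}\mathbb{F}_{q_\alpha}\right)\bs X'_\alpha\mid\alpha\in\Delta'\js\ ,
\end{equation*}
i.e.\ $E'^+_\Delta$ is the completed tensor product $E'^+_{\alpha_1}\widehat{\otimes}_{\Fp}E'^+_{\Delta'}$ over $\Fp$ (with respect to the tensor-product of the $X'_{\alpha_1}$-adic and the augmentation-ideal topologies), where on the right $E'_{\Delta'}$ is built from the extensions $E'_\alpha/E_\alpha$ for $\alpha\in\Delta'$. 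This is essentially a statement about power series rings: $R\bs Y\js\widehat{\otimes}_{\Fp}S\cong (R\otimes_{\Fp}S)\bs Y\js$ when $S$ is itself a (finite free module over a) power series ring, which is routine.

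Next I would convert the completed tensor product into an ordinary tensor product over $E_{\alpha_1}$. The point is that $E'_{\alpha_1}$ is a \emph{finite} free module over $E_{\alpha_1}=\Fp\bg X_{\alpha_1}\jg$ (a basis being $1,X'_{\alpha_1},\dots,(X'_{\alpha_1})^{e_{\alpha_1}f_{\alpha_1}-1}$ in the notation where $E'_{\alpha_1}\cong\mathbb{F}_{q_{\alpha_1}}\bg X'_{\alpha_1}\jg$), and likewise $E'^+_{\alpha_1}$ is finite free over $E^+_{\alpha_1}$. Because a completed tensor product with a finite free module is the same as the ordinary tensor product (the module is already complete, being finitely generated over the complete ring $E_{\Delta'}$ resp.\ $E^+_{\Delta'}$, and no further completion is needed in finitely many coordinates), one gets
\begin{equation*}
E'^+_\Delta\cong E'^+_{\alpha_1}\otimes_{E^+_{\alpha_1}}\left(E^+_{\alpha_1}\widehat{\otimes}_{\Fp}E'^+_{\Delta'}\right)\cong E'^+_{\alpha_1}\otimes_{E^+_{\alpha_1}}E'^+_{\Delta'\cup\{\alpha_1\},\mathrm{triv}}\ ,
\end{equation*}
where the middle term $E^+_{\alpha_1}\widehat{\otimes}_{\Fp}E'^+_{\Delta'}$ is exactly $E'^+_\Delta$ computed with the trivial extension $E'_{\alpha_1}=E_{\alpha_1}$ at the spot $\alpha_1$, which by the remark preceding the lemma (the special case $E'_\alpha=E_\alpha$ for all $\alpha$ gives $E'_\Delta\cong E_\Delta$, applied now with nontrivial extensions only at $\Delta'$) is canonically $E'^+_{\{\alpha_1\}\cup\Delta'}$ with nontrivial data on $\Delta'$ only. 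Inverting $X_\Delta$ and invoking the inductive hypothesis on $\Delta'$ then yields
\begin{equation*}
E'_\Delta\cong E'_{\alpha_1}\otimes_{E_{\alpha_1}}\left(E'_{\alpha_2}\otimes_{E_{\alpha_2}}\left(\cdots\left(E'_{\alpha_n}\otimes_{E_{\alpha_n}}E_\Delta\right)\right)\right)\ ,
\end{equation*}
which is the claim. One must check along the way that the isomorphisms are compatible with the actions of the $\varphi_\alpha$ (and the norms), but this is automatic since all maps are built from the variable-wise Frobenii and the construction is functorial in each $E'_\alpha$ separately.

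The main obstacle I expect is purely bookkeeping rather than conceptual: making rigorous the identification ``completed tensor product with a finite free module equals ordinary tensor product,'' and checking that the completion with respect to $|\cdot|_{prod}$ really does factor as an iterated/one-variable-at-a-time completion. Concretely, one has to verify that the norm $|\cdot|_{prod}$ on $E'_{\Delta,\circ}$ restricted to $E'_{\alpha_1}\otimes_{\Fp}E'_{\Delta',\circ}$ induces on the $E_{\alpha_1}$-module $E'_{\alpha_1}\otimes_{E_{\alpha_1}}(\text{stuff})$ the same topology as the natural one coming from $E'_{\alpha_1}$ being finite free over $E_{\alpha_1}$ — i.e.\ that completing ``all variables at once'' agrees with completing ``$X'_{\alpha_1}$ first, then the rest.'' Since $E'^+_{\alpha_1}$ is a finite free $E^+_{\alpha_1}$-module and the augmentation ideal of $E'^+_{\Delta,\circ}$ is generated by the augmentation ideals of the factors, this comparison of topologies is standard, but it is the place where some care is genuinely required.
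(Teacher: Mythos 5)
Your proposal is correct and follows essentially the same route as the paper: the paper rearranges $E'^+_{\Delta,\circ}=\bigotimes_{\alpha}(E'^+_\alpha\otimes_{E^+_\alpha}E^+_\alpha)$ into the iterated tensor product over $E^+_{\Delta,\circ}$ in one step and then completes with respect to the maximal ideal of $E^+_{\Delta,\circ}$ and inverts $X_\Delta$, whereas you unroll the same rearrangement as an induction, peeling off one finite free factor $E'^+_{\alpha_1}/E^+_{\alpha_1}$ at a time. The key point in both versions is identical: finite freeness of $E'^+_\alpha$ over $E^+_\alpha$ makes the relevant ideal-adic topologies cofinal, so the completion commutes with the finite base changes.
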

\begin{proof}
By rearranging the order of tensor products we have an identification
\begin{equation*}
E'^+_{\Delta,\circ}=\bigotimes_{\alpha\in\Delta,\mathbb{F}_p}(E'^+_\alpha\otimes_{E^+_\alpha}E^+_\alpha)\cong E'^+_{\alpha_1}\otimes_{E^+_{\alpha_1}}\left(E'^+_{\alpha_2}\otimes_{E^+_{\alpha_2}}\left(\dots(E'^+_{\alpha_n}\otimes_{E^+_{\alpha_n}}E^+_{\Delta,\circ})\right)\right)\ ,
\end{equation*}
where $E^+_{\Delta,\circ}$ is just $E'^+_{\Delta,\circ}$ with the choice $E'_\alpha=E_\alpha$ for all $\alpha\in\Delta$. The statement follows by completing this with respect to the maximal ideal of $E^+_{\Delta,\circ}$ and inverting $X_\Delta$.
\end{proof}

We define the multivariable analogue of $E^{sep}$ as
\begin{equation*}
E^{sep}_\Delta:=\varinjlim_{E_\alpha\leq E'_\alpha\leq E^{sep}_\alpha,\forall\alpha\in\Delta}E'_\Delta\ .
\end{equation*}

For any subset $S\subseteq \Delta$ we define the similar notions $E'^+_S$, $E'_S$, and $E^{sep}_S$ with $\Delta$ replaced by $S$. We equip $E^{sep}_\Delta$ with the relative Frobenii $\varphi_\alpha$ for each $\alpha\in\Delta$ defined above on each $E'_\Delta$. Further, $E^{sep}_\Delta$ admits an action of $\GQpD$ satisfying

\begin{pro}\label{invEsepD}
Assume that the extensions $E'_\alpha/E_\alpha$ are Galois for all $\alpha\in\Delta$ and let $H':=\prod_{\alpha\in\Delta}H'_\alpha$ where $H'_\alpha:=\Gal(E^{sep}_\alpha/E'_\alpha)$. Then we have $(E^{sep}_\Delta)^{H'_\Delta}=E'_\Delta$. In particular, the subring $(E^{sep}_\Delta)^{\HQpD}$ of $\HQpD$-invariants in $E^{sep}_\Delta$ equals $E_\Delta$ with the previously defined action of $\Gamma_\Delta\cong \GQpD/\HQpD$.
\end{pro}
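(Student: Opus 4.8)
The plan is to reduce the statement to ordinary finite Galois descent, carried out one variable at a time via Lemma~\ref{E'Dfingen}, and then to pass to the filtered colimit defining $E^{sep}_\Delta$.

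First I would control the colimit. The finite \emph{Galois} extensions $E''_\alpha$ with $E'_\alpha\subseteq E''_\alpha\subseteq E^{sep}_\alpha$ are cofinal, so $E^{sep}_\Delta=\varinjlim E''_\Delta$ over such systems $(E''_\alpha)_{\alpha\in\Delta}$. For every such system the subring $E''_\Delta$ is stable under $\GQpD$: by construction of the $\GQpD$-action, each $\GQpa$ preserves the tensor factor $E''_\alpha$ (which is $\GQpa$-stable because $E''_\alpha/E_\alpha$ is Galois), acts on it through the finite quotient $\Gal(E''_\alpha/E_\alpha)$, and acts trivially on the remaining factors. In particular $E''_\Delta$ is $H'$-stable, so an element of $E^{sep}_\Delta$ lying in $E''_\Delta$ is $H'$-invariant in $E^{sep}_\Delta$ if and only if it is $H'$-invariant in $E''_\Delta$; hence $(E^{sep}_\Delta)^{H'}=\bigcup_{E''\supseteq E'}(E''_\Delta)^{H'}$ and it suffices to show $(E''_\Delta)^{H'}=E'_\Delta$ for every such $E''$.

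The one-variable ingredient is the standard fact that if $L/F$ is a finite Galois extension with group $G$, $G_0\le G$ a subgroup with fixed field $L_0=L^{G_0}$, and $A$ is any $F$-algebra, then $(L\otimes_F A)^{G_0}=L_0\otimes_F A$ with $G_0$ acting on the first factor only: the equalizer sequence $0\to L_0\to L\to\prod_{g\in G_0}L$, whose right-hand map sends $x$ to $((g-1)x)_{g\in G_0}$, is exact because $L/L_0$ is Galois with group $G_0$, and it remains exact after the exact functor $-\otimes_F A$ (as $F$ is a field). Now fix $E''$ and write $\Delta=\{\alpha_1,\dots,\alpha_n\}$. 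The actions of the groups $H'_{\alpha_i}=\Gal(E^{sep}_{\alpha_i}/E'_{\alpha_i})$ on $E''_\Delta$ commute, so $(E''_\Delta)^{H'}$ is obtained by taking $H'_{\alpha_1}$-, then $H'_{\alpha_2}$-, \dots, then $H'_{\alpha_n}$-invariants in turn. Using Lemma~\ref{E'Dfingen} with the index being treated listed first, the ring at hand at the $j$-th stage has the form $E''_{\alpha_j}\otimes_{E_{\alpha_j}}S_j$, where $S_j$ is obtained from $E_\Delta$ by base change along the one-variable extensions at the indices $\ne j$ (namely $E'_{\alpha_i}/E_{\alpha_i}$ for $i<j$ and $E''_{\alpha_i}/E_{\alpha_i}$ for $i>j$), hence is an $E_{\alpha_j}$-algebra on which $H'_{\alpha_j}$ acts trivially, while $H'_{\alpha_j}$ acts on $E''_{\alpha_j}$ through $\Gal(E''_{\alpha_j}/E'_{\alpha_j})$. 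As $E_{\alpha_j}$ is a field, the one-variable ingredient replaces $E''_{\alpha_j}$ by $E'_{\alpha_j}$ at this stage. After running through all $n$ indices, Lemma~\ref{E'Dfingen} identifies the outcome with $E'_\Delta$, giving $(E^{sep}_\Delta)^{H'}=E'_\Delta$.

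For the final assertion I would specialize to $E'_\alpha=E_\alpha$ for all $\alpha$: then $E'_\Delta=E_\Delta$ and, by the theorem of Fontaine and Wintenberger recalled above, $H'_\alpha=\HQpa$, so $H'=\HQpD$ and $(E^{sep}_\Delta)^{\HQpD}=E_\Delta$. The residual action of $\GQpD/\HQpD=\Gamma_\Delta$ on $E_\Delta$ is, factor by factor, the action of $\Gamma_\alpha=\GQpa/\HQpa$ on $E_\alpha=(E^{sep}_\alpha)^{\HQpa}$, which by the same theorem is the one described in \eqref{phigammaED}; assembling over $\alpha\in\Delta$ gives precisely the $\Gamma_\Delta$-action on $E_\Delta$ of \eqref{phigammaED}. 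The step demanding the most care is the reduction to one variable: one must verify that Lemma~\ref{E'Dfingen} genuinely exhibits $E''_\Delta$ as a \emph{flat} base change --- over the field $E_{\alpha_j}$ --- of the one-variable Galois extension $E''_{\alpha_j}/E_{\alpha_j}$, with the $\GQpD$-action compatible with this presentation (each $\GQpa$ acting only on the $\alpha$-th tensor factor), and that $H'$-invariants commute with the colimit defining $E^{sep}_\Delta$; once these compatibilities are recorded, the remainder is routine.
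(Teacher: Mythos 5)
Your proof is correct, but it takes a genuinely different route from the paper's. The paper works directly with the completed tensor product $E''^+_\Delta$ and handles the completion by devissage: it reduces modulo powers of the ideal $\mathcal{M}_\alpha^k=(X''_\alpha)^k$, where the quotient becomes an honest (uncompleted) tensor product $E''^+_{\Delta\setminus\{\alpha\}}\otimes_{\mathbb{F}_p}(E''^+_\alpha/\mathcal{M}_\alpha^k)$ with finite-dimensional second factor, applies induction on $|\Delta|$ to descend the $\Delta\setminus\{\alpha\}$-part, passes to the limit over $k$, and then repeats with the ideal generated by the $X'_\beta$, $\beta\neq\alpha$, to descend the remaining factor. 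You instead invoke Lemma \ref{E'Dfingen} to rewrite $E''_\Delta$ as an iterated \emph{finite} base change of $E_\Delta$, after which the completion disappears from the problem entirely and the classical descent isomorphism $(L\otimes_F A)^{G_0}=L_0\otimes_F A$, applied once per index, finishes the job; your equalizer argument for that isomorphism is correct. Your route is shorter and makes the Galois descent transparent; its cost is that everything is funnelled through the identifications of Lemma \ref{E'Dfingen}, which must be equivariant and compatible with the inclusions $E'_\Delta\subseteq E''_\Delta$ (so that the invariants are identified with the subring $E'_\Delta$ of $E^{sep}_\Delta$ and not merely with an abstract copy of it) --- compatibilities you rightly flag, and which do hold since all the maps in question are induced by rearranging the tensor factors of $E'^+_{\Delta,\circ}$ and hence commute with the factorwise $\GQpD$-action. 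Both arguments reduce in the end to the same one-variable inputs: $(E^{sep}_\alpha)^{H'_\alpha}=E'_\alpha$ and exactness of base change along a field extension.
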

\begin{proof}
Since $X_\Delta$ is $H'_\Delta$-invariant and $\varinjlim$ can be interchanged with taking $ H'_\Delta$-invariants, it suffices to show that whenever $$E_\alpha=\mathbb{F}_p\bg X_\alpha\jg\leq E'_\alpha=\mathbb{F}_{q'_\alpha}\bg X'_\alpha\jg\leq E''_\alpha=\mathbb{F}_{q''_\alpha}\bg X''_\alpha\jg$$ is a sequence of finite Galois extensions for each $\alpha\in\Delta$ then we have $(E''^+_\Delta)^{ H'_\Delta}=E'^+_\Delta$. The containment $(E''^+_\Delta)^{ H'_\Delta}\supseteq E'^+_\Delta$ is clear. We prove the converse by induction on $|\Delta|$. Note that the ideal $\mathcal{M}_{\alpha}\lhd E''^+_\Delta$ generated by $X''_\alpha$ is invariant under the action of $ H'_\Delta$ for any fixed $\alpha$ in $\Delta$. Moreover, for any integer $k\geq 1$ the ring $E''^+_{\alpha}/\mathcal{M}_{\alpha}^k$ is finite dimensional over $\mathbb{F}_p$. Therefore the image of $(E''^+_\Delta)^{ H'_\Delta}$ under the quotient map $E''^+_\Delta\twoheadrightarrow E''^+_\Delta/\mathcal{M}_{\alpha}^k$ is contained in
\begin{align*}
\left(E''^+_\Delta/\mathcal{M}_{\alpha}^k\right)^{H'_\Delta}\subseteq \left(E''^+_\Delta/\mathcal{M}_{\alpha}^k\right)^{ H'_{\Delta\setminus\{\alpha\}}}= \left(E''^+_{\Delta\setminus\{\alpha\}}\otimes_{\mathbb{F}_p}\left(E''^+_{\alpha}/\mathcal{M}_{\alpha}^k\right)\right)^{ H'_{\Delta\setminus\{\alpha\}}}=\\
=\left(E''^+_{\Delta\setminus\{\alpha\}}\right)^{ H'_{\Delta\setminus\{\alpha\}}}\otimes_{\mathbb{F}_p}\left(E''^+_{\alpha}/\mathcal{M}_{\alpha}^k\right)=
E'^+_{\Delta\setminus\{\alpha\}}\otimes_{\mathbb{F}_p}\left(E''^+_{\alpha}/\mathcal{M}_{\alpha}^k\right)
\end{align*}
by induction. Taking the projective limit with respect to $k\geq 1$ we deduce that $(E''^+_\Delta)^{H'_\Delta}$ is contained in the power series ring $$\left(\mathbb{F}_{q''_\alpha}\otimes_{\mathbb{F}_p}\bigotimes_{\beta\in\Delta\setminus\{\alpha\},\mathbb{F}_p}\mathbb{F}_{q'_\beta}\right)\bs X''_\alpha,X'_\beta\mid \beta\in\Delta\setminus\{\alpha\}\js\subseteq E''^+_\Delta\ .$$ Now using the action of $H'_\alpha$ in a similar argument as above (reducing modulo the $k$th power of the ideal generated by all the $X'_\beta$, $\beta\in\Delta\setminus\{\alpha\}$ for all $k\geq 1$) we deduce the statement.
\end{proof}

The subring $E^{sep}_{\Delta,\circ}\cong \bigotimes_{\alpha\in\Delta,\mathbb{F}_p}E^{sep}_\alpha$ in $E^{sep}_\Delta$ is the inductive limit of $E'_{\Delta,\circ}\subseteq E'_{\Delta}$ where $E'_\alpha$ runs through the finite separable extensions of $E_\alpha$ for each $\alpha\in\Delta$.

Let $V$ be a finite dimensional representation of the group $\GQpD$ over $\mathbb{F}_p$. The basechange $E^{sep}_\Delta\otimes_{\mathbb{F}_p}V$ is equipped with the diagonal semilinear action of $\GQpD$ and with the Frobenii $\varphi_\alpha$ for $\alpha\in \Delta$. These all commute with each other. We define the value of the functor $\mathbb{D}$ at $V$ by putting
\begin{equation*}
\mathbb{D}(V):=(E^{sep}_\Delta\otimes_{\mathbb{F}_p}V)^{\HQpD}\ .
\end{equation*}
By Proposition \ref{invEsepD} $\mathbb{D}(V)$ is a module over $E_\Delta$ inheriting the action of the monoid $T_{+,\Delta}$ from the action of $\varphi_\alpha$ ($\alpha\in\Delta$) and the Galois group $\GQpD$ on $E^{sep}_\Delta\otimes_{\mathbb{F}_p}V$. Our key Lemma is the following.

\begin{lem}\label{galinvbasis}
The $E^{sep}_\Delta$-module $E^{sep}_\Delta\otimes_{\mathbb{F}_p}V$ admits a basis consisting of elements fixed by $\HQpD$.
\end{lem}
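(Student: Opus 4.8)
The plan is to prove the statement by induction on $|\Delta|$, reducing it at each stage to the one-variable case, which is (a $\GL_n$-version of) the theorem of Fontaine and Wintenberger recalled above.

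First I would reduce to a finite level. Since $V$ is continuous, for each $\alpha\in\Delta$ the action of $\HQpa$ on $V$ factors through $\Gal(E'_\alpha/E_\alpha)$ for a suitable finite Galois subextension $E_\alpha\leq E'_\alpha\leq E^{sep}_\alpha$; hence the diagonal semilinear action of $\HQpD$ on $E^{sep}_\Delta\otimes_{\Fp}V$ is already defined over $E'_\Delta$ and factors through the finite group $\prod_{\alpha\in\Delta}\Gal(E'_\alpha/E_\alpha)$. As ``fixed by $\HQpD$'' and ``fixed by this finite quotient'' mean the same for elements of $E'_\Delta\otimes V$, I am free to enlarge the $E'_\alpha$. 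For $|\Delta|=1$ the ring $E_\alpha=\Fp\bg X_\alpha\jg$ is a field, $\HQpa=\Gal(E^{sep}_\alpha/E_\alpha)$, and $V$ is a continuous $\Gal(E'_\alpha/E_\alpha)$-representation; Hilbert's Theorem~90 in the form $H^1(\Gal(E'_\alpha/E_\alpha),\GL_n(E'_\alpha))=1$ (equivalently: $(E'_\alpha\otimes_{\Fp}V)^{\Gal(E'_\alpha/E_\alpha)}$ is an $E_\alpha$-form of dimension $n:=\dim_{\Fp}V$) shows that any $E_\alpha$-basis of the invariants is an $E^{sep}_\alpha$-basis of $E^{sep}_\alpha\otimes V$ consisting of $\HQpa$-fixed vectors.

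For the inductive step I would peel off one variable. Fix $\alpha\in\Delta$ and put $\Delta'=\Delta\setminus\{\alpha\}$. Restricting the cocycle of the semilinear $\HQpD$-action on $E^{sep}_\Delta\otimes V$ to $\HQpa$ one gets the cocycle of the $\Gal(E'_\alpha/E_\alpha)$-representation $V|_{\HQpa}$; by the one-variable case applied in the variable $X_\alpha$ (using that $E'_\alpha$ sits $\HQpa$-equivariantly inside $E^{sep}_\Delta$) this cocycle is a coboundary by some $b\in\GL_n(E'_\alpha)\subseteq\GL_n(E^{sep}_\Delta)$. After the change of basis given by $b$, the $\HQpa$-action on $E^{sep}_\Delta\otimes V$ is untwisted, so the new basis lies in $W:=(E^{sep}_\Delta\otimes_{\Fp}V)^{\HQpa}$; this $W$ is a free module of rank $n$ over $R:=(E^{sep}_\Delta)^{\HQpa}$ (an $R$-basis is read off from $b^{-1}$), it carries the residual semilinear action of $\HQpDa=\prod_{\beta\in\Delta'}H_{\Qp,\beta}$, and $E^{sep}_\Delta\otimes_RW\cong E^{sep}_\Delta\otimes_{\Fp}V$. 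By an argument parallel to that of Proposition~\ref{invEsepD} (together with Lemma~\ref{E'Dfingen}) the ring $R$ is of exactly the same shape as $E^{sep}_{\Delta'}$, only built over the base $E_\Delta$ — which now carries the ``passive'' variable $X_\alpha$ — in place of $E_{\Delta'}$; in particular $R^{\HQpDa}=E_\Delta$ and $R$ is again an increasing union of completed tensor products of finite separable extensions in the variables $X_\beta$, $\beta\in\Delta'$. One then applies the inductive hypothesis — in the slightly more general form allowing such a base ring, equivalently as the vanishing of the relevant non-abelian $H^1$ — to $W$ over $R$ with its $\HQpDa$-action, obtaining an $R$-basis of $W$ fixed by $\HQpDa$. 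Since it lies in $W$ it is also $\HQpa$-fixed, hence $\HQpD$-fixed, and it is an $E^{sep}_\Delta$-basis of $E^{sep}_\Delta\otimes V$ because $E^{sep}_\Delta\otimes_RW\cong E^{sep}_\Delta\otimes V$.

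The hard part will be the bookkeeping in the inductive step. One must (i) verify, by a variant of the computation in Proposition~\ref{invEsepD}, that $(E^{sep}_\Delta)^{\HQpa}$ — and, iterating, all rings obtained by successively taking invariants under the remaining Galois factors, down to $E_\Delta$ itself — really are of the right shape for the induction to apply; and (ii) formulate the inductive statement for arbitrary such semilinear twists, since after the first reduction the residual twist is no longer valued in $\GL_n(\Fp)$ but in $\GL_n$ of these larger rings. The substantive content concealed here is that $(E^{sep}_\Delta\otimes_{\Fp}V)^{\HQpD}$ is a \emph{free} $E_\Delta$-module of rank $n$ (so that the invariant basis actually exists) — a freeness which is \emph{not} a formal consequence of projectivity over $E_\Delta$, and which is precisely what the interplay between the one-variable descents and the completed-tensor-product structure of $E^{sep}_\Delta$ has to produce.
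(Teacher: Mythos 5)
Your overall skeleton (induction on $|\Delta|$, one-variable Hilbert~90 to kill the $\HQpa$-twist) matches the paper's, but the inductive step as you set it up does not close, and the point where you defer the difficulty is exactly where the paper's proof does its real work. After untwisting the $\HQpa$-action you are left with $W=(E^{sep}_\Delta\otimes_{\Fp}V)^{\HQpa}$, a free module over $R=(E^{sep}_\Delta)^{\HQpa}$ carrying a \emph{semilinear} $\HQpDa$-action whose cocycle is valued in $\GL_n(E'_\alpha)$, not in $\GL_n(\Fp)$. The statement you are inducting on concerns modules of the form $E^{sep}_{\Delta\setminus\{\alpha\}}\otimes_{\Fp}V'$ for a \emph{linear} $\Fp$-representation $V'$, so it simply does not apply to $W$; you would need the ``more general form allowing such a base ring,'' i.e.\ the vanishing of the non-abelian $H^1$ of $\HQpDa$ with values in $\GL_n$ of the rings $R$. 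That is not classical Hilbert~90: $R$ is a localized completed tensor product, not a field, and over such rings a rank-$n$ descent datum need not trivialize --- you yourself note that freeness of the invariants is not formal. Asserting this generalized $H^1$-vanishing as the inductive hypothesis, without proof, is the gap.

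The paper avoids the generalization entirely by a descent-to-$\Fp$ step that your proposal is missing. It works inside $E^{sep}_{\Delta,\circ}=\bigotimes_{\alpha\in\Delta,\Fp}E^{sep}_\alpha$ (the uncompleted tensor product, which embeds into $E^{sep}_\Delta$, so a basis there suffices), forms $V_\alpha=(E^{sep}_\alpha\otimes_{\Fp}V)^{\HQpa}$, an $E_\alpha$-\emph{linear} $d$-dimensional representation of $\HQpDa$, and then shows that $V_\alpha$ and $E_\alpha\otimes_{\Fp}V$ are isomorphic as $\HQpDa$-representations already over $E_\alpha$: they become isomorphic over a finite extension $E'_\alpha=E_\alpha(u)$, one writes the intertwiner as $B(u)=B_0+B_1u+\dots+B_{n-1}u^{n-1}$, and since $\det B(x)$ is a nonzero polynomial over the infinite field $E_\alpha$ one may specialize $u\mapsto u_0\in E_\alpha$ with $\det B(u_0)\neq 0$. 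This produces a basis of $V_\alpha$ in which $\HQpDa$ acts through $\GL_d(\Fp)$, i.e.\ an $\Fp$-form $V_{\alpha\ast}$, to which the inductive hypothesis applies \emph{verbatim}. To salvage your route you must either supply this specialization/descent argument or actually prove the $\GL_n$-version of Hilbert~90 over the intermediate rings $R$; neither is done in your write-up.
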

\begin{proof}
At first consider the $E^{sep}_{\Delta,\circ}$-module $E^{sep}_{\Delta,\circ}\otimes_{\mathbb{F}_p}V$. We show by induction on $|\Delta|$ that $E^{sep}_{\Delta,\circ}\otimes_{\mathbb{F}_p}V$ admits a basis consisting of $\HQpD$-invariant vectors. The statement follows from this noting that $E^{sep}_{\Delta,\circ}$ is a subring in $E^{sep}_\Delta$ therefore the required basis exists also in $E^{sep}_\Delta\otimes_{\mathbb{F}_p}V\cong E^{sep}_\Delta\otimes_{E^{sep}_{\Delta,\circ}}(E^{sep}_{\Delta,\circ}\otimes_{\mathbb{F}_p}V)$.

By Hilbert's Thm.\ 90 the $\HQpa$-module $E^{sep}_\alpha\otimes_{\mathbb{F}_p}V$ is trivial for each $\alpha\in\Delta$. So we have an $E^{sep}_\alpha$-basis $e_1^{(\alpha)},\dots,e_d^{(\alpha)}$ of $E^{sep}_\alpha\otimes_{\mathbb{F}_p}V$ consisting of $\HQpa$-invariant elements. Since we have an action of the direct product $\HQpD$ on $V$, the $E_\alpha$-vector space
\begin{equation*}
V_\alpha:=E_\alpha e_1^{(\alpha)}+\dots + E_\alpha e_d^{(\alpha)}=(E^{sep}_\alpha\otimes_{\mathbb{F}_p}V)^{\HQpa}
\end{equation*}
admits a linear action of the group $\HQpDa$. Now note that the representations $V$ and $V_\alpha$ of the group $\HQpDa$ become isomorphic over the field $E^{sep}_\alpha$ by construction. Since $\HQpDa$ acts through a finite quotient on $V$, there is a finite extension $E'_\alpha$ of $E_\alpha$ contained in $E^{sep}_\alpha$ such that we have an isomorphism $E'_\alpha\otimes_{\mathbb{F}_p}V\cong E'_\alpha\otimes_{E_\alpha}V_\alpha$ of $\HQpDa$-representations. Making this identification and writing $e_i:=1\otimes e_i\in E'_\alpha\otimes_{\mathbb{F}_p}V$ (resp.\ $e_i^{(\alpha)}:=1\otimes e_i^{(\alpha)}$), $i=1,\dots,d$, for a basis $e_1,\dots,e_d$ in $V$ (resp.\ for the basis $e_1^{(\alpha)},\dots e_d^{(\alpha)}$ in $V_\alpha$) by an abuse of notation, we find a matrix $B\in \GL_d(E'_\alpha)$ with $B\rho(h)=\rho_\alpha(h)B$ for all $h\in \HQpDa$ where $\rho(h)\in \GL_d(\mathbb{F}_p)$ (resp.\ $\rho_\alpha(h)\in \GL_d(E_\alpha)$) is the matrix of the action of $h$ on $V$ (resp.\ on $V_\alpha$) in the basis $e_1,\dots,e_d$ (resp.\ $e_1^{(\alpha)},\dots e_d^{(\alpha)}$). Now $E'_\alpha/E_\alpha$ is a finite separable extension, so there exists a primitive element $u\in E'_\alpha$ with $E'_\alpha=E_\alpha(u)$. Hence we may write $B$ as a sum $B=B(u)=B_0+B_1u+\dots +B_{n-1}u^{n-1}$ for some matrices $B_0,B_1,\dots,B_{n-1}\in E_\alpha^{d\times d}$ with $n:=|E'_\alpha:E_\alpha|$. Since $\det B\neq 0$, the polynomial $\det(B(x)):=\det(B_0+B_1x+\dots +B_{n-1}x^{n-1})\in E_\alpha[x]$ is not identically $0$. As $E_\alpha$ is an infinite field, there exists a $u_0\in E_\alpha$ with $\det B(u_0)\neq 0$. Now we have $\rho(h)=B(u_0)^{-1}\rho_\alpha(h)B(u_0)$ for all $h\in \HQpDa$, ie.\ the representations $V$ and $V_\alpha$ of $\HQpDa$ are isomorphic already over $E_\alpha$. This shows that there exists a basis $v_1^{(\alpha)},\dots v_d^{(\alpha)}$ in $V_\alpha$ such that the action of each $h$ in $\HQpDa$ is given by a matrix in $\GL_d(\mathbb{F}_p)$ in this basis. We put 
\begin{align*}
V_{\alpha\ast}:=\mathbb{F}_pv_1^{(\alpha)}+\dots +\mathbb{F}_p v_d^{(\alpha)}\subset V_\alpha=\left(E^{sep}_\alpha\otimes_{\mathbb{F}_p} V\right)^{\HQpa}=\\
=\left(\left(\bigotimes_{\beta\in\Delta\setminus\{\alpha\}}1\right)\otimes (E^{sep}_\alpha\otimes_{\mathbb{F}_p} V)\right)^{\HQpa}\subseteq \left(E^{sep}_{\Delta,\circ}\otimes_{\mathbb{F}_p}V\right)^{\HQpa} \ .
\end{align*}
By induction we find a basis $v_1,\dots,v_n$ of $E^{sep}_{\Delta\setminus\{\alpha\},\circ}\otimes_{\mathbb{F}_p}V_{\alpha\ast}\subseteq \left(E^{sep}_{\Delta,\circ}\otimes_{\mathbb{F}_p}V\right)^{\HQpa}$ consisting of $\HQpDa$-invariant elements which are $\HQpa$-invariant, as well, by construction. Therefore $v_1,\dots,v_n$ is an $\HQpD$-invariant basis of $E^{sep}_{\Delta,\circ}\otimes_{\mathbb{F}_p}V$ as required.
\end{proof}

\begin{lem}\label{invertibleelts}
We have $(E^{sep}_\Delta)^\times\cap E_\Delta=E_\Delta^{\times}$.
\end{lem}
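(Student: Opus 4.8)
The plan is to establish the nontrivial inclusion $(E^{sep}_\Delta)^\times\cap E_\Delta\subseteq E_\Delta^\times$ by a norm/determinant argument; the reverse inclusion is immediate, since a unit of $E_\Delta$ keeps its inverse in the overring $E^{sep}_\Delta$.

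First I would reduce to a finite level. If $a\in E_\Delta$ admits an inverse in $E^{sep}_\Delta=\varinjlim E'_\Delta$, then, the colimit being filtered and all transition maps being injective, the elements $a$ and $a^{-1}$ together with the relation $aa^{-1}=1$ already occur inside $E'_\Delta$ for a suitable choice of finite separable extensions $E'_\alpha/E_\alpha$ ($\alpha\in\Delta$). Hence it is enough to show: if $a\in E_\Delta$ is a unit in $E'_\Delta$, then $a\in E_\Delta^\times$.

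Next I would invoke Lemma~\ref{E'Dfingen}, which with $\Delta=\{\alpha_1,\dots,\alpha_n\}$ presents $E'_\Delta$ as the iterated tensor product $E'_{\alpha_1}\otimes_{E_{\alpha_1}}\bigl(\cdots(E'_{\alpha_n}\otimes_{E_{\alpha_n}}E_\Delta)\cdots\bigr)$. Since each $E_{\alpha_i}$ is a field and $E'_{\alpha_i}/E_{\alpha_i}$ is a finite extension, every base change in this tower is finite free, of rank $[E'_{\alpha_i}:E_{\alpha_i}]$, over the ring produced at the previous stage; hence $E'_\Delta$ is a finite free $E_\Delta$-module, of rank $N:=\prod_{\alpha\in\Delta}[E'_\alpha:E_\alpha]\ge 1$.

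Finally, multiplication by $a$ on the free $E_\Delta$-module $E'_\Delta$ is an $E_\Delta$-linear endomorphism; as $a$ lies in $E_\Delta$ it is scalar multiplication, so its determinant equals $a^N\in E_\Delta$. Since $a$ is a unit in $E'_\Delta$, this endomorphism is invertible (its inverse being multiplication by $a^{-1}$), so its determinant $a^N$ is a unit of $E_\Delta$, and therefore so is $a$ itself, with inverse $a^{N-1}(a^N)^{-1}$. I do not expect a serious obstacle here: the only points demanding a little care are the filtered-colimit reduction to a finite level and the observation that Lemma~\ref{E'Dfingen} yields genuine freeness (not merely finiteness or flatness), which is exactly what makes the determinant an honest power of $a$.
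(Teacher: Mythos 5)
Your argument is correct, but it is not the one the paper uses. The paper's proof is a one-liner by Galois descent: if $u\in(E^{sep}_\Delta)^\times\cap E_\Delta$, then $u$ is fixed by $\HQpD$, hence so is $u^{-1}$ (the action is by ring automorphisms), and therefore $u^{-1}\in(E^{sep}_\Delta)^{\HQpD}=E_\Delta$ by Proposition~\ref{invEsepD}. You instead avoid the Galois action altogether: after the (legitimate) reduction to a finite level $E'_\Delta$ via the filtered colimit with injective transition maps, you use Lemma~\ref{E'Dfingen} to see that $E'_\Delta$ is finite free of rank $N$ over $E_\Delta$ and run the standard norm/determinant trick, $\det(m_a)=a^N\in E_\Delta^\times$. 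Both routes are sound. The paper's version is shorter on the page but leans on the substantial invariance statement $(E^{sep}_\Delta)^{\HQpD}=E_\Delta$, which was proved earlier by an inductive completion argument; yours needs only the finite freeness of each $E'_\Delta$ over $E_\Delta$ (a fact the paper itself records and reuses in the proof of Proposition~\ref{hilbert90D}), so it is more elementary in its inputs and would apply verbatim to any base subring over which the finite levels are finite free, independently of any group action. The two points you flag as needing care --- injectivity of the transition maps and genuine freeness rather than mere flatness --- are indeed the only delicate spots, and both hold here.
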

\begin{proof}
Let $u$ be arbitrary in $(E^{sep}_\Delta)^\times\cap E_\Delta$. Since $u$ is invariant under the action of $\HQpD$, so is its inverse $u^{-1}$ whence it also lies in $E_\Delta$ by Proposition \ref{invEsepD}.
\end{proof}

\begin{lem}\label{phiinv}
We have $\bigcap_{\alpha\in\Delta}(E^{sep}_\Delta)^{\varphi_\alpha=\id}=\mathbb{F}_p$.
\end{lem}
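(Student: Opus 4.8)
The plan is to cut the statement into a \emph{reduction to constants} and a purely finite computation. The inclusion $\mathbb{F}_p\subseteq\bigcap_{\alpha\in\Delta}(E^{sep}_\Delta)^{\varphi_\alpha=\id}$ is immediate, since $\varphi_\alpha$ restricts to the identity on $\mathbb{F}_p\subseteq E_\alpha$. For the reverse inclusion, let $x$ lie in the intersection. As $E^{sep}_\Delta=\varinjlim_{E'_\alpha}E'_\Delta$ and the partial Frobenii are compatible with this colimit, there are finite separable extensions $E'_\alpha/E_\alpha$ ($\alpha\in\Delta$) with $x\in E'_\Delta$ and $\varphi_\alpha(x)=x$ inside $E'_\Delta$. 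I will use the identification $E'^+_\Delta\cong\big(\bigotimes_{\alpha\in\Delta,\mathbb{F}_p}\mathbb{F}_{q_\alpha}\big)\bs X'_\alpha\mid\alpha\in\Delta\js$ from the construction of $E'_\Delta$: the operator $\varphi_\alpha$ acts as the $p$-power Frobenius on the factor $E'_\alpha$ and trivially on the factors $E'_\beta$ ($\beta\ne\alpha$), so in particular it preserves the constant subring $R:=\bigotimes_{\alpha\in\Delta,\mathbb{F}_p}\mathbb{F}_{q_\alpha}$, and $\varphi_s:=\prod_{\alpha\in\Delta}\varphi_\alpha$ is the $p$-power map on $E'_\Delta$ (as it is on $E_\Delta$).

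The first step is to show $x\in R$. From $\varphi_\alpha(x)=x$ for all $\alpha$ we get $\varphi_s(x)=x$, i.e.\ $x^p=x$. Since each $\mathbb{F}_{q_\alpha}$ is a finite separable extension of $\mathbb{F}_p$, the ring $\bigotimes_{\alpha,\mathbb{F}_p}\mathbb{F}_{q_\alpha}$ is a finite product of finite fields; hence $E'^+_\Delta$, and — after inverting the nonzerodivisor $X_\Delta$ — also $E'_\Delta$, is a finite product $\prod_j T_j$ of integral domains of characteristic $p$, each $T_j$ having prime subfield $\mathbb{F}_p$ and sitting over one of the finite fields $\mathbb{F}_{q^{(j)}}$ in the decomposition $R=\prod_j\mathbb{F}_{q^{(j)}}$. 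Writing $x=(x_j)_j$, the equation $x_j^p=x_j$ in the domain $T_j$ forces $x_j\in\mathbb{F}_p$, so $x\in\prod_j\mathbb{F}_p\subseteq\prod_j\mathbb{F}_{q^{(j)}}=R$.

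The second step is the finite-ring identity $R^{\varphi_\alpha=\id,\ \forall\alpha\in\Delta}=\mathbb{F}_p$, which together with the first step gives $x\in\mathbb{F}_p$ and finishes the proof. Here I peel off one tensor factor at a time: fixing $\alpha\in\Delta$ and writing $R=\mathbb{F}_{q_\alpha}\otimes_{\mathbb{F}_p}M$ with $M:=\bigotimes_{\beta\ne\alpha,\mathbb{F}_p}\mathbb{F}_{q_\beta}$, one has $\varphi_\alpha=F\otimes\id_M$ with $F$ the $p$-power Frobenius of $\mathbb{F}_{q_\alpha}$; tensoring the left exact sequence $0\to\mathbb{F}_p\to\mathbb{F}_{q_\alpha}\xrightarrow{F-\id}\mathbb{F}_{q_\alpha}$ with the free $\mathbb{F}_p$-module $M$ yields $R^{\varphi_\alpha=\id}=\mathbb{F}_p\otimes_{\mathbb{F}_p}M=M$, on which the remaining $\varphi_\beta$ ($\beta\ne\alpha$) act as the partial Frobenii of $M$. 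Iterating over all $\alpha\in\Delta$ collapses $R$ to the empty tensor product $\mathbb{F}_p$.

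The only non-formal point — and the step I expect to take the most care — is the reduction to constants: one must track the genuine ring structure of $E'_\Delta$ and in particular note that $\bigotimes_\alpha\mathbb{F}_{q_\alpha}$ need not be a field, so $E'_\Delta$ need not be a domain and $x^p=x$ does \emph{not} on its own force $x\in\mathbb{F}_p$ — only membership in the product of the prime subfields of the domain factors, from which the second step then extracts $\mathbb{F}_p$. Once the \emph{finite product of domains} picture is fixed, the remaining verifications (that the residual actions in the inductive collapse are the expected partial Frobenii, and that $\mathbb{F}_{q_\alpha}^{F=\id}=\mathbb{F}_p$) are routine.
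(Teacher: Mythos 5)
Your proof is correct, and the interesting part is that your reduction to constants runs along a genuinely different line from the paper's. The paper first shows $u\in E'^+_\Delta$ by a norm argument: since $\bigotimes_{\alpha\in\Delta,\mathbb{F}_p}\mathbb{F}_{q_\alpha}$ is reduced, $|u^p|_{prod}=|u|_{prod}^p$, so $u^p=u$ forces $|u|_{prod}\in\{0,1\}$; it then extracts the constant term $u_0$, proves $u_0\in\mathbb{F}_p$ by the same basis computation you perform (your flatness/kernel formulation of $R^{\varphi_\alpha=\id}=M$ is the same calculation packaged more structurally), and finally feeds $u-u_0$ back into the norm argument, using $|u-u_0|_{prod}<1$ to conclude $u=u_0$. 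You instead make the structure that underlies the paper's reducedness appeal explicit: $R=\bigotimes_\alpha\mathbb{F}_{q_\alpha}$ is a finite product of finite fields, so $E'^+_\Delta\cong\prod_j\mathbb{F}_{q^{(j)}}\bs X'_\alpha\mid\alpha\in\Delta\js$ and, after inverting the nonzerodivisor $X_\Delta$, $E'_\Delta$ is a finite product of domains of characteristic $p$; in each factor $y^p=y$ has only the $p$ roots in the prime field, so $x$ lands in $\prod_j\mathbb{F}_p\subseteq R$ in one stroke. This buys you a purely algebraic argument that bypasses the topology and the norm $|\cdot|_{prod}$ entirely, and it reaches ``$x$ is a constant'' directly rather than via integrality plus a second pass on $u-u_0$. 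The paper's norm-based version has the mild advantage of not requiring one to verify that the product decomposition of the coefficient ring propagates through the completed power series ring and the localization, but that verification is routine (the idempotents already live in $R$), so your route is a legitimate and arguably cleaner alternative. The one point you rightly flag and handle --- that $E'_\Delta$ need not be a domain, so $x^p=x$ alone only confines $x$ to the product of prime subfields and the $\varphi_\alpha$-conditions are still needed to cut down to the diagonal $\mathbb{F}_p$ --- is exactly where a careless version of this argument would go wrong, since the partial Frobenii permute the idempotents of $R$.
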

\begin{proof}
The containment $\mathbb{F}_p\subseteq \bigcap_{\alpha\in\Delta}(E^{sep}_\Delta)^{\varphi_\alpha=\id}\subseteq (E^{sep}_\Delta)^{\varphi_s=\id}$ is obvious. On the other hand, let $u\in E^{sep}_\Delta$ be arbitrary such that $\varphi_\alpha(u)=u$ for all $\alpha\in\Delta$. Then we also have $u^p=\varphi_s(u)=u$ as $\varphi_s$ is the absolute Frobenius on $E^{sep}_\Delta$. Since $E^{sep}_\Delta$ is defined as an inductive limit, $u$ lies in $E'_\Delta\cong (\bigotimes_{\alpha\in\Delta,\mathbb{F}_p}\mathbb{F}_{q_\alpha})\bs X'_\alpha\mid \alpha\in\Delta\js[X_\Delta^{-1}]$ for some collection $E'_\alpha=\mathbb{F}_{q_\alpha}\bg X'_\alpha\jg$ ($\alpha\in\Delta$) of finite separable extensions of $E_\alpha$. Note that $\bigotimes_{\alpha\in\Delta,\mathbb{F}_p}\mathbb{F}_{q_\alpha}$ is a finite \'etale algebra over $\mathbb{F}_p$, in particular, it is reduced. Therefore we have $|u^p|_{prod}=|u|_{prod}^p$. We deduce $|u|_{prod}=1$ unless $u=0$. In particular, $u$ lies in $E'^+_\Delta=(\bigotimes_{\alpha\in\Delta,\mathbb{F}_p}\mathbb{F}_{q_\alpha})\bs X'_\alpha\mid \alpha\in\Delta\js$. The constant term $u_0\in \bigotimes_{\alpha\in\Delta,\mathbb{F}_p}\mathbb{F}_{q_\alpha}$ also satisfies $\varphi_\alpha(u_0)=u_0$ for all $\alpha\in\Delta$. For a fixed $\alpha\in \Delta$ we choose an $\mathbb{F}_p$-basis $d_1,\dots,d_n$ of $\bigotimes_{\beta\in\Delta\setminus\{\alpha\},\mathbb{F}_p}\mathbb{F}_{q_\beta}$ and write $u_0=\sum_{i=1}^nc_i\otimes d_i$ with $c_i\in\mathbb{F}_{q_\alpha}$. This decomposition is unique and we compute 
\begin{align*}
\sum_{i=1}^nc_i\otimes d_i=u_0=\varphi_\alpha(u_0)=\sum_{i=1}^nc_i^p\otimes d_i\ . 
\end{align*}
We deduce $c_i=c_i^p$, ie.\ $c_i\in\mathbb{F}_p$ for all $1\leq i\leq n$. It follows by induction on $|\Delta|$ that $u_0$ lies in $\mathbb{F}_p$. Now $u-u_0$ is also fixed by each $\varphi_\alpha$ ($\alpha\in\Delta$), but we have $|u-u_0|_{prod}<1$. This implies by the discussion above that $u=u_0$ is in $\mathbb{F}_p$ as desired.
\end{proof}

\begin{pro}\label{mathbbD}
$\mathbb{D}(V)$ is an \'etale $T_{+,\Delta}$-module over $E_\Delta$ of rank $d:=\dim_{\mathbb{F}_p}V$. Moreover, we have $E^{sep}_\Delta\otimes_{E_\Delta}\mathbb{D}(V)\cong E^{sep}_\Delta\otimes_{\mathbb{F}_p}V$ and $$V=\bigcap_{\alpha\in\Delta}(E^{sep}_\Delta\otimes_{E_\Delta}\mathbb{D}(V))^{\varphi_\alpha=\id}\ .$$
\end{pro}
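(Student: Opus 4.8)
The plan is to exploit Lemma~\ref{galinvbasis}: fix an $\HQpD$-invariant $E^{sep}_\Delta$-basis $v_1,\dots,v_d$ of $E^{sep}_\Delta\otimes_{\Fp}V$. Since the action of each $\varphi_\alpha$ and of $\GQpD$ commutes with itself and preserves the subring of $\HQpD$-invariants, the $E_\Delta$-span $\sum_i E_\Delta v_i$ is stable under $T_{+,\Delta}$, and one checks it equals $\mathbb{D}(V)=(E^{sep}_\Delta\otimes_{\Fp}V)^{\HQpD}$ by a Galois-descent argument: an element of the tensor product fixed by $\HQpD$ has coordinates in the $v_i$-basis fixed by $\HQpD$, hence lying in $(E^{sep}_\Delta)^{\HQpD}=E_\Delta$ by Proposition~\ref{invEsepD}. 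This gives immediately that $\mathbb{D}(V)$ is a free $E_\Delta$-module of rank $d$ and that the natural map $E^{sep}_\Delta\otimes_{E_\Delta}\mathbb{D}(V)\to E^{sep}_\Delta\otimes_{\Fp}V$ is an isomorphism (it sends the basis $v_i$ to the basis $v_i$). For \'etaleness: writing the matrix of $\varphi_\alpha$ on the basis $v_1,\dots,v_d$ of $\mathbb{D}(V)$ as $A_\alpha\in E_\Delta^{d\times d}$, the fact that $\id\otimes\varphi_\alpha$ is already an isomorphism on $E^{sep}_\Delta\otimes_{\Fp}V$ (because $\varphi_\alpha$ is a bijection there, being an automorphism of $E^{sep}_\Delta$ on the first factor tensored with the identity, restricted appropriately — more precisely $\varphi_\alpha$ is the absolute Frobenius in the $\alpha$-variable and a bijection on $E^{sep}_\alpha$) forces $A_\alpha\in\GL_d(E^{sep}_\Delta)$; but $\det A_\alpha\in E_\Delta$, so by Lemma~\ref{invertibleelts} we get $\det A_\alpha\in E_\Delta^\times$ and the map $\id\otimes\varphi_\alpha\colon\varphi_\alpha^*\mathbb{D}(V)\to\mathbb{D}(V)$ is an isomorphism.

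For the last formula, observe that under the identification $E^{sep}_\Delta\otimes_{E_\Delta}\mathbb{D}(V)\cong E^{sep}_\Delta\otimes_{\Fp}V$ the operator $\varphi_\alpha$ corresponds to $\varphi_\alpha\otimes\id_V$ acting on $E^{sep}_\Delta\otimes_{\Fp}V$ (the action on $V$ being trivial, and on $\mathbb{D}(V)$ it is the given one). Hence $\bigcap_\alpha(E^{sep}_\Delta\otimes_{\Fp}V)^{\varphi_\alpha=\id}$ is computed by fixing an $\Fp$-basis $w_1,\dots,w_d$ of $V$ and writing a general element as $\sum_j u_j\otimes w_j$ with $u_j\in E^{sep}_\Delta$; it is fixed by all $\varphi_\alpha$ iff $\varphi_\alpha(u_j)=u_j$ for all $\alpha$ and all $j$, iff $u_j\in\bigcap_\alpha(E^{sep}_\Delta)^{\varphi_\alpha=\id}=\Fp$ by Lemma~\ref{phiinv}. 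Thus the intersection is exactly $\sum_j\Fp w_j=V$, which is the claim.

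I expect the main obstacle to be the precise verification that $\sum_i E_\Delta v_i=\mathbb{D}(V)$ as $T_{+,\Delta}$-modules, i.e.\ that the $\HQpD$-invariants of $E^{sep}_\Delta\otimes_{\Fp}V$ are exactly the $E_\Delta$-linear combinations of the $v_i$ with \emph{no} larger coefficient ring sneaking in. This is where Proposition~\ref{invEsepD}, giving $(E^{sep}_\Delta)^{\HQpD}=E_\Delta$, does the essential work: since the $v_i$ form an $E^{sep}_\Delta$-basis, the coordinate functionals are $E^{sep}_\Delta$-linear and $\HQpD$-equivariant (because the $v_i$ are $\HQpD$-fixed), so an invariant vector has invariant coordinates, hence coordinates in $E_\Delta$. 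The rest — freeness, rank, the base-change isomorphism, and reading off $\varphi_\alpha$-invariants via Lemmas~\ref{invertibleelts} and~\ref{phiinv} — is then formal.
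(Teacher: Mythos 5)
Your proposal follows the paper's proof essentially verbatim: freeness and the base-change isomorphism from Lemma \ref{galinvbasis} plus the descent $(E^{sep}_\Delta)^{\HQpD}=E_\Delta$ of Proposition \ref{invEsepD}, \'etaleness from Lemma \ref{invertibleelts}, and the final formula from Lemmata \ref{galinvbasis} and \ref{phiinv}. The only point to correct is your parenthetical justification for $A_\alpha\in\GL_d(E^{sep}_\Delta)$: the partial Frobenius $\varphi_\alpha$ is \emph{not} a bijection on $E^{sep}_\alpha$ (nor on $E^{sep}_\Delta$), since $E_\alpha=\mathbb{F}_p\bg X_\alpha\jg$ is imperfect and, e.g., $X_\alpha^{1/p}$ is inseparable over $E_\alpha$, hence does not lie in $E^{sep}_\alpha$. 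The correct reason is the one you implicitly use in your last paragraph: in the basis $1\otimes w_1,\dots,1\otimes w_d$ coming from an $\mathbb{F}_p$-basis of $V$, the semilinear operator $\varphi_\alpha\otimes\id_V$ has matrix the identity, so writing $v_i=\sum_j b_{ji}\otimes w_j$ with $B=(b_{ji})\in\GL_d(E^{sep}_\Delta)$ gives $A_\alpha=B^{-1}\varphi_\alpha(B)$, which is invertible over $E^{sep}_\Delta$ because $\varphi_\alpha$, being a ring homomorphism, sends units to units; Lemma \ref{invertibleelts} then puts $\det A_\alpha$ in $E_\Delta^\times$ exactly as you say. With that repair the argument is complete and coincides with the paper's.
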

\begin{proof}
By Lemmata \ref{invEsepD} and \ref{galinvbasis} $\mathbb{D}(V)$ is a free module of rank $d$ over $E_\Delta$. Moreover, the matrix of $\varphi_\alpha$ in any basis of $\mathbb{D}(V)$ is invertible in $E^{sep}_\Delta$, therefore also in $E_\Delta$ by Lemma \ref{invertibleelts}. So the action of $T_{+,\Delta}$ on $\mathbb{D}(V)$ is \'etale. The last statement is a direct consequence of Lemmata \ref{galinvbasis} and \ref{phiinv}.
\end{proof}

\begin{lem}\label{tannakaDmodp}
For objects $V,V_1,V_2$ in $\RepDFp$ we have $\mathbb{D}(V_1\otimes_{\mathbb{F}_p}V_2)\cong \mathbb{D}(V_1)\otimes_{E_\Delta}\mathbb{D}(V_2)$ and $\mathbb{D}(V^*)\cong \mathbb{D}(V)^*$.
\end{lem}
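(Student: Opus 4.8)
The plan is to produce natural $T_{+,\Delta}$-equivariant $E_\Delta$-linear comparison maps for both statements and to check that each becomes an isomorphism after base change along $E_\Delta\hookrightarrow E^{sep}_\Delta$. First I would observe that $E^{sep}_\Delta$ is faithfully flat over $E_\Delta$: by Lemma \ref{E'Dfingen} and the fact that each $E'^+_\alpha$ is finite free over $E^+_\alpha$, every $E'_\Delta$ is finite free over $E_\Delta$, and $E^{sep}_\Delta$ is the filtered colimit of these. Hence a morphism of $E_\Delta$-modules is an isomorphism if and only if it is so after $\otimes_{E_\Delta}E^{sep}_\Delta$, and a bilinear pairing of finite free $E_\Delta$-modules is perfect iff it is perfect after this base change. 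I would also record that $\mathbb{D}(\mathbb{F}_p)=(E^{sep}_\Delta)^{\HQpD}=E_\Delta$ by Proposition \ref{invEsepD}, so $\mathbb{D}$ carries the unit object to the unit object.

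For the tensor product, the multiplication $E^{sep}_\Delta\otimes_{\mathbb{F}_p}E^{sep}_\Delta\to E^{sep}_\Delta$ tensored with $\id_{V_1\otimes_{\mathbb{F}_p}V_2}$ yields a map $(E^{sep}_\Delta\otimes_{\mathbb{F}_p}V_1)\otimes_{\mathbb{F}_p}(E^{sep}_\Delta\otimes_{\mathbb{F}_p}V_2)\to E^{sep}_\Delta\otimes_{\mathbb{F}_p}(V_1\otimes_{\mathbb{F}_p}V_2)$ which is equivariant for the diagonal $\GQpD$-action and for every $\varphi_\alpha$; passing to $\HQpD$-invariants and using $E_\Delta$-bilinearity gives a natural $T_{+,\Delta}$-equivariant map $\phi\colon\mathbb{D}(V_1)\otimes_{E_\Delta}\mathbb{D}(V_2)\to\mathbb{D}(V_1\otimes_{\mathbb{F}_p}V_2)$. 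Applying $E^{sep}_\Delta\otimes_{E_\Delta}-$ and invoking the canonical isomorphisms $E^{sep}_\Delta\otimes_{E_\Delta}\mathbb{D}(V_i)\cong E^{sep}_\Delta\otimes_{\mathbb{F}_p}V_i$ of Proposition \ref{mathbbD} (valid by Lemma \ref{galinvbasis}), a short diagram chase identifies $\id\otimes\phi$ with the obvious isomorphism $(E^{sep}_\Delta\otimes_{\mathbb{F}_p}V_1)\otimes_{E^{sep}_\Delta}(E^{sep}_\Delta\otimes_{\mathbb{F}_p}V_2)\cong E^{sep}_\Delta\otimes_{\mathbb{F}_p}(V_1\otimes_{\mathbb{F}_p}V_2)$. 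By faithful flatness $\phi$ is then an isomorphism, and it intertwines the $T_{+,\Delta}$-actions by construction; as a check, both sides are étale of rank $(\dim_{\mathbb{F}_p}V_1)(\dim_{\mathbb{F}_p}V_2)$ over $E_\Delta$ — the left since a tensor product of étale modules is étale, the right by Proposition \ref{mathbbD}.

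For the dual I would reduce to the tensor case: applying $\mathbb{D}$ to the evaluation $V\otimes_{\mathbb{F}_p}V^*\to\mathbb{F}_p$ and composing with $\phi$ produces a $T_{+,\Delta}$-equivariant pairing $\mathbb{D}(V)\otimes_{E_\Delta}\mathbb{D}(V^*)\to\mathbb{D}(\mathbb{F}_p)=E_\Delta$, hence a $T_{+,\Delta}$-equivariant map $\psi\colon\mathbb{D}(V^*)\to\mathbb{D}(V)^*$, the target carrying the étale structure defined in the discussion following the definition of $\mathcal{D}^{et}(\varphi_{\Delta},\Gamma_\Delta,E_\Delta)$. Base-changing along $E_\Delta\hookrightarrow E^{sep}_\Delta$, using that $\mathbb{D}(V)$ is finite free and Proposition \ref{mathbbD}, one identifies this pairing with the standard perfect pairing $(E^{sep}_\Delta\otimes_{\mathbb{F}_p}V)\otimes_{E^{sep}_\Delta}(E^{sep}_\Delta\otimes_{\mathbb{F}_p}V^*)\to E^{sep}_\Delta$ coming from $V\otimes_{\mathbb{F}_p}V^*\to\mathbb{F}_p$; faithful flatness then shows the original pairing is perfect, so $\psi$ is an isomorphism. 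The only places needing genuine care are the faithful flatness of $E^{sep}_\Delta$ over $E_\Delta$ and the bookkeeping that $\psi$ respects the dual étale structure on $\mathbb{D}(V)^*$; I do not expect a real obstacle here, only unwinding of definitions.
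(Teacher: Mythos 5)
Your argument is correct and follows essentially the same route as the paper: both hinge on the comparison isomorphism $E^{sep}_\Delta\otimes_{E_\Delta}\mathbb{D}(V)\cong E^{sep}_\Delta\otimes_{\Fp}V$ of Proposition \ref{mathbbD} to trivialize both sides over $E^{sep}_\Delta$. The only difference is the descent mechanism at the end: you invoke faithful flatness of $E^{sep}_\Delta$ over $E_\Delta$ (which does hold, since each $E'_\Delta$ is finite free over $E_\Delta$ by Lemma \ref{E'Dfingen} and the colimit is filtered), whereas the paper simply takes $\HQpD$-invariants of the trivialized isomorphism and uses $(E^{sep}_\Delta)^{\HQpD}=E_\Delta$ from Proposition \ref{invEsepD} together with the freeness of $\mathbb{D}(V_1)\otimes_{E_\Delta}\mathbb{D}(V_2)$ and of $\Hom_{E_\Delta}(\mathbb{D}(V),E_\Delta)$; both mechanisms are valid.
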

\begin{proof}
We compute
\begin{align*}
\mathbb{D}(V_1\otimes_{\Fp}V_2)=\left(E^{sep}_\Delta\otimes_{\Fp} V_1\otimes_{\Fp}V_2\right)^{\HQpD}\cong \left((E^{sep}_\Delta\otimes_{\Fp} V_1)\otimes_{E^{sep}_\Delta}(E^{sep}_\Delta\otimes_{\Fp}V_2)\right)^{\HQpD}\cong\\
\left((E^{sep}_\Delta\otimes_{E_\Delta} \mathbb{D}(V_1))\otimes_{E^{sep}_\Delta}(E^{sep}_\Delta\otimes_{E_\Delta}\mathbb{D}(V_2))\right)^{\HQpD}\cong\\ \cong\left(E^{sep}_\Delta\otimes_{E_\Delta} (\mathbb{D}(V_1)\otimes_{E_\Delta}\mathbb{D}(V_2))\right)^{\HQpD}\cong \mathbb{D}(V_1)\otimes_{E_\Delta}\mathbb{D}(V_2)\ .
\end{align*}
For the second statement we have
\begin{align*}
\mathbb{D}(V^*)=\left(E^{sep}_\Delta\otimes_{\Fp}\Hom_{\Fp}(V,\Fp)\right)^{\HQpD}\cong \Hom_{E^{sep}_\Delta}(E^{sep}_\Delta\otimes_{\Fp}V,E^{sep}_\Delta)^{\HQpD}\cong\\ \cong \Hom_{E^{sep}_\Delta}(E^{sep}_\Delta\otimes_{E_\Delta}\mathbb{D}(V),E^{sep}_\Delta)^{\HQpD}\cong \left(E^{sep}_\Delta\otimes_{E_\Delta}\Hom_{E_\Delta}(\mathbb{D}(V),E_\Delta)\right)^{\HQpD}\cong \mathbb{D}(V)^*\ .
\end{align*}
\end{proof}

\begin{thm}\label{fullfaithmodp}
$\mathbb{D}$ is a fully faithful tensor functor from the category $\RepDFp$ to the category $\mathcal{D}^{et}(\varphi_{\Delta},\Gamma_\Delta,E_\Delta)$.
\end{thm}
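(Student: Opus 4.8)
The plan is to produce a quasi-inverse functor, or at least enough structure to exhibit $\mathbb{D}$ as fully faithful and compatible with tensor products. The tensor-functor claim is already essentially done: Lemma \ref{tannakaDmodp} gives the compatibility with $\otimes$ and with duals, and $\mathbb{D}$ sends the trivial representation $\mathbb{F}_p$ to $E_\Delta$ (the unit object). So the content of the theorem is full faithfulness, and I would spend the proof on that.

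First I would reduce full faithfulness to a statement about $\Hom$'s of the underlying objects. For $V_1,V_2\in\RepDFp$, a $\GQpD$-equivariant $\mathbb{F}_p$-linear map $V_1\to V_2$ is the same as a $\GQpD$-invariant element of $\Hom_{\mathbb{F}_p}(V_1,V_2)\cong V_1^*\otimes_{\mathbb{F}_p}V_2$. On the $(\varphi_\Delta,\Gamma_\Delta)$-module side, a morphism $\mathbb{D}(V_1)\to\mathbb{D}(V_2)$ is an $E_\Delta$-linear, $T_{+,\Delta}$-equivariant map, i.e. an element of $\Hom_{E_\Delta}(\mathbb{D}(V_1),\mathbb{D}(V_2))$ fixed by all $\varphi_\alpha$ and all $\Gamma_\alpha$; using Lemma \ref{tannakaDmodp} (applied to $V_1^*\otimes V_2$) this $\Hom$-module is $\mathbb{D}(V_1^*\otimes_{\mathbb{F}_p}V_2)$, and a $T_{+,\Delta}$-equivariant endomorphism of the unit corresponds to the $\varphi_\alpha$-invariants (the $\Gamma_\Delta$-invariance is automatic once one is $\varphi_\Delta$-invariant, or can be carried along). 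Thus it suffices to show, for any $W\in\RepDFp$, that the natural map $W^{\GQpD}\to \bigcap_{\alpha\in\Delta}\mathbb{D}(W)^{\varphi_\alpha=\id}$ is a bijection. But $\mathbb{D}(W)=(E^{sep}_\Delta\otimes_{\mathbb{F}_p}W)^{\HQpD}$, so by Proposition \ref{mathbbD} we have $\bigcap_\alpha(E^{sep}_\Delta\otimes_{E_\Delta}\mathbb{D}(W))^{\varphi_\alpha=\id}=W$, and intersecting further with the $\HQpD$-invariants recovers exactly $W^{\HQpD\rtimes\Gamma_\Delta}=W^{\GQpD}$ — here one uses that $\Gamma_\Delta=\GQpD/\HQpD$ acts on $W^{\HQpD}$ and that the $\varphi_\alpha$-fixed vectors inside $\mathbb{D}(W)$ are already the full $W$, which carries the residual $\GQpD$-action.

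Concretely, the key computation is: for $\phi\in\Hom_{E_\Delta}(\mathbb{D}(V_1),\mathbb{D}(V_2))$ commuting with all $\varphi_\alpha$, base-change along $E_\Delta\hookrightarrow E^{sep}_\Delta$ to get $\mathrm{id}\otimes\phi\colon E^{sep}_\Delta\otimes_{E_\Delta}\mathbb{D}(V_1)\to E^{sep}_\Delta\otimes_{E_\Delta}\mathbb{D}(V_2)$, which by the first isomorphism of Proposition \ref{mathbbD} is a map $E^{sep}_\Delta\otimes_{\mathbb{F}_p}V_1\to E^{sep}_\Delta\otimes_{\mathbb{F}_p}V_2$ commuting with all $\varphi_\alpha$ and (being defined over $E_\Delta$) with $\HQpD$; by the last assertion of Proposition \ref{mathbbD} applied to the source and target it must carry $V_1=\bigcap_\alpha(\cdots)^{\varphi_\alpha=\id}$ into $V_2$, producing an $\mathbb{F}_p$-linear map $V_1\to V_2$ which is $\HQpD$-equivariant (trivially, as it is a restriction of an $\HQpD$-map and $\HQpD$ acts trivially on $V_i$) and $\Gamma_\Delta$-equivariant because $\phi$ was $\Gamma_\Delta$-equivariant, hence $\GQpD$-equivariant. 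This gives a two-sided inverse to $V\mapsto\mathbb{D}$ on morphisms (injectivity of $\mathbb{D}$ on morphisms follows since $E^{sep}_\Delta\otimes_{E_\Delta}(-)$ is faithful on the free modules $\mathbb{D}(V)$, and surjectivity is the construction just given), so $\mathbb{D}$ is fully faithful.

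The step I expect to require the most care is checking that the morphism recovered on the Galois side is genuinely $\GQpD$-equivariant and not merely $\varphi_\Delta$- and $\HQpD$-equivariant — i.e. tracking the $\Gamma_\Delta$-action through the identification $\bigcap_\alpha(E^{sep}_\Delta\otimes_{E_\Delta}\mathbb{D}(W))^{\varphi_\alpha=\id}=W$. This amounts to noting that the $\Gamma_\Delta$-action on $\mathbb{D}(W)$ used to define morphisms of $(\varphi_\Delta,\Gamma_\Delta)$-modules is by construction the restriction of the $\GQpD$-action on $E^{sep}_\Delta\otimes_{\mathbb{F}_p}W$ (Proposition \ref{invEsepD} identifies $E_\Delta$ with the $\HQpD$-invariants together with its $\Gamma_\Delta$-action), so the fixed-point subspace $W$ inherits precisely the quotient action of $\GQpD/\HQpD=\Gamma_\Delta$ on top of the tautological $\HQpD$-action, which together reconstitute the original $\GQpD$-representation on $W$. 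Everything else (that $\mathrm{id}\otimes\phi$ over $E^{sep}_\Delta$ still commutes with the $\varphi_\alpha$ and with $\HQpD$, faithful flatness of $E_\Delta\to E^{sep}_\Delta$ on finite free modules) is routine.
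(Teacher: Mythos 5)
Your proposal is correct and, in its concrete form (base-changing a morphism $\theta\colon\mathbb{D}(V_1)\to\mathbb{D}(V_2)$ along $E_\Delta\hookrightarrow E^{sep}_\Delta$ and using the two assertions of Proposition \ref{mathbbD} to recover a $\GQpD$-equivariant map $V_1\to V_2$ with $\theta=\mathbb{D}(f)$, faithfulness following from the faithfulness of $E^{sep}_\Delta\otimes_{E_\Delta}(-)$ on free modules), is essentially the paper's own proof; the internal-Hom packaging via $V_1^*\otimes V_2$ is just an optional reformulation of the same argument.
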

\begin{proof}
Let $f\colon V_1\to V_2$ be a nonzero morphism in $\RepDFp$. Then the $E^{sep}_\Delta$-linear map $\id\otimes f\colon E^{sep}_\Delta\otimes_{\mathbb{F}_p} V_1\to E^{sep}_\Delta\otimes_{\mathbb{F}_p}V_2$ is also nonzero. By the last statement in Prop.\ \ref{mathbbD} it follows that $\mathbb{D}(f)\neq 0$ therefore the faithfulness.

Now let $V_1$ and $V_2$ be arbitrary objects in $\RepDFp$ and $\theta\colon \mathbb{D}(V_1)\to \mathbb{D}(V_2)$ be a morphism in $\mathcal{D}^{et}(\varphi_{\Delta},\Gamma_\Delta,E_\Delta)$. Then by Prop.\ \ref{mathbbD} we obtain a $\GQpD$-equivariant $\mathbb{F}_p$-linear map 
\begin{equation*}
f\colon V_1=\bigcap_{\alpha\in\Delta}\left(E^{sep}_\Delta\otimes_{E_\Delta} \mathbb{D}(V_1)\right)^{\varphi_\alpha=\id}\to \bigcap_{\alpha\in\Delta}\left(E^{sep}_\Delta\otimes_{E_\Delta} \mathbb{D}(V_2)\right)^{\varphi_\alpha=\id}=V_2
\end{equation*}
induced by $\theta$ for which we have $\theta=\mathbb{D}(f)$. Therefore $\mathbb{D}$ is full. The compatibility with tensor products is proven in Lemma \ref{tannakaDmodp}.
\end{proof}

\begin{rem}
Note that any \'etale $T_{+,\Delta}$-module $D$ in the image of the functor $\mathbb{D}$ is free as a module over $E_\Delta$ by construction.
\end{rem}

Consider the diagonal embedding $\mathrm{diag}\colon G_{\mathbb{Q}_p}\hookrightarrow G_{\mathbb{Q}_p,\Delta}$ sending $g\in G_{\mathbb{Q}_p}$ to $(g,\dots,g)$. This defines a functor $\widehat{\mathrm{diag}}\colon \RepDFp\to \operatorname{Rep}_{\mathbb{F}_p}(G_{\mathbb{Q}_p})$ via restriction. On the other hand, we have the reduction map $\ell\colon \mathcal{D}^{et}(\varphi_{\Delta},\Gamma_\Delta,E_\Delta)\to \mathcal{D}^{et}(\varphi,\Gamma,E)$ to usual $(\varphi,\Gamma)$-modules defined in section 2.4 of \cite{MultVar}. Recall that this is given by taking the quotient by the ideal generated by $(X_\alpha-X_\beta\mid \alpha,\beta\in\Delta)$ and restricting to the diagonal $\varphi=\varphi_s=\prod_{\alpha\in\Delta}\varphi_\alpha$ and $\Gamma:=\{(\gamma,\dots,\gamma)\}\leq \Gamma_\Delta$.

\begin{cor}
There is a natural isomorphism $\widehat{\mathrm{diag}}\cong \mathbb{V}_F\circ\ell\circ\mathbb{D}$ of functors $\RepDFp\to \operatorname{Rep}_{\mathbb{F}_p}(G_{\mathbb{Q}_p})$ where $\mathbb{V}_F\colon \mathcal{D}^{et}(\varphi,\Gamma,E)\to \operatorname{Rep}_{\mathbb{F}_p}(G_{\mathbb{Q}_p})$ is Fontaine's functor from classical \'etale $(\varphi,\Gamma)$-modules to Galois representations. 
\end{cor}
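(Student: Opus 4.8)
The plan is to unwind the definitions of both $\widehat{\mathrm{diag}}$ and $\mathbb{V}_F\circ\ell\circ\mathbb{D}$ and exhibit a functorial isomorphism between the resulting $G_{\mathbb{Q}_p}$-representations. Fix $V$ in $\RepDFp$. On the one hand, $\widehat{\mathrm{diag}}(V)$ is simply the underlying $\mathbb{F}_p$-vector space $V$ with $G_{\mathbb{Q}_p}$ acting through the diagonal embedding $\mathrm{diag}\colon G_{\mathbb{Q}_p}\hookrightarrow \GQpD$. On the other hand, $\ell(\mathbb{D}(V))$ is the usual étale $(\varphi,\Gamma)$-module obtained from $\mathbb{D}(V)=(E^{sep}_\Delta\otimes_{\Fp}V)^{\HQpD}$ by reducing modulo the ideal $(X_\alpha-X_\beta\mid\alpha,\beta\in\Delta)$ and restricting the action to $\varphi=\varphi_s$ and the diagonal $\Gamma\leq\Gamma_\Delta$, and then $\mathbb{V}_F$ recovers the Galois representation as $(E^{sep}\otimes_E \ell(\mathbb{D}(V)))^{\varphi=\id}$ (or the dual convention, depending on normalisation), with $G_{\mathbb{Q}_p}$ acting through its action on $E^{sep}$.

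The key step is to produce, compatibly with the diagonal embedding $G_{\mathbb{Q}_p}\hookrightarrow\GQpD$, a ring homomorphism $E^{sep}_\Delta\to E^{sep}$ that sends each $X_\alpha$ to the standard variable $X$ of the classical theory and intertwines $\varphi_s$ with $\varphi$ and the diagonal $\Gamma$-action with the usual one. Concretely, the quotient $E_\Delta \to E_\Delta/(X_\alpha-X_\beta) \cong E$ used to define $\ell$ should be promoted, after choosing compatible separable closures, to a surjection $E^{sep}_\Delta \twoheadrightarrow E^{sep}$ equivariant for $\mathrm{diag}$ on Galois groups; here one uses that $E^{sep}_\Delta$ is built as an inductive limit of completed tensor products $E'_\Delta$ of finite separable extensions $E'_\alpha/E_\alpha$, and that specialising all variables to a single one turns such a tensor product into a (compositum inside a) finite separable extension of $E$. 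Once this map is in hand, base change gives
\begin{equation*}
E^{sep}\otimes_{E}\ell(\mathbb{D}(V))\;\cong\;E^{sep}\otimes_{E^{sep}_\Delta}(E^{sep}_\Delta\otimes_{\Fp}V)\;\cong\;E^{sep}\otimes_{\Fp}V
\end{equation*}
as $G_{\mathbb{Q}_p}$-modules (with $G_{\mathbb{Q}_p}$ acting diagonally on the right), using Prop.\ \ref{mathbbD}, which identifies $E^{sep}_\Delta\otimes_{E_\Delta}\mathbb{D}(V)$ with $E^{sep}_\Delta\otimes_{\Fp}V$, together with the fact that $\ell$ is obtained by a further base change along $E_\Delta\to E$. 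Taking $\varphi=\id$ invariants on both sides and invoking Lemma \ref{phiinv} (in its classical one-variable form $(E^{sep})^{\varphi=\id}=\Fp$), exactly as in the proof of Prop.\ \ref{mathbbD}, recovers $V$ on the right-hand side with its diagonal $G_{\mathbb{Q}_p}$-action, i.e.\ $\mathbb{V}_F(\ell(\mathbb{D}(V)))\cong\widehat{\mathrm{diag}}(V)$.

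Functoriality in $V$ is automatic since every construction in sight ($\mathbb{D}$, $\ell$, $\mathbb{V}_F$, and the base-change isomorphisms) is functorial, and the intertwining with tensor products follows from Lemma \ref{tannakaDmodp} together with the monoidality of $\ell$ and $\mathbb{V}_F$. I expect the main obstacle to be the careful construction of the $\mathrm{diag}$-equivariant surjection $E^{sep}_\Delta\twoheadrightarrow E^{sep}$: one must check that the specialisation $X_\alpha\mapsto X$ is compatible with the partial Frobenii (so that $\varphi_s$ goes to $\varphi$) and with the $\Gamma_\alpha$-actions on the nose, and that after passing to finite separable extensions the map remains well defined and surjective onto $E^{sep}$ — this amounts to matching, at the level of inductive systems, the finite separable extensions $E'_\Delta$ of $E_\Delta$ with finite separable extensions of $E$, which is essentially the content of how $\ell$ was defined in \cite{MultVar} but now needs to be tracked through the full Galois-theoretic picture. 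Everything else is a formal diagram chase.
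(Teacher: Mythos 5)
Your proposal is correct and follows essentially the same route as the paper: the heart of the argument in both cases is the construction of the $\mathrm{diag}$-equivariant lift $\ell^{sep}\colon E^{sep}_\Delta\to E^{sep}$ of $\ell$ (sending each $X_\alpha\mapsto X$ and each $E^{sep}_\alpha$ into $E^{sep}$, completed on each finite level $E'_\Delta$), followed by the base-change identification $E^{sep}\otimes_{\ell^{sep},E^{sep}_\Delta}(E^{sep}_\Delta\otimes_{\Fp}V)\cong E^{sep}\otimes_{\Fp}V$ coming from Prop.~\ref{mathbbD}. The only cosmetic difference is that the paper takes $H_{\Qp}$-invariants to identify $\ell(\mathbb{D}(V))$ with Fontaine's $\mathbb{D}_F(\widehat{\mathrm{diag}}(V))$ and then quotes $\mathbb{V}_F\circ\mathbb{D}_F\cong\id$, whereas you take $\varphi=\id$ invariants directly; these are equivalent reformulations of the same final step.
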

\begin{proof}
We may identify $E_\alpha\overset{\sim}{\to} E=\mathbb{F}_p\bg X\jg$ by sending $X_\alpha\to X$ for all $\alpha\in\Delta$. We extend this identification to $E^{sep}_\alpha\to E^{sep}$. So we obtain a map $\ell^{sep}\colon E^{sep}_\Delta\to E^{sep}$ sending each subring $E^{sep}_\alpha$ to $E^{sep}$ via these identifications and completing on the level of each finite extension $E'_\Delta$. Then $\ell^{sep}$ is $G_{\Qp}$-equivariant where $G_{\Qp}$ acts on $E^{sep}_\Delta$ via the diagonal embedding $G_{\mathbb{Q}_p}\hookrightarrow G_{\mathbb{Q}_p,\Delta}$ and the usual way on $E^{sep}$. The restriction of $\ell^{sep}$ to $E_\Delta$ is the map $\ell\colon E_\Delta\to E$ defined above, so the diagram
\begin{equation*}
\xymatrix{
E_\Delta\ar@{^{(}->}[r]\ar[d]_{\ell} & E_\Delta^{sep}\ar[d]^{\ell^{sep}}\\
E\ar@{^{(}->}[r] & E^{sep}
}
\end{equation*}
commutes. Thus for an object $V$ in $\RepDFp$ we compute
\begin{align*}
\mathbb{V}_F\circ\ell\circ\mathbb{D}(V)=\mathbb{V}_F(E\otimes_{\ell,E_\Delta}\mathbb{D}(V))=\mathbb{V}_F((E^{sep})^{H_{\mathbb{Q}_p}}\otimes_{\ell,E_\Delta}\mathbb{D}(V))=\\
=\mathbb{V}_F((E^{sep}\otimes_{\ell^{sep},E^{sep}_\Delta}E^{sep}_\Delta\otimes_{E_\Delta}\mathbb{D}(V))^{H_{\mathbb{Q}_p}})=\mathbb{V}_F((E^{sep}\otimes_{\ell^{sep},E^{sep}_\Delta}E^{sep}_\Delta\otimes_{\mathbb{F}_p}V)^{H_{\mathbb{Q}_p}})=\\
=\mathbb{V}_F((E^{sep}\otimes_{\mathbb{F}_p}V)^{H_{\mathbb{Q}_p}})=\mathbb{V}_F\circ\mathbb{D}_F(V)=V\mid_{\mathrm{diag}(G_{\mathbb{Q}_p})}=\widehat{\diag}(V)\ ,
\end{align*}
where $\mathbb{D}_F\colon \operatorname{Rep}_{\mathbb{F}_p}(G_{\mathbb{Q}_p})\to \mathcal{D}^{et}(\varphi,\Gamma,E)$ stands for Fontaine's classical functor. 
\end{proof}

\subsection{The functor $\mathbb{V}$}

In order to show that the functor $\mathbb{D}$ is essentially surjective, we construct its quasi-inverse $\mathbb{V}$. Let $D$ be an object in $\mathcal{D}^{et}(\varphi_{\Delta},\Gamma_\Delta,E_\Delta)$. The group $\GQpD$ acts on $E_\Delta^{sep}\otimes_{E_\Delta}D$ via the formula $g(\lambda\otimes x):=g(\lambda)\otimes \chi_{cyc}(g)(x)$ ($g\in\GQpD$, $\lambda\in E_\Delta^{sep}$, $x\in D$) where $\chi_{cyc}\colon \GQpD\to \Gamma_\Delta$ is the quotient map. Moreover, each partial Frobenius $\varphi_\alpha$ ($\alpha\in \Delta$) acts semilinearly on $E_\Delta^{sep}\otimes_{E_\Delta}D$ via the formula $\varphi_\alpha(\lambda\otimes x):=\varphi_\alpha(\lambda)\otimes \varphi_\alpha(x)$. All these actions commute with each other by construction. We define
\begin{equation*}
\mathbb{V}(D):=\bigcap_{\alpha\in\Delta}\left(E_\Delta^{sep}\otimes_{E_\Delta}D\right)^{\varphi_\alpha=\id}\ .
\end{equation*}
$\mathbb{V}(D)$ is a---a priori not necessarily finite dimensional---representation of $\GQpD$ over $\mathbb{F}_p$. 

\begin{lem}\label{polinv}
For any integer $r>0$ we have $\bigcap_{\beta\in\Delta}(E^{sep}_{\Delta\setminus\{\alpha\}}[X_\alpha]/(X_\alpha^r))^{\varphi_\beta=\id}=\mathbb{F}_p[X_\alpha]/(X_\alpha^r)$.
\end{lem}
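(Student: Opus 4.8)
The plan is to deduce Lemma~\ref{polinv} directly from Lemma~\ref{phiinv} by expanding elements in powers of $X_\alpha$ and comparing coefficients. The key point is that each operator $\varphi_\beta$ acting on $E^{sep}_{\Delta\setminus\{\alpha\}}[X_\alpha]/(X_\alpha^r)$ with $\beta\in\Delta\setminus\{\alpha\}$ fixes $X_\alpha$ and acts only through the coefficient ring $E^{sep}_{\Delta\setminus\{\alpha\}}$, so the problem decouples into $r$ independent copies of the assertion of Lemma~\ref{phiinv} for the smaller index set $\Delta\setminus\{\alpha\}$.

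In detail, I would first record that $E^{sep}_{\Delta\setminus\{\alpha\}}[X_\alpha]/(X_\alpha^r)$ is free as a module over its subring $E^{sep}_{\Delta\setminus\{\alpha\}}$ with basis $1,X_\alpha,\dots,X_\alpha^{r-1}$, so that every element $u$ has a unique expression $u=\sum_{i=0}^{r-1}c_iX_\alpha^i$ with $c_i\in E^{sep}_{\Delta\setminus\{\alpha\}}$. Next, for $\beta\in\Delta\setminus\{\alpha\}$ the relative Frobenius $\varphi_\beta$ is the Frobenius on the $E'_\beta$-component and the identity on all other components; in particular it fixes $X_\alpha$ and restricts to the usual operator $\varphi_\beta$ on $E^{sep}_{\Delta\setminus\{\alpha\}}$. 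Hence $\varphi_\beta(u)=\sum_{i=0}^{r-1}\varphi_\beta(c_i)X_\alpha^i$, and by uniqueness of the expansion $\varphi_\beta(u)=u$ holds if and only if $\varphi_\beta(c_i)=c_i$ for every $i$.

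Combining these observations, $u$ is fixed by all the relevant $\varphi_\beta$ precisely when every coefficient $c_i$ lies in $\bigcap_{\beta\in\Delta\setminus\{\alpha\}}(E^{sep}_{\Delta\setminus\{\alpha\}})^{\varphi_\beta=\id}$. By Lemma~\ref{phiinv}, applied with the finite index set $\Delta\setminus\{\alpha\}$ in place of $\Delta$, this intersection equals $\mathbb{F}_p$; therefore $u\in\mathbb{F}_p[X_\alpha]/(X_\alpha^r)$. The reverse inclusion is immediate, since each $\varphi_\beta$ ($\beta\neq\alpha$) fixes $\mathbb{F}_p$ and $X_\alpha$. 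The extreme cases are absorbed into this: for $r=1$ the statement is literally Lemma~\ref{phiinv} for $\Delta\setminus\{\alpha\}$, and for $\Delta=\{\alpha\}$ the left-hand side is the whole ring $\mathbb{F}_p[X_\alpha]/(X_\alpha^r)$.

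I do not expect a genuine obstacle here: all of the real content is packaged inside Lemma~\ref{phiinv}, and the statement is a formal consequence of it once the coefficient-wise description of the $\varphi_\beta$-action is in place. The only steps meriting a line of care are the freeness of $E^{sep}_{\Delta\setminus\{\alpha\}}[X_\alpha]/(X_\alpha^r)$ over $E^{sep}_{\Delta\setminus\{\alpha\}}$, which legitimizes comparing coefficients, and the fact that for $\beta\neq\alpha$ the operator $\varphi_\beta$ really acts ``coefficient by coefficient'' because it fixes the variable $X_\alpha$.
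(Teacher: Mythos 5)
Your proof is correct and is essentially the paper's own argument: the paper writes $E^{sep}_{\Delta\setminus\{\alpha\}}[X_\alpha]/(X_\alpha^r)\cong E^{sep}_{\Delta\setminus\{\alpha\}}\otimes_{\mathbb{F}_p}\mathbb{F}_p[X_\alpha]/(X_\alpha^r)$ with the $\varphi_\beta$ ($\beta\neq\alpha$) acting only on the first factor and then invokes Lemma~\ref{phiinv}, which is exactly your coefficient-by-coefficient comparison in the basis $1,X_\alpha,\dots,X_\alpha^{r-1}$. You have merely spelled out the details (including the harmless observation that the intersection is really over $\beta\in\Delta\setminus\{\alpha\}$, consistent with how the lemma is used in \eqref{modrisom}).
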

\begin{proof}
This follows from Lemma \ref{phiinv} noting that $\mathbb{F}_p[X_\alpha]/(X_\alpha^r)$ is a finite dimensional $\mathbb{F}_p$-vector space on which $\varphi_\beta$ acts identically for all $\beta\in\Delta\setminus\{\alpha\}$ and we have $E^{sep}_{\Delta\setminus\{\alpha\}}[X_\alpha]/(X_\alpha^r)\cong E^{sep}_{\Delta\setminus\{\alpha\}}\otimes_{\mathbb{F}_p}\mathbb{F}_p[X_\alpha]/(X_\alpha^r)$.
\end{proof}

\begin{lem}\label{tensorpol}
For any integer $r>0$ and finitely generated $E^+_{\overline{\alpha}}/(X_\alpha^r)$-module $M$ we have an identification $E^{sep}_{\Delta\setminus\{\alpha\}}[X_\alpha]/(X_\alpha^r)\otimes_{E^+_{\overline{\alpha}}/(X_\alpha^r)}M\cong E^{sep}_{\Delta\setminus\{\alpha\}}\otimes_{E_{\Delta\setminus\{\alpha\}}}M$.
\end{lem}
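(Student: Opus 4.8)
The plan is to unwind both sides as base changes and match them. First I would rewrite the ring $E^+_{\overline{\alpha}}=E^+_\Delta[X_{\Delta\setminus\{\alpha\}}^{-1}]$ modulo $X_\alpha^r$: since $E^+_\Delta\cong E^+_{\Delta\setminus\{\alpha\}}\bs X_\alpha\js$, we get $E^+_{\overline{\alpha}}/(X_\alpha^r)\cong E_{\Delta\setminus\{\alpha\}}[X_\alpha]/(X_\alpha^r)$, a free module of rank $r$ over $E_{\Delta\setminus\{\alpha\}}$ with basis $1,X_\alpha,\dots,X_\alpha^{r-1}$. Likewise $E^{sep}_{\Delta\setminus\{\alpha\}}[X_\alpha]/(X_\alpha^r)$ is free of rank $r$ over $E^{sep}_{\Delta\setminus\{\alpha\}}$ on the same basis, and I would observe that as a ring it is canonically $E^{sep}_{\Delta\setminus\{\alpha\}}\otimes_{E_{\Delta\setminus\{\alpha\}}}\left(E_{\Delta\setminus\{\alpha\}}[X_\alpha]/(X_\alpha^r)\right)$, i.e.\ it is the base change of $E^+_{\overline{\alpha}}/(X_\alpha^r)$ along the flat (indeed faithfully flat) ring map $E_{\Delta\setminus\{\alpha\}}\hookrightarrow E^{sep}_{\Delta\setminus\{\alpha\}}$.

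With that identification in hand, the claim is just associativity/transitivity of tensor product: for any $E^+_{\overline{\alpha}}/(X_\alpha^r)$-module $M$,
\begin{align*}
E^{sep}_{\Delta\setminus\{\alpha\}}[X_\alpha]/(X_\alpha^r)\otimes_{E^+_{\overline{\alpha}}/(X_\alpha^r)}M
&\cong\left(E^{sep}_{\Delta\setminus\{\alpha\}}\otimes_{E_{\Delta\setminus\{\alpha\}}}E^+_{\overline{\alpha}}/(X_\alpha^r)\right)\otimes_{E^+_{\overline{\alpha}}/(X_\alpha^r)}M\\
&\cong E^{sep}_{\Delta\setminus\{\alpha\}}\otimes_{E_{\Delta\setminus\{\alpha\}}}M\ ,
\end{align*}
which is exactly the assertion. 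The finite generation hypothesis on $M$ is not even needed for this formal manipulation; it is presumably recorded only because $M$ arises as such in the intended application. I would spell out that the isomorphism is the obvious one $\lambda\otimes m\mapsto \lambda\otimes m$ (viewing $M$ as an $E_{\Delta\setminus\{\alpha\}}$-module by restriction along $E_{\Delta\setminus\{\alpha\}}\to E^+_{\overline{\alpha}}/(X_\alpha^r)$), and that it is $\varphi_\beta$- and $\Gamma$-equivariant for $\beta\in\Delta\setminus\{\alpha\}$ since all the ring maps involved commute with those operators.

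The only point requiring a word of care is the compatibility of the $X_\alpha$-adic structures: one must check that the map $E^+_\Delta\to E^{sep}_{\Delta\setminus\{\alpha\}}[X_\alpha]/(X_\alpha^r)$ sending $X_\alpha\mapsto X_\alpha$ and $E^+_{\Delta\setminus\{\alpha\}}\hookrightarrow E^{sep}_{\Delta\setminus\{\alpha\}}$ really does kill $X_\alpha^r$ and factor through $E^+_{\overline{\alpha}}/(X_\alpha^r)$ — i.e.\ that inverting $X_{\Delta\setminus\{\alpha\}}$ causes no trouble, which is automatic because $X_{\Delta\setminus\{\alpha\}}$ is already invertible in $E^{sep}_{\Delta\setminus\{\alpha\}}$. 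I do not anticipate any real obstacle here; the content of the lemma is entirely bookkeeping with base change along $E_{\Delta\setminus\{\alpha\}}\hookrightarrow E^{sep}_{\Delta\setminus\{\alpha\}}$, and the ``hard part,'' such as it is, is merely writing down the ring isomorphism $E^{sep}_{\Delta\setminus\{\alpha\}}[X_\alpha]/(X_\alpha^r)\cong E^{sep}_{\Delta\setminus\{\alpha\}}\otimes_{E_{\Delta\setminus\{\alpha\}}}E^+_{\overline{\alpha}}/(X_\alpha^r)$ cleanly.
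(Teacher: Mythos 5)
Your proof is correct and is essentially the paper's own argument: the paper's entire proof is the one-line observation that $E^+_{\overline{\alpha}}/(X_\alpha^r)\cong E_{\Delta\setminus\{\alpha\}}[X_\alpha]/(X_\alpha^r)$, after which the statement is exactly the transitivity of base change along $E_{\Delta\setminus\{\alpha\}}\hookrightarrow E^{sep}_{\Delta\setminus\{\alpha\}}$ that you spell out. Your additional remarks (equivariance, the finite-generation hypothesis being unnecessary for the formal identity) are accurate but not needed.
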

\begin{proof}
This follows from the isomorphism $E^+_{\overline{\alpha}}/(X_\alpha^r)\cong E_{\Delta\setminus\{\alpha\}}[X_\alpha]/(X_\alpha^r)$.
\end{proof}

For a subset $S\subseteq\Delta$ we put $E^{sep+}_S:=\varinjlim E'^+_S$ so we have $E^{sep}_S=E^{sep+}_S[X_S^{-1}]$.

\begin{lem}\label{Esepflat}
$E^{sep}_S$ (resp.\ $E^{sep+}_S$) is flat as a module over $E_S$ (resp.\ over $E^+_S$) for all $S\subseteq \Delta$.
\end{lem}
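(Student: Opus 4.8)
The plan is to reduce first to the finite level and then to a statement about power series rings over a field. Since $E^{sep+}_S=\varinjlim E'^+_S$ and $E^{sep}_S=\varinjlim E'_S$ are filtered colimits over the collections $(E'_\alpha)_{\alpha\in S}$ of finite separable subextensions of $E^{sep}_\alpha/E_\alpha$ (the index set is directed, since the componentwise compositum dominates any two collections), and a filtered colimit of flat modules is flat, it suffices to prove that $E'^+_S$ is flat over $E^+_S$ for each fixed collection. The statement for $E'_S$ over $E_S$ then follows at once: $E^{sep}_S=E^{sep+}_S[X_S^{-1}]=E^{sep+}_S\otimes_{E^+_S}E_S$ is obtained from the flat $E^+_S$-module $E^{sep+}_S$ by the base change $E^+_S\to E_S=E^+_S[X_S^{-1}]$.

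For the finite level I would use the explicit description established above (with $S$ in place of $\Delta$): $E'^+_S\cong R\bs X'_\alpha\mid\alpha\in S\js$, where $R:=\bigotimes_{\alpha\in S,\mathbb{F}_p}\mathbb{F}_{q_\alpha}$ is a finite \'etale, hence reduced, $\mathbb{F}_p$-algebra and therefore a finite product $R\cong\prod_j F_j$ of finite fields. Thus $E'^+_S\cong\prod_j F_j\bs X'_\alpha\mid\alpha\in S\js$ is flat over $E^+_S$ as soon as each factor $B:=F_j\bs X'_\alpha\mid\alpha\in S\js$ is (the index set being finite). Now $B$ and $A:=E^+_S=\mathbb{F}_p\bs X_\alpha\mid\alpha\in S\js$ are regular local rings of Krull dimension $|S|$, and the structure map $A\to B$ is local: for each $\alpha\in S$ one has $X_\alpha=($unit$)\cdot(X'_\alpha)^{e_\alpha}$ in $E'^+_\alpha$, hence in $B$, where $e_\alpha\geq 1$ is the ramification index of $E'_\alpha/E_\alpha$, so $X_\alpha\in(X'_\beta\mid\beta\in S)B=\mathfrak{m}_B$. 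The same identity shows that the closed fibre $B/\mathfrak{m}_AB=B/(X_\alpha\mid\alpha\in S)B\cong F_j[X'_\alpha\mid\alpha\in S]/\bigl((X'_\alpha)^{e_\alpha}\mid\alpha\in S\bigr)$ is a finite-dimensional $F_j$-algebra, hence Artinian of Krull dimension $0$. Therefore $\dim B=|S|=\dim A+\dim(B/\mathfrak{m}_AB)$, and by the local criterion for flatness in the form of ``miracle flatness'' (a local homomorphism of Noetherian local rings with regular source, Cohen--Macaulay target, and fibre dimension equal to the difference of the dimensions is flat) the ring $B$ is flat over $A$.

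This essentially finishes the proof; the only non-formal ingredient beyond the ring-theoretic preparations already in place is the flatness criterion, whose hypotheses come down to the regularity of $A$ and the finiteness of the ramification indices $e_\alpha$. The point requiring a little care is that one must work with the completed rings $E'^+_S$, $E^+_S$ (which are power series rings, hence regular Noetherian) rather than with the dense subrings $E'^+_{S,\circ}$, $E^+_{S,\circ}$, which need not be Noetherian. In fact a variant of this argument gives more: each $E'^+_\alpha$ is finite free over the complete discrete valuation ring $E^+_\alpha$, so $E'^+_{S,\circ}=\bigotimes_{\alpha\in S,\mathbb{F}_p}E'^+_\alpha$ is finite free of some rank $N$ over $E^+_{S,\circ}$, and completing the free module $(E^+_{S,\circ})^N$ with respect to the augmentation ideal (whose $E'^+_{S,\circ}$-adic and $E^+_{S,\circ}$-adic filtrations induce the $|\cdot|_{prod}$-topology, as $X_\alpha$ and $X'_\alpha$ differ by a unit and a power) identifies $E'^+_S$ with $(E^+_S)^N$; thus $E'^+_S$ is even \emph{free} over $E^+_S$, and I would include this sharper statement.
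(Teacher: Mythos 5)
Your proof is correct, but it is considerably more elaborate than the paper's, which disposes of the lemma in one line: by Lemma \ref{E'Dfingen} (and its integral analogue implicit in the identification $E'^+_S\cong(\bigotimes_{\alpha\in S,\mathbb{F}_p}\mathbb{F}_{q_\alpha})\bs X'_\alpha\mid\alpha\in S\js$), each $E'_S$ (resp.\ $E'^+_S$) is \emph{finite free} over $E_S$ (resp.\ $E^+_S$) --- the iterated base change $E'_{\alpha_1}\otimes_{E_{\alpha_1}}(\cdots(E'_{\alpha_n}\otimes_{E_{\alpha_n}}E_S))$ of the finite free extensions $E'^+_\alpha/E^+_\alpha$ of complete discrete valuation rings --- and a filtered colimit of flat modules is flat. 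Your outer reduction (filtered colimit, then base change $E^+_S\to E_S$ to pass from the plus rings to the localizations) is exactly the paper's. Where you diverge is at the finite level: you decompose $E'^+_S$ along the idempotents of the finite \'etale algebra $\bigotimes\mathbb{F}_{q_\alpha}$ and invoke miracle flatness for each factor $F_j\bs X'_\alpha\js$ over $\mathbb{F}_p\bs X_\alpha\js$, using $X_\alpha=(\text{unit})\cdot(X'_\alpha)^{e_\alpha}$ to see that the map is local with zero-dimensional closed fibre. This is a valid alternative, though it only yields flatness where the direct route yields freeness; tellingly, your closing paragraph reconstructs precisely the finite-freeness argument the paper takes as the starting point, so the miracle-flatness detour, while sound, is not needed. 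If you keep your version, the sharper ``finite free'' statement at the end should really be the main argument rather than an afterthought.
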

\begin{proof}
By construction, $E'_S$ (resp.\ $E'^+_S$) is finite free over $E_S$ (resp.\ over $E^+_S$), so $E^{sep}_S$ (resp.\ $E^{sep+}_S$) is the direct limit of flat modules hence flat.
\end{proof}

\begin{lem}\label{complinv}
We have $(E^{sep+}_{\Delta\setminus\{\alpha\}}\bs X_\alpha\js [X_\Delta^{-1}])^{\HQpDa}=E_\Delta$.
\end{lem}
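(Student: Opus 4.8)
The plan is to peel off, one at a time, the two operations that turn $E^{sep+}_{\Delta\setminus\{\alpha\}}$ into $E^{sep+}_{\Delta\setminus\{\alpha\}}\bs X_\alpha\js[X_\Delta^{-1}]$ — first the completion in the variable $X_\alpha$, then the localization at $X_\Delta$ — and to check in each case that the functor of $\HQpDa$-invariants passes through the operation. Here $\HQpDa=\prod_{\beta\in\Delta\setminus\{\alpha\}}H_{\mathbb{Q}_p,\beta}$ acts on $E^{sep+}_{\Delta\setminus\{\alpha\}}$ and fixes both $X_\alpha$ and $X_\Delta$, so all the actions in sight are well defined.

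For the completion step I would argue that a power series $\sum_{n\geq 0}f_nX_\alpha^n$ with coefficients $f_n\in E^{sep+}_{\Delta\setminus\{\alpha\}}$ is $\HQpDa$-fixed if and only if each $f_n$ is, so that
\[
\left(E^{sep+}_{\Delta\setminus\{\alpha\}}\bs X_\alpha\js\right)^{\HQpDa}=\left(E^{sep+}_{\Delta\setminus\{\alpha\}}\right)^{\HQpDa}\bs X_\alpha\js .
\]
Then $\left(E^{sep+}_{\Delta\setminus\{\alpha\}}\right)^{\HQpDa}=E^+_{\Delta\setminus\{\alpha\}}$: this is the integral analogue of Proposition \ref{invEsepD} for the finite set $\Delta\setminus\{\alpha\}$ in place of $\Delta$, and it is exactly what the proof of that proposition yields, since at each finite Galois level $E'_{\Delta\setminus\{\alpha\}}$ the action of $\HQpDa$ on $E'^{+}_{\Delta\setminus\{\alpha\}}$ factors through the finite quotient $\prod_{\beta\neq\alpha}\Gal(E'_\beta/E_\beta)$, whence $\HQpDa$-invariants commute with the directed colimit defining $E^{sep+}_{\Delta\setminus\{\alpha\}}$. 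Finally $E^+_{\Delta\setminus\{\alpha\}}\bs X_\alpha\js=\mathbb{F}_p\bs X_\beta\mid\beta\in\Delta\setminus\{\alpha\}\js\bs X_\alpha\js$ equals $\mathbb{F}_p\bs X_\beta\mid\beta\in\Delta\js=E^+_\Delta$ because $\Delta$ is finite, so this step gives $\left(E^{sep+}_{\Delta\setminus\{\alpha\}}\bs X_\alpha\js\right)^{\HQpDa}=E^+_\Delta$.

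For the localization step, write $R:=E^{sep+}_{\Delta\setminus\{\alpha\}}\bs X_\alpha\js$ and observe that $X_\Delta=X_\alpha\,X_{\Delta\setminus\{\alpha\}}$ is a non-zero-divisor in $R$: the element $X_{\Delta\setminus\{\alpha\}}$ is a non-zero-divisor in $E^+_{\Delta\setminus\{\alpha\}}=\mathbb{F}_p\bs X_\beta\mid\beta\in\Delta\setminus\{\alpha\}\js$ and remains one in $E^{sep+}_{\Delta\setminus\{\alpha\}}$ after the flat base change of Lemma \ref{Esepflat}, hence also in the power series ring $R$ over $E^{sep+}_{\Delta\setminus\{\alpha\}}$, while $X_\alpha$ is manifestly a non-zero-divisor in $R$. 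Since $X_\Delta$ is moreover $\HQpDa$-invariant, multiplication by $X_\Delta$ is an injective $\HQpDa$-equivariant endomorphism of $R$ and $R[X_\Delta^{-1}]=\bigcup_{k\geq 0}X_\Delta^{-k}R$. An element $X_\Delta^{-k}r$ of this union is $\HQpDa$-fixed exactly when $X_\Delta^{-k}(g(r)-r)=0$ in $R[X_\Delta^{-1}]$ for every $g\in\HQpDa$, i.e.\ when $g(r)=r$ for every $g$ (here one uses that $X_\Delta$ is a non-zero-divisor in $R$). Hence
\[
\left(E^{sep+}_{\Delta\setminus\{\alpha\}}\bs X_\alpha\js[X_\Delta^{-1}]\right)^{\HQpDa}=\left(R^{\HQpDa}\right)[X_\Delta^{-1}]=E^+_\Delta[X_\Delta^{-1}]=E_\Delta ,
\]
which is the assertion.

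The only point that really needs attention is the commutation of the profinite-group invariants with the two colimit-type constructions, which for an infinite group can fail in general; it works here because at the integral level the $\HQpDa$-action factors through finite quotients, and because inverting $X_\Delta$ is harmless once $X_\Delta$ is known to be an invariant non-zero-divisor. Everything else is formal.
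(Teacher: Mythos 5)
Your proof is correct and follows the same route as the paper's (much terser) argument: the paper likewise observes that $\HQpDa$ fixes $X_\alpha$ and $X_\Delta$, hence acts coefficientwise on $E^{sep+}_{\Delta\setminus\{\alpha\}}\bs X_\alpha\js$, and reduces to $(E^{sep+}_{\Delta\setminus\{\alpha\}})^{\HQpDa}=E^+_{\Delta\setminus\{\alpha\}}$, which is exactly the integral statement established in the proof of Proposition \ref{invEsepD}. Your extra care about commuting invariants with the completion and with the localization at the invariant non-zero-divisor $X_\Delta$ only fills in details the paper leaves implicit.
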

\begin{proof}
We have $E_\Delta=E^+_{\Delta\setminus\{\alpha\}}\bs X_\alpha\js [X_\Delta^{-1}]$ where $E^+_{\Delta\setminus\{\alpha\}}=(E^{sep+}_{\Delta\setminus\{\alpha\}})^{\HQpDa}$ by Lemma \ref{invEsepD} and $\HQpDa$ acts trivially on both $X_\alpha$ and $X_\Delta$, so acts on the power series ring $E^{sep+}_{\Delta\setminus\{\alpha\}}\bs X_\alpha\js$ coefficientwise.
\end{proof}

Our main result in this section is the following

\begin{thm}\label{modpequiv}
The functors $\mathbb{D}$ and $\mathbb{V}$ are quasi-inverse equivalences of categories between the Tannakian categories $\RepDFp$ and $\mathcal{D}^{et}(\varphi_{\Delta},\Gamma_\Delta,E_\Delta)$.
\end{thm}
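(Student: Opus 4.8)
The plan is to deduce the theorem from Theorem~\ref{fullfaithmodp} and Proposition~\ref{mathbbD}: the functor $\mathbb{D}$ is already fully faithful, and the last identity in Proposition~\ref{mathbbD} is precisely $\mathbb{V}\circ\mathbb{D}\cong\id$ on $\RepDFp$, so everything reduces to showing that for every object $D$ of $\mathcal{D}^{et}(\varphi_{\Delta},\Gamma_\Delta,E_\Delta)$ the representation $\mathbb{V}(D)$ is finite dimensional over $\Fp$ with $\dim_{\Fp}\mathbb{V}(D)=\rk_{E_\Delta}D$, the $\GQpD$-action on it is continuous, and the natural map $E^{sep}_\Delta\otimes_{\Fp}\mathbb{V}(D)\to E^{sep}_\Delta\otimes_{E_\Delta}D$ (multiplication on the left tensor factor) is an isomorphism. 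I would then finish as follows: Lemma~\ref{galinvbasis} applied to $\mathbb{V}(D)$ shows that $\mathbb{D}(\mathbb{V}(D))=(E^{sep}_\Delta\otimes_{\Fp}\mathbb{V}(D))^{\HQpD}$ is free over $E_\Delta=(E^{sep}_\Delta)^{\HQpD}$ of rank $\rk_{E_\Delta}D$ with $E^{sep}_\Delta\otimes_{E_\Delta}\mathbb{D}(\mathbb{V}(D))\cong E^{sep}_\Delta\otimes_{E_\Delta}D$; the canonical $T_{+,\Delta}$-equivariant injection $D\hookrightarrow(E^{sep}_\Delta\otimes_{E_\Delta}D)^{\HQpD}=\mathbb{D}(\mathbb{V}(D))$, $x\mapsto 1\otimes x$, is then a map of modules of the same rank over the domain $E_\Delta$, so its cokernel $Q$ is a finitely generated torsion module carrying an induced $\Gamma_\Delta$-action; hence the annihilator of $Q$ is a nonzero $\Gamma_\Delta$-invariant ideal of $E_\Delta$, which equals $E_\Delta$ by Lemma~2.1 of \cite{MultVar}, i.e.\ $Q=0$ and $\mathbb{D}(\mathbb{V}(D))=D$. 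This gives essential surjectivity together with $\mathbb{D}\circ\mathbb{V}\cong\id$, and hence the theorem.

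To prove the displayed claim I would argue by induction on $|\Delta|$. The case $|\Delta|=1$ is the classical theorem of Fontaine that an \'etale $\varphi$-module over $E=\Fp\bg X\jg$ is trivialized over $E^{sep}$; one may recover it from the lattices $D^+$, $D^{++}$ of \cite{Mira} (reconstructed in \S2.2) or simply quote \cite{FO}. For $|\Delta|\ge 2$ fix $\alpha\in\Delta$ and write $\overline{\alpha}:=\Delta\setminus\{\alpha\}$. The main tool is the integral model $D^{+\ast}_{\overline{\alpha}}$ of \S2.2: by Proposition~\ref{alphaintetale} and Lemma~\ref{phicover} it is a finitely generated $E^+_{\overline{\alpha}}$-module, \'etale for $T_{+,\overline{\alpha}}$, stable under $\varphi_\alpha$, with $D^{+\ast}_{\overline{\alpha}}[X_\alpha^{-1}]=D$. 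For each $r\ge 1$ Lemma~\ref{tensorpol} identifies $M_r:=D^{+\ast}_{\overline{\alpha}}/X_\alpha^rD^{+\ast}_{\overline{\alpha}}$ with a finitely generated \'etale $(\varphi_{\overline{\alpha}},\Gamma_{\overline{\alpha}})$-module over $E^+_{\overline{\alpha}}/(X_\alpha^r)\cong E_{\overline{\alpha}}[X_\alpha]/(X_\alpha^r)$; since the latter is a finite free $E_{\overline{\alpha}}$-algebra on which each $\varphi_\beta$ ($\beta\in\overline{\alpha}$) acts compatibly, $M_r$ becomes an object of $\mathcal{D}^{et}(\varphi_{\overline{\alpha}},\Gamma_{\overline{\alpha}},E_{\overline{\alpha}})$ with extra commuting actions of $\varphi_\alpha$, $\Gamma_\alpha$ and its $E_{\overline{\alpha}}[X_\alpha]/(X_\alpha^r)$-structure, so the induction hypothesis trivializes all $\varphi_\beta$ ($\beta\in\overline{\alpha}$) on $M_r$ over $E^{sep}_{\overline{\alpha}}$-coefficients, compatibly in $r$.

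Next I would pass to $\varprojlim_r$ — using that the systems are Mittag--Leffler, so $\varprojlim_r$ commutes with the relevant $\varphi_\beta$-fixed-point functors, that $\{X_\alpha^kD^+_{\overline{\alpha}}\}_k$ is cofinal in the $X_\alpha$-adic filtration of $D^{+\ast}_{\overline{\alpha}}$ (Lemma~\ref{twoprodinD+}), and Lemma~\ref{polinv} — to obtain a finite dimensional $\Fp$-space $W$, stable under $\varphi_\alpha$ and $\Gamma_\Delta$, together with an isomorphism of the base changes of $W$ and of $D^{+\ast}_{\overline{\alpha}}$ over the power series ring $E^{sep+}_{\overline{\alpha}}\bs X_\alpha\js$ of Lemma~\ref{complinv}. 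After inverting $X_\Delta$, Lemma~\ref{complinv} says the $\HQpDa$-invariants of that ring are exactly $E_\Delta$, and what remains is a single \'etale $\varphi_\alpha$-module, i.e.\ the $|\Delta|=1$ situation with $E^{sep}_{\overline{\alpha}}$ in place of $\Fp$ as coefficient ring. Adjoining separable extensions in the $X_\alpha$-direction, that is passing from $E^{sep+}_{\overline{\alpha}}\bs X_\alpha\js[X_\Delta^{-1}]$ up to $E^{sep}_\Delta$, then trivializes $\varphi_\alpha$ as well — this is the base case applied with nontrivial coefficients, the exactness needed being flatness of $E^{sep}_S$ over $E_S$ (Lemma~\ref{Esepflat}). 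One is left with $\mathbb{V}(D)=W^{\varphi_\alpha=\id}$, an isomorphism $E^{sep}_\Delta\otimes_{\Fp}\mathbb{V}(D)\xrightarrow{\sim}E^{sep}_\Delta\otimes_{E_\Delta}D$, and $\dim_{\Fp}\mathbb{V}(D)=\rk_{E_\Delta}D$ by comparing ranks; continuity of the $\GQpD$-action is automatic since it factors through a finite quotient already on each $M_r$, hence on $W$.

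The hard part is the passage to the limit and the subsequent single-variable trivialization: one must control precisely which integral lattice the trivializations produced by the induction hypothesis converge into — this is exactly why one works with $D^{+\ast}_{\overline{\alpha}}$ rather than with $D^+$ or $D^{++}$, cf.\ Proposition~\ref{alphaintetale} and Lemma~\ref{phicover} — and then re-run the Fontaine--Wintenberger trivialization of the last Frobenius over the non-field coefficient ring $E^{sep}_{\overline{\alpha}}$. Lemmata~\ref{polinv}, \ref{tensorpol} and \ref{complinv} are precisely the bookkeeping devices tailored to this descent.
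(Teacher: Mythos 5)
Your proposal is correct and follows essentially the same route as the paper: induction on $|\Delta|$ with Fontaine's classical result as base case, reduction of the lattice $D^{+\ast}_{\overline{\alpha}}$ modulo $X_\alpha^r$ to apply the induction hypothesis, a Mittag--Leffler passage to the $\varprojlim_r$, a final one-variable trivialization of $\varphi_\alpha$, and a rank-plus-$\Gamma_\Delta$-invariant-ideal argument to conclude $\mathbb{D}(\mathbb{V}(D))\cong D$. The only slip is calling the limit object $W$ a finite dimensional $\mathbb{F}_p$-space --- it is a finitely generated $\mathbb{F}_p\bs X_\alpha\js$-module, becoming the finite dimensional $\mathbb{F}_p\bg X_\alpha\jg$-space $D_\alpha$ only after inverting $X_\alpha$ --- but your subsequent use of it is consistent with the correct statement.
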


\begin{cor}\label{alwaysfree}
Any object $D$ in $\mathcal{D}^{et}(\varphi_{\Delta},\Gamma_\Delta,E_\Delta)$ is a free module over $E_\Delta$.
\end{cor}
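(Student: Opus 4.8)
The plan is to read this off immediately from the equivalence of categories in Theorem \ref{modpequiv} together with the structure of objects in the image of $\mathbb{D}$. First I would invoke the essential-surjectivity half of Theorem \ref{modpequiv}: given an object $D$ of $\mathcal{D}^{et}(\varphi_{\Delta},\Gamma_\Delta,E_\Delta)$, there is an isomorphism $D\cong \mathbb{D}(V)$ in $\mathcal{D}^{et}(\varphi_{\Delta},\Gamma_\Delta,E_\Delta)$, where $V:=\mathbb{V}(D)$ lies in $\RepDFp$. In particular this is an isomorphism of $E_\Delta$-modules, since morphisms in $\mathcal{D}^{et}(\varphi_{\Delta},\Gamma_\Delta,E_\Delta)$ are by definition $E_\Delta$-linear.

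Next I would recall that objects in the image of $\mathbb{D}$ are free: by Proposition \ref{mathbbD} (which rests on Lemmata \ref{invEsepD} and \ref{galinvbasis}, i.e.\ on the existence of an $\HQpD$-invariant $E^{sep}_\Delta$-basis of $E^{sep}_\Delta\otimes_{\Fp}V$), the $E_\Delta$-module $\mathbb{D}(V)=(E^{sep}_\Delta\otimes_{\Fp}V)^{\HQpD}$ is free of rank $\dim_{\Fp}V$. Transporting this along the isomorphism $D\cong\mathbb{D}(V)$ shows that $D$ is a free $E_\Delta$-module of rank $\dim_{\Fp}\mathbb{V}(D)$.

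There is no real obstacle once Theorem \ref{modpequiv} is available: the corollary is essentially the combination of the Remark following Theorem \ref{fullfaithmodp} (``any $D$ in the image of $\mathbb{D}$ is free over $E_\Delta$'') with essential surjectivity of $\mathbb{D}$. I would also point out that this sharpens Proposition \ref{proj} and Lemma \ref{stably}: we previously knew only that $D$ is projective, hence stably free, over $E_\Delta$, and now obtain honest freeness, thereby circumventing the lack of a Quillen--Suslin-type statement for the ring $E_\Delta$.
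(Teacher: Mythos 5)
Your proof is correct and follows exactly the same route as the paper: the paper deduces the corollary from the essential surjectivity of $\mathbb{D}$ (Theorem \ref{modpequiv}) combined with the remark after Theorem \ref{fullfaithmodp} that any object in the image of $\mathbb{D}$ is free over $E_\Delta$ by construction, which is precisely Proposition \ref{mathbbD} resting on Lemmata \ref{invEsepD} and \ref{galinvbasis}. Nothing further is needed.
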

\begin{proof}
This follows from the essential surjectivity of $\mathbb{D}$ using the remark after Thm.\ \ref{fullfaithmodp}.
\end{proof}

\begin{proof}[Proof of Thm.\ \ref{modpequiv}]
This is a long proof that we divide into $5$ steps.

\emph{Step 1. Reducing the statement to the essential surjectivity of $\mathbb{D}$.} By Thm.\ \ref{fullfaithmodp} the functor $\mathbb{D}$ is fully faithful and we have $\mathbb{V}\circ\mathbb{D}(V)\cong V$ naturally in $V$ for any object $V$ in $\RepDFp$ by Prop.\ \ref{mathbbD}. Moreover, by Lemma \ref{tannakaDmodp} $\mathbb{D}$ is compatible with tensor products and duals. So it remains to show that $\mathbb{D}$ is essentially surjective. We proceed by induction on $|\Delta|$. For $|\Delta|=1$ this is a classical result of Fontaine (see e.g.\ Thm.\ 2.21 in \cite{FO}). Suppose that $|\Delta|>1$, fix $\alpha\in\Delta$, and pick an object $D$ in $\mathcal{D}^{et}(\varphi_{\Delta},\Gamma_\Delta,E_\Delta)$.

\emph{Step 2. The goal here is to trivialize the $\varphi_\beta$-action ($\beta\in\Delta\setminus\{\alpha\}$) on $D^{+*}_{\overline{\alpha}}/X_\alpha^rD^{+*}_{\overline{\alpha}}$ uniformly in $r$ by tensoring up with $E^{sep}_{\Delta\setminus\{\alpha\}}$.}
By Prop.\ \ref{alphaintetale} $D^{+*}_{\overline{\alpha}}$ is an \'etale $T_{+,\overline{\alpha}}$-module over $E^+_{\overline{\alpha}}$. Reducing mod $X_\alpha^r$ for an integer $r>0$ we deduce that $D^{+*}_{\overline{\alpha},r}:=D^{+*}_{\overline{\alpha}}/X_\alpha^rD^{+*}_{\overline{\alpha}}$ is an \'etale $T_{+,\overline{\alpha}}$-module over $E^+_{\overline{\alpha}}/(X_\alpha^r)\cong E_{\Delta\setminus\{\alpha\}}[X_\alpha]/(X_\alpha^r)$. Since each $\varphi_\beta$ ($\beta\in\Delta\setminus\{\alpha\}$) acts trivially on the variable $X_\alpha$, we have a natural isomorphism of functors
\begin{equation*}
E_{\Delta\setminus\{\alpha\}}[X_\alpha]/(X_\alpha^r)\otimes_{E_{\Delta\setminus\{\alpha\}}[X_\alpha]/(X_\alpha^r),\varphi_t}\cdot\cong E_{\Delta\setminus\{\alpha\}}\otimes_{E_{\Delta\setminus\{\alpha\}},\varphi_t}\cdot
\end{equation*}
for all $t\in T_{+,\overline{\alpha}}$. Hence $D^{+*}_{\overline{\alpha},r}$ is an object in $\mathcal{D}^{et}(\varphi_{{\Delta\setminus\{\alpha\}}},\Gamma_{\Delta\setminus\{\alpha\}},E_{\Delta\setminus\{\alpha\}})$ since $E_{\Delta\setminus\{\alpha\}}[X_\alpha]/(X_\alpha^r)$ is finitely generated as a module over $E_{\Delta\setminus\{\alpha\}}$. By the inductional hypothesis (see step $1$), we can therefore trivialize $D^{+*}_{\overline{\alpha},r}$ by tensoring with $E^{sep}_{\Delta\setminus\{\alpha\}}$ over $E_{\Delta\setminus\{\alpha\}}$. However, this is the same as applying $E^{sep}_{\Delta\setminus\{\alpha\}}[X_\alpha]/(X_\alpha^r)\otimes_{E_{\Delta\setminus\{\alpha\}}[X_\alpha]/(X_\alpha^r)}\cdot$ by Lemma \ref{tensorpol}. Hence the natural map
\begin{align}
E^{sep}_{\Delta\setminus\{\alpha\}}[X_\alpha]/(X_\alpha^r)\otimes_{\mathbb{F}_p[X_\alpha]/(X_\alpha^r)}\bigcap_{\beta\in\Delta\setminus\{\alpha\}}\left(E^{sep}_{\Delta\setminus\{\alpha\}}[X_\alpha]/(X_\alpha^r)\otimes_{E^+_{\overline{\alpha}}/(X_\alpha^r)}D^{+*}_{\overline{\alpha},r}\right)^{\varphi_\beta=\id}\overset{\sim}{\to}\notag\\
\overset{\sim}{\to} E^{sep}_{\Delta\setminus\{\alpha\}}[X_\alpha]/(X_\alpha^r)\otimes_{E^+_{\overline{\alpha}}/(X_\alpha^r)}D^{+*}_{\overline{\alpha},r}\cong E^{sep}_{\Delta\setminus\{\alpha\}}[X_\alpha]/(X_\alpha^r)\otimes_{E^+_{\overline{\alpha}}}D^{+*}_{\overline{\alpha}}\label{modrisom}
\end{align}
is an isomorphism for all $r>0$ using Lemma \ref{polinv}. Our key Lemma is the following consequence of Prop.\ \ref{alphaintetale}.
\begin{lem}\label{bounded}
There exists a finitely generated $E_\Delta^+$-submodule $M\leq D^{+*}_{\overline{\alpha}}$ such that
\begin{equation}
\bigcap_{\beta\in\Delta\setminus\{\alpha\}}\left(E^{sep}_{\Delta\setminus\{\alpha\}}[X_\alpha]/(X_\alpha^r)\otimes_{E^+_{\overline{\alpha}}}D^{+*}_{\overline{\alpha},}\right)^{\varphi_\beta=\id}\label{rfix}
\end{equation}
is contained in the image of the map 
\begin{equation}\label{maplevelr}
E^{sep+}_{\Delta\setminus\{\alpha\}}[X_\alpha]/(X_\alpha^r)\otimes_{E^+_{\Delta}}M\to E^{sep+}_{\Delta\setminus\{\alpha\}}[X_\alpha]/(X_\alpha^r)\otimes_{E^+_{\Delta}}D^{+*}_{\overline{\alpha}}\cong E^{sep}_{\Delta\setminus\{\alpha\}}[X_\alpha]/(X_\alpha^r)\otimes_{E^+_{\overline{\alpha}}}D^{+*}_{\overline{\alpha}}
\end{equation}
induced by the inclusion $M\leq D^{+*}_{\overline{\alpha}}$ for all $r>0$. Moreover, $M$ can be chosen in such a way that \eqref{maplevelr} is injective.
\end{lem}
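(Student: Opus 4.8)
The plan is to construct a single finitely generated $E_\Delta^+$-submodule $M\leq D^{+*}_{\overline{\alpha}}$ that simultaneously ``captures'' all the $\varphi_\beta$-invariants modulo every power of $X_\alpha$. The starting point is Lemma \ref{phicover}: pick $D_0\subset D^{+*}_{\overline{\alpha}}$ finitely generated over $E_\Delta^+$ with $D_0\subseteq E_\Delta^+\varphi_{\overline{\alpha}}(D_0)$ and $D^{+*}_{\overline{\alpha}}=D_0[X_{\Delta\setminus\{\alpha\}}^{-1}]=\bigcup_{r\geq 0}E_\Delta^+\varphi_{\overline{\alpha}}^r(X_{\Delta\setminus\{\alpha\}}^{-1}D_0)$. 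First I would observe that an element $x$ in the invariant space \eqref{rfix} satisfies $\varphi_{\overline{\alpha}}(x)=x$ (as $\varphi_{\overline{\alpha}}=\prod_{\beta\neq\alpha}\varphi_\beta$), hence $x=\varphi_{\overline{\alpha}}^r(x)$ for all $r$; combined with $x$ lying in the image of $E^{sep+}_{\Delta\setminus\{\alpha\}}[X_\alpha]/(X_\alpha^r)\otimes D^{+*}_{\overline{\alpha}}$ and the covering $D^{+*}_{\overline{\alpha}}=\bigcup_s E_\Delta^+\varphi_{\overline{\alpha}}^s(X_{\Delta\setminus\{\alpha\}}^{-1}D_0)$, applying $\varphi_{\overline{\alpha}}$ a large number of times ``absorbs'' the negative powers of $X_{\Delta\setminus\{\alpha\}}$ since $\varphi_{\overline{\alpha}}$ multiplies $X_{\Delta\setminus\{\alpha\}}$-valuations by $p$ while the $E_\Delta^+$-coefficients only contribute nonnegative valuation. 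This should force $x$ to already lie in the image of $E^{sep+}_{\Delta\setminus\{\alpha\}}[X_\alpha]/(X_\alpha^r)\otimes_{E_\Delta^+}\varphi_{\overline{\alpha}}(D_0)$, and iterating, in the image of $E_\Delta^+\varphi_{\overline{\alpha}}^r(D_0)$ for every $r$ — i.e. in $\bigcap_{r}E_\Delta^+\varphi_{\overline{\alpha}}^r(D_0)$ which by Lemma \ref{intersectnon0}-type reasoning (or directly from Lemma \ref{phicover}) is close to $D_0$ itself.

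The cleanest way to package this is: set $M:=X_\alpha^{k}D_0$ (or $X_{\Delta}^k D_0$) for a suitable $k=k(D)>0$ dictated by Lemma \ref{intersectnon0} and the estimates in the proof of the proposition on $D^{++}$, so that $M$ is $\varphi_{\overline{\alpha}}$-stable in the strong sense $M\subseteq E_\Delta^+\varphi_{\overline{\alpha}}^r(M)$ for all $r$ and additionally $\varphi_{\overline{\alpha}}$ acts on $D^{+*}_{\overline{\alpha}}/M$ ``contractingly'' in the $X_{\Delta\setminus\{\alpha\}}$-adic sense. Then the argument above shows the invariant space \eqref{rfix} is contained in the $\varphi_{\overline{\alpha}}$-fixed points of $E^{sep+}_{\Delta\setminus\{\alpha\}}[X_\alpha]/(X_\alpha^r)\otimes_{E_\Delta^+}(D^{+*}_{\overline{\alpha}}/M)$ being zero in the limit, hence lands in the image of \eqref{maplevelr}. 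For the injectivity clause, I would use flatness of $E^{sep+}_{\Delta\setminus\{\alpha\}}$ over $E^+_{\Delta\setminus\{\alpha\}}$ (Lemma \ref{Esepflat}): since $M$ is a finitely generated $E_\Delta^+$-module inside the free-over-$E_\Delta$ (hence torsion-free) module $D$, after possibly enlarging $k$ I can arrange that $M$ is a \emph{free} $E_{\Delta\setminus\{\alpha\}}[X_\alpha]/(X_\alpha^r)$-module direct summand is too much, but at least that $M\hookrightarrow D^{+*}_{\overline{\alpha}}$ stays injective after $\otimes_{E_\Delta^+}E^{sep+}_{\Delta\setminus\{\alpha\}}[X_\alpha]/(X_\alpha^r)$; concretely, choosing $M$ to be generated by part of an $E_\Delta$-basis of $D$ scaled by a power of $X_\Delta$ makes $M$ itself $E_\Delta^+$-free, and base change of a free module along any ring map is injective into the ambient free module.

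The main obstacle I expect is the uniformity in $r$: one must choose the single $k$ (hence the single $M$) \emph{before} letting $r\to\infty$, so the contraction estimate for $\varphi_{\overline{\alpha}}$ on $D^{+*}_{\overline{\alpha}}/M$ and the ``absorption of negative $X_{\Delta\setminus\{\alpha\}}$-powers'' must be made quantitative and independent of $r$ — this is exactly where Lemma \ref{intersectnon0} (the $X_\alpha^k$-bound independent of $\varphi_t\in T_{+,\overline{\alpha}}$) and Lemma \ref{phicover} (the explicit $\varphi_{\overline{\alpha}}^r(X_{\Delta\setminus\{\alpha\}}^{-1}D_0)$ covering) do the real work. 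A secondary subtlety is bookkeeping the two different base rings $E^{sep+}_{\Delta\setminus\{\alpha\}}[X_\alpha]/(X_\alpha^r)$ versus $E^{sep}_{\Delta\setminus\{\alpha\}}[X_\alpha]/(X_\alpha^r)$ and the identifications of Lemma \ref{tensorpol} and \eqref{modrisom}, but these are routine once the integral model $M$ is in hand.
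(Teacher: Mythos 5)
Your skeleton is the right one --- build $M$ out of the $D_0$ of Lemma \ref{phicover}, use that every element $x$ of \eqref{rfix} is fixed by $\varphi_{\overline{\alpha}}$, and deduce a denominator bound uniform in $r$ --- but both steps that actually carry the proof are justified by reasoning that runs in the wrong direction. First, $\varphi_{\overline{\alpha}}$ does not ``absorb'' negative powers of $X_{\Delta\setminus\{\alpha\}}$: it multiplies their exponents by $p$, so applying $\varphi_{\overline{\alpha}}$ to $x$ makes denominators \emph{worse}, and the filtration $E^+_\Delta\varphi_{\overline{\alpha}}^l(M)$ of $D^{+*}_{\overline{\alpha}}$ is \emph{increasing} in $l$, so $\varphi_{\overline{\alpha}}$ is expansive, not contractive, on $D^{+*}_{\overline{\alpha}}/M$; your ``the $\varphi_{\overline{\alpha}}$-fixed points of the quotient vanish by contraction'' step therefore has no proof as stated. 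The mechanism that works (and is what the paper uses) is the \emph{\'etale} property of $D^{+*}_{\overline{\alpha}}$ from Prop.\ \ref{alphaintetale}: taking $M:=X_{\Delta\setminus\{\alpha\}}^{-1}D_0$, so that $D^{+*}_{\overline{\alpha}}=\bigcup_l E^+_\Delta\varphi_{\overline{\alpha}}^l(M)$ by the last assertion of Lemma \ref{phicover}, one gets $x=(\id\otimes\varphi_{\overline{\alpha}}^{l})(y)$ for some $y\in\varphi_{\overline{\alpha}}^{l*}\bigl(E'_{\Delta\setminus\{\alpha\}}[X_\alpha]/(X_\alpha^r)\otimes_{E^+_\Delta}M\bigr)$, while also $x=\varphi_{\overline{\alpha}}^{l}(x)=(\id\otimes\varphi_{\overline{\alpha}}^{l})(1\otimes x)$; the \emph{injectivity} of $\id\otimes\varphi_{\overline{\alpha}}^{l}$ (\'etaleness together with reducedness of $E'_{\Delta\setminus\{\alpha\}}$) then forces $1\otimes x=y$, hence $x$ lies in the image of $M$. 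Note also that your candidate $M=X_\alpha^kD_0$ cannot serve: since $\varphi_{\overline{\alpha}}$ fixes $X_\alpha$, the union $\bigcup_lE^+_\Delta\varphi_{\overline{\alpha}}^l(X_\alpha^kD_0)$ lands in $X_\alpha^kD^{+*}_{\overline{\alpha}}$ and misses \eqref{rfix} in general; the twist must be by $X_{\Delta\setminus\{\alpha\}}^{-1}$.

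Second, the injectivity of \eqref{maplevelr} does not follow from $M$ being free: an injection of free modules need not stay injective after an arbitrary base change (e.g.\ $2\mathbb{Z}\subset\mathbb{Z}$ tensored with $\mathbb{Z}/2$). The kernel is controlled by $\Tor_1$ of the \emph{quotient}, so one must show that $D^{+*}_{\overline{\alpha}}/M$ has no $X_\alpha$-torsion; this is exactly where the specific shape $M=X_{\Delta\setminus\{\alpha\}}^{-k-1}(D^+\cap D^{+*}_{\overline{\alpha}})$ and Lemma \ref{twoprodinD+} enter, and it is also why you cannot switch to a different, freely generated $M$ for this clause: the same $M$ has to serve both halves of the statement.
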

\begin{proof}
We show that $M:=X_{\Delta\setminus\{\alpha\}}^{-1}D_0$ will do where $D_0$ is defined in Lemma \ref{phicover}. Since $D_0$ is finitely generated over $E^+_\Delta$, so is $M$. By Lemma \ref{phicover}, we have $D^{+*}_{\overline{\alpha}}=\bigcup_{l\geq 0} E^+_\Delta\varphi_{\overline{\alpha}}^l(M)$. For any fixed $r>0$ there exists an integer $l_r\geq 0$ such that \eqref{rfix} is contained in 
\begin{align*}
E^{sep+}_{\Delta\setminus\{\alpha\}}[X_\alpha]/(X_\alpha^r)\otimes_{E^+_{\Delta}}X_{\Delta\setminus\{\alpha\}}^{-p^{l_r}+1}M\subseteq E^{sep+}_{\Delta\setminus\{\alpha\}}[X_\alpha]/(X_\alpha^r)\otimes_{E^+_{\Delta}}E^+_{\Delta}\varphi_{\overline{\alpha}}^{l_r}(M)=\\
=E^{sep+}_{\Delta\setminus\{\alpha\}}[X_\alpha]/(X_\alpha^r)\varphi_{\overline{\alpha}}^{l_r}(E^{sep+}_{\Delta\setminus\{\alpha\}}[X_\alpha]/(X_\alpha^r)\otimes_{E^+_{\Delta}}M)\ . 
\end{align*}
Now if $x$ lies in \eqref{rfix}, then we have $\varphi_{\overline{\alpha}}^{l_r}(x)=x$. On the other hand, $x$ lies in $$E'_{\Delta\setminus\{\alpha\}}[X_\alpha]/(X_\alpha^r)\varphi_{\overline{\alpha}}^{l_r}(E'_{\Delta\setminus\{\alpha\}}[X_\alpha]/(X_\alpha^r)\otimes_{E^+_{\Delta}}M)$$ for some finite separable extensions $E'_\beta/E_\beta$ for $\beta\in\Delta\setminus\{\alpha\}$ and $E'_{\Delta\setminus\{\alpha\}}:=\widehat{\bigotimes}_{\beta\in\Delta\setminus\{\alpha\},\mathbb{F}_p}E'_\beta$. Therefore $x$ lies in fact in $E'_{\Delta\setminus\{\alpha\}}[X_\alpha]/(X_\alpha^r)\otimes_{E^+_{\Delta}}M$ by the injectivity of the map
\begin{align*}
\id\otimes\varphi_{\overline{\alpha}}^{l_r}\colon E'_{\Delta\setminus\{\alpha\}}[X_\alpha]/(X_\alpha^r)\otimes_{E'_{\Delta\setminus\{\alpha\}}[X_\alpha]/(X_\alpha^r),\varphi_{\overline{\alpha}}^{l_r}}(E'_{\Delta\setminus\{\alpha\}}[X_\alpha]/(X_\alpha^r)\otimes_{E^+_{\overline{\alpha}}}D^{+*}_{\overline{\alpha}})\to\\
\to E'_{\Delta\setminus\{\alpha\}}[X_\alpha]/(X_\alpha^r)\otimes_{E^+_{\overline{\alpha}}}D^{+*}_{\overline{\alpha}}
\end{align*}
($D^{+*}_{\overline{\alpha}}$ is \'etale) noting that the absolute Frobenius $\varphi_{\overline{\alpha}}\colon E'_{\Delta\setminus\{\alpha\}}\to  E'_{\Delta\setminus\{\alpha\}}$ is injective since the ring $ E'_{\Delta\setminus\{\alpha\}}$ is the localization of a power series ring over a finite \'etale algebra over $\mathbb{F}_p$, in particular, it is reduced. 

Finally, by the proof of Lemma \ref{phicover} we may choose $D_0=X_{\Delta\setminus\{\alpha\}}^{-k}(D^+\cap D^{+*}_{\overline{\alpha}})$ for some integer $k>0$ whence $M=X_{\Delta\setminus\{\alpha\}}^{-k-1}(D^+\cap D^{+*}_{\overline{\alpha}})$. So by Lemma \ref{twoprodinD+} $D^{+*}_{\overline{\alpha}}/M$ has no $X_\alpha$-torsion as $D^{+*}_{\overline{\alpha}}/M\cong D^{+*}_{\overline{\alpha}}+X_{\Delta\setminus\{\alpha\}}^{-k-1}D^+/(X_{\Delta\setminus\{\alpha\}}^{-k-1}D^+)$ is contained in $D^+_{\overline{\alpha}}/(X_{\Delta\setminus\{\alpha\}}^{-k-1}D^+)\cong D^+_{\overline{\alpha}}/D^+$. Therefore the map \eqref{maplevelr} is injective.
\end{proof}

\emph{Step 3. The goal here is to show the following compatibility of our construction with projective limits with respect to $r$.}
\begin{lem}\label{invlimcomm}
We have 
\begin{align*}
\varprojlim_r \left(E^{sep+}_{\Delta\setminus\{\alpha\}}[X_\alpha]/(X_\alpha^r)\otimes_{E^+_{\Delta}}M\right)\cong E^{sep+}_{\Delta\setminus\{\alpha\}}\bs X_\alpha\js\otimes_{E^+_{\Delta}}M\ ,\\
\varprojlim_r \left(E^{sep}_{\Delta\setminus\{\alpha\}}[X_\alpha]/(X_\alpha^r)\otimes_{E^+_{\overline{\alpha}}}D^{+*}_{\overline{\alpha}}\right)\cong E^{sep}_{\Delta\setminus\{\alpha\}}\bs X_\alpha\js\otimes_{E^+_{\overline{\alpha}}}D^{+*}_{\overline{\alpha}}\quad\text{, and}\\
\varprojlim_r \left(E^{sep}_{\Delta\setminus\{\alpha\}}[X_\alpha]/(X_\alpha^r)\otimes_{\mathbb{F}_p[X_\alpha]/(X_\alpha^r)}\bigcap_{\beta\in\Delta\setminus\{\alpha\}}\left(E^{sep}_{\Delta\setminus\{\alpha\}}[X_\alpha]/(X_\alpha^r)\otimes_{E^+_{\overline{\alpha}}/(X_\alpha^r)}D^{+*}_{\overline{\alpha},r}\right)^{\varphi_\beta=\id}\right)\cong \\
\cong  E^{sep}_{\Delta\setminus\{\alpha\}}\bs X_\alpha\js\otimes_{\mathbb{F}_p\bs X_\alpha\js}\bigcap_{\beta\in\Delta\setminus\{\alpha\}}\left(E^{sep}_{\Delta\setminus\{\alpha\}}\bs X_\alpha\js\otimes_{E_{\Delta}}D\right)^{\varphi_\beta=\id}\ .
\end{align*}
\end{lem}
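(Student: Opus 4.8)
The plan is to reduce all three isomorphisms to one statement about flat base change and $X_\alpha$-adic completion. Each ``completed'' target ($E^{sep+}_{\Delta\setminus\{\alpha\}}\bs X_\alpha\js$ for the first isomorphism, $E^{sep}_{\Delta\setminus\{\alpha\}}\bs X_\alpha\js$ for the second and third) is by construction the $X_\alpha$-adic completion of a polynomial ring $R[X_\alpha]$, hence $X_\alpha$-adically complete and separated, and is flat over the relevant base ring: $E^{sep+}_{\Delta\setminus\{\alpha\}}$, resp.\ $E^{sep}_{\Delta\setminus\{\alpha\}}$, is flat over $E^+_{\Delta\setminus\{\alpha\}}$, resp.\ over $E_{\Delta\setminus\{\alpha\}}$, by Lemma~\ref{Esepflat}; flatness is preserved under $X_\alpha$-adic completion over the Noetherian rings $E^+_{\Delta\setminus\{\alpha\}}[X_\alpha]$, resp.\ $E_{\Delta\setminus\{\alpha\}}[X_\alpha]$, so $E^{sep+}_{\Delta\setminus\{\alpha\}}\bs X_\alpha\js$, resp.\ $E^{sep}_{\Delta\setminus\{\alpha\}}\bs X_\alpha\js$, is flat over $E^+_\Delta$, resp.\ over $E^+_{\overline{\alpha}}=E^+_\Delta[X_{\Delta\setminus\{\alpha\}}^{-1}]$ (here one uses that $X_{\Delta\setminus\{\alpha\}}$ is already invertible in $E^{sep}_{\Delta\setminus\{\alpha\}}\bs X_\alpha\js$); finally $E^{sep}_{\Delta\setminus\{\alpha\}}\bs X_\alpha\js$ is flat over the discrete valuation ring $\Fp\bs X_\alpha\js$ because it is $X_\alpha$-torsion free.

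The once-and-for-all statement I would establish is: if $A$ is a Noetherian domain with $X_\alpha\in A\setminus\{0\}$, $B$ a flat $A$-algebra which is $X_\alpha$-adically complete and separated, and $P$ a finitely generated $X_\alpha$-torsion free $A$-module, then $B\otimes_AP$ is $X_\alpha$-adically complete; since $(B/X_\alpha^rB)\otimes_AP=(B\otimes_AP)/X_\alpha^r(B\otimes_AP)$, this yields $\varprojlim_r\bigl((B/X_\alpha^rB)\otimes_AP\bigr)\cong B\otimes_AP$. To prove it: being finitely generated and torsion free over the Noetherian domain $A$, the module $P$ embeds into some $A^n$, hence $B\otimes_AP\hookrightarrow B^n$ by flatness and $\bigcap_rX_\alpha^r(B\otimes_AP)\subseteq\bigcap_rX_\alpha^rB^n=0$, so $B\otimes_AP$ is $X_\alpha$-adically separated. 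Pick a finite presentation $A^b\overset{\phi}{\to}A^a\to P\to 0$ with $K:=\Ker(A^a\to P)$; by flatness $B\otimes_AK$ is the image of $\phi\colon B^b\to B^a$ inside $B^a=\varprojlim_r(B/X_\alpha^rB)^a$, and its $X_\alpha$-adic closure is $\bigcap_r(B\otimes_AK+X_\alpha^rB^a)=\bigcap_rB\otimes_A(K+X_\alpha^rA^a)$, which modulo $B\otimes_AK$ equals $\bigcap_rX_\alpha^r(B\otimes_AP)=0$ by the separatedness just shown; hence $B\otimes_AK$ is closed in $B^a$ and $B\otimes_AP=B^a/(B\otimes_AK)\cong\varprojlim_r(B\otimes_AP)/X_\alpha^r(B\otimes_AP)$.

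The first isomorphism is the case $A=E^+_\Delta=E^+_{\Delta\setminus\{\alpha\}}\bs X_\alpha\js$, $B=E^{sep+}_{\Delta\setminus\{\alpha\}}\bs X_\alpha\js$, $P=M$, and the second the case $A=E^+_{\overline{\alpha}}$, $B=E^{sep}_{\Delta\setminus\{\alpha\}}\bs X_\alpha\js$, $P=D^{+*}_{\overline{\alpha}}$ ($P$ is $X_\alpha$-torsion free since $D^{+*}_{\overline{\alpha}}\subseteq D$ and $X_\alpha$ is a unit in $E_\Delta$); note $B/X_\alpha^rB=E^{sep+}_{\Delta\setminus\{\alpha\}}[X_\alpha]/(X_\alpha^r)$, resp.\ $E^{sep}_{\Delta\setminus\{\alpha\}}[X_\alpha]/(X_\alpha^r)$, so $(B/X_\alpha^rB)\otimes_AP$ is exactly the term displayed in the lemma. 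For the third isomorphism I would first observe that $(\;)^{\varphi_\beta=\id}$ is the kernel of the $\Fp$-linear operator $\varphi_\beta-\id$, compatible with the transition maps in $r$, so forming these invariants and intersecting over $\beta$ commutes with $\varprojlim_r$; together with the already-proven second isomorphism this identifies $N:=\varprojlim_rN_r$, where $N_r$ denotes the level-$r$ intersection in the displayed formula, with $\bigcap_{\beta\in\Delta\setminus\{\alpha\}}\bigl(E^{sep}_{\Delta\setminus\{\alpha\}}\bs X_\alpha\js\otimes_{E^+_{\overline{\alpha}}}D^{+*}_{\overline{\alpha}}\bigr)^{\varphi_\beta=\id}$, which via $E_\Delta=E^+_{\overline{\alpha}}[X_\alpha^{-1}]$ and $D=D^{+*}_{\overline{\alpha}}[X_\alpha^{-1}]$ is the module on the right of the asserted isomorphism. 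It then remains to apply the once-and-for-all statement with $A=\Fp\bs X_\alpha\js$, $B=E^{sep}_{\Delta\setminus\{\alpha\}}\bs X_\alpha\js$, $P=N$; this requires $N$ to be finitely generated over $\Fp\bs X_\alpha\js$ with $N/X_\alpha^rN=N_r$, which is where the inductive hypothesis of the proof of Theorem~\ref{modpequiv} is used: $D^{+*}_{\overline{\alpha}}/X_\alpha^r$ is an \'etale $(\varphi_{\Delta\setminus\{\alpha\}},\Gamma_{\Delta\setminus\{\alpha\}})$-module over $E_{\Delta\setminus\{\alpha\}}$, hence free over $E_{\Delta\setminus\{\alpha\}}$ and trivialized by $E^{sep}_{\Delta\setminus\{\alpha\}}$, so $E^{sep}_{\Delta\setminus\{\alpha\}}[X_\alpha]/(X_\alpha^r)\otimes_{E^+_{\overline{\alpha}}}D^{+*}_{\overline{\alpha}}\cong E^{sep}_{\Delta\setminus\{\alpha\}}\otimes_{\Fp}V_r$ for a finite $\Fp[X_\alpha]/(X_\alpha^r)$-module $V_r$, and since $\varphi_\beta-\id$ is surjective on $E^{sep}_{\Delta\setminus\{\alpha\}}$ (Artin--Schreier) the pertinent $H^1$'s vanish, making the transition maps $N_{r+1}\to N_r$ surjective with kernel $X_\alpha^rN_{r+1}$ and keeping $\dim_{\Fp}(N_r/X_\alpha N_r)$ bounded; thus $N=\varprojlim_rN_r$ is finitely generated over $\Fp\bs X_\alpha\js$ with the $N_r$ as its $X_\alpha^r$-reductions.

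The main obstacle is that the big rings $B$ (such as $E^{sep+}_{\Delta\setminus\{\alpha\}}\bs X_\alpha\js$) are not Noetherian, so one cannot simply quote the fact that a finitely generated module over a complete Noetherian ring is complete; flatness of $B$ over the Noetherian domain $A$, combined with $X_\alpha$-torsion freeness of the modules in play (used to get separatedness by embedding into a finite free module), is precisely what replaces Noetherianity. A secondary difficulty, specific to the third isomorphism, is the verification that the $N_r$ really are the $X_\alpha^r$-reductions of a single finitely generated $\Fp\bs X_\alpha\js$-module, which relies on the Artin--Schreier surjectivity of $\varphi_\beta-\id$ and on the inductive equivalence over $\Delta\setminus\{\alpha\}$.
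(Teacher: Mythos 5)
Your overall architecture matches the paper's: treat the first two isomorphisms as instances of ``$B\otimes_A P$ is $X_\alpha$-adically complete'', and reduce the third to showing that the modules $N_r$ of $\varphi_\beta$-invariants form a surjective system arising as the $X_\alpha^r$-reductions of a single finitely generated $\Fp\bs X_\alpha\js$-module (topological Nakayama). Within that, your route for the first two isomorphisms is genuinely different: you invoke flatness of the \emph{completed} ring $E^{sep+}_{\Delta\setminus\{\alpha\}}\bs X_\alpha\js$ over $E^+_\Delta$, which rests on the nontrivial (but true) fact that $I$-adic completion of a flat module over a Noetherian ring is flat over the completed ring, and then deduce completeness of $B\otimes_A P$ from closedness of $B\otimes_A K$ in $B^a$. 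The paper never needs flatness of the completed ring: it establishes $\Tor$-independence of the \emph{finite-level} rings $E^{sep+}_{\Delta\setminus\{\alpha\}}[X_\alpha]/(X_\alpha^r)$ with $M$ directly (using only Lemma \ref{Esepflat} and $X_\alpha$-torsion-freeness of $M$) and then runs a Mittag--Leffler argument on a two-step presentation of $M$. Your version is cleaner and more reusable but imports a heavier commutative-algebra input; note also a small slip: to embed $P$ into $A^n$ you need $P$ torsion-free over the domain $A$, not merely $X_\alpha$-torsion-free (this is harmless here since $M$ and $D^{+*}_{\overline{\alpha}}$ sit inside the projective $E_\Delta$-module $D$, but your ``once-and-for-all statement'' is false as literally formulated).

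The one place where your argument has a real hole as written is the surjectivity of the transition maps $N_{r+1}\to N_r$ in the third isomorphism. You justify it by saying ``$\varphi_\beta-\id$ is surjective on $E^{sep}_{\Delta\setminus\{\alpha\}}$ (Artin--Schreier) so the pertinent $H^1$'s vanish''. Two problems: first, $E^{sep}_{\Delta\setminus\{\alpha\}}$ is a completed tensor product of separably closed fields, not a separably closed field, and the surjectivity of $\id-\varphi_\beta$ on it is exactly the content of Lemma \ref{phialphasurjEsepD}, which requires a careful convergence argument (adjoining Artin--Schreier roots of a full set of coset representatives at once) and is only proved in Section 4; second, for $|\Delta\setminus\{\alpha\}|\geq 2$ the obstruction to lifting an element invariant under \emph{all} the $\varphi_\beta$ lives in $h^1$ of the Koszul-type complex $\Phi^\bullet$, whose acyclicity is Prop.\ \ref{phiacyclicEsepD} --- again nontrivial and proved later. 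The paper's own proof sidesteps all of this: by \eqref{modrisom} the level-$r$ module is $E^{sep}_{\Delta\setminus\{\alpha\}}[X_\alpha]/(X_\alpha^r)\otimes_{\Fp[X_\alpha]/(X_\alpha^r)}N_r$, the reduction map on the big modules is onto, so the image of $N_{r_1}$ in $N_{r_2}$ generates the level-$r_2$ module over the faithfully flat extension $E^{sep}_{\Delta\setminus\{\alpha\}}[X_\alpha]/(X_\alpha^{r_2})$ and hence equals $N_{r_2}$. You should replace your $H^1$ appeal by this faithful-flatness argument (or at least make the forward reference to Prop.\ \ref{phiacyclicEsepD} explicit and check there is no circularity). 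The remainder of your Step for the third isomorphism --- identifying the kernel of $N_{r+1}\to N_r$ with $X_\alpha^rN_{r+1}$, bounding $\dim_{\Fp}(N_r/X_\alpha N_r)$, and applying topological Nakayama --- agrees with the paper.
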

\begin{proof}
Since $M$ is contained in $D$, $M$ has no $X_\alpha$-torsion. In particular, $M$ is flat as a module over the local ring $\mathbb{F}_p\bs X_\alpha\js$ and $\Tor_i^{\Fp\bs X_\alpha\js}(\mathbb{F}_p[X_\alpha]/(X_\alpha^r),M)=0$ for integers $i,r> 0$. Now we have the identification $$E^{sep+}_{\Delta\setminus\{\alpha\}}[X_\alpha]/(X_\alpha^r)\otimes_{E^+_{\Delta}}\cdot\cong E^{sep+}_{\Delta\setminus\{\alpha\}}\otimes_{E^+_{\Delta\setminus\{\alpha\}}}(\mathbb{F}_p[X_\alpha]/(X_\alpha^r)\otimes_{\mathbb{F}_p\bs X_\alpha\js}\cdot)$$ applied to an arbitrary projective resolution $P_\bullet$ of $M$ as an $E_\Delta^+$-module. Noting that each $P_j$ ($j\geq 0$) is flat over $\Fp\bs X_\alpha\js$ (as they are torsion-free) we deduce that $\mathbb{F}_p[X_\alpha]/(X_\alpha^r)\otimes_{\mathbb{F}_p\bs X_\alpha\js}P_\bullet$ is acyclic in nonzero degrees as it computes $\Tor_\bullet^{\Fp\bs X_\alpha\js}(\mathbb{F}_p[X_\alpha]/(X_\alpha^r),M)$. Moreover, by Lemma \ref{Esepflat} $E^{sep+}_{\Delta\setminus\{\alpha\}}$ is flat over $E^+_{\Delta\setminus\{\alpha\}}$ whence the complex $E^{sep+}_{\Delta\setminus\{\alpha\}}[X_\alpha]/(X_\alpha^r)\otimes_{E^+_{\Delta}}P_\bullet$ is also acyclic in nonzero degrees showing that $E^{sep+}_{\Delta\setminus\{\alpha\}}[X_\alpha]/(X_\alpha^r)$ and $M$ are $\Tor$-independent over $E^+_\Delta$.

On the other hand, $M$ is finitely generated over $E^+_\Delta$, so we have short exact sequences
\begin{align*}
0\to M_1\to (E^+_\Delta)^{k_0}\overset{f_0}{\to} M\to 0\qquad\text{and}\qquad
0\to M_2\to (E^+_\Delta)^{k_1}\to M_1\to 0
\end{align*}
by noetherianity. In order to simplify notation write $(\cdot)_r$ for $E^{sep+}_{\Delta\setminus\{\alpha\}}[X_\alpha]/(X_\alpha^r)\otimes_{E^+_{\Delta}}\cdot$ to obtain an exact sequence
\begin{align*}
(M_2)_r\to (E^+_\Delta)_r^{k_1}\overset{f_{1,r}}{\to} (E^+_\Delta)_r^{k_0}\overset{f_{0,r}}{\to} (M)_r\to 0
\end{align*}
for all $r>0$ using the $\Tor$-independence above. Now since the natural map $(N)_{r_1}\to (N)_{r_2}$ is surjective for any $E^+_\Delta$-module $N$ and $r_1\geq r_2>0$ by the right exactness of $\cdot\otimes_{E_\Delta^+}N$, the natural map $\Ker(f_{0,r_1})\to \Ker(f_{0,r_2})$ is also surjective (applying this in case $N=M_1$ and a diagram chasing). So the Mittag-Leffler property is satisfied for these projective systems showing that the map $\varprojlim_r f_{0,r}$ is surjective with kernel $\varprojlim_r \Ker(f_{0,r})=\varprojlim_r\mathrm{Im}(f_{1,r})$. Applying the same trick as above with $N=M_2$ we deduce that the projective system $\Ker(f_{1,r})$ also satisfies the Mittag-Leffler property showing that $\varprojlim_r f_{1,r}$ has image $\varprojlim_r \mathrm{Im}(f_{1,r})$. In particular, $\varprojlim_r (M)_r$ is the cokernel of the map $\varprojlim_r f_{1,r}\colon (E^{sep+}_{\Delta\setminus\{\alpha\}}\bs X_\alpha\js)^{k_1}\to (E^{sep+}_{\Delta\setminus\{\alpha\}}\bs X_\alpha\js)^{k_0}$ and so is $E^{sep+}_{\Delta\setminus\{\alpha\}}\bs X_\alpha\js\otimes_{E^+_\Delta}M$ as claimed. The second statement follows in exactly the same way.

For the third statement note that the isomorphism \eqref{modrisom} and the surjectivity of the map $E^{sep}_{\Delta\setminus\{\alpha\}}[X_\alpha]/(X_\alpha^{r_1})\otimes_{E^+_{\overline{\alpha}}}D^{+*}_{\overline{\alpha}}\to E^{sep}_{\Delta\setminus\{\alpha\}}[X_\alpha]/(X_\alpha^{r_2})\otimes_{E^+_{\overline{\alpha}}}D^{+*}_{\overline{\alpha}}$ implies that the map
\begin{align*}
\bigcap_{\beta\in\Delta\setminus\{\alpha\}}\left(E^{sep}_{\Delta\setminus\{\alpha\}}[X_\alpha]/(X_\alpha^{r_1})\otimes_{E^+_{\overline{\alpha}}/(X_\alpha^{r_1})}D^{+*}_{\overline{\alpha},r_1}\right)^{\varphi_\beta=\id}\to\\ \to\bigcap_{\beta\in\Delta\setminus\{\alpha\}}\left(E^{sep}_{\Delta\setminus\{\alpha\}}[X_\alpha]/(X_\alpha^{r_2})\otimes_{E^+_{\overline{\alpha}}/(X_\alpha^{r_2})}D^{+*}_{\overline{\alpha},r_2}\right)^{\varphi_\beta=\id}
\end{align*}
is also onto for all $r_1\geq r_2$. Therefore the natural map
\begin{align*}
\bigcap_{\beta\in\Delta\setminus\{\alpha\}}\left(E^{sep}_{\Delta\setminus\{\alpha\}}\bs X_\alpha\js\otimes_{E^+_{\overline{\alpha}}}D^{+*}_{\overline{\alpha}}\right)^{\varphi_\beta=\id}=\\
=\varprojlim_r\bigcap_{\beta\in\Delta\setminus\{\alpha\}}\left(E^{sep}_{\Delta\setminus\{\alpha\}}[X_\alpha]/(X_\alpha^{r})\otimes_{E^+_{\overline{\alpha}}/(X_\alpha^{r})}D^{+*}_{\overline{\alpha},r}\right)^{\varphi_\beta=\id}\to\\ \to\bigcap_{\beta\in\Delta\setminus\{\alpha\}}\left(E^{sep}_{\Delta\setminus\{\alpha\}}[X_\alpha]/(X_\alpha)\otimes_{E^+_{\overline{\alpha}}/(X_\alpha)}D^{+*}_{\overline{\alpha},1}\right)^{\varphi_\beta=\id}
\end{align*}
is also onto using the second statement of the Lemma. On the other hand, the kernel of this map equals 
\begin{align*}
\bigcap_{\beta\in\Delta\setminus\{\alpha\}}\left(E^{sep}_{\Delta\setminus\{\alpha\}}\bs X_\alpha\js\otimes_{E^+_{\overline{\alpha}}}D^{+*}_{\overline{\alpha}}\right)^{\varphi_\beta=\id}\cap X_\alpha E^{sep}_{\Delta\setminus\{\alpha\}}\bs X_\alpha\js\otimes_{E^+_{\overline{\alpha}}}D^{+*}_{\overline{\alpha}}=\\
=X_\alpha \bigcap_{\beta\in\Delta\setminus\{\alpha\}}\left(E^{sep}_{\Delta\setminus\{\alpha\}}\bs X_\alpha\js\otimes_{E^+_{\overline{\alpha}}}D^{+*}_{\overline{\alpha}}\right)^{\varphi_\beta=\id}
\end{align*}
since $X_\alpha$ is fixed by each $\varphi_\beta$ and $E^{sep}_{\Delta\setminus\{\alpha\}}\bs X_\alpha\js\otimes_{E^+_{\overline{\alpha}}}D^{+*}_{\overline{\alpha}}$ has no $X_\alpha$-torsion. This shows, in particular, that $\bigcap_{\beta\in\Delta\setminus\{\alpha\}}\left(E^{sep}_{\Delta\setminus\{\alpha\}}\bs X_\alpha\js\otimes_{E^+_{\overline{\alpha}}}D^{+*}_{\overline{\alpha}}\right)^{\varphi_\beta=\id}$ is finitely generated over $\mathbb{F}_p\bs X_\alpha\js$ by the topological Nakayama Lemma (see \cite{BH}). Moreover, it is torsion-free hence free as $E^{sep}_{\Delta\setminus\{\alpha\}}\bs X_\alpha\js\otimes_{E^+_{\overline{\alpha}}}D^{+*}_{\overline{\alpha}}$ has no $X_\alpha$-torsion either. In particular, $$E^{sep}_{\Delta\setminus\{\alpha\}}\bs X_\alpha\js\otimes_{\mathbb{F}_p\bs X_\alpha\js}\bigcap_{\beta\in\Delta\setminus\{\alpha\}}\left(E^{sep}_{\Delta\setminus\{\alpha\}}\bs X_\alpha\js\otimes_{E_{\Delta}}D\right)^{\varphi_\beta=\id}$$ is $X_\alpha$-adically complete and the result follows.
\end{proof}

\emph{Step 4. The goal here is to obtain a $(\varphi_\alpha,\Gamma_\alpha)$-module $D_\alpha$ over $E_\alpha$ (by trivializing the action of each $\varphi_\beta$, $\beta\in\Delta\setminus\{\alpha\}$) which is at the same time a linear representation of the group $\GQpDa$.}
We take projective limits of the inclusions in Lemma \ref{bounded} with respect to $r$ to conclude (using Lemma \ref{invlimcomm}) that
\begin{equation*}
\bigcap_{\beta\in\Delta\setminus\{\alpha\}}\left(E^{sep}_{\Delta\setminus\{\alpha\}}\bs X_\alpha\js\otimes_{E^+_{\overline{\alpha}}}D^{+*}_{\overline{\alpha}}\right)^{\varphi_\beta=\id}
\end{equation*}
is contained in the image of the map 
\begin{equation*}
E^{sep+}_{\Delta\setminus\{\alpha\}}\bs X_\alpha\js\otimes_{E^+_{\Delta}}M\to E^{sep}_{\Delta\setminus\{\alpha\}}\bs X_\alpha\js\otimes_{E^+_{\overline{\alpha}}}D^{+*}_{\overline{\alpha}}\ .
\end{equation*}
Note that $M[X_\Delta^{-1}]=D^{+*}_{\overline{\alpha}}[X_\Delta^{-1}]=D^{+*}_{\overline{\alpha}}[X_\alpha^{-1}]=D$ and $\varphi_\beta$ acts trivially on $X_\alpha$. So inverting $X_\Delta$ above we deduce that
\begin{equation*}
D_\alpha:=\bigcap_{\beta\in\Delta\setminus\{\alpha\}}\left(E^{sep}_{\Delta\setminus\{\alpha\}}\bg X_\alpha\jg\otimes_{E_{\Delta}}D\right)^{\varphi_\beta=\id}
\end{equation*}
is contained in the image of the map 
\begin{equation*}
E^{sep+}_{\Delta\setminus\{\alpha\}}\bs X_\alpha\js[X_\Delta^{-1}]\otimes_{E_{\Delta}}D\hookrightarrow E^{sep}_{\Delta\setminus\{\alpha\}}\bg X_\alpha\jg\otimes_{E_{\Delta}}D\ .
\end{equation*} 
On the other hand, by \eqref{modrisom} and the third statement of Lemma \ref{invlimcomm} we have an isomorphism
\begin{equation}
 E^{sep}_{\Delta\setminus\{\alpha\}}\bg X_\alpha\jg\otimes_{\mathbb{F}_p\bg X_\alpha\jg}D_\alpha\overset{\sim}{\to} E^{sep}_{\Delta\setminus\{\alpha\}}\bg X_\alpha\jg\otimes_{E_{\Delta}}D\ .\label{inftyisom}
\end{equation}
\begin{lem}
The finite dimensional $\mathbb{F}_p\bg X_\alpha\jg$-vector space $D_\alpha$ has the structure of an \'etale $(\varphi_\alpha,\Gamma_\alpha)$-module. At the same time it is a (linear) representation of the group $\GQpDa$. These two actions commute with each other.
\end{lem}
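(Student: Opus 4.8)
The plan is to transfer the partial Frobenius $\varphi_\alpha$, the group $\Gamma_\alpha$, and the group $\GQpDa$ from the coefficient ring onto $D_\alpha$, and then to deduce the \'etale property by faithfully flat descent along the comparison isomorphism \eqref{inftyisom}. Write $R:=E^{sep}_{\Delta\setminus\{\alpha\}}\bg X_\alpha\jg$ and endow $R\otimes_{E_\Delta}D$ with the diagonal actions of: the partial Frobenius $\varphi_\alpha$ (raising $1+X_\alpha$ to its $p$th power, identity on $E^{sep}_{\Delta\setminus\{\alpha\}}$); the group $\GQpa$, acting through its quotient $\Gamma_\alpha$ on $R$ via $X_\alpha\mapsto(1+X_\alpha)^{\chi_\alpha(\cdot)}-1$ and on $D$ through the factor $\Gamma_\alpha\leq\Gamma_\Delta$; the group $\GQpDa$, acting by the Fontaine--Wintenberger Galois action on $E^{sep}_{\Delta\setminus\{\alpha\}}$ (trivially on $X_\alpha$) and on $D$ through $\prod_{\beta\neq\alpha}\Gamma_\beta\leq\Gamma_\Delta$; and the relative Frobenii $\varphi_\beta$ ($\beta\in\Delta\setminus\{\alpha\}$), acting on the $E^{sep}_\beta$-part of $E^{sep}_{\Delta\setminus\{\alpha\}}$ and trivially on $X_\alpha$. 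The first task is to check that $\varphi_\alpha$, $\GQpa$, and $\GQpDa$ each commute with every $\varphi_\beta$ ($\beta\neq\alpha$) on $R\otimes_{E_\Delta}D$. On $R$ this is because the $\alpha$-direction and the $(\Delta\setminus\{\alpha\})$-direction involve disjoint variables; the only point that uses the structure theory is that the Galois action of $\GQpDa$ commutes with $\varphi_\beta$, which holds since $\varphi_\beta$ is the $\Fp$-linear absolute Frobenius on the $E^{sep}_\beta$-factor and the identity elsewhere, and any $\Fp$-algebra endomorphism commutes with the $p$-power map. On $D$ it is the defining property of a $(\varphi_\Delta,\Gamma_\Delta)$-module that $\Gamma_\Delta=\prod_\gamma\Gamma_\gamma$ commutes with the commutative monoid $T_{+,\Delta}$.

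Granting this, each of $\varphi_\alpha$, $\GQpa$ (acting through $\Gamma_\alpha$), and $\GQpDa$ stabilises $D_\alpha=\bigcap_{\beta\in\Delta\setminus\{\alpha\}}(R\otimes_{E_\Delta}D)^{\varphi_\beta=\id}$, yielding an $E_\alpha$-semilinear action of $\varphi_\alpha$ and of $\Gamma_\alpha$ and an $E_\alpha$-linear action of $\GQpDa$ on $D_\alpha$ (here $E_\alpha=\Fp\bg X_\alpha\jg$; note $\GQpDa$ fixes $E_\alpha$ pointwise). The $(\varphi_\alpha,\Gamma_\alpha)$-action and the $\GQpDa$-action commute with one another precisely because the corresponding operators commute on $R\otimes_{E_\Delta}D$ --- again by disjointness of variables on $R$ and by the product structure of $\Gamma_\Delta$ together with commutativity of $T_{+,\Delta}$ with $\Gamma_\Delta$ on $D$. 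Recall from the last part of Step~3 (end of the proof of Lemma~\ref{invlimcomm}) that $D_\alpha$ is a \emph{finite-dimensional} $E_\alpha$-vector space, obtained by inverting $X_\alpha$ in a finitely generated torsion-free $\Fp\bs X_\alpha\js$-module, so $E_\alpha$-bases of $D_\alpha$ exist. The $\GQpDa$-action is continuous because, by Lemma~\ref{bounded}, an $E_\alpha$-basis of $D_\alpha$ already lies in $E'_{\Delta\setminus\{\alpha\}}\bg X_\alpha\jg\otimes_{E_\Delta}D$ for some finite Galois extensions $E'_\beta/E_\beta$ ($\beta\neq\alpha$), whence the action factors through the finite group $\prod_{\beta\neq\alpha}\Gal(E'_\beta/E_\beta)$.

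It remains to verify that $D_\alpha$ is \'etale as a $(\varphi_\alpha,\Gamma_\alpha)$-module over $E_\alpha$. The map \eqref{inftyisom} is the $R$-linear extension of the $\varphi_\alpha$-equivariant inclusion $D_\alpha\hookrightarrow R\otimes_{E_\Delta}D$, and $\varphi_\alpha$ is a ring endomorphism of $R$, so \eqref{inftyisom} is $\varphi_\alpha$-equivariant; base-changing $\id\otimes\varphi_\alpha\colon\varphi_\alpha^*D_\alpha\to D_\alpha$ along $E_\alpha\hookrightarrow R$ and using \eqref{inftyisom} identifies it with $R\otimes_{E_\Delta}(\id\otimes\varphi_\alpha\colon\varphi_\alpha^*D\to D)$, which is an isomorphism because $D$ is \'etale over $E_\Delta$. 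Since $E_\alpha$ is a field, $R$ is a nonzero --- hence faithfully flat --- $E_\alpha$-module, so $\id\otimes\varphi_\alpha\colon\varphi_\alpha^*D_\alpha\to D_\alpha$ is itself an isomorphism; equivalently, the determinant of $\varphi_\alpha$ in an $E_\alpha$-basis of $D_\alpha$ is an element of the field $E_\alpha$ mapping to a unit of $R$, hence nonzero. (Commutativity of $\varphi_\alpha$ with $\Gamma_\alpha$, the remaining part of the $(\varphi_\alpha,\Gamma_\alpha)$-module structure, is again inherited from $R\otimes_{E_\Delta}D$.)

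I expect the main obstacle to be organisational rather than conceptual: keeping straight which operators commute on $R\otimes_{E_\Delta}D$ --- in particular the commutation of the Fontaine--Wintenberger Galois action of $\GQpDa$ with the relative partial Frobenii --- and checking carefully that \eqref{inftyisom} is genuinely $\varphi_\alpha$-equivariant, so that faithfully flat descent of ``being an isomorphism'' legitimately yields the \'etale property.
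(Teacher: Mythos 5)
Your proposal is correct, and the first half (defining the actions of $\varphi_\alpha$, $\Gamma_\alpha$, and $\GQpDa$ on $R\otimes_{E_\Delta}D$, checking they commute with each $\varphi_\beta$, $\beta\in\Delta\setminus\{\alpha\}$, and hence descend to the intersection $D_\alpha$) is exactly what the paper does, though the paper compresses it into one sentence; your explicit verification that the Fontaine--Wintenberger action commutes with the relative Frobenii, and your continuity remark via Lemma \ref{bounded}, are welcome additions. For the \'etale property the two arguments diverge in mechanism. The paper computes $\varphi_\alpha^*D_\alpha=\mathbb{F}_p\bg X_\alpha\jg\otimes_{\mathbb{F}_p\bg X_\alpha\jg,\varphi_\alpha}D_\alpha$ directly, pulling the pullback functor through the intersection and through the $\varphi_\beta$-fixed points (implicitly using that $E_\alpha$ is finite free over $\varphi_\alpha(E_\alpha)$, so the pullback is exact and commutes with kernels and intersections) and then invoking $\varphi_\alpha^*D\cong D$ inside. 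You instead base-change the linearized map $\id\otimes\varphi_\alpha\colon\varphi_\alpha^*D_\alpha\to D_\alpha$ along the faithfully flat extension $E_\alpha\hookrightarrow R=E^{sep}_{\Delta\setminus\{\alpha\}}\bg X_\alpha\jg$, identify the result with the base change of $\id\otimes\varphi_\alpha\colon\varphi_\alpha^*D\to D$ via the $\varphi_\alpha$-equivariant isomorphism \eqref{inftyisom}, and descend the property of being an isomorphism. Both routes are sound; yours trades the (slightly delicate, and left implicit in the paper) commutation of $\varphi_\alpha^*$ with an infinite intersection of fixed-point subspaces for a use of \eqref{inftyisom}, which is available at this point and makes the role of the comparison isomorphism explicit, while the paper's computation is self-contained and does not need faithfully flat descent.
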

\begin{proof}
The operator $\varphi_\alpha$ and the groups $\Gamma_\alpha$ and $\GQpDa$ act naturally on $D_\alpha$. For the \'etaleness of the action of $\varphi_\alpha$ on $D_\alpha$ note that we have $\mathbb{F}_p\bg X_\alpha\jg\otimes_{\mathbb{F}_p\bg X_\alpha\jg,\varphi_\alpha}D\cong D$ by the \'etale property of $\varphi_\alpha$ on $D$ and that $\varphi_\beta$ acts trivially on $\mathbb{F}_p\bg X_\alpha\jg$ for $\beta\in\Delta\setminus\{\alpha\}$. So we compute 
\begin{align*}
\mathbb{F}_p\bg X_\alpha\jg\otimes_{\mathbb{F}_p\bg X_\alpha\jg,\varphi_\alpha}D_\alpha=\mathbb{F}_p\bg X_\alpha\jg\otimes_{\mathbb{F}_p\bg X_\alpha\jg,\varphi_\alpha}\bigcap_{\beta\in\Delta\setminus\{\alpha\}}\left(E^{sep}_{\Delta\setminus\{\alpha\}}\bg X_\alpha\jg\otimes_{E_{\Delta}}D\right)^{\varphi_\beta=\id}=\\
=\bigcap_{\beta\in\Delta\setminus\{\alpha\}}\left(\mathbb{F}_p\bg X_\alpha\jg\otimes_{\mathbb{F}_p\bg X_\alpha\jg,\varphi_\alpha}E^{sep}_{\Delta\setminus\{\alpha\}}\bg X_\alpha\jg\otimes_{E_{\Delta}}D\right)^{\varphi_\beta=\id}=\\
=\bigcap_{\beta\in\Delta\setminus\{\alpha\}}\left(E^{sep}_{\Delta\setminus\{\alpha\}}\bg X_\alpha\jg\otimes_{E_{\Delta}}\mathbb{F}_p\bg X_\alpha\jg\otimes_{\mathbb{F}_p\bg X_\alpha\jg,\varphi_\alpha}D\right)^{\varphi_\beta=\id}\cong\\
\cong\bigcap_{\beta\in\Delta\setminus\{\alpha\}}\left(E^{sep}_{\Delta\setminus\{\alpha\}}\bg X_\alpha\jg\otimes_{E_{\Delta}}D\right)^{\varphi_\beta=\id}=D_\alpha\ .
\end{align*}
\end{proof}
\emph{Step 5. We show the essential surjectivity of $\mathbb{D}$ here.} Now we apply $\mathbb{V}_{F,\alpha}=(E_\alpha^{sep}\otimes_{\mathbb{F}_p\bg X_\alpha\jg}\cdot)^{\varphi_\alpha=\id}$ to $D_\alpha$ to obtain a finite dimensional $\mathbb{F}_p$-representation $V$ of $\GQpD$. Moreover, we have $\dim_{\mathbb{F}_p}V=\dim_{\mathbb{F}_p\bg X_\alpha\jg}D_\alpha=\rk_{E_\Delta}D$ by the isomorphism \eqref{inftyisom} since $\mathbb{V}_{F,\alpha}$ is rank-preserving by Fontaine's classical result. Using again the isomorphism \eqref{inftyisom} we conclude that the upper horizontal map in the diagram
\begin{align*}
\xymatrix{
E^{sep+}_{\Delta\setminus\{\alpha\}}\bs X_\alpha\js[X_\Delta^{-1}]\otimes_{\mathbb{F}_p\bg X_\alpha\jg}D_\alpha\ar@{^{(}->}[d]\ar[r] & E^{sep+}_{\Delta\setminus\{\alpha\}}\bs X_\alpha\js[X_\Delta^{-1}]\otimes_{E_{\Delta}}D\ar@{^{(}->}[d]\\
E^{sep}_{\Delta\setminus\{\alpha\}}\bg X_\alpha\jg\otimes_{\mathbb{F}_p\bg X_\alpha\jg}D_\alpha\ar[r]^\sim & E^{sep}_{\Delta\setminus\{\alpha\}}\bg X_\alpha\jg\otimes_{E_{\Delta}}D
}
\end{align*}
induced by the containment $D_\alpha\subset E^{sep+}_{\Delta\setminus\{\alpha\}}\bs X_\alpha\js[X_\Delta^{-1}]\otimes_{E_{\Delta}}D$ is injective since so are the vertical arrows as $E^{sep+}_{\Delta\setminus\{\alpha\}}\bs X_\alpha\js[X_\Delta^{-1}]$ is a subring in $E^{sep}_{\Delta\setminus\{\alpha\}}\bg X_\alpha\jg$ and $D$ (resp.\ $D_\alpha$) is flat over $E_\Delta$ (resp.\ over $\Fp\bg X_\alpha\jg$) by Prop.\ \ref{proj} (resp.\ since $\Fp\bg X_\alpha\jg$ is a field). Applying $E^{sep}_\alpha\otimes_{\mathbb{F}_p\bg X_\alpha\jg}\cdot$ we deduce another injective composite map
\begin{align*}
E^{sep}_\Delta\otimes_{\mathbb{F}_p}V\hookrightarrow\\
\hookrightarrow\left(E^{sep+}_{\Delta\setminus\{\alpha\}}\bs X_\alpha\js[X_\Delta^{-1}]\otimes_{\mathbb{F}_p\bg X_\alpha\jg}E^{sep}_\alpha\right)\otimes_{\mathbb{F}_p}V\cong\\
\cong E^{sep+}_{\Delta\setminus\{\alpha\}}\bs X_\alpha\js[X_\Delta^{-1}]\otimes_{\mathbb{F}_p\bg X_\alpha\jg}E^{sep}_\alpha\otimes_{\mathbb{F}_p\bg X_\alpha\jg}D_\alpha=\\
=E^{sep}_\alpha\otimes_{\mathbb{F}_p\bg X_\alpha\jg}E^{sep+}_{\Delta\setminus\{\alpha\}}\bs X_\alpha\js[X_\Delta^{-1}]\otimes_{\mathbb{F}_p\bg X_\alpha\jg}D_\alpha\hookrightarrow\\
\hookrightarrow \left(E^{sep}_\alpha\otimes_{\mathbb{F}_p\bg X_\alpha\jg}E^{sep+}_{\Delta\setminus\{\alpha\}}\bs X_\alpha\js[X_\Delta^{-1}]\right)\otimes_{E_{\Delta}}D\ .
\end{align*}
Taking $\HQpD$-invariants of this inclusion we deduce an inclusion $\mathbb{D}(V)\hookrightarrow D$ using Lemma \ref{complinv}. However, this is an isomorphism by Prop.\ 2.1 in \cite{MultVar} as $\mathbb{D}(V)$ and $D$ have the same rank.
\end{proof}
\begin{rems}
\begin{enumerate}
\item Even though we have constructed $V$ in the proof of the above theorem by a different procedure from just putting $V:=\mathbb{V}(D)$, we still have an isomorphism $V\cong \mathbb{V}(\mathbb{D}(V))\cong \mathbb{V}(D)$ by Prop.\ \ref{mathbbD}.
\item If $\kappa$ is a finite extension of $\Fp$, then we have an equivalence of categories between $\mathrm{Rep}_\kappa(\GQpD)$ and $\mathcal{D}^{et}(\varphi_{\Delta},\Gamma_\Delta,\kappa\otimes_{\Fp}E_\Delta)$. Indeed, we have a natural isomorphism $(\kappa\otimes_{\Fp}E^{sep}_\Delta)\otimes_\kappa\cdot\cong E^{sep}_\Delta\otimes_{\Fp}\cdot$ as functors on $\mathrm{Rep}_\kappa(\GQpD)$.
\end{enumerate}
\end{rems}

\section{The case of $p$-adic representations}

\subsection{Cohomological preliminaries}

We will need the following multivariable analogue of Hilbert's Theorem 90 (additive form).

\begin{pro}\label{hilbert90D}
The continuous group cohomology $H^1_{cont}(\HQpD,E^{sep}_\Delta)$ vanishes.
\end{pro}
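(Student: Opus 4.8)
The plan is to descend to the level of finite Galois extensions, apply there the additive form of Hilbert's Theorem~90, and then pass to the direct limit.

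\emph{Reduction to finite level.} First I would note that $E^{sep}_\Delta$ is a \emph{discrete} $\HQpD$-module: any $x\in E^{sep}_\Delta$ lies in some $E'_\Delta$ attached to a collection of finite Galois extensions $E'_\alpha/E_\alpha$ $(\alpha\in\Delta)$, the action of $\HQpa$ on $E'_\Delta$ factors through the finite quotient $\Gal(E'_\alpha/E_\alpha)$, and hence the open normal subgroup $N':=\prod_{\alpha\in\Delta}\Gal(E^{sep}_\alpha/E'_\alpha)$ fixes $E'_\Delta$ pointwise. Consequently continuous cohomology is ordinary profinite group cohomology and commutes with the direct limit: $H^1_{cont}(\HQpD,E^{sep}_\Delta)=H^1(\HQpD,E^{sep}_\Delta)=\varinjlim_{E'}H^1(\HQpD,E'_\Delta)$, the colimit being over the directed system of collections $E'=(E'_\alpha)_{\alpha\in\Delta}$ of finite Galois extensions.

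\emph{Vanishing at finite level.} Fix such an $E'$ and put $G':=\prod_{\alpha\in\Delta}\Gal(E'_\alpha/E_\alpha)=\HQpD/N'$; I would show $H^1(G',E'_\Delta)=0$. By Lemma~\ref{E'Dfingen} the ring extension $E'_\Delta/E_\Delta$ is obtained by iterated base change, along the maps displayed there, from the finite separable field extensions $E'_\alpha/E_\alpha$; since a finite separable field extension is \'etale and \'etaleness is stable under base change and composition, $E'_\Delta/E_\Delta$ is finite \'etale, and $(E'_\Delta)^{G'}=E_\Delta$ by Proposition~\ref{invEsepD}, so it is a $G'$-Galois extension of rings. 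Therefore the trace $\Tr_{E'_\Delta/E_\Delta}\colon E'_\Delta\to E_\Delta$ is surjective (checking after the faithfully flat base change that trivializes the cover), so one may pick $\theta\in E'_\Delta$ with $\Tr_{E'_\Delta/E_\Delta}(\theta)=1$, and the classical averaging argument for additive Hilbert~90 applies: for a $1$-cocycle $(a_\sigma)_{\sigma\in G'}$ one sets $\beta:=\sum_{\tau\in G'}a_\tau\,\tau(\theta)$ and checks $a_\sigma=\beta-\sigma(\beta)$. (One could instead deduce this by induction on $|\Delta|$ from the one-variable case via the Lyndon--Hochschild--Serre spectral sequence for the product decomposition of $G'$.) This is the substantive point: although each integral extension $E'^+_\Delta/E^+_\Delta$ is in general (wildly) ramified along $X_\Delta=0$, inverting $X_\Delta$ turns $E'_\Delta/E_\Delta$ into a finite \'etale extension — which is exactly what the tower description of Lemma~\ref{E'Dfingen} buys — and this is what makes the trace surjective.

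\emph{D\'evissage in the limit.} Finally I would remove the classes not already killed at level $E'$. Inflation--restriction gives $0\to H^1(G',E'_\Delta)\to H^1(\HQpD,E'_\Delta)\to H^1(N',E'_\Delta)$, and since $N'$ acts trivially, $H^1(N',E'_\Delta)=\Hom_{cont}(N',E'_\Delta)$. A class $c'\in H^1(\HQpD,E'_\Delta)$ restricts to a continuous homomorphism $\psi\colon N'\to E'_\Delta$ with open kernel (the target being discrete); choosing a product of open subgroups inside $\ker\psi$ and intersecting over $\HQpa$-conjugates, one finds finite Galois extensions $E''_\alpha\supseteq E'_\alpha$ so that $N'':=\prod_{\alpha\in\Delta}\Gal(E^{sep}_\alpha/E''_\alpha)$ is open normal in $\HQpD$ and contained in $\ker\psi$. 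Then the image of $c'$ in $H^1(\HQpD,E''_\Delta)$ restricts to $0$ on $N''$, hence by inflation--restriction again it comes from $H^1(\HQpD/N'',E''_\Delta)=H^1(G'',E''_\Delta)$, which vanishes by the previous step; so $c'$ dies in $\varinjlim_{E'}H^1(\HQpD,E'_\Delta)$, and $H^1_{cont}(\HQpD,E^{sep}_\Delta)=0$. The rest is routine inflation--restriction bookkeeping together with the discreteness of the coefficient module.
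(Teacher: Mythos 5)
Your proof is correct, and its skeleton is the same as the paper's: reduce to the finite level $H^1(H',E'_\Delta)$ for a collection of finite Galois extensions $E'_\alpha/E_\alpha$ (the paper does this in one sentence via Prop.~\ref{invEsepD}, whereas you spell out the discreteness of $E^{sep}_\Delta$, the passage to the colimit, and the inflation--restriction d\'evissage needed to kill classes that only become trivial at a higher level --- bookkeeping the paper suppresses but which is genuinely part of the argument). Where you diverge is in the finite-level vanishing itself. The paper takes a normal basis of each $E'_\alpha$ over $E_\alpha$ and uses Lemma~\ref{E'Dfingen} to conclude that $E'_\Delta\cong E_\Delta[H']$ is an \emph{induced} $H'$-module, so that all positive-degree cohomology vanishes at once. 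You instead produce an element $\theta\in E'_\Delta$ with $\sum_{\tau\in H'}\tau(\theta)=1$ and run the classical averaging coboundary. Both are textbook proofs of additive Hilbert~90; the normal-basis route gives the stronger statement ($H^i=0$ for all $i\geq 1$, which is not needed here), while yours isolates exactly what $H^1$ requires. One small simplification to your trace step: rather than invoking \'etaleness of $E'_\Delta/E_\Delta$ and the torsor property to get surjectivity of the ring trace and to identify it with $\sum_{\tau\in H'}\tau$, you can just pick $c_\alpha\in E'_\alpha$ with $\Tr_{E'_\alpha/E_\alpha}(c_\alpha)=1$ for each $\alpha$ (separability of the field extensions) and set $\theta:=\bigotimes_{\alpha\in\Delta}c_\alpha\in E'_{\Delta,\circ}\subseteq E'_\Delta$; then $\sum_{\tau\in H'}\tau(\theta)=\prod_{\alpha}\Tr_{E'_\alpha/E_\alpha}(c_\alpha)=1$ directly, parallel to how the paper builds its basis out of one-variable data.
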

\begin{proof}
By Prop.\ \ref{invEsepD} it suffices to show that for finite Galois extensions $E'_\alpha/E_\alpha$ (for all $\alpha\in\Delta$) with Galois group $H'_\alpha:=\Gal(E'_\alpha/E_\alpha)$ we have $H^1(H',E'_\Delta)=\{1\}$ where we put $H':=\prod_{\alpha\in\Delta} H'_\alpha$. Choose a normal basis $e_1,\dots,e_{n_\alpha}\in E'_\alpha$ over $E_\alpha$ for each $\alpha\in\Delta$. By Lemma \ref{E'Dfingen} the set $\{\prod_{\alpha\in\Delta}e_{i_\alpha}\mid 1\leq i_\alpha\leq n_\alpha,\ \alpha\in\Delta\}$ is a basis of the free $E_\Delta$-module $E'_\Delta$. In particular, $E'_\Delta\cong E_\Delta[H']$ is induced as an $H'$-module whence the cohomology group $H^1(H',E'_\Delta)$ is trivial.
\end{proof}

Let $D$ be an abelian group admitting an action of the commutative monoid $\prod_{\alpha\in\Delta}\varphi_\alpha^{\mathbb{N}}$. Fix a total ordering $<$ on $\Delta$ and consider the complex
\begin{align*}
\Phi^\bullet(D)\colon 0\to D\to \bigoplus_{\alpha\in\Delta}D\to \dots\to \bigoplus_{\{\alpha_1,\dots,\alpha_r\}\in \binom{\Delta}{r}}D\to\dots \to D\to 0
\end{align*}
where for all $0\leq r\leq |\Delta|-1$ the map $d_{\alpha_1,\dots,\alpha_r}^{\beta_1,\dots,\beta_{r+1}}\colon D\to D$ from the component in the $r$th term corresponding to $\{\alpha_1,\dots,\alpha_r\}\subseteq \Delta$ to the component corresponding to the $(r+1)$-tuple $\{\beta_1,\dots,\beta_{r+1}\}\subseteq\Delta$ is given by
\begin{equation*}
d_{\alpha_1,\dots,\alpha_r}^{\beta_1,\dots,\beta_{r+1}}=\begin{cases}0&\text{if }\{\alpha_1,\dots,\alpha_r\}\not\subseteq\{\beta_1,\dots,\beta_{r+1}\}\\ (-1)^{\varepsilon}(\id-\varphi_\beta)&\text{if }\{\beta_1,\dots,\beta_{r+1}\}=\{\alpha_1,\dots,\alpha_r\}\cup\{\beta\}\ ,\end{cases}
\end{equation*}
where $\varepsilon=\varepsilon(\alpha_1,\dots,\alpha_r,\beta)$ is the number of elements in the set $\{\alpha_1,\dots,\alpha_r\}$ smaller than $\beta$. Since the operators $(\id-\varphi_\beta)$ commute with each other, $\Phi^\bullet(D)$ is a chain complex of abelian groups. Note that for each $\alpha\in\Delta$ we have a complex
\begin{equation*}
\Phi^\bullet_\alpha(D)\colon 0\to D\overset{\id-\varphi_\alpha}{\to} D\to 0
\end{equation*}
such that $\Phi^\bullet(E^{sep}_\Delta)$ is a kind of completed tensor product of the complexes $\Phi^\bullet_\alpha(E^{sep}_\alpha)$. More precisely, the tensor product over $\mathbb{F}_p$ of the complexes $\Phi^\bullet(E^{sep}_\alpha)$ is the complex $\Phi^\bullet(E^{sep}_{\Delta,\circ})$ which is therefore acyclic in nonzero degrees with $0$th cohomology equal to $\mathbb{F}_p$ by the K\"unneth formula. Note that there are no higher $\Tor$'s as the tensor product is taken over the field $\mathbb{F}_p$. We need the following completed version of this observation.
\begin{pro}\label{phiacyclicEsepD}
The complex $\Phi^\bullet(E^{sep}_\Delta)$ is acyclic in nonzero degrees with $0$th cohomology equal to $\mathbb{F}_p$.
\end{pro}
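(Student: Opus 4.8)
The plan is to prove the statement by induction on $|\Delta|$, peeling off one partial Frobenius at a time. By construction $\Phi^\bullet(D)$ is the total complex of the $\Delta$-cube whose vertices are copies of $D$ and whose edge in direction $\beta\in\Delta$ is $\pm(\id-\varphi_\beta)$; since the operators $\id-\varphi_\beta$ commute, fixing $\alpha\in\Delta$ exhibits $\Phi^\bullet(D)$, up to a shift, as the mapping cone of the chain endomorphism $\id-\varphi_\alpha$ of $\Phi^\bullet_{\Delta\setminus\{\alpha\}}(D)$. Applying this with $D=E^{sep}_\Delta$ and writing $B^\bullet:=\Phi^\bullet_{\Delta\setminus\{\alpha\}}(E^{sep}_\Delta)$, we get a long exact sequence
\[
\cdots\to H^{i-1}(B^\bullet)\xrightarrow{\ \id-\varphi_\alpha\ }H^{i-1}(B^\bullet)\to H^i(\Phi^\bullet(E^{sep}_\Delta))\to H^i(B^\bullet)\xrightarrow{\ \id-\varphi_\alpha\ }H^i(B^\bullet)\to\cdots .
\]
The base case $|\Delta|=1$ is immediate: $\Phi^\bullet(E^{sep}_{\{\alpha\}})=[E^{sep}_\alpha\xrightarrow{\id-\varphi_\alpha}E^{sep}_\alpha]$ with $\varphi_\alpha$ the absolute Frobenius, its kernel is $\Fp$ (the polynomial $T^p-T$ has all of its roots in $\Fp$) and its cokernel vanishes because $E^{sep}_\alpha$ is separably closed, so $y^p-y=z$ is solvable for every $z$ (Artin--Schreier).

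The heart of the inductive step is the claim that $B^\bullet$ is acyclic in positive degrees and that $H^0(B^\bullet)$ equals $E^{sep}_\alpha$, embedded in $E^{sep}_\Delta$ as the subring of elements fixed by all $\varphi_\beta$ with $\beta\neq\alpha$, on which $\varphi_\alpha$ acts as the absolute Frobenius. To see this, write $E^{sep}_\Delta=\varinjlim_{E'_\alpha}\big(\varinjlim_{(E'_\gamma)_{\gamma\neq\alpha}}E'_\Delta\big)$; the outer filtered colimit is exact and compatible with the operators $\varphi_\beta$ ($\beta\neq\alpha$), so it suffices to treat, for each fixed finite separable $E'_\alpha=\mathbb{F}_{q_\alpha}\bg X'_\alpha\jg$, the complex $\Phi^\bullet_{\Delta\setminus\{\alpha\}}(R_{E'_\alpha})$, where $R_{E'_\alpha}:=\varinjlim_{(E'_\gamma)_{\gamma\neq\alpha}}E'_\Delta$. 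Rearranging the $\alpha$-factor to the front as in Lemma \ref{E'Dfingen} and commuting the completion with the colimit over the $E'_\gamma$ ($\gamma\neq\alpha$) by the Mittag--Leffler / $\Tor$-independence arguments of Lemma \ref{invlimcomm}, one identifies $R_{E'_\alpha}$ with $\big(\varprojlim_r\,(\mathbb{F}_{q_\alpha}\otimes_{\Fp}E^{sep}_{\Delta\setminus\{\alpha\}})[X'_\alpha]/(X'_\alpha)^r\big)[1/X'_\alpha]$, the operators $\varphi_\beta$ ($\beta\neq\alpha$) acting only through the factor $E^{sep}_{\Delta\setminus\{\alpha\}}$ and trivially on $\mathbb{F}_{q_\alpha}$ and on $X'_\alpha$. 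Hence $\Phi^\bullet_{\Delta\setminus\{\alpha\}}(R_{E'_\alpha})=\big(\varprojlim_r\,\mathbb{F}_{q_\alpha}[X'_\alpha]/(X'_\alpha)^r\otimes_{\Fp}\Phi^\bullet_{\Delta\setminus\{\alpha\}}(E^{sep}_{\Delta\setminus\{\alpha\}})\big)[1/X'_\alpha]$. By the inductive hypothesis $\Phi^\bullet_{\Delta\setminus\{\alpha\}}(E^{sep}_{\Delta\setminus\{\alpha\}})$ is acyclic in positive degrees with $H^0=\Fp$; each truncation $\mathbb{F}_{q_\alpha}[X'_\alpha]/(X'_\alpha)^r\otimes_{\Fp}(-)$ is then still acyclic in positive degrees (tensoring over the field $\Fp$), and the transition maps between the truncations are surjective, so the $\varprojlim$ is acyclic in positive degrees by Mittag--Leffler and has $H^0=\varprojlim_r\mathbb{F}_{q_\alpha}[X'_\alpha]/(X'_\alpha)^r=\mathbb{F}_{q_\alpha}\bs X'_\alpha\js$; inverting $X'_\alpha$ and passing to the colimit over $E'_\alpha$ gives the claim, with $H^0(B^\bullet)=\varinjlim_{E'_\alpha}\mathbb{F}_{q_\alpha}\bg X'_\alpha\jg=E^{sep}_\alpha$.

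Granting the claim, the long exact sequence finishes the proof: for $i\geq 2$ both $H^{i-1}(B^\bullet)$ and $H^i(B^\bullet)$ vanish, so $H^i(\Phi^\bullet(E^{sep}_\Delta))=0$; for $i=1$ it gives $H^1(\Phi^\bullet(E^{sep}_\Delta))=\operatorname{coker}(\id-\varphi_\alpha\colon E^{sep}_\alpha\to E^{sep}_\alpha)=0$ by Artin--Schreier over the separably closed field $E^{sep}_\alpha$; and for $i=0$ it gives $H^0(\Phi^\bullet(E^{sep}_\Delta))=\ker(\id-\varphi_\alpha\colon E^{sep}_\alpha\to E^{sep}_\alpha)=\Fp$, which is also just Lemma \ref{phiinv}. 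I expect the main obstacle to be the claim about $B^\bullet$: the analogue is \emph{false} over a single finite extension $E'_\Delta$ (already $\id-\varphi_\alpha$ fails to be surjective on the residue field $\mathbb{F}_{q_\alpha}$, and correspondingly $\Phi^\bullet(E'_\Delta)$ is not acyclic in positive degrees), so the colimit over $E'_\alpha$ is essential, and the delicate point --- the ``completed K\"unneth'' version of the observation flagged in the text --- is to commute $\Phi^\bullet_{\Delta\setminus\{\alpha\}}$ with the passage from $E^{sep}_{\Delta\setminus\{\alpha\}}$ to $R_{E'_\alpha}$, which is exactly where the projective-limit arguments modelled on Lemma \ref{invlimcomm} are needed.
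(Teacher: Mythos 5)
Your reduction via the mapping cone is sound, and it is really the same argument as the paper's: the paper proves the proposition by the explicit cocycle version of your long exact sequence, inducting on $|\Delta|$ and using as its only nontrivial input the statement that $\id-\varphi_\alpha\colon E^{sep}_S\to E^{sep}_S$ is surjective with kernel $E^{sep}_{S\setminus\{\alpha\}}$ (Lemma \ref{phialphasurjEsepD}). Your claim about $B^\bullet$ is true, and follows from that lemma by the same cocycle-killing induction. The gap is in how you prove the claim. The identification of $R_{E'_\alpha}=\varinjlim_{(E'_\gamma)_{\gamma\neq\alpha}}E'_\Delta$ with $\bigl(\varprojlim_r(\mathbb{F}_{q_\alpha}\otimes_{\Fp}E^{sep}_{\Delta\setminus\{\alpha\}})[X'_\alpha]/(X'_\alpha)^r\bigr)[1/X'_\alpha]$ is false: completion does not commute with the filtered colimit over the $(E'_\gamma)_{\gamma\neq\alpha}$. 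An element of $R_{E'_\alpha}$, expanded as a power series in $X'_\alpha$, has all of its coefficients in a single finite level $\mathbb{F}_{q_\alpha}\otimes_{\Fp}E'_{\Delta\setminus\{\alpha\}}$, whereas the ring on the right contains series $\sum_n c_n (X'_\alpha)^n$ whose coefficients $c_n$ run through larger and larger finite extensions inside $E^{sep}_{\Delta\setminus\{\alpha\}}$. That larger ring is essentially the $E^{sep}_{\Delta\setminus\{\alpha\}}\bs X_\alpha\js[X_\Delta^{-1}]$ which the paper is at pains to distinguish from $E^{sep}_\Delta$ elsewhere (Lemma \ref{complinv}, Step 4 of the proof of Theorem \ref{modpequiv}); recall also the warning that $E'_\Delta$ is not complete for $|\cdot|_{prod}$ once $|\Delta|>1$. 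So your K\"unneth/Mittag--Leffler computation establishes acyclicity for the wrong (strictly larger) ring, and nothing transfers back to $R_{E'_\alpha}$, or to $E^{sep}_\Delta=\varinjlim_{E'_\alpha}R_{E'_\alpha}$, automatically.

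What is actually needed, and what the paper supplies, is the surjectivity of $\id-\varphi_\beta$ on $E^{sep}_\Delta$ itself. This is not formal: for $c\in E'_\Delta$ one writes $c$ as a convergent sum $\sum_n c_{\overline{\beta},n}\otimes c_{\beta,n}$ with bounded denominators, solves $x^p-x=u$ for the finitely many coset representatives $u$ of $X_\beta^{-k}E'^+_\beta/X'_\beta E'^+_\beta$ after a finite separable extension of $E'_\beta$ only, inverts $\id-\varphi_\beta$ on the topologically nilpotent part by the geometric series (Lemma \ref{inverseconvergephi}), and checks that the solutions have controlled denominators so that the resulting sum converges inside some $E''_\Delta\subset E^{sep}_\Delta$ rather than merely in the completion. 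This denominator control is exactly the point your completed-tensor shortcut elides. With Lemma \ref{phialphasurjEsepD} in hand, your mapping-cone induction (equivalently, the paper's cocycle manipulation) goes through; without it, the claim about $B^\bullet$ is unproven.
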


The following Lemma is well-known.

\begin{lem}\label{inverseconvergephi}
For any finite separable extension $E'_\alpha/E_\alpha$ the map $\id-\varphi_\alpha\colon X'_\alpha E'^+_\alpha\to X'_\alpha E'^+_\alpha$ is bijective.
\end{lem}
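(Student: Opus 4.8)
The plan is to exhibit an explicit two-sided inverse of $\id-\varphi_\alpha$ by the usual geometric-series trick, the point being that on the maximal ideal $X'_\alpha E'^+_\alpha$ the operator $\varphi_\alpha$ strictly contracts with respect to the $X'_\alpha$-adic topology.

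First I would record the contraction estimate. Writing $\val_{X'_\alpha}$ for the $X'_\alpha$-adic valuation on $E'^+_\alpha$ normalised so that $\val_{X'_\alpha}(X'_\alpha)=1$, and using that $\varphi_\alpha(x)=x^p$, one has $\val_{X'_\alpha}(\varphi_\alpha(x))=p\,\val_{X'_\alpha}(x)$ for every $x\in E'^+_\alpha$. Hence $\varphi_\alpha$ sends $(X'_\alpha)^kE'^+_\alpha$ into $(X'_\alpha)^{pk}E'^+_\alpha$ for all $k\geq 1$; in particular $\varphi_\alpha$ does preserve $X'_\alpha E'^+_\alpha$, it is continuous for the $X'_\alpha$-adic topology, and it is topologically nilpotent on $X'_\alpha E'^+_\alpha$ (recall that $E'^+_\alpha\cong\mathbb{F}_{q_\alpha}\bs X'_\alpha\js$ is $X'_\alpha$-adically complete).

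Injectivity is then immediate: if $x\in X'_\alpha E'^+_\alpha$ satisfies $\varphi_\alpha(x)=x$ with $x\neq 0$, then $\val_{X'_\alpha}(x)=p\,\val_{X'_\alpha}(x)$, forcing $\val_{X'_\alpha}(x)=0$, contradicting $x\in X'_\alpha E'^+_\alpha$. For surjectivity, given $y\in X'_\alpha E'^+_\alpha$ I would set $x:=\sum_{k\geq 0}\varphi_\alpha^k(y)$; this series converges in $X'_\alpha E'^+_\alpha$ because $\val_{X'_\alpha}(\varphi_\alpha^k(y))=p^k\,\val_{X'_\alpha}(y)\geq p^k\to\infty$ and the ring is complete, and applying $\varphi_\alpha$ term by term (legitimate by continuity) gives $\varphi_\alpha(x)=\sum_{k\geq 1}\varphi_\alpha^k(y)$, so $(\id-\varphi_\alpha)(x)=y$. (Note $\id-\varphi_\alpha$ is additive, $\varphi_\alpha$ being the Frobenius, so this really produces a bijection of additive groups.)

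I do not expect a genuine obstacle here — this is the elementary fact behind the lemma being called ``well-known''. The only points deserving a line of care are that the map genuinely lands in the maximal ideal $X'_\alpha E'^+_\alpha$ rather than in all of $E'^+_\alpha$ — on the full ring $\id-\varphi_\alpha$ is neither injective nor surjective, since its restriction to the residue field $\mathbb{F}_{q_\alpha}$ is the additive Artin--Schreier map $a\mapsto a-a^p$ — and the convergence of the geometric series defining the inverse, which is exactly the contraction estimate recorded above.
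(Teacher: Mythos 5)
Your proof is correct and is essentially the paper's own argument: the paper likewise notes that the kernel of $\id-\varphi_\alpha$ (namely $\mathbb{F}_p$) does not meet $X'_\alpha E'^+_\alpha$ and inverts the map by the convergent series $\sum_{n\geq 0}\varphi_\alpha^n$. Your version merely spells out the contraction estimate $\val_{X'_\alpha}(\varphi_\alpha(x))=p\,\val_{X'_\alpha}(x)$ that the paper leaves implicit.
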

\begin{proof}
The kernel of $\id-\varphi_\alpha$ is $\mathbb{F}_p$ which is not contained in $X'_\alpha E'^+_\alpha$. On the other hand, $\sum_{n=0}^\infty\varphi_\alpha^n$ converges on this set and is therefore an inverse to $\id-\varphi_\alpha$ for formal reasons.
\end{proof}

Our key is the following

\begin{lem}\label{phialphasurjEsepD}
For all $\alpha\in S\subseteq\Delta$ the map $\id-\varphi_\alpha\colon E^{sep}_S\to E^{sep}_S$ is surjective with kernel $E^{sep}_{S\setminus\{\alpha\}}$.
\end{lem}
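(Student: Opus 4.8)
\emph{Setup.} The plan is to single out the variable $X_\alpha$ and expand elements of $E^{sep}_S$ into Laurent series in a uniformizer $X'_\alpha$ of a finite separable extension $E'_\alpha/E_\alpha$. Write $T:=S\setminus\{\alpha\}$ and $X'_T:=\prod_{\beta\in T}X'_\beta$. Using the power series description of $E'^+_S$ recorded before Lemma \ref{E'Dfingen}, for any choice of finite separable extensions $E'_\beta/E_\beta$ ($\beta\in S$) one has
$$E'_S\cong B\bs X'_\alpha\js\left[\tfrac{1}{X'_\alpha X'_T}\right],\qquad B:=E'^+_T\otimes_{\Fp}\mathbb{F}_{q_\alpha},$$
and under this identification $\varphi_\alpha$ acts on $B$ by $\sigma:=\id_{E'^+_T}\otimes\mathrm{Frob}_{\mathbb{F}_{q_\alpha}}$ and sends $X'_\alpha\mapsto(X'_\alpha)^p$, so $\varphi_\alpha\bigl(\sum_m b_m(X'_\alpha)^m\bigr)=\sum_m\sigma(b_m)(X'_\alpha)^{pm}$. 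Since $\varinjlim$ is exact, it suffices to compute the kernel of $\id-\varphi_\alpha$ on each $E'_S$, and for surjectivity to produce a preimage of an arbitrary $z\in E'_S$ after possibly enlarging $E'_\alpha$.

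\emph{Kernel.} If $\varphi_\alpha(z)=z$, write $z=\sum_{m\ge -k}c_m(X'_\alpha)^m$ with $c_m\in B[1/X'_T]=E'_T\otimes_{\Fp}\mathbb{F}_{q_\alpha}$. Comparing coefficients yields $c_{pm}=\sigma(c_m)$ for all $m$, and $c_m=0$ whenever $p\nmid m$ and $m\ne 0$; iterating $c_{p^jm_0}=\sigma^j(c_{m_0})$ forces $c_m=0$ for every $m\ne 0$. Hence $z=c_0$ with $\sigma(c_0)=c_0$; as $\mathrm{Frob}_{\mathbb{F}_{q_\alpha}}$ is $\Fp$-linear with fixed points $\Fp$, flatness of $E'_T$ over $\Fp$ gives $(E'_T\otimes_{\Fp}\mathbb{F}_{q_\alpha})^{\sigma}=E'_T$, so $z$ lies in the image of the natural embedding $E^{sep}_{S\setminus\{\alpha\}}\hookrightarrow E^{sep}_S$. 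The reverse inclusion $E^{sep}_{S\setminus\{\alpha\}}\subseteq\ker(\id-\varphi_\alpha)$ is immediate since $\varphi_\alpha$ acts trivially on every $\beta$-component with $\beta\ne\alpha$. (This parallels the argument of Lemma \ref{phiinv}.)

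\emph{Surjectivity.} Fix $z\in E'_S$ and split $z=z_{>0}+z_{\le 0}$ along the $X'_\alpha$-expansion, where $z_{\le 0}=\sum_{m=-k}^{0}c_m(X'_\alpha)^m$ is a finite Laurent tail with $c_m\in E'_T\otimes_{\Fp}\mathbb{F}_{q_\alpha}$, and $z_{>0}=(X'_T)^{-k}w$ with $w\in X'_\alpha B\bs X'_\alpha\js$. For $z_{>0}$: since $\varphi_\alpha$ multiplies the $X'_\alpha$-valuation by $p$ and fixes $X'_T$, the series $y_{>0}:=(X'_T)^{-k}\sum_{n\ge 0}\varphi_\alpha^n(w)$ converges in $B\bs X'_\alpha\js$, lies in $E'_S$, and satisfies $(\id-\varphi_\alpha)(y_{>0})=z_{>0}$ by telescoping; this is the completed form of Lemma \ref{inverseconvergephi}, and needs no enlargement. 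For $z_{\le 0}$: writing $c_m=\sum_i d_{m,i}\otimes\lambda_{m,i}$ with $d_{m,i}\in E'_T$, $\lambda_{m,i}\in\mathbb{F}_{q_\alpha}$, we may view $z_{\le 0}\in E'_T\otimes_{\Fp}E^{sep}_\alpha$, on which $\varphi_\alpha$ acts as $\id_{E'_T}\otimes\mathrm{Frob}$. The field $E^{sep}_\alpha$ is separably closed of characteristic $p$, so the Artin--Schreier map $\id-\mathrm{Frob}\colon E^{sep}_\alpha\to E^{sep}_\alpha$ is surjective; it is $\Fp$-linear, so right-exactness of $E'_T\otimes_{\Fp}(-)$ shows $\id-\varphi_\alpha$ is surjective on $E'_T\otimes_{\Fp}E^{sep}_\alpha$, and there is $y_{\le 0}\in E'_T\otimes_{\Fp}E^{sep}_\alpha$ with $(\id-\varphi_\alpha)(y_{\le 0})=z_{\le 0}$. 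This $y_{\le 0}$ already lies in $E'_T\otimes_{\Fp}E''_\alpha$ for some finite separable $E''_\alpha/E_\alpha$; localizing the map $E'^+_T\otimes_{\Fp}E''^+_\alpha\hookrightarrow E''^+_S$ (injective since the source is $X''_\alpha$-adically separated with completion $E''^+_S$) exhibits $E'_T\otimes_{\Fp}E''_\alpha$ as a $\varphi_\alpha$-stable subring of $E''_S\subseteq E^{sep}_S$. Then $y:=y_{>0}+y_{\le 0}$ solves $(\id-\varphi_\alpha)(y)=z$ in $E^{sep}_S$.

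\emph{Main obstacle.} The delicate point is the negative Laurent tail $z_{\le 0}$: there the geometric series $\sum_n\varphi_\alpha^n$ diverges, so one is genuinely forced to pass to a larger (typically wildly ramified) separable extension of $E_\alpha$. Making this precise rests on the surjectivity of Artin--Schreier on the separably closed field $E^{sep}_\alpha$ together with the (routine but fiddly) bookkeeping identifying finite pieces of $E^{sep}_{S\setminus\{\alpha\}}\otimes_{\Fp}E^{sep}_\alpha$ with $\varphi_\alpha$-stable subrings of $E^{sep}_S$.
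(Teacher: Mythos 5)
Your proof is correct, and it rests on the same two pillars as the paper's argument --- the convergent geometric series $\sum_{n}\varphi_\alpha^n$ on the part that is integral in the $\alpha$-variable (the completed form of Lemma \ref{inverseconvergephi}), and adjoining Artin--Schreier roots to $E'_\alpha$ for the remainder --- but it organizes the surjectivity step around a different decomposition. The paper writes $c\in X_\Delta^{-k}E'^+_\Delta$ as an \emph{infinite} convergent sum of elementary tensors $c_{\overline{\alpha},n}\otimes c_{\alpha,n}$, and must therefore (i) manufacture a single finite extension $E''_\alpha$ serving all the $c_{\alpha,n}$ simultaneously, which it does by solving Artin--Schreier only for a finite set of representatives of $X_\alpha^{-k}E'^+_\alpha/X'_\alpha E'^+_\alpha$, and (ii) verify that the resulting preimages $d_{\alpha,n}$ have uniformly bounded $X_\alpha$-denominators so that $\sum_n c_{\overline{\alpha},n}\otimes d_{\alpha,n}$ still converges. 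You instead expand along powers of $X'_\alpha$ with coefficients in $E'_T\otimes_{\Fp}\mathbb{F}_{q_\alpha}$ having a common $X'_T$-denominator, so the only piece requiring an extension is the \emph{finite} Laurent tail $z_{\le 0}$; the Artin--Schreier step is then invoked only finitely many times (via right-exactness of $E'_T\otimes_{\Fp}(-)$ applied to the surjection $\id-\mathrm{Frob}$ on $E^{sep}_\alpha$) and no convergence estimate is needed there. The price is the bookkeeping you flag yourself: one must check that $E'_T\otimes_{\Fp}E''_\alpha$ embeds $\varphi_\alpha$-equivariantly into $E''_S$, a point that is immediate in the paper's elementary-tensor formulation. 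Both routes are sound; yours trades the paper's valuation bound on the $d_{\alpha,n}$ for this (routine) embedding verification, and the kernel computations coincide.
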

\begin{proof}
We may assume $S=\Delta$. The inclusion $E^{sep}_{\Delta\setminus\{\alpha\}}\subseteq \Ker(\id-\varphi_\alpha)$ is clear. For a collection $E_\beta\leq E'_\beta=\mathbb{F}_{q_\beta}\bg X'_\beta\jg$ ($\beta\in\Delta$) of finite separable extensions the ring $E'_\Delta$ is embedded into $(E'_{\Delta\setminus\{\alpha\}}\otimes_{\mathbb{F}_p}\mathbb{F}_{q_\alpha})\bg X'_\alpha\jg$. By comparing the coefficients we find that $(E'_{\Delta\setminus\{\alpha\}}\otimes_{\mathbb{F}_p}\mathbb{F}_{q_\alpha})\bg X'_\alpha\jg^{\varphi_\alpha=\id}=E'_{\Delta\setminus\{\alpha\}}$.

For the surjectivity pick an element $c$ in $E'_\Delta\subset E^{sep}_\Delta$ for some collection of finite separable extensions $E_\beta\leq E'_\beta=\mathbb{F}_{q_\beta}\bg X'_\beta\jg$ ($\beta\in\Delta$). There exists an integer $k\geq 0$ such that $c$ lies in $X_\Delta^{-k}E'^+_\Delta=\widehat{\bigotimes}_{\beta\in\Delta,\mathbb{F}_p}X_\beta^{-k}E'^+_\beta$. So we may write $c$ as a convergent sum $c=\sum_{n=1}^\infty c_{\overline{\alpha},n}\otimes c_{\alpha,n}$ such that $c_{\overline{\alpha},n}\in X_{\Delta\setminus\{\alpha\}}^{-k}E'^+_{\Delta\setminus\{\alpha\}}$ with $c_{\overline{\alpha},n}\to 0$ and $c_{\alpha,n}\in X_\alpha^{-k}E'^+_\alpha$. The set $X_\alpha^{-k}E'^+_\alpha/X'_\alpha E'^+_\alpha$ is finite, so we choose a finite set $U\subset X_\alpha^{-k}E'^+_\alpha$ of representatives of all the cosets in $X_\alpha^{-k}E'^+_\alpha/X'_\alpha E'^+_\alpha$. We adjoin the roots of the polynomial $f_u(X)=X^p-X-u$ to $E'_\alpha$ for each $u\in U$ in order to obtain a finite separable extension $E''_\alpha/E'_\alpha$ (noting that these polynomials do not have multiple roots). We deduce that each $u\in U$ lies in the image of $\id-\varphi_\alpha\colon E''_\alpha\to E''_\alpha$, and by construction we may write $c_{\alpha,n}=u_n+v_n$ with $u_n\in U$ and $v_n\in X'_\alpha E'^+_\alpha$ for all $n\geq 1$. By Lemma \ref{inverseconvergephi}, the elements $v_n$ are in the image of $\id-\varphi_\alpha$, too, whence so are the elements $c_{\alpha,n}$ by the additivity of the map $\id-\varphi_\alpha$, ie.\ $c_{\alpha,n}=d_{\alpha,n}-\varphi_\alpha(d_{\alpha,n})$ for some  $d_{\alpha,n}\in E''_\alpha$ for all $n\geq 1$. Moreover, the $X_\alpha$-adic valuation of $d_{\alpha,n}$ is bounded by that of the $X_\alpha$-adic valuation of $c_{\alpha,n}$ showing that the sum $d:=\sum_{n=1}^\infty c_{\overline{\alpha},n}\otimes d_{\alpha,n}$ defines an element in $E^{sep}_\Delta$ with $c=d-\varphi_\alpha(d)$.
\end{proof}

\begin{proof}[Proof of Prop.\ \ref{phiacyclicEsepD}]
We proceed by induction on $|\Delta|$. The case $|\Delta|=1$ is clear, so suppose $n:=|\Delta|>1$ and we have proven the statement for any proper subset $S\subsetneq \Delta=\{\alpha_1,\dots,\alpha_n\}$. Let $c=(c_S)_{S\in\binom{\Delta}{r}}\in \bigoplus_{S\in \binom{\Delta}{r}} E^{sep}_\Delta$ be a cocycle in degree $r$. By Lemma \ref{phialphasurjEsepD} we find an element $x=(x_U)_{U\in \binom{\Delta}{r-1}}$ with $x_U=0$ for all $U$ with $\alpha_n\in U$ such that $(c-d^{r-1}(x))_S=0$ for all $S\in \binom{\Delta}{r}$ with $\alpha_n\in S$. Indeed, the map $\cdot\cup\{\alpha_n\}\colon \binom{\Delta\setminus\{\alpha_n\}}{r-1}\to \{S\in\binom{\Delta}{r}\mid \alpha_n\in S\}$ is a bijection and by our assumption that $x$ is concentrated into $\binom{\Delta\setminus\{\alpha_n\}}{r-1}\subset \binom{\Delta}{r-1}$ only the $S\setminus\{\alpha\}$-component of $x$ contributes to the $S$ component of $d^{r-1}(x)$ for $\alpha_n\in S$. So by replacing $c$ with $c-d^{r-1}(x)$ we may assume without loss of generality that $c_S=0$ for all $S$ containing $\alpha_n$. In particular, for $S'\in\binom{\Delta\setminus\{\alpha_n\}}{r}$ we compute 
\begin{align*}
0=(d^r(c))_{S'\cup\{\alpha_n\}}=(-1)^r(\id-\varphi_{\alpha_n})(c_{S'})+\sum_{\beta\in S'}(-1)^{\varepsilon(\beta,S)}(\id-\varphi_\beta)(c_{S'\cup\{\alpha_n\}\setminus\{\beta\}})=\\
=(-1)^r(\id-\varphi_{\alpha_n})(c_{S'})\ .
\end{align*}
Using Lemma \ref{phialphasurjEsepD} again this yields $c_{S'}\in E^{sep}_{\Delta\setminus\{\alpha_n\}}$ for all $S'\in \binom{\Delta}{r}$. Now the statement follows by induction.
\end{proof}

The association $D\mapsto \Phi^\bullet(D)$ is an exact functor from the category of abelian groups with an action of $\prod_{\alpha\in\Delta}\varphi_\alpha^{\mathbb{N}}$ to the category of chain complexes of abelian groups. In particular, for any short exact sequence $0\to D_1\to D_2\to D_3\to 0$, we have a short exact sequence $0\to \Phi^\bullet(D_1)\to \Phi^\bullet(D_2)\to \Phi^\bullet(D_3)\to 0$ of chain complexes. This yields a long exact sequence
\begin{equation*}
0\to h^0\Phi^\bullet(D_1)\to h^0\Phi^\bullet(D_2)\to h^0\Phi^\bullet(D_3)\to h^1\Phi^\bullet(D_1)\to h^1\Phi^\bullet(D_2)\to h^1\Phi^\bullet(D_3)\to\cdots
\end{equation*}
of abelian groups.

\subsection{The multivariable $p$-adic coefficient ring}

Our goal in this section is to lift $E_\Delta$ and $E^{sep}_\Delta$ to characteristic $0$ so we can classify $p$-adic representations of $\GQpD$. Recall \cite{FO} that $\mathcal{O_E}\cong \varprojlim_h \mathbb{Z}/(p^h)\bg X\jg$ is constructed as a Cohen ring of $E\cong\mathbb{F}_p\bg X\jg$. Via the embedding $X\mapsto [\varepsilon]-1$ these are subrings of $\tilde{B}$ which is defined as $\tilde{B}:=W(\widehat{E^{sep}})[p^{-1}]$ where $W(\widehat{E^{sep}})$ is the ring of $p$-typical Witt vectors of the completion $\widehat{E^{sep}}$ (with respect to the $X$-adic topology) of the separable closure $E^{sep}$. Here $[\varepsilon]$ denotes the Teichm\"uller representative of the sequence $\varepsilon=(\varepsilon_n)_n\in \varprojlim_{x\mapsto x^p}\mathcal{O}_{\mathbb{C}_p}\cong \widehat{E^{sep}}^+$ of $p$-power roots of unity with $\varepsilon_1\neq 1$. Note that $\widehat{E^{sep}}$ is an algebraically closed field of characteristic $p$ which is, in fact, isomorphic to the tilt $\mathbb{C}_p^\flat=\Frac(\varprojlim_{x\mapsto x^p}\mathcal{O}_{\mathbb{C}_p}/(p))$ of $\mathbb{C}_p$ in the modern terminology. Further, for any finite extension $E'/E$ contained in $E^{sep}$ there exists a unique finite unramified extension $\mathcal{E}'$ of $\mathcal{E}=\mathcal{O_E}[p^{-1}]$ contained in $\tilde{B}$ with residue field $E'$ (Prop.\ 4.20 in \cite{FO}).

We define the ring $\OED$ as the projective limit $\varprojlim_h \left(\mathbb{Z}/(p^h)\bs X_\alpha\mid \alpha\in\Delta\js [X_\Delta^{-1}]\right)$ and put $\mathcal{E}_\Delta:=\OED[p^{-1}]$ so we have $\OED/(p)\cong E_\Delta$. The Iwasawa algebra $\OED^+=\Zp\bs X_\alpha\mid \alpha\in\Delta\js\leq \OED$ is isomorphic to the completed tensor product of the one-variable Iwasawa algebras $\mathcal{O}_{\mathcal{E}_\alpha}^+:=\Zp\bs X_\alpha\js$ ($\alpha\in\Delta$) over $\Zp$. This motivates the way we can lift $E'_\Delta$ to characteristic $0$ for a collection $E'_\alpha/E_\alpha$ ($\alpha\in\Delta$) of finite separable extensions. We define $$\mathcal{O}_{\mathcal{E}'_\Delta}^+:=\widehat{\bigotimes_{\alpha\in\Delta,\Zp}}\mathcal{O}_{\mathcal{E}'_\alpha}^+$$
as a completed tensor product. If we write $E'_\alpha=\mathbb{F}_{q_\alpha}\bg X'_\alpha\jg$ ($\alpha\in\Delta$) then we may identify $\mathcal{O}_{\mathcal{E}'_\Delta}^+$ with the power series ring $\left(\bigotimes_{\alpha\in\Delta,\Zp}W(\mathbb{F}_{q_\alpha})\right)\bs X'_\alpha\mid \alpha\in\Delta\js$ over the finite \'etale $\Zp$-algebra $\bigotimes_{\alpha\in\Delta,\Zp}W(\mathbb{F}_{q_\alpha})$. We define $\mathcal{O}_{\mathcal{E}'_\Delta}$ as the $p$-adic completion $\widehat{\mathcal{O}_{\mathcal{E}'_\Delta}^+[X_\Delta^{-1}]}=\varprojlim_h \mathcal{O}_{\mathcal{E}'_\Delta}^+[X_\Delta^{-1}]/(p^h)$ and put $\mathcal{E}'_\Delta:=\mathcal{O}_{\mathcal{E}'_\Delta}[p^{-1}]$. We have the following alternative characterization of $\mathcal{O}_{\mathcal{E}'_\Delta}$.

\begin{lem}
Writing $\Delta=\{\alpha_1,\dots,\alpha_n\}$ we have $$\mathcal{O}_{\mathcal{E}'_\Delta}\cong \mathcal{O}_{\mathcal{E}'_{\alpha_1}}\otimes_{\mathcal{O}_{\mathcal{E}_{\alpha_1}}}(\cdots (\mathcal{O}_{\mathcal{E}'_{\alpha_n}}\otimes_{\mathcal{O}_{\mathcal{E}_{\alpha_n}}}\OED))\ .$$
In particular, $\mathcal{O}_{\mathcal{E}'_\Delta}$ is a free module of rank $\prod_{i=1}^n|E'_{\alpha_i}:E_{\alpha_i}|$ over $\OED$.
\end{lem}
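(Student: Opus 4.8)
The plan is to run the proof of Lemma~\ref{E'Dfingen} over $\Zp$ in place of $\Fp$, the one new ingredient being the finite freeness of the local rings in the characteristic-$0$ picture.

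First I would recall that for each $\alpha\in\Delta$ the ring $\mathcal{O}_{\mathcal{E}'_\alpha}^+\cong W(\mathbb{F}_{q_\alpha})\bs X'_\alpha\js$ is finite free, of rank $n_\alpha:=|E'_\alpha:E_\alpha|$, over $\mathcal{O}_{\mathcal{E}_\alpha}^+=\Zp\bs X_\alpha\js$: a basis of $E'^+_\alpha$ over $E^+_\alpha$ lifts, by $p$-adic completeness and Nakayama, to a basis of $\mathcal{O}_{\mathcal{E}'_\alpha}^+$ over $\mathcal{O}_{\mathcal{E}_\alpha}^+$ (equivalently, this is a finite injection of complete regular local rings of the same dimension). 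Inverting $X_\alpha$ (which is the same as inverting $X'_\alpha$, since $X_\alpha$ is a unit times a power of $X'_\alpha$) and $p$-adically completing, one gets in the same way that $\mathcal{O}_{\mathcal{E}'_\alpha}=\widehat{\mathcal{O}_{\mathcal{E}'_\alpha}^+[X_\alpha^{-1}]}$ is finite free of rank $n_\alpha$ over $\mathcal{O}_{\mathcal{E}_\alpha}$ (this is the $+$-enhancement of Prop.~4.20 in \cite{FO}). Granting this, the iterated tensor product
\[
R:=\mathcal{O}_{\mathcal{E}'_{\alpha_1}}\otimes_{\mathcal{O}_{\mathcal{E}_{\alpha_1}}}\bigl(\cdots\bigl(\mathcal{O}_{\mathcal{E}'_{\alpha_n}}\otimes_{\mathcal{O}_{\mathcal{E}_{\alpha_n}}}\OED\bigr)\bigr)
\]
is, by induction on $n$ and multiplicativity of ranks of free modules, finite free of rank $\prod_{i=1}^n n_{\alpha_i}$ over $\OED$; so the ``in particular'' clause is immediate once we produce an isomorphism $\mathcal{O}_{\mathcal{E}'_\Delta}\cong R$.

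For the isomorphism itself I would defer all completions to the end, exactly as in Lemma~\ref{E'Dfingen}. Rearranging the (uncompleted) tensor product over $\Zp$ gives, canonically,
\[
\bigotimes_{\alpha\in\Delta,\Zp}\mathcal{O}_{\mathcal{E}'_\alpha}^+\;\cong\;\mathcal{O}_{\mathcal{E}'_{\alpha_1}}^+\otimes_{\mathcal{O}_{\mathcal{E}_{\alpha_1}}^+}\bigl(\cdots\bigl(\mathcal{O}_{\mathcal{E}'_{\alpha_n}}^+\otimes_{\mathcal{O}_{\mathcal{E}_{\alpha_n}}^+}\textstyle\bigotimes_{\alpha\in\Delta,\Zp}\mathcal{O}_{\mathcal{E}_\alpha}^+\bigr)\bigr),
\]
and by the finite freeness recalled above every term here is a \emph{free} module over $\bigotimes_{\alpha,\Zp}\mathcal{O}_{\mathcal{E}_\alpha}^+$. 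Since completion of a free module is simply base change to the completed base ring, completing both sides with respect to the maximal ideal of $\bigotimes_{\alpha,\Zp}\mathcal{O}_{\mathcal{E}_\alpha}^+$ identifies $\mathcal{O}_{\mathcal{E}'_\Delta}^+$ with the corresponding iterated tensor product $R^+$ over $\OED^+$, a free $\OED^+$-module of rank $\prod n_{\alpha_i}$. Inverting $X_\Delta$ keeps it free over $\OED^+[X_\Delta^{-1}]$, and $p$-adically completing (again base change, as the module is free) turns $R^+[X_\Delta^{-1}]$ into $R^+\otimes_{\OED^+}\OED=R$; but that same operation applied to the left-hand side is by definition $\mathcal{O}_{\mathcal{E}'_\Delta}$. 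Hence $\mathcal{O}_{\mathcal{E}'_\Delta}\cong R$, and one checks along the way that the identification respects the $\OED$-algebra structures and the partial Frobenii, which are built componentwise.

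The only point needing care is that the intermediate uncompleted tensor products over $\Zp$ are not noetherian, so ``completion commutes with finite base change'' is not available in general; the way around this — as in Lemma~\ref{E'Dfingen} — is precisely the observation that at every stage the modules are \emph{free} over the relevant base, for which completion unconditionally equals base change. If one prefers to avoid even this, the alternative is to reduce modulo $p^h$: since $\OED/(p^h)=\Zp/(p^h)\bs X_\alpha\mid\alpha\in\Delta\js[X_\Delta^{-1}]$ and $\mathcal{O}_{\mathcal{E}'_\Delta}/(p^h)=(\mathcal{O}_{\mathcal{E}'_\Delta}^+/(p^h))[X_\Delta^{-1}]$, the mod-$p^h$ statement is literally Lemma~\ref{E'Dfingen} with $\Fp$ replaced by $\Zp/(p^h)$, and taking $\varprojlim_h$ of both sides (and of the $\varphi_\alpha$-actions) yields the claim.
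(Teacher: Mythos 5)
Your proposal is correct, but your primary route is genuinely different from the paper's. The paper's proof is a three-line top-down argument: since each $\mathcal{O}_{\mathcal{E}'_{\alpha_i}}$ and $\OED$ are naturally subrings of $\mathcal{O}_{\mathcal{E}'_\Delta}$, the universal property gives a ring homomorphism from the iterated tensor product to $\mathcal{O}_{\mathcal{E}'_\Delta}$; this map is an isomorphism modulo $p$ by Lemma \ref{E'Dfingen}, and one concludes by the $p$-adic completeness of both sides (the source being a finite free $\OED$-module, hence complete, and the target $p$-torsion-free). The finite freeness of $\mathcal{O}_{\mathcal{E}'_{\alpha_i}}$ over the complete discrete valuation ring $\mathcal{O}_{\mathcal{E}_{\alpha_i}}$ enters only for the ``in particular'' clause. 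You instead rebuild the isomorphism bottom-up: you lift bases to get $\mathcal{O}_{\mathcal{E}'_\alpha}^+$ finite free over $\mathcal{O}_{\mathcal{E}_\alpha}^+$, rearrange the uncompleted tensor product over $\Zp$, and push the identification through the completions using the (correct and necessary) observation that for finite \emph{free} modules completion is unconditionally base change, sidestepping the non-noetherianity of $\bigotimes_{\alpha,\Zp}\mathcal{O}_{\mathcal{E}_\alpha}^+$. What your route buys is independence from the characteristic-$p$ Lemma \ref{E'Dfingen} as a black box and an explicit basis; what the paper's route buys is brevity, since the hard combinatorial rearrangement was already done in characteristic $p$. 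Your fallback suggestion (run Lemma \ref{E'Dfingen} over $\Zp/(p^h)$ and take $\varprojlim_h$) is essentially the paper's devissage in a slightly heavier form. Two small imprecisions worth fixing, neither a gap: in characteristic $0$ the element $X_\alpha$ is only a unit times a power of $X'_\alpha$ \emph{modulo} $p$, so the claim that inverting $X_\alpha$ equals inverting $X'_\alpha$ should be phrased after $p$-adic completion (or one should simply invoke Prop.\ 4.20 of \cite{FO} directly, as you also do); and the completion you take with respect to the maximal ideal of $\bigotimes_{\alpha,\Zp}\mathcal{O}_{\mathcal{E}_\alpha}^+$ must be identified with the completed tensor product defining $\mathcal{O}_{\mathcal{E}'_\Delta}^+$, which holds because the two ideals of definition are cofinal.
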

\begin{proof}
Each $\mathcal{O}_{\mathcal{E}'_{\alpha_i}}$ is naturally a subring in $\mathcal{O}_{\mathcal{E}'_\Delta}$ and so is $\OED$. Therefore there is a ring homomorphism from the right hand side to the left hand side which is an isomorphism modulo $p$ by Lemma \ref{E'Dfingen}. The first statement follows from the $p$-adic completeness of both sides.

Since $\mathcal{O}_{\mathcal{E}_{\alpha_i}}$ is a complete discrete valuation ring, $\mathcal{O}_{\mathcal{E}'_{\alpha_i}}$ is finite free over $\mathcal{O}_{\mathcal{E}_{\alpha_i}}$ of rank $|E'_{\alpha_i}:E_{\alpha_i}|$ ($i=1,\dots,n$). Therefore the second statement.
\end{proof}

Now we define $\mathcal{E}^{ur}_\Delta:=\varinjlim \mathcal{E}'_\Delta$ and $\mathcal{O}_{\mathcal{E}^{ur}_\Delta}:=\varinjlim \mathcal{O}_{\mathcal{E}'_\Delta}$ where $E'_\alpha$ runs over the finite subextensions of $E_\alpha$ in $E^{sep}_\alpha$ for all $\alpha\in\Delta$. Further, we denote by $\widehat{\mathcal{E}^{ur}_\Delta}$ (resp.\ by $\mathcal{O}_{\widehat{\mathcal{E}^{ur}_\Delta}}$) the $p$-adic completion of $\mathcal{E}^{ur}_\Delta$ (resp.\ of $\mathcal{O}_{\mathcal{E}^{ur}_\Delta}$). We have $\mathcal{O}_{\widehat{\mathcal{E}^{ur}_\Delta}}/(p)\cong E^{sep}_\Delta$ by construction. The group $\GQpD$ acts naturally on $\widehat{\mathcal{E}^{ur}_\Delta}$ (resp.\ on $\mathcal{O}_{\widehat{\mathcal{E}^{ur}_\Delta}}$). Moreover, for each $\alpha\in\Delta$ we have the Frobenius lift $\varphi_\alpha$ on $\tilde{B}_\alpha$ (the copy of $\tilde{B}$ indexed by $\alpha$) which acts on $[\varepsilon]$ by raising to the $p$th power (as it is a Teichm\"uller representative). So we have $\varphi_\alpha(X_\alpha)=(X_\alpha+1)^p-1$. For each finite extension $E'_\alpha/E_\alpha$ we have $\varphi_\alpha(\mathcal{E}'_\alpha)\subset \mathcal{E}'_\alpha$, so this defines an action of $\varphi_\alpha$ on the rings $\mathcal{E}^{ur}_\Delta$, $\mathcal{O}_{\mathcal{E}^{ur}_\Delta}$, $\widehat{\mathcal{E}^{ur}_\Delta}$, and $\mathcal{O}_{\widehat{\mathcal{E}^{ur}_\Delta}}$ for all $\alpha\in\Delta$. These operators commute with each other and with the action of the group $\GQpD$.

\begin{pro}\label{padicinv}
We have 
\begin{eqnarray*}
\widehat{\mathcal{E}^{ur}_\Delta}^{\HQpD}=\mathcal{E}_\Delta\ ,\qquad &&\bigcap_{\alpha\in\Delta}\widehat{\mathcal{E}^{ur}_\Delta}^{\varphi_\alpha=\id}=\mathbb{Q}_p\ ,\text{ and}\\
\mathcal{O}_{\widehat{\mathcal{E}^{ur}_\Delta}}^{\HQpD}=\OED\ , \qquad &&\bigcap_{\alpha\in\Delta}\mathcal{O}_{\widehat{\mathcal{E}^{ur}_\Delta}}^{\varphi_\alpha=\id}=\Zp\ .
\end{eqnarray*}
\end{pro}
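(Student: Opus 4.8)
\emph{Strategy.} The inclusions ``$\supseteq$'' are all immediate, so only the reverse inclusions need proof, and the plan is to reduce everything modulo $p$ to the characteristic-$p$ statements already at hand---Proposition \ref{invEsepD} for the $\HQpD$-invariants and Lemma \ref{phiinv} for the joint $\varphi_\alpha$-invariants---and then lift to characteristic zero. The two structural facts I will use repeatedly are that $\mathcal{O}_{\widehat{\mathcal{E}^{ur}_\Delta}}$ is $p$-torsion free (each $\mathcal{O}_{\mathcal{E}'_\Delta}$ is free over the $p$-torsion free ring $\OED$, and filtered colimits together with $p$-adic completion preserve this) and that it is $p$-adically complete and separated with $\mathcal{O}_{\widehat{\mathcal{E}^{ur}_\Delta}}/(p^h)\cong\mathcal{O}_{\mathcal{E}^{ur}_\Delta}/(p^h)$, so in particular $\mathcal{O}_{\widehat{\mathcal{E}^{ur}_\Delta}}/(p)\cong E^{sep}_\Delta$. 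I will first treat the two integral statements and then deduce the rational ones by inverting $p$.

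\emph{The $\HQpD$-invariants.} For $h\geq1$, $p$-torsion freeness gives a short exact sequence of topological $\HQpD$-modules $0\to E^{sep}_\Delta\xrightarrow{p^{h-1}}\mathcal{O}_{\widehat{\mathcal{E}^{ur}_\Delta}}/(p^h)\to\mathcal{O}_{\widehat{\mathcal{E}^{ur}_\Delta}}/(p^{h-1})\to0$. Taking $\HQpD$-invariants and using that $H^1_{cont}(\HQpD,E^{sep}_\Delta)=0$ by Proposition \ref{hilbert90D} (for the cohomology one passes to a cofinal family of Galois levels $E'_\alpha/E_\alpha$, on which $\HQpD$ acts through the finite quotient $\prod_{\alpha}\Gal(E'_\alpha/E_\alpha)$, and applies the induced-module computation from the proof of that proposition), the transition maps $(\mathcal{O}_{\widehat{\mathcal{E}^{ur}_\Delta}}/(p^h))^{\HQpD}\to(\mathcal{O}_{\widehat{\mathcal{E}^{ur}_\Delta}}/(p^{h-1}))^{\HQpD}$ are surjective and, by an induction on $h$ whose base case $h=1$ is Proposition \ref{invEsepD}, these invariants are identified compatibly with $\OED/(p^h)$. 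Mittag--Leffler then lets me pass to $\varprojlim_h$ (and invariants commute with this limit), giving $\mathcal{O}_{\widehat{\mathcal{E}^{ur}_\Delta}}^{\HQpD}=\OED$. For the fractional version, an element $x\in\widehat{\mathcal{E}^{ur}_\Delta}^{\HQpD}$ can be written $x=p^{-N}y$ with $y\in\mathcal{O}_{\widehat{\mathcal{E}^{ur}_\Delta}}$; injectivity of multiplication by $p^N$ forces $y$ to be $\HQpD$-fixed, so $y\in\OED$ and $x\in\mathcal{E}_\Delta$.

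\emph{The joint $\varphi_\alpha$-invariants.} Given $x\in\bigcap_{\alpha\in\Delta}\mathcal{O}_{\widehat{\mathcal{E}^{ur}_\Delta}}^{\varphi_\alpha=\id}$, reduction mod $p$ together with Lemma \ref{phiinv} shows $x\bmod p\in\bigcap_{\alpha}(E^{sep}_\Delta)^{\varphi_\alpha=\id}=\Fp$; pick $a_0\in\Zp$ lifting it. Since each $\varphi_\alpha$ is additive and fixes $a_0$, the element $x-a_0$ still lies in $\bigcap_{\alpha}\mathcal{O}_{\widehat{\mathcal{E}^{ur}_\Delta}}^{\varphi_\alpha=\id}$ and in $p\,\mathcal{O}_{\widehat{\mathcal{E}^{ur}_\Delta}}$; writing $x-a_0=px_1$ and using that multiplication by $p$ is injective and commutes with every $\varphi_\alpha$, I get $x_1\in\bigcap_{\alpha}\mathcal{O}_{\widehat{\mathcal{E}^{ur}_\Delta}}^{\varphi_\alpha=\id}$ again. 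Iterating produces $a_0,a_1,\ldots\in\Zp$ with $x\equiv\sum_{i=0}^{h-1}p^ia_i\pmod{p^h}$ for all $h$, so by $p$-adic completeness and separatedness $x=\sum_{i\geq0}p^ia_i\in\Zp$. Inverting $p$ exactly as above gives $\bigcap_{\alpha}\widehat{\mathcal{E}^{ur}_\Delta}^{\varphi_\alpha=\id}=\Qp$.

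\emph{Expected difficulty.} No step is genuinely hard: all the arithmetic content is concentrated in Propositions \ref{invEsepD} and \ref{hilbert90D} and Lemma \ref{phiinv}. The only delicate points are bookkeeping ones---pinning down the continuous $\HQpD$-cohomology in the d\'evissage by descending to Galois levels with finite acting quotient, and legitimising the interchange of invariants with $\varprojlim_h$ via Mittag--Leffler---together with systematically invoking $p$-torsion freeness whenever one needs to divide by a power of $p$.
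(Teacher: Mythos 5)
Your proposal is correct and follows the same route as the paper, whose proof is just the one-line observation that the rational statements reduce to the integral ones (since $p$ is $\varphi_\alpha$- and $\HQpD$-invariant) and that the integral ones follow from Proposition \ref{invEsepD} and Lemma \ref{phiinv} by d\'evissage; you have simply written out that d\'evissage. The only cosmetic difference is that for the $\HQpD$-invariants you invoke the vanishing of $H^1_{cont}(\HQpD,E^{sep}_\Delta)$, whereas the same successive-approximation argument you use for the $\varphi_\alpha$-invariants (subtracting an invariant lift from $\OED$, which surjects onto $E_\Delta$) already suffices.
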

\begin{proof}
The statements on $\widehat{\mathcal{E}^{ur}_\Delta}$ follow from those on $\mathcal{O}_{\widehat{\mathcal{E}^{ur}_\Delta}}$ as $p$ is $\varphi_\alpha$- and $\HQpD$-invariant for all $\alpha\in\Delta$. Moreover, the latter statements are consequences of Prop.\ \ref{invEsepD}, resp.\ Lemma \ref{phiinv} using devissage.
\end{proof}

\subsection{The equivalence of categories}

We denote by $\RepDZp$ (resp.\ by $\RepDQp$) the category of continuous representations of $\GQpD$ on finitely generated $\Zp$-modules (resp.\ on finite dimensional $\mathbb{Q}_p$-vector spaces). Let $T$ (resp.\ $V$) be an object in $\RepDZp$ (resp.\ in $\RepDQp$). We define
\begin{equation*}
\mathbb{D}(T):=\left(\mathcal{O}_{\widehat{\mathcal{E}^{ur}_\Delta}}\otimes_{\Zp}T\right)^{\HQpD}\qquad\left(\text{resp.\ } \mathbb{D}(V):=\left(\widehat{\mathcal{E}^{ur}_\Delta}\otimes_{\Qp}V\right)^{\HQpD} \right)\ .
\end{equation*}

By Prop.\ \ref{padicinv} $\mathbb{D}(T)$ (resp.\ $\mathbb{D}(V)$) is a module over $\OED$ (resp.\ over $\mathcal{E}_\Delta$). Moreover, it admits an action of the monoid $T_{+,\Delta}$: the action of $\varphi_\alpha$ ($\alpha\in\Delta$) is trivial on $T$ (resp.\ on $V$) and therefore comes from the action on $\mathcal{O}_{\widehat{\mathcal{E}^{ur}_\Delta}}$ (resp.\ on $\widehat{\mathcal{E}^{ur}_\Delta}$) defined above. The action of $\Gamma_\Delta=\GQpD/\HQpD$ comes from the diagonal action of $\GQpD$ on $\mathcal{O}_{\widehat{\mathcal{E}^{ur}_\Delta}}\otimes_{\Zp}T$ (resp.\ on $\widehat{\mathcal{E}^{ur}_\Delta}\otimes_{\Qp}V$).

\begin{pro}\label{mathbbDZp}
Let $T$ be an object in $\RepDZp$. The natural map
\begin{equation*}
\mathcal{O}_{\widehat{\mathcal{E}^{ur}_\Delta}}\otimes_{\OED} \mathbb{D}(T)\to \mathcal{O}_{\widehat{\mathcal{E}^{ur}_\Delta}}\otimes_{\Zp}T
\end{equation*}
is an isomorphism.
\end{pro}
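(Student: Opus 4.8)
The plan is to argue by dévissage, using the already-proven characteristic-$p$ equivalence (Thm.\ \ref{modpequiv}, Prop.\ \ref{mathbbD}) as the base case and $p$-adic completeness to lift it. Write $\beta_T\colon \mathcal{O}_{\widehat{\mathcal{E}^{ur}_\Delta}}\otimes_{\OED}\mathbb{D}(T)\to\mathcal{O}_{\widehat{\mathcal{E}^{ur}_\Delta}}\otimes_{\Zp}T$ for the map in question. I will use freely that $\mathcal{O}_{\widehat{\mathcal{E}^{ur}_\Delta}}$ is $\Zp$-flat and $p$-adically complete with $\mathcal{O}_{\widehat{\mathcal{E}^{ur}_\Delta}}/p\cong E^{sep}_\Delta$, and that $\mathcal{O}_{\widehat{\mathcal{E}^{ur}_\Delta}}/(p^h)=\varinjlim_{E'_\bullet}\mathcal{O}_{\mathcal{E}'_\Delta}/(p^h)$ is flat over $\OED/(p^h)$ (each $\mathcal{O}_{\mathcal{E}'_\Delta}$ being finite free over $\OED$), so that $\mathcal{O}_{\widehat{\mathcal{E}^{ur}_\Delta}}\otimes_{\OED}(\cdot)$ is exact on $p^h$-torsion $\OED$-modules. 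The one genuinely new ingredient is a $p$-adic Hilbert 90: for $W$ killed by $p$ one has $H^1_{cont}(\HQpD,\mathcal{O}_{\widehat{\mathcal{E}^{ur}_\Delta}}\otimes_{\Zp}W)=0$, because $\mathcal{O}_{\widehat{\mathcal{E}^{ur}_\Delta}}\otimes_{\Zp}W=E^{sep}_\Delta\otimes_{\Fp}W$ is, by Lemma \ref{galinvbasis}, isomorphic as an $\HQpD$-module to a finite direct sum of copies of $E^{sep}_\Delta$, on which $H^1_{cont}(\HQpD,\cdot)$ vanishes by Prop.\ \ref{hilbert90D}; filtering $\mathcal{O}_{\widehat{\mathcal{E}^{ur}_\Delta}}$ $p$-adically and invoking Mittag--Leffler then also gives $H^1_{cont}(\HQpD,\mathcal{O}_{\widehat{\mathcal{E}^{ur}_\Delta}})=0$.

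\emph{The finite case.} If $pT=0$ then $T$ lies in $\RepDFp$, the module $\mathbb{D}(T)$ of this section agrees with the one from Section 3, and $\beta_T$ is precisely the isomorphism of Prop.\ \ref{mathbbD}. For $p^hT=0$ one inducts on $h$: applying $\mathcal{O}_{\widehat{\mathcal{E}^{ur}_\Delta}}\otimes_{\Zp}(\cdot)$ to $0\to pT\to T\to T/pT\to 0$ and taking $\HQpD$-cohomology gives, using the vanishing above, a short exact sequence $0\to\mathbb{D}(pT)\to\mathbb{D}(T)\to\mathbb{D}(T/pT)\to 0$ of $\OED$-modules killed by $p^h$; applying the (exact) functor $\mathcal{O}_{\widehat{\mathcal{E}^{ur}_\Delta}}\otimes_{\OED}(\cdot)$, comparing with $0\to\mathcal{O}_{\widehat{\mathcal{E}^{ur}_\Delta}}\otimes_{\Zp}pT\to\mathcal{O}_{\widehat{\mathcal{E}^{ur}_\Delta}}\otimes_{\Zp}T\to\mathcal{O}_{\widehat{\mathcal{E}^{ur}_\Delta}}\otimes_{\Zp}T/pT\to 0$, and running the five lemma reduces $\beta_T$ to $\beta_{pT}$ and $\beta_{T/pT}$, which are isomorphisms by induction. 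The same induction shows $\mathbb{D}(T)$ is finitely generated over $\OED$.

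\emph{The free case.} Let $T\cong\Zp^r$. Applying $\mathcal{O}_{\widehat{\mathcal{E}^{ur}_\Delta}}\otimes_{\Zp}(\cdot)$ to $0\to T\xrightarrow{p^h}T\to T/p^hT\to 0$ and using $H^1_{cont}(\HQpD,\mathcal{O}_{\widehat{\mathcal{E}^{ur}_\Delta}})=0$ yields $\mathbb{D}(T)/p^h\mathbb{D}(T)\cong\mathbb{D}(T/p^hT)$; since $\mathbb{D}(T)\subseteq\mathcal{O}_{\widehat{\mathcal{E}^{ur}_\Delta}}\otimes_{\Zp}T$ is $p$-torsion free and $p$-adically complete (being $\varprojlim_h\mathbb{D}(T/p^hT)$) and $\mathbb{D}(T)/p\mathbb{D}(T)\cong\mathbb{D}(T/pT)$ is free of rank $r$ over $E_\Delta$ by Prop.\ \ref{mathbbD}, topological Nakayama gives a surjection $\OED^r\twoheadrightarrow\mathbb{D}(T)$, which---being an isomorphism modulo $p$ of $\Zp$-flat modules---is an isomorphism. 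Then $\beta_T$ is an $\mathcal{O}_{\widehat{\mathcal{E}^{ur}_\Delta}}$-linear map $\mathcal{O}_{\widehat{\mathcal{E}^{ur}_\Delta}}^r\to\mathcal{O}_{\widehat{\mathcal{E}^{ur}_\Delta}}^r$ whose reduction mod $p$ is the isomorphism $E^{sep}_\Delta\otimes_{E_\Delta}\mathbb{D}(T/pT)\xrightarrow{\sim}E^{sep}_\Delta\otimes_{\Fp}(T/pT)$ of Prop.\ \ref{mathbbD}; hence $\det\beta_T$ reduces to a unit of $E^{sep}_\Delta$ and is therefore a unit of the $p$-adically complete ring $\mathcal{O}_{\widehat{\mathcal{E}^{ur}_\Delta}}$, so $\beta_T$ is an isomorphism. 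A general $T$ splits as $T_{\mathrm{free}}\oplus T_{\mathrm{tors}}$, and both $\mathbb{D}(\cdot)$ and $\mathcal{O}_{\widehat{\mathcal{E}^{ur}_\Delta}}\otimes_{\OED}(\cdot)$ are additive, so the statement follows.

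I expect the main obstacle to be the limit bookkeeping: deriving the $H^1$-vanishing statements from Prop.\ \ref{hilbert90D} (this needs the mod $p$ structure theory to recognise $E^{sep}_\Delta\otimes_{\Fp}W$ as an induced $\HQpD$-module), carrying out the Mittag--Leffler arguments that let continuous cohomology commute with $\varprojlim_h$, and checking that $\mathbb{D}(T)$ is finitely generated---indeed free---over $\OED$ rather than merely a submodule of the enormous module $\mathcal{O}_{\widehat{\mathcal{E}^{ur}_\Delta}}\otimes_{\Zp}T$; all three ultimately rest on the flatness of $\mathcal{O}_{\widehat{\mathcal{E}^{ur}_\Delta}}/(p^h)$ over $\OED/(p^h)$, i.e.\ on the finite freeness of the level-$\mathcal{E}'_\Delta$ rings over $\OED$.
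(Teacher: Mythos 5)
Your treatment of the $p$-power-torsion case follows the paper's proof in all essentials (d\'evissage on $h$, the multivariable Hilbert 90 of Prop.~\ref{hilbert90D} applied via Lemma~\ref{galinvbasis}, and the five lemma), with one imprecision: you filter by $0\to pT\to T\to T/pT\to 0$, so the cohomology group whose vanishing you need in order to get the short exact sequence of $\mathbb{D}$'s is $H^1_{cont}(\HQpD,\mathcal{O}_{\widehat{\mathcal{E}^{ur}_\Delta}}\otimes_{\Zp}pT)$, and $pT$ is killed by $p^{h-1}$, not by $p$; the vanishing you state only covers $p$-torsion coefficients. Either extend the vanishing to $p^h$-torsion modules by the same d\'evissage (as the paper does in the corollary following this proposition), or do as the paper's proof does and take the \emph{submodule} to be killed by $p$, i.e.\ use $0\to T_1\to T\to T_2\to 0$ with $pT_1=0$ and $p^{h-1}T_2=0$, so that Prop.~\ref{hilbert90D} applies directly to the sub.

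The genuine gap is in your passage to general $T$. You assert that ``a general $T$ splits as $T_{\mathrm{free}}\oplus T_{\mathrm{tors}}$'' and invoke additivity of $\mathbb{D}$; but that splitting is only a splitting of $\Zp$-modules, and it need not be $\GQpD$-equivariant, so it is not a decomposition in $\RepDZp$ and $\mathbb{D}$ cannot be applied summand by summand. (Nonsplit extensions of the trivial representation $\Zp$ by $\Zp/p$ already exist for a $\Zp$-quotient of $\GQpD$.) Two repairs are available: argue with the exact sequence $0\to T_{\mathrm{tors}}\to T\to T/T_{\mathrm{tors}}\to 0$ and the five lemma, using the $H^1$-vanishing for the $p$-power-torsion coefficient module $\mathcal{O}_{\widehat{\mathcal{E}^{ur}_\Delta}}\otimes_{\Zp}T_{\mathrm{tors}}$; or follow the paper, which obtains the general case simply by passing to the projective limit over $h$ of the isomorphisms for $T/p^hT$. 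Your argument for honestly free $T$ (topological Nakayama plus the observation that the determinant of $\beta_T$ is a unit because it is one modulo $p$) is correct and even yields the freeness of $\mathbb{D}(T)$ over $\OED$, but as written you never legitimately reduce the general case to it.
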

\begin{proof}
This is very similar to the proof of Prop.\ 2.30 in \cite{FO}. We proceed in two steps. Assume first that $T$ is killed by a power $p^h$ of $p$. We use induction on $h$. The case $h=1$ is done in Prop.\ \ref{mathbbD}. Now for $h>1$ we have a short exact sequence $0\to T_1\to T\to T_2\to 0$ of objects in $\RepDZp$ such that $pT_1=0$ and $p^{h-1}T_2$. Since $\mathcal{O}_{\widehat{\mathcal{E}^{ur}_\Delta}}$ has no $p$-torsion, it is flat as $\Zp$-module. Therefore we obtain a short exact sequence
\begin{equation*}
0\to \mathcal{O}_{\widehat{\mathcal{E}^{ur}_\Delta}}\otimes_{\Zp}T_1\to \mathcal{O}_{\widehat{\mathcal{E}^{ur}_\Delta}}\otimes_{\Zp}T\to \mathcal{O}_{\widehat{\mathcal{E}^{ur}_\Delta}}\otimes_{\Zp}T_2\to 0\ .
\end{equation*}
Now we have an identification $\mathcal{O}_{\widehat{\mathcal{E}^{ur}_\Delta}}\otimes_{\Zp}T_1\cong E^{sep}_\Delta\otimes_{\Fp}T_1\cong E^{sep}_\Delta\otimes_{E_\Delta}\mathbb{D}(T_1)$. In particular, as a representation of $\HQpD$ we have $\mathcal{O}_{\widehat{\mathcal{E}^{ur}_\Delta}}\otimes_{\Zp}T_1\cong (E^{sep}_\Delta)^{\dim_{\Fp} T_1}$. In particular, Prop.\ \ref{hilbert90D} yields $H^1_{cont}(\HQpD,\mathcal{O}_{\widehat{\mathcal{E}^{ur}_\Delta}}\otimes_{\Zp}T_1)=\{1\}$. By the long exact sequence of continuous $\HQpD$-cohomology we deduce the exactness of the sequence
\begin{equation*}
0\to \mathbb{D}(T_1)\to \mathbb{D}(T)\to \mathbb{D}(T_2)\to 0\ .
\end{equation*}
Now we have a commutative diagram
\begin{equation*}
\xymatrix{
0\ar[r] & \mathcal{O}_{\widehat{\mathcal{E}^{ur}_\Delta}}\otimes_{\OED}\mathbb{D}(T_1)\ar[r]\ar[d]^\sim & \mathcal{O}_{\widehat{\mathcal{E}^{ur}_\Delta}}\otimes_{\OED}\mathbb{D}(T)\ar[r]\ar[d] & \mathcal{O}_{\widehat{\mathcal{E}^{ur}_\Delta}}\otimes_{\OED}\mathbb{D}(T_2)\ar[r]\ar[d]^\sim & 0\\
0\ar[r] & \mathcal{O}_{\widehat{\mathcal{E}^{ur}_\Delta}}\otimes_{\Zp}T_1\ar[r] & \mathcal{O}_{\widehat{\mathcal{E}^{ur}_\Delta}}\otimes_{\Zp}T\ar[r] & \mathcal{O}_{\widehat{\mathcal{E}^{ur}_\Delta}}\otimes_{\Zp}T_2\ar[r] & 0
}
\end{equation*}
with exact rows. Thus the vertical map in the middle is an isomorphism by induction using the $5$-lemma.

The general case follows from this by taking the projective limit of the isomorphisms above for $T/p^hT$ as $h$ tends to infinity.
\end{proof}

An \'etale $T_{+,\Delta}$-module over $\OED$ is a finitely generated $\OED$-module $D$ together with a semilinear action of the monoid $T_{+,\Delta}$ such that for all $\varphi_t\in T_{+,\Delta}$ the map 
\begin{equation*}
\id\otimes\varphi_t\colon \varphi_t^*D:=\OED\otimes_{\OED,\varphi_t}D\to D
\end{equation*}
is an isomorphism. We denote by $\mathcal{D}^{et}(\varphi_{\Delta},\Gamma_\Delta,\OED)$ the category of \'etale $T_{+,\Delta}$-modules over $\OED$. As in the mod $p$ case, $\mathcal{D}^{et}(\varphi_{\Delta},\Gamma_\Delta,\OED)$ has the structure of a neutral Tannakian category. If $D$ is a finitely generated $\OED$ module that is killed by a power $p^h$ of $p$ we define the generic length of $D$ as $\genlength D:=\sum_{i=1}^h\rk_{E_\Delta}p^{i-1}D/p^iD$ where $\rk_{E_\Delta}$ denotes the generic rank (ie.\ dimension over $\Frac(E_\Delta)$ of the localisation at $(0)$).

\begin{cor}
The functor $\mathbb{D}$ is exact. $\mathbb{D}(T)$ is an object in $\mathcal{D}^{et}(\varphi_{\Delta},\Gamma_\Delta,\OED)$ for any $T$ in $\RepDZp$. Moreover, if $T$ is killed by a power of $p$ then the we have $\genlength\mathbb{D}(T)=\length_{\Zp}T$.
\end{cor}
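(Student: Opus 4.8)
The plan is to deduce all three assertions from what has already been proved, the main tools being Proposition \ref{mathbbDZp} and the faithful flatness of $\mathcal{O}_{\widehat{\mathcal{E}^{ur}_\Delta}}$ over $\OED$. For the latter, $\OED$ is noetherian (being the $p$-adic completion of the localisation $\Zp\bs X_\alpha\mid\alpha\in\Delta\js[X_\Delta^{-1}]$ of a noetherian ring), and $\mathcal{O}_{\widehat{\mathcal{E}^{ur}_\Delta}}$, being the $p$-adic completion of the filtered colimit of the finite free $\OED$-modules $\mathcal{O}_{\mathcal{E}'_\Delta}$, is flat over $\OED$; faithfulness is checked modulo $p$, where it reduces to faithful flatness of $E^{sep}_\Delta=\varinjlim E'_\Delta$ over $E_\Delta$ (a filtered colimit of finite free modules, each having $E_\Delta$ as a direct summand). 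Granting this, exactness of $\mathbb{D}$ is immediate: for $0\to T_1\to T\to T_2\to 0$ in $\RepDZp$ the complex $0\to\mathbb{D}(T_1)\to\mathbb{D}(T)\to\mathbb{D}(T_2)\to 0$ becomes, after applying $\mathcal{O}_{\widehat{\mathcal{E}^{ur}_\Delta}}\otimes_{\OED}\cdot$ and using the naturality of Proposition \ref{mathbbDZp}, the sequence $0\to\mathcal{O}_{\widehat{\mathcal{E}^{ur}_\Delta}}\otimes_{\Zp}T_1\to\mathcal{O}_{\widehat{\mathcal{E}^{ur}_\Delta}}\otimes_{\Zp}T\to\mathcal{O}_{\widehat{\mathcal{E}^{ur}_\Delta}}\otimes_{\Zp}T_2\to 0$, which is exact by $\Zp$-flatness; since base change along the flat $\OED\to\mathcal{O}_{\widehat{\mathcal{E}^{ur}_\Delta}}$ commutes with homology, faithful flatness forces the homology of the original complex to vanish.

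Next, $\mathbb{D}(T)$ is a finitely generated $\OED$-module: by Proposition \ref{mathbbDZp}, $\mathcal{O}_{\widehat{\mathcal{E}^{ur}_\Delta}}\otimes_{\OED}\mathbb{D}(T)\cong\mathcal{O}_{\widehat{\mathcal{E}^{ur}_\Delta}}\otimes_{\Zp}T$ is finitely generated over $\mathcal{O}_{\widehat{\mathcal{E}^{ur}_\Delta}}$, and finite generation descends along the faithfully flat $\OED\to\mathcal{O}_{\widehat{\mathcal{E}^{ur}_\Delta}}$. For the étale property, exactness of $\mathbb{D}$ applied to $0\to pT\to T\to T/pT\to 0$ (with $\mathbb{D}(pT)=p\mathbb{D}(T)$, obtained by applying $\mathbb{D}$ to $0\to T[p]\to T\xrightarrow{\ p\ }pT\to 0$) gives $\mathbb{D}(T)/p\mathbb{D}(T)\cong\mathbb{D}(T/pT)$; hence $\id\otimes\varphi_t\colon\varphi_t^*\mathbb{D}(T)\to\mathbb{D}(T)$ reduces modulo $p$ to the corresponding map for the étale $E_\Delta$-module $\mathbb{D}(T/pT)$, an isomorphism by Proposition \ref{mathbbD}, so $\id\otimes\varphi_t$ is surjective by topological Nakayama. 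For injectivity I would argue by cases: if $T$ is $p$-torsion free then $\mathbb{D}(T)\subseteq\mathcal{O}_{\widehat{\mathcal{E}^{ur}_\Delta}}\otimes_{\Zp}T$ and $\varphi_t^*\mathbb{D}(T)$ (using that $\OED$ is finite free over $\varphi_t(\OED)$) are $p$-torsion free and $p$-adically complete, so a surjection which is an isomorphism modulo $p$ is bijective; if $p^hT=0$ one inducts on $h$ with the $5$-lemma, using exactness of $\mathbb{D}$ and of $\varphi_t^*(\cdot)=\OED\otimes_{\OED,\varphi_t}\cdot$, the base case $h=1$ being Proposition \ref{mathbbD}; the general case follows from the splitting $T\cong T^{\mathrm{tors}}\oplus\Zp^r$ and additivity.

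Finally, for the generic length: since $\OED/(p)=E_\Delta$ is a domain, $(p)$ is a prime ideal of $\OED$ and the localisation of $\OED$ at $(p)$ is a discrete valuation ring with residue field $\Frac(E_\Delta)$, so for $p$-power-torsion $T$ the generic length $\genlength\mathbb{D}(T)$ equals the length of $\mathbb{D}(T)$ localised at $(p)$; thus $\genlength\mathbb{D}(\cdot)$ is additive on short exact sequences of $p$-power-torsion modules, as is $\length_{\Zp}(\cdot)$, and by exactness of $\mathbb{D}$ it suffices to treat $pT=0$, where $\genlength\mathbb{D}(T)=\rk_{E_\Delta}\mathbb{D}(T)=\dim_{\Fp}T=\length_{\Zp}T$ by Proposition \ref{mathbbD}. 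The step I expect to require the most care is the injectivity half of the étale property — promoting ``isomorphism modulo $p$'' to ``isomorphism'' for $\id\otimes\varphi_t$, which forces the case analysis above; the remaining steps are formal once Propositions \ref{mathbbDZp} and \ref{mathbbD} and the faithful flatness are in hand.
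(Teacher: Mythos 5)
Your argument is correct in substance but takes a genuinely different route from the paper at the two key points. For exactness, the paper does \emph{not} use flat descent: it first treats $p$-power-torsion $T$ by induction on $h$, using the vanishing $H^1_{cont}(\HQpD,\mathcal{O}_{\widehat{\mathcal{E}^{ur}_\Delta}}\otimes_{\Zp}T_1)=\{1\}$ (coming from the multivariable Hilbert 90, Prop.~\ref{hilbert90D}) and the long exact sequence of continuous $\HQpD$-cohomology, and then passes to general $T$ via the six-term sequences built from $T_i[p^h]$ and $T_i/p^hT_i$ together with a Mittag--Leffler argument. Your replacement --- faithful flatness of $\mathcal{O}_{\widehat{\mathcal{E}^{ur}_\Delta}}$ over $\OED$ combined with the naturality of Prop.~\ref{mathbbDZp}, so that the homology of $0\to\mathbb{D}(T_1)\to\mathbb{D}(T)\to\mathbb{D}(T_2)\to 0$ is killed after a faithfully flat base change --- is valid and arguably cleaner, and it also gives finite generation of $\mathbb{D}(T)$ by descent rather than by the paper's route ($\mathbb{D}(T)/p\mathbb{D}(T)=\mathbb{D}(T/pT)$ finitely generated plus $p$-adic completeness of $\mathbb{D}(T)$). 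The price is that the faithful flatness is itself a nontrivial input: flatness of the $p$-adic completion of the filtered colimit $\varinjlim\mathcal{O}_{\mathcal{E}'_\Delta}$ needs the local criterion for flatness (or the standard fact that the $I$-adic completion of a flat module over a noetherian ring is flat), and faithfulness uses that every maximal ideal of the $p$-adically complete ring $\OED$ contains $p$ together with the fact that each $E'_\Delta$ contains $E_\Delta$ as a module direct summand; your sketch of these points is adequate but should be spelled out. Your treatment of the generic length (identifying $\genlength$ with length over the discrete valuation ring $(\OED)_{(p)}$ to get additivity) is a nice justification of a step the paper leaves implicit, and otherwise matches the paper's dévissage to Prop.~\ref{mathbbD}.

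One concrete error to fix: in the injectivity half of the \'etale property you invoke ``the splitting $T\cong T^{\mathrm{tors}}\oplus\Zp^r$ and additivity.'' This splitting exists as $\Zp$-modules but is \emph{not} $\GQpD$-equivariant in general, so $\mathbb{D}$ does not decompose accordingly. The repair is immediate and uses nothing new: apply $\mathbb{D}$ to the equivariant exact sequence $0\to T^{\mathrm{tors}}\to T\to T/T^{\mathrm{tors}}\to 0$, note that $\varphi_t^*(\cdot)=\OED\otimes_{\OED,\varphi_t}(\cdot)$ is exact because $\OED$ is finite free over $\varphi_t(\OED)$, and conclude injectivity of $\id\otimes\varphi_t$ for $\mathbb{D}(T)$ from the torsion and torsion-free cases by the five lemma, exactly as you already do in the $p^h$-torsion induction. (For comparison, the paper handles the \'etale property by induction on the length in the torsion case and then by taking projective limits over $\mathbb{D}(T/p^hT)$ in general.)
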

\begin{proof}
If $T$ is an object in $\RepDZp$ such that $p^h T=0$, then we have $H^1(\HQpD, \mathcal{O}_{\widehat{\mathcal{E}^{ur}_\Delta}}\otimes_{\Zp}T)=\{1\}$ by induction on $h$ using the long exact sequence of continuous $\HQpD$-cohomology. So the exactness of $\mathbb{D}$ on finite length objects in $\RepDZp$ follows the same way as in the proof of Prop.\ \ref{mathbbDZp} in the special case when $pT_1=0$. Now if $0\to T_1\to T_2\to T_3\to 0$ is an arbitrary short exact sequence in $\RepDZp$ then we have an exact sequence 
\begin{align*}
0\to T_1[p^h]\to T_2[p^h]\to T_3[p^h]\overset{\partial_h}{\to} T_1/p^hT_1\to T_2/p^hT_2\to T_3/p^hT_3\to 0
\end{align*}
of finite length objects for all $h\geq 1$. Applying $\mathbb{D}$ yields an exact sequence
\begin{align*}
0\to \mathbb{D}(T_1[p^h])\to \mathbb{D}(T_2[p^h])\to \mathbb{D}(T_3[p^h])\to \mathbb{D}(T_1/p^hT_1)\to \mathbb{D}(T_2/p^hT_2)\to \mathbb{D}(T_3/p^hT_3)\to 0
\end{align*}
for all $h\geq 1$. Since $T_i$ is finitely generated over $\Zp$, we have $T_i[p^h]=(T_i)_{tors}$ for $h\geq h_0$ large enough ($i=1,2,3$). In particular, the connecting map $T_i[p^{(n+1)h}]\overset{p^h\cdot}{\to}T_i[p^{nh}]$ is the zero map for $h\geq h_0$ and $i=1,2,3$. Thus the Mittag--Leffler property is satisfied for both $\mathrm{Im}(\partial_h)_h$ and $\Coker(\partial_h)_h$ as the map $T_1/p^{h+1}T_1\to T_1/p^hT_1$ is surjective for all $h\geq 1$. Hence taking the projective limit we obtain an exact sequence $0\to \mathbb{D}(T_1)\to \mathbb{D}(T_2)\to \mathbb{D}(T_3)\to 0$ as claimed.

The statement on the generic length follows from the exactness using Prop.\ \ref{mathbbD} and induction on $h$ such that $p^hT=0$. In particular, $\mathbb{D}(T)$ is finitely generated over $\OED$ if $T$ has finite length. Now if $T$ is not necessarily of finite length then we apply the exactness of $\mathbb{D}$ on the exact sequence $0\to T[p]\to T\overset{p\cdot}{\to} T\to T/pT\to 0$ to obtain that $\mathbb{D}(T/pT)=\mathbb{D}(T)/p\mathbb{D}(T)$ which is finitely generated over $E_\Delta$. Therefore $\mathbb{D}(T)$ is finitely generated over $\OED$ by the $p$-adic completeness of $\mathbb{D}(T)$ (it follows easily from the definition that we have $\varprojlim_h\mathbb{D}(T/p^hT)=\mathbb{D}(T)$).

Finally, the \'etale property for finite length modules follows by induction on the length from the case $h=1$ (Prop.\ \ref{mathbbD}) and in general by taking the projective limit.
\end{proof}

Conversely, let $D$ be an object in $\mathcal{D}^{et}(\varphi_{\Delta},\Gamma_\Delta,\OED)$. We define
\begin{equation*}
\mathbb{T}(D):=\bigcap_{\alpha\in\Delta}\left(\mathcal{O}_{\widehat{\mathcal{E}^{ur}_\Delta}}\otimes_{\OED}D\right)^{\varphi_\alpha=\id}\ .
\end{equation*}
This is a $\Zp$-module admitting a diagonal action of $\GQpD$ via the formula $g(\lambda\otimes d):=g(\lambda)\otimes\chi(g)(d)$ where $\chi\colon \GQpD\twoheadrightarrow\Gamma_\Delta$ is the quotient map.

\begin{pro}
For any object $D$ in $\mathcal{D}^{et}(\varphi_{\Delta},\Gamma_\Delta,\OED)$, the natural map
\begin{equation*}
\mathcal{O}_{\widehat{\mathcal{E}^{ur}_\Delta}}\otimes_{\Zp}\mathbb{T}(D)\to \mathcal{O}_{\widehat{\mathcal{E}^{ur}_\Delta}}\otimes_{\OED}D
\end{equation*}
is an isomorphism.
\end{pro}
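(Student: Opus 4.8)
The plan is to run the dévissage of Proposition \ref{mathbbDZp} in reverse, feeding in the mod $p$ equivalence of Theorem \ref{modpequiv}. Write $R:=\mathcal{O}_{\widehat{\mathcal{E}^{ur}_\Delta}}$; the asserted map $R\otimes_{\Zp}\mathbb{T}(D)\to R\otimes_{\OED}D$ is natural in $D$. First I would reduce to the case where $D$ is killed by a power $p^h$ of $p$, and inside that induct on $h$, the base case being $h=1$. In that base case $D$ is an object of $\mathcal{D}^{et}(\varphi_\Delta,\Gamma_\Delta,E_\Delta)$, so by Theorem \ref{modpequiv} we may write $D\cong\mathbb{D}(V)$ with $V=\mathbb{V}(D)$ in $\RepDFp$; since $R/p\cong E^{sep}_\Delta$ and $D$ is killed by $p$, Proposition \ref{mathbbD} identifies $R\otimes_{\OED}D$ with $E^{sep}_\Delta\otimes_{\Fp}V$ compatibly with the partial Frobenii. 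Hence $\mathbb{T}(D)=V$ and $R\otimes_{\Zp}\mathbb{T}(D)=E^{sep}_\Delta\otimes_{\Fp}V$, and the natural map is the identity.

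For the inductive step ($h\geq 2$) I would use the short exact sequence $0\to p^{h-1}D\to D\to D/p^{h-1}D\to 0$ of $\OED$-modules. Both ends are again étale: since $\OED$ is finite free, hence flat, over $\varphi_t(\OED)$ for every $\varphi_t\in T_{+,\Delta}$ (with a basis in $\OED^+$, as in Proposition \ref{alphaintetale}), the functor $\varphi_t^*(-)$ is exact and carries this sequence to $0\to\varphi_t^*(p^{h-1}D)\to\varphi_t^*D\to\varphi_t^*D/\varphi_t^*(p^{h-1}D)\to 0$ with $\varphi_t^*(p^{h-1}D)=p^{h-1}\varphi_t^*D$; restricting the isomorphism $\id\otimes\varphi_t\colon\varphi_t^*D\to D$ to the submodule $p^{h-1}\varphi_t^*D$ (resp.\ passing to the quotient by it) shows $\id\otimes\varphi_t$ is bijective on $p^{h-1}D$ and on $D/p^{h-1}D$. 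Thus $p^{h-1}D$ is an object of $\mathcal{D}^{et}(\varphi_\Delta,\Gamma_\Delta,E_\Delta)$ killed by $p$, and $D/p^{h-1}D$ is an étale $T_{+,\Delta}$-module over $\OED/p^{h-1}$.

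Next I would apply $R\otimes_{\OED}(-)$ to this sequence, which is exact since all three modules are killed by $p^h$ and $R/p^h=\varinjlim\mathcal{O}_{\mathcal{E}'_\Delta}/p^h$ is a filtered colimit of finite free $\OED/p^h$-modules, hence flat over $\OED/p^h$. Applying the exact functor $\Phi^\bullet(-)$ of the cohomological preliminaries and passing to its long exact cohomology sequence, the crucial point is that $\Phi^\bullet(R\otimes_{\OED}p^{h-1}D)$ is acyclic in positive degrees: by the case $h=1$, $R\otimes_{\OED}p^{h-1}D\cong E^{sep}_\Delta\otimes_{\Fp}\mathbb{V}(p^{h-1}D)$, which as a $\prod_{\alpha\in\Delta}\varphi_\alpha^{\mathbb{N}}$-module is a finite direct sum of copies of $E^{sep}_\Delta$, so Proposition \ref{phiacyclicEsepD} applies and gives $h^i\Phi^\bullet(R\otimes_{\OED}p^{h-1}D)=0$ for $i>0$ while $h^0=\mathbb{T}(p^{h-1}D)$. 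Hence the long exact sequence collapses to a short exact sequence $0\to\mathbb{T}(p^{h-1}D)\to\mathbb{T}(D)\to\mathbb{T}(D/p^{h-1}D)\to 0$. Comparing it, via the natural maps, with the sequence $0\to R\otimes_{\Zp}\mathbb{T}(p^{h-1}D)\to R\otimes_{\Zp}\mathbb{T}(D)\to R\otimes_{\Zp}\mathbb{T}(D/p^{h-1}D)\to 0$ (exact since $R$ is flat over $\Zp$), the outer vertical maps are isomorphisms by the case $h=1$ and by the inductive hypothesis, so the five lemma finishes the induction.

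Finally, for an arbitrary $D$ in $\mathcal{D}^{et}(\varphi_\Delta,\Gamma_\Delta,\OED)$, each $D/p^hD$ is étale (the quotient argument above, with $p^{h-1}$ replaced by $p^h$) and finitely generated, so the finite-length case applies to it; since $D$ is $p$-adically complete (finitely generated over the $p$-adically complete noetherian ring $\OED$) and $R$ is $p$-adically complete, one has $R\otimes_{\OED}D\cong\varprojlim_h R\otimes_{\OED}D/p^hD$, and, limits commuting with limits, $\mathbb{T}(D)=\varprojlim_h\mathbb{T}(D/p^hD)$; the transition maps are surjective, so one concludes by passing to the projective limit of the isomorphisms for the $D/p^hD$, exactly as in the last paragraph of the proof of Proposition \ref{mathbbDZp}. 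I expect the main obstacle to be the finite-length step, and inside it the cohomological input: one must arrange, via Theorem \ref{modpequiv}, that the base change of the $p$-torsion piece $p^{h-1}D$ becomes a sum of copies of $E^{sep}_\Delta$, so that the acyclicity of $\Phi^\bullet(E^{sep}_\Delta)$ (Proposition \ref{phiacyclicEsepD}) can be invoked — this is precisely where the mod $p$ equivalence of categories re-enters the $p$-adic statement.
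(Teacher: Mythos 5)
Your proposal is correct and follows essentially the same route as the paper: d\'evissage to the $p$-power-torsion case, a short exact sequence whose subobject is killed by $p$ and hence (via Theorem \ref{modpequiv} and Proposition \ref{mathbbD}) base-changes to a finite direct sum of copies of $E^{sep}_\Delta$, acyclicity of $\Phi^\bullet(E^{sep}_\Delta)$ from Proposition \ref{phiacyclicEsepD}, the five lemma, and a passage to the limit. The only (immaterial) difference is that the paper uses $0\to D[p]\to D\to D/D[p]\to 0$ where you use $0\to p^{h-1}D\to D\to D/p^{h-1}D\to 0$; your extra verifications (\'etaleness of sub and quotient, flatness of $R/p^h$ over $\OED/p^h$) are points the paper leaves implicit.
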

\begin{proof}
This is completely analogous to the proof of Prop.\ 2.31 in \cite{FO}. We proceed in two steps. At first assume that $p^hD=0$ for some integer $h\geq 1$. Consider the exact sequence $0\to D[p]\to D\to D/D[p]\to 0$ and apply the exact functor $\Phi^\bullet\circ(\mathcal{O}_{\widehat{\mathcal{E}^{ur}_\Delta}}\otimes_{\OED}\cdot)$ to obtain an exact sequence
\begin{equation*}
0\to \Phi^\bullet(\mathcal{O}_{\widehat{\mathcal{E}^{ur}_\Delta}}\otimes_{\OED}D[p])\to \Phi^\bullet(\mathcal{O}_{\widehat{\mathcal{E}^{ur}_\Delta}}\otimes_{\OED}D)\to \Phi^\bullet(\mathcal{O}_{\widehat{\mathcal{E}^{ur}_\Delta}}\otimes_{\OED}D/D[p])\to 0\ .
\end{equation*}
By Thm.\ \ref{modpequiv} $D[p]$ is in the image of the functor $\mathbb{D}$ whence $\mathcal{O}_{\widehat{\mathcal{E}^{ur}_\Delta}}\otimes_{\OED}D[p]$ is isomorphic to $(E^{sep}_\Delta)^{\rk_{E_\Delta}D[p]}$ as a $\prod_{\alpha\in\Delta}\varphi_\alpha^{\mathbb{N}}$-module using Prop.\ \ref{mathbbD}. In particular, $h^1\Phi^\bullet(\mathcal{O}_{\widehat{\mathcal{E}^{ur}_\Delta}}\otimes_{\OED}D[p])=0$ by Prop.\ \ref{phiacyclicEsepD}. This yields an exact sequence
\begin{equation*}
0\to \mathbb{T}(D[p])\to \mathbb{T}(D)\to \mathbb{T}(D/D[p])\to 0\ ,
\end{equation*}
and the statement follows the same way as in the proof of Prop.\ \ref{mathbbDZp}.

The general case follows by taking the limit.
\end{proof}

Now note that $\mathbb{T}(D)$ is finitely generated over $\Zp$: this is obvious in the case when $p^hD=0$ using induction on $h$ and in the general case by Nakayama's lemma as we have $\mathbb{T}(D)=\varprojlim_h \mathbb{T}(D/p^hD)$ by construction. So we deduce
\begin{thm}\label{equivcatZp}
The functors $\mathbb{D}$ and $\mathbb{T}$ are quasi-inverse equivalences of categories between the Tannakian categories $\RepDZp$ and $\mathcal{D}^{et}(\varphi_{\Delta},\Gamma_\Delta,\OED)$.
\end{thm}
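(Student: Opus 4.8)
The plan is to deduce the theorem from the two comparison isomorphisms already established --- Prop.\ \ref{mathbbDZp} and the Proposition immediately preceding this statement --- together with the mod $p$ equivalence Thm.\ \ref{modpequiv} and the vanishing results Prop.\ \ref{hilbert90D} and Prop.\ \ref{phiacyclicEsepD}. By the Corollary above, $\mathbb{D}$ takes values in $\mathcal{D}^{et}(\varphi_{\Delta},\Gamma_\Delta,\OED)$, and $\mathbb{T}(D)$ is finitely generated over $\Zp$ with a continuous $\GQpD$-action (the action factors through a finite quotient on each $D/p^hD$); so it remains to produce natural isomorphisms $\mathbb{T}\circ\mathbb{D}\cong\id_{\RepDZp}$ and $\mathbb{D}\circ\mathbb{T}\cong\id$, and to record compatibility with tensor products and duals.

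For $\mathbb{T}\circ\mathbb{D}$, Prop.\ \ref{mathbbDZp} (used $T_{+,\Delta}$-equivariantly) identifies
\[
\mathbb{T}(\mathbb{D}(T))\ \cong\ \bigcap_{\alpha\in\Delta}\left(\mathcal{O}_{\widehat{\mathcal{E}^{ur}_\Delta}}\otimes_{\Zp}T\right)^{\varphi_\alpha=\id},
\]
and since the $\varphi_\alpha$ act trivially on $T$ I must identify this with $T$. When $T$ is free this is immediate from $\bigcap_{\alpha\in\Delta}\mathcal{O}_{\widehat{\mathcal{E}^{ur}_\Delta}}^{\varphi_\alpha=\id}=\Zp$ (Prop.\ \ref{padicinv}). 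When $p^hT=0$ I would induct on $h$: the case $h=1$ is the last assertion of Prop.\ \ref{mathbbD} applied to $\mathcal{O}_{\widehat{\mathcal{E}^{ur}_\Delta}}\otimes_{\Zp}T\cong E^{sep}_\Delta\otimes_{\Fp}T$, and for $h>1$ one takes $0\to T_1\to T\to T_2\to 0$ with $pT_1=0$, applies the exact functor $\Phi^\bullet(\mathcal{O}_{\widehat{\mathcal{E}^{ur}_\Delta}}\otimes_{\Zp}\cdot)$, uses $h^1\Phi^\bullet(\mathcal{O}_{\widehat{\mathcal{E}^{ur}_\Delta}}\otimes_{\Zp}T_1)=h^1\Phi^\bullet\big((E^{sep}_\Delta)^{\oplus\dim_{\Fp}T_1}\big)=0$ (Prop.\ \ref{phiacyclicEsepD}), and concludes with the long exact cohomology sequence and the five lemma. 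A general finitely generated $T$ then follows from the free and finite-length cases by the five lemma applied to $0\to T_{\mathrm{tors}}\to T\to T/T_{\mathrm{tors}}\to 0$, checking via $\Phi^\bullet$ and Prop.\ \ref{phiacyclicEsepD} that $\mathbb{T}\circ\mathbb{D}$ sends this to a short exact sequence.

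For $\mathbb{D}\circ\mathbb{T}$, the preceding Proposition identifies ($\HQpD$-equivariantly, with $\HQpD$ acting through the first factor only)
\[
\mathbb{D}(\mathbb{T}(D))\ \cong\ \left(\mathcal{O}_{\widehat{\mathcal{E}^{ur}_\Delta}}\otimes_{\OED}D\right)^{\HQpD},
\]
so it remains to see this equals $D$. For $pD=0$ this is precisely $\mathbb{D}\circ\mathbb{V}\cong\id$ from Thm.\ \ref{modpequiv}, since then $\mathbb{T}(D)=\mathbb{V}(D)$ and the expression above is the mod $p$ value $\mathbb{D}(\mathbb{V}(D))\cong D$. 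For $p^hD=0$ one inducts on $h$ via $0\to D[p]\to D\to D/D[p]\to 0$: as $pD[p]=0$, $\mathcal{O}_{\widehat{\mathcal{E}^{ur}_\Delta}}\otimes_{\OED}D[p]\cong (E^{sep}_\Delta)^{\oplus\rk_{E_\Delta}D[p]}$ as $\HQpD$-module, so $H^1_{cont}(\HQpD,\mathcal{O}_{\widehat{\mathcal{E}^{ur}_\Delta}}\otimes_{\OED}D[p])=0$ by Prop.\ \ref{hilbert90D}; the long exact $\HQpD$-cohomology sequence and the five lemma finish the induction. A general $D$ follows by passing to the limit over $D/p^hD$, using that $D$ is finitely generated over the $p$-adically complete ring $\OED$ (so $D=\varprojlim_h D/p^hD$) and a Mittag--Leffler argument as in the proof of the Corollary. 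Finally, compatibility of $\mathbb{D}$ --- hence of $\mathbb{T}$ --- with tensor products and duals is proven exactly as in Lemma \ref{tannakaDmodp}, with Prop.\ \ref{hilbert90D} ensuring that $\HQpD$-invariants commute with the relevant tensor and $\Hom$ over $\mathcal{O}_{\widehat{\mathcal{E}^{ur}_\Delta}}$.

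I expect the main obstacle to be essentially organisational rather than conceptual: one must verify that each comparison isomorphism above is compatible with the partial Frobenii and with the full $\GQpD$-action (so that the fixed-point and invariant functors apply termwise), and that the projective systems arising in the reduction to the finite-length case satisfy Mittag--Leffler; all the substantive inputs --- Prop.\ \ref{hilbert90D}, Prop.\ \ref{phiacyclicEsepD}, Thm.\ \ref{modpequiv}, Prop.\ \ref{mathbbDZp} and its $\mathbb{T}$-analogue --- are already in hand.
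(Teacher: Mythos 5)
Your proposal is correct and follows essentially the same route as the paper, which deduces the theorem directly from Prop.\ \ref{mathbbDZp}, its $\mathbb{T}$-analogue, and the finite generation of $\mathbb{T}(D)$ by taking $\varphi_\Delta$-fixed points resp.\ $\HQpD$-invariants of the two comparison isomorphisms (via Prop.\ \ref{padicinv} and the d\'evissage arguments you describe). Your writeup merely makes explicit the details the paper leaves implicit in its ``So we deduce''.
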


Finally, an \'etale $T_{+,\Delta}$-module over $\mathcal{E}_\Delta$ is a finitely generated $\mathcal{E}_\Delta$-module $D$ together with a semilinear action of the monoid $T_{+,\Delta}$ such that there exists an object $D_0$ in $\mathcal{D}^{et}(\varphi_{\Delta},\Gamma_\Delta,\OED)$ with an isomorphism $D\cong D_0[p^{-1}]= \mathcal{E}_\Delta\otimes_{\OED}D_0$. We denote by $\mathcal{D}^{et}(\varphi_{\Delta},\Gamma_\Delta,\mathcal{E}_\Delta)$ the category of \'etale $T_{+,\Delta}$-modules over $\mathcal{E}_\Delta$. As before, $\mathcal{D}^{et}(\varphi_{\Delta},\Gamma_\Delta,\mathcal{E}_\Delta)$ has the structure of a neutral Tannakian category. We have the following characteristic $0$ version of the category equivalence:

\begin{thm}\label{equivcatQp}
The functors
\begin{eqnarray*}
V&\mapsto&\mathbb{D}(V):=\left(\widehat{\mathcal{E}^{ur}_\Delta}\otimes_{\Qp}V\right)^{\HQpD}\\
D&\mapsto&\mathbb{V}(D):=\bigcap_{\alpha\in\Delta}\left(\widehat{\mathcal{E}^{ur}_\Delta}\otimes_{\mathcal{E}_\Delta}D\right)^{\varphi_\alpha=\id}
\end{eqnarray*}
are quasi-inverse equivalences of categories between the Tannakian categories $\RepDQp$ and $\mathcal{D}^{et}(\varphi_{\Delta},\Gamma_\Delta,\mathcal{E}_\Delta)$.
\end{thm}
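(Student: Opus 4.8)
The plan is to reduce Theorem~\ref{equivcatQp} to the integral equivalence of Theorem~\ref{equivcatZp} by inverting $p$, i.e.\ by passing back and forth between lattices on the two sides. On the Galois side the elementary input is that every continuous representation $V$ of the compact group $\GQpD$ on a finite dimensional $\Qp$-vector space admits a $\GQpD$-stable $\Zp$-lattice $T$ with $T[p^{-1}]=V$: starting from an arbitrary $\Zp$-lattice, its $\GQpD$-orbit is bounded by compactness, so the $\Zp$-submodule it generates is such a $T$. Given such a $T$ I would first check the identity
\begin{equation*}
\mathbb{D}(V)\;\cong\;\mathbb{D}(T)[p^{-1}]\ ,
\end{equation*}
which follows from $\widehat{\mathcal{E}^{ur}_\Delta}\otimes_{\Qp}V=\bigl(\mathcal{O}_{\widehat{\mathcal{E}^{ur}_\Delta}}\otimes_{\Zp}T\bigr)[p^{-1}]$ together with the elementary fact that taking $\HQpD$-invariants commutes with inverting $p$ (if an element of the localisation is fixed, then a $p$-power multiple of it lies in the un-localised module and is still fixed). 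Since $\mathbb{D}(T)$ is an object of $\mathcal{D}^{et}(\varphi_{\Delta},\Gamma_\Delta,\OED)$ by Theorem~\ref{equivcatZp}, this identity shows that $\mathbb{D}(V)$ genuinely defines an object of $\mathcal{D}^{et}(\varphi_{\Delta},\Gamma_\Delta,\mathcal{E}_\Delta)$; functoriality in $V$ and independence of the chosen lattice are automatic because $\mathbb{D}(V)$ is defined intrinsically.

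Symmetrically, for an object $D$ of $\mathcal{D}^{et}(\varphi_{\Delta},\Gamma_\Delta,\mathcal{E}_\Delta)$, writing $D\cong D_0[p^{-1}]$ with $D_0$ in $\mathcal{D}^{et}(\varphi_{\Delta},\Gamma_\Delta,\OED)$ (possible by the very definition of the category), I would prove $\mathbb{V}(D)\cong\mathbb{T}(D_0)[p^{-1}]$ by the same device: $\widehat{\mathcal{E}^{ur}_\Delta}\otimes_{\mathcal{E}_\Delta}D=\bigl(\mathcal{O}_{\widehat{\mathcal{E}^{ur}_\Delta}}\otimes_{\OED}D_0\bigr)[p^{-1}]$, and the operation $\bigcap_{\alpha\in\Delta}(\cdot)^{\varphi_\alpha=\id}$ commutes with inverting $p$ for the same reason. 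As $\mathbb{T}(D_0)$ is finitely generated over $\Zp$, this immediately yields that $\mathbb{V}(D)$ is a finite dimensional $\Qp$-vector space carrying a continuous $\GQpD$-action, and that the integral comparison isomorphism of the Proposition preceding Theorem~\ref{equivcatZp} localises to $\widehat{\mathcal{E}^{ur}_\Delta}\otimes_{\Qp}\mathbb{V}(D)\overset{\sim}{\to}\widehat{\mathcal{E}^{ur}_\Delta}\otimes_{\mathcal{E}_\Delta}D$; independence of the choice of $D_0$ is again automatic since $\mathbb{V}(D)$ is defined without reference to it.

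It then remains to assemble the quasi-inverse isomorphisms from the integral ones. For $V$ in $\RepDQp$ with stable lattice $T$, Theorem~\ref{equivcatZp} gives $\mathbb{T}(\mathbb{D}(T))\cong T$, whence $\mathbb{V}(\mathbb{D}(V))\cong\mathbb{T}(\mathbb{D}(T))[p^{-1}]\cong T[p^{-1}]=V$, naturally in $V$. Conversely, for $D\cong D_0[p^{-1}]$ the image $T'$ of $\mathbb{T}(D_0)$ in $\mathbb{V}(D)$ is a $\GQpD$-stable lattice, and since $\mathbb{D}$ is exact on $\RepDZp$ and $\mathbb{D}$ of a finite torsion representation is killed by a power of $p$, we get $\mathbb{D}(\mathbb{V}(D))=\mathbb{D}(T')[p^{-1}]\cong\mathbb{D}(\mathbb{T}(D_0))[p^{-1}]\cong D_0[p^{-1}]=D$ by Theorem~\ref{equivcatZp} again; naturality is inherited. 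The compatibility of $\mathbb{D}$ with tensor products and duals (the Tannakian statement) follows from the integral, hence ultimately the mod~$p$, compatibilities of Theorem~\ref{equivcatZp} and Lemma~\ref{tannakaDmodp} upon inverting $p$. I expect no essential obstacle here: the whole argument is a d\'evissage to the $\Zp$-case. The point that needs the most care is the small torsion bookkeeping in the last step — verifying that replacing $\mathbb{T}(D_0)$ by its torsion-free quotient and then inverting $p$ returns $D$ on the nose — together with the routine check that the $p$-adic topology and the continuity of the $\Gamma_\Delta$- and $\GQpD$-actions are preserved under localisation.
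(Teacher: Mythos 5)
Your proposal is correct and follows exactly the paper's own (very brief) argument: choose a $\GQpD$-stable lattice $T\subset V$ by compactness (resp.\ an integral model $D_0$ of $D$ by the definition of \'etaleness over $\mathcal{E}_\Delta$) and deduce everything from Theorem~\ref{equivcatZp} by inverting $p$ on both sides. The extra details you supply --- that invariants and $\varphi_\alpha$-fixed points commute with inverting $p$ on $p$-torsion-free modules, and the torsion bookkeeping when passing from $\mathbb{T}(D_0)$ to a lattice in $\mathbb{V}(D)$ --- are exactly the routine verifications the paper leaves implicit.
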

\begin{proof}
Since $\GQpD$ is compact, any finite dimensional $\Qp$-representation $V$ contains a $\GQpD$-invariant lattice $T$. The statement follows from Thm.\ \ref{equivcatZp} by inverting $p$ on both sides. The compatibility with tensor products and duals follows the same way as in characteristic $p$.
\end{proof}

\begin{rems}
\begin{enumerate}
\item If $A$ is a $\Zp$-algebra which is finitely generated as a module over $\Zp$, then we have an equivalence of categories between $\mathrm{Rep}_A(\GQpD)$ and $\mathcal{D}^{et}(\varphi_{\Delta},\Gamma_\Delta,A\otimes_{\Zp}\OED)$. Indeed, we have a natural isomorphism $(A\otimes_{\Zp}\mathcal{O}_{\widehat{\mathcal{E}^{ur}_\Delta}})\otimes_A\cdot\cong \mathcal{O}_{\widehat{\mathcal{E}^{ur}_\Delta}}\otimes_{\Zp}\cdot$ as functors on $\mathrm{Rep}_A(\GQpD)$. Similarly, if $K$ is a finite extension of $\Qp$, then we have an equivalence of categories between $\mathrm{Rep}_K(\GQpD)$ and $\mathcal{D}^{et}(\varphi_{\Delta},\Gamma_\Delta,K\otimes_{\Qp}\mathcal{E}_\Delta)$.
\item It is expected that there is a similar equivalence of categories for representations of the $|\Delta|$th direct power of the group $\Gal(\overline{\mathbb{Q}_p}/F)$ for a finite extension $F/\mathbb{Q}_p$. However, at this point it is not clear what type of $(\varphi,\Gamma)$-modules one should consider. The usual cyclotomic $(\varphi,\Gamma)$-modules do not seem to be well-suited for the purpose of the $p$-adic and mod $p$ Langlands programme. On the other hand, the Lubin--Tate setting may not work properly in characteristic $p$ due to the non-existence of the distinguished left inverse $\psi$ of $\varphi$. To work over the character variety of the group $\mathcal{O}_F$ \cite{BSX} seems, however, to be a good candidate.
\end{enumerate}
\end{rems}

\end{document}